\renewcommand{\S}{\mathbb{S}}
\newcommand{\pen}{\textbf{pen}}
\newcommand{\m}{\mathbf}
\newcommand{\E}{{\mathbb E}}
\newcommand{\R}{{\mathbb R}}
\renewcommand{\P}{{\mathbb P}}
\newcommand{\M}{{\mathcal{M}}}
\newcommand{\indep}{\perp\hspace{-.5em}\perp}
\newcommand{\mb}{\mathbf}
\newcommand{\s}{\text{sgn}}
\theoremstyle{definition}
\theoremstyle{remark}
\newdimen\AAdi%
\newbox\AAbo%
\def\AAk#1#2{\setbox\AAbo=\hbox{#2}\AAdi=\wd\AAbo\kern#1\AAdi{}}%
\newcounter{rcnt}[section]
\def\argmin{\mathop{\rm argmin}}
\def\argmax{\mathop{\rm argmax}}
\numberwithin{equation}{section}
\theoremstyle{plain}
\renewcommand{\S}{S}
\begin{document}

\begin{frontmatter}
\title{Optimal Linear Discriminators For The Discrete Choice Model In Growing Dimensions}
\runtitle{Discrete Choice Model in Growing Dimensions}

\begin{aug}
\author{\fnms{Debarghya} \snm{Mukherjee}\ead[label=e1]{mdeb@umich.edu}},
\author{\fnms{Moulinath} \snm{Banerjee}\ead[label=e2]{moulib@umich.edu}}
\and
\author{\fnms{Ya'acov} \snm{Ritov}\ead[label=e3]{yritov@umich.edu}
}

\thankstext{t1}{Supported by NSF Grant DMS-1712962}
\runauthor{Mukherjee, D., Banerjee, M. and Ritov, Y.}

\affiliation{University of Michigan}

\address{University of Michigan \\
437, West Hall, \\
1085 South University \\
Ann Arbor, MI 48109 \\
\printead{e1}
}

\address{University of Michigan\\
275, West Hall, \\
1085 South University \\
Ann Arbor, MI 48109 \\
\printead{e2}}

\address{University of Michigan\\
462, West Hall, \\
1085 South University \\
Ann Arbor, MI 48109 \\
\printead{e3}}
\end{aug}

\begin{abstract}
Manski's celebrated maximum score estimator for the discrete choice model, which is an optimal linear discriminator, has been the focus of much investigation in both the econometrics and statistics literatures, but its behavior under growing dimension scenarios largely remains unknown. This paper addresses that gap. Two different cases are considered: $p$ grows with $n$ but at a slow rate, i.e. $p/n \rightarrow 0$; and $p \gg n$ (fast growth). In the binary response model, we recast Manski's score estimation as empirical risk minimization for a classification problem, and derive the $\ell_2$ rate of convergence of the score estimator under a \emph{transition condition} in terms of our margin parameter that calibrates the level of difficulty of the estimation problem. 
We also establish upper and lower bounds for the minimax $\ell_2$ error in the binary choice model that differ by a logarithmic factor, and construct a minimax-optimal estimator in the slow growth regime. Some extensions to the general case -- the multinomial response model -- are also considered. Last but not least, we use a variety of learning algorithms to compute the maximum score estimator in growing dimensions.
\end{abstract}
\end{frontmatter}

\section{Introduction}\label{intro}
The maximum score estimator for the discrete choice model was introduced by Charles Manski in his seminal paper \cite{manski1975} in connection with the stochastic utility model of choice, and has been extensively studied in both the econometrics and the statistics literatures. The binary choice model can be considered as a linear regression model with missing data.  More specifically, let $$Y_i^* = X_i'\beta^0 + \epsilon_i$$ where $\{X_i, \epsilon_i\}$ are we $n$ i.i.d pairs, the distribution of $\epsilon_i$ is allowed to depend on $X_i$ and $\med(Y_i^*|X_i) = X_i'\beta^0$ (i.e. $\med(\epsilon_i|X_i) = 0$), but instead of observing the full data, we only get to see $\{Y_i, X_i\}$ where 
\begin{equation}
\label{eq:binary_response}
Y_i =  \s(Y_i^*) = \s(X_i^{\top} \beta^0 + \epsilon_i) \,.
\end{equation}
The regression parameter $\beta^0$ is of interest. 
The population score function is defined as: $$S(\beta) = \mathbb{E}(Y\s(X'\beta)) = \mathbb{E}(\s(Y^*)\s(X'\beta))$$ and the corresponding sample score function is:
$$S_n(\beta)=\frac{1}{n}\sum_{i=1}^n Y_i \s(X_i^\mathsf{T}\beta) = \frac{1}{n}\sum_{i=1}^n \s(Y_i^*)\s(X_i^\mathsf{T}\beta) \,.$$
The maximum score estimator is defined as any value of $\beta$ that maximizes the sample/empirical score function: $$\hat{\beta}_n = \argmax_{\beta: \|\beta\|=1} S_n(\beta) \,.$$
Note that some norm restriction on $\beta$ is important both for identifiability of $\beta$ in this model, as well as for meaningful optimization. 
As $\beta^0$ is only identifiable and estimable up to direction, in what follows, we take $\|\beta_0\| = 1$. We also note that the choice of the maximizer is not important; in fact there is no unique maximizer. In follow-up work \cite{manski1985}, Manski proved the consistency of  $\hat{\beta}_n$ to the true $\beta^0$ and some large deviation results under mild assumptions. The asymptotic distribution properties of the maximum score estimator were established by Kim and Pollard \cite{kp90} who proved that under additional assumptions $(\beta^0 - \hat{\beta}_n) = O_p(n^{-\frac{1}{3}})$ and that the normalized difference converges in distribution to a non-Gaussian random variable that is characterized as the maximizer of a quadratically drifted Gaussian process. Shortly thereafter, Horowitz \cite{horowitz1992} established, under smoothness conditions beyond those in \cite{kp90}, the estimator obtained by maximizing a kernel smoothed version of the score function can improve the rate of the smoothed estimator. One advantage of Horowitz's estimator over the original maximum score estimator, from a practical viewpoint, is that the limit distribution in his setting is Gaussian and therefore more amenable to inference, while the quantiles of the non-Gaussian limit are hard to determine. Also, around the same time Klein and Spady (see \cite{klein1993efficient}) proved that under the additional assumption $\E(Y|X) = \E\left(Y|X^{\top}\beta^0\right)$, one can obtain a consistent and asymptotically normal estimator of $\beta^0$, which is also semi-parametrically efficient. More recently, Seo and Otsu (\cite{seo2018local}, \cite{seo2015asymptotics}) have extended the asymptotic results on the score estimator to dependent data scenarios. Alternatively, resampling techniques can also be used for inference.  Manski and Thompson \cite{manski1986} suggested that the usual bootstrap yields a good approximation of the distribution of the maximum score estimator, but it turns out that the bootstrap is actually inconsistent, as shown in Abrevaya and Huang \cite{abrevaya2005} (but see also \cite{sen2010inconsistency}). More recently, a model--based smoothed bootstrap approach was proposed by Patra et.al. \cite{patra2018}. Generic ($m$ out of $n$) subsampling techniques \cite{politis1999subsampling} can, of course, be used in principle, but typically suffer from imprecise coverage unless the subsample size $m$ is well-chosen, which is typically a difficult problem. For  applications of maximum score estimators and their variants, see \cite{briesch2002semiparametric}, \cite{fox2013measuring}, \cite{bajari2008evaluating} and references therein. 
\noindent
\newline
\newline
{\bf Connections to empirical risk minimization:} The maximum score estimator is naturally connected to a classification problem with two classes. In Manski's problem, we have observations
$\{X_1, Y_1\}, \cdots, \{X_n, Y_n\}$, where $X_i \in \mathbb{R}^p$ and $Y_i \in \{-1,1\}$, these being the labels of the two classes. The conditional class probabilities are specified by $$ \eta(x) = \P(Y = 1| X=x)  = 1 - F_{\epsilon|X=x} (-x^\mathsf{T}\beta^0)\,.$$ For classifying the $Y_i$'s  using an arbitrary classifier $h$ under 0-1 loss,  the population risk is given by $L(h) = \mathbb{P}(Y \ne h(X))$. Consider the set of classifiers corresponding to all possible hyperplanes, i.e. $$\mathcal{G} = \{g_{\beta}: g_{\beta}(x) = \s(x^\mathsf{T}\beta), \|\beta\| = 1\} \,.$$
The population risk under 0-1 loss for this family is then given by: $$L(\beta) = L(g_{\beta}) = P(Y \neq \s(X^\mathsf{T}\beta)) \,,$$
and is consistently estimated by the empirical risk $L_n(\beta) = \mathbb{P}_n(Y \neq \s(X^\mathsf{T}\beta))$.
From the structure of the model, it is easy to see that the Bayes' classifier, i.e. the classifier which minimizes the population risk in this model (over all possible classifiers) is precisely $g_{\beta_0}$:
\begin{equation*}
\begin{split}
\s(\eta(x) - 0.5) & = \s\left( 1 - F_{\epsilon|X=x} (-x^\mathsf{T}\beta^0) - 0.5\right) \\
& = \s\left( 0.5 - F_{\epsilon|X=x} (-x^\mathsf{T}\beta^0) \right) \\
& = \s(x^\mathsf{T}\beta^0)  = g_{\beta^0}\hspace*{0.3in} [\because \med(\epsilon|X) = 0] \,.
\end{split}
\end{equation*}
Thus $\tilde{\beta}_n := \arg \min_{\beta}\,L_n(\beta)$ empirically estimates the Bayes classifier. By simple algebra $S(\beta) = 1 - 2 L(\beta)$ and $S_n(\beta) = 1 - 2 L_n(\beta)$. Since the former is maximized at $\beta_0$ and the latter at $\hat{\beta}_n$, it follows that $\hat{\beta}_n$ is one particular choice for $\tilde{\beta}_n$. Thus, the maximum score estimator is the minimizer of the empirical risk in this classification problem. The rate of estimation of $\beta^0$ depends on two crucial factors: (1) The manner in which $P(Y=1|X)$ changes across the hyperplane and (2) The distribution of $X_i$'s near the hyperplane. If the conditional probability shifts from $1/2$ rather slowly as we move away from the hyperplane, we have a `fuzzier' classification problem and estimation becomes more challenging. On the other hand, the distribution of the $X_i$'s governs the density of observed points around the hyperplane, with higher concentration of points being conducive to improved inference. As far as our knowledge goes, there is no work on the high-dimensional aspect of this model, so this paper bridges a gap in the literature.
\newline
\newline
{\bf The multinomial response discrete choice model:}  This model, which is a natural extension of its binary counterpart, arises in practice when an individual has to choose among finitely many elements, e.g. picking out a movie among several choices proposed by Netflix. 
In \cite{manski1975}, Manski also proposed an extension of the maximum score estimator for multinomial responses. We first describe the model. Assume that each individual has to choose from $m$ many alternatives, for each of which they have a utility value. Denote by $u_{i,j}$, the utility value of the $j^{th}$ alternative for the $i^{th}$ individual. Hence, $i$ will choose the $k^{th}$ alternative only if it provides them maximum utility, i.e. 
$$u_{i,k} \ge u_{i,j}  \ \text{for all } j \neq k, j \in \{1,2, \dots, m\} \,.$$
The utility values are modeled as $u_{i,j} = \m{x}_{i,j}^{\top} \beta^0 + \epsilon_{i,j}$ where $\m{x}_{i,j} \in \mathbb{R}^p$ is a vector of observable covariates and $\epsilon_{i,j}$ is an unobservable error. For notational simplicity, define an $m \times p$ matrix $\m{X}_i$ for  individual $i$ whose $j^{th}$ row is $\m{x}_{i,j}^{\top}$, the co-variate corresponding to their $j^{th}$ utility.  The $u_{i,j}$'s are not observed, but we do observe a multinomial vector $\m{y}_i \in \{0,1\}^m$  for each $i$, where 
$$\m{y}_{i,k}  = \begin{cases}
1,  \ \text{if }  u_{i,k} \ge u_{i,j}  \ \text{for all } j \neq k, j \in \{1,2, \dots, m\} \\
0, \ \text{otherwise}
\end{cases}\,.$$
In words, this vector indicates which alternative has been chosen by individual $i$. The available data on $n$ individuals are therefore the $n$ pairs 
$\{\m{y}_i, \m{X}_i\}_{i=1}^n$ which can be viewed as i.i.d replicates of a random object $(\m{y}_{1 \times m}, \m{X}_{m \times p})$, with the $j$'th row of $\m{X}$ written as $\m{x}_j^{\top}$. The response vector $\m{y}$ is related to the unobserved utility vector $(u_1, u_2, \ldots, u_m)$ through the linear model: $u_j = \m{x}_j^{\top} \beta_0 + \epsilon_j$. 
\newline
\newline
Under certain assumptions on the distribution of $(\m{y}_{1 \times m}, \m{X}_{m \times p})$ (see e.g. Assumption 2 of \cite{manski1975} or the more relaxed version, Assumption 1 of \cite{fox2007semiparametric}), which stipluates that the joint density of $(\epsilon_1, \epsilon_2, \ldots, \epsilon_m)$ conditional on $\m{X}$ is exchangeable, it can be shown that the probability of choosing the $i^{th}$ utility is driven by the ordering of the \emph{deterministic part of the utility function}. This is formalized in the \emph{rank ordering property} described below. 
\begin{assumption}[Rank ordering property]
\label{ass:rank_mult}
Define $p(j | \mb{X}, \beta)$ as the probability of the $j^{th}$ product having maximum utility under a generic regression parameter $\beta$ and conditional on $\m{X}$ being the covariate matrix: 
$$p(j | \mb{X}, \beta) = \P(\mb{x}_j^{\top}\beta +\epsilon_j \ge \mb{x}_k^{\top}\beta + \epsilon_k 
\ \forall \ k \in \{1, \dots, m\}, j \neq k \ | \mb{X})\,.$$ 
The rank ordering property says: $p(j | \mb{X}, \beta_0) \ge p(k | \mb{X}, \beta_0)$ if and only if $\mb{x}_j^{\top}\beta_0 \ge \mb{x}_k^{\top} \beta_0$. Note that the probability is taken over the joint distribution of $\{\epsilon_k\}_{k=1}^m$ given $\m{X}$. 
\end{assumption}
This motivates the estimation of the true parameter $\beta^0$ by maximizing the following score function: 
\begin{equation}
\label{main_mult_eq}
S^{(mult)}_n(\beta)  = \frac{1}{nm(m-1)}\sum_{i=1}^n \sum_{j=1}^m \m{y}_{i,j} \left[\sum_{k \neq j}\mathds{1}\left(\m{x}_{i,j}^{\top}{\beta} > \m{x}_{i,k}^{\top}\beta\right)\right] \,.
\end{equation} 
This is a natural generalization of the maximum score to multinomial responses. The idea is to find a $\beta$ that is most commensurate with the observed data. If $j(i)$ is the observed utility for the $i$'th individual, only the $j(i)$'th term in the inner sum is relevant, and given this information, we look for $\beta$ that makes the deterministic part of the $j(i)$'th utility larger than those of most other utilities across all $n$ observations. 
Hence, with enough data, any maximizer of $\S^{(mult)}_n(\beta)$ can be expected to be close to $\beta^0$ with high probability under Assumption \ref{ass:rank_mult}. 

We also note that this directly reduces to the binary response model presented at the beginning, when $m=2$. In this case, there are only two utility values for the $i^{th}$ individual who chooses the first option only if $u_{i,1} > u_{i,2}$. Now, 
\begin{equation*}
u_{i,1} - u_{i,2} > 0 \iff (\m{x}_{i,1} - \m{x}_{i,2})^{\top}\beta_0 + (\epsilon_{i,1} - \epsilon_{i,2}) > 0\,,
\end{equation*}
and hence, taking $X_i = (\m{x}_{i,1} - \m{x}_{i,2})$, $\epsilon_i =  (\epsilon_{i,1} - \epsilon_{i,2})$ and $Y_i$ to be a binary response which takes value $1$ when item $1$ is chosen and $0$ otherwise, we recover the binary response model as mentioned in equation \eqref{eq:binary_response} 
via a simple linear transformation.  

There is a vast literature, especially in economics, which deals with the discrete choice model, although most of it is confined to the binary response model. Lee \cite{lee1995semiparametric} extended the analysis of \cite{klein1993efficient} for the binary response model to the multinomial case under an appropriate version of the assumptions in the latter paper to obtain a consistent and asymptotically normal semi-parametric efficient estimator. Fox (\cite{fox2007semiparametric}) proved the consistency of the maximum score estimator for the multinomial response model under a partially missing data assumption, where the chosen utility and a subset of alternative utilities are observed, without Manski's assumption of conditionally independent  errors (Assumption 2 of \cite{manski1975}). Recently, Yan (\cite{yan2019semiparametric}) extended the analysis of Horowitz (\cite{horowitz1992}) to establish asymptotic normality of a kernel smoothed estimator in the multinomial model. 

To the best of our knowledge, all previous work on the binary as well as the multinomial discrete choice model has been done under the setting of fixed dimensional covariates and in the latter model, also under a fixed number of utilities. Our motivation for studying the maximum score estimator in these models is two-fold. Firstly, the score estimator works under very mild conditions on the underlying data generating mechanisms (particularly, through the flexible dependence of the error given the covariate), and is therefore robust to model-misspecification as a consequence of which it has attracted the attention of multiple researchers in both economics and statistics. Through a study of this model in growing dimensions, and results on the concentration properties of the estimator as well as minimax estimation rates in this problem, we provide a novel and interesting direction to the literature on this topic, which we hope will be carried forward by others interested in this genre of problems. Second, from a purely statistical point of view, the score estimator is one of the classic examples of non-regular estimators which arise either through the optimization of criterion functions that are discontinuous in the parameter (note the indicator functions involved in $S_n$ and $S_n^{mult}$), or through optimization problems where the estimator falls on the boundary of the parameter space (e.g., in modern statistical problems involving convex optimization where the estimator lies on a face of a convex cone or more generally a convex set). Such estimators have been known in the literature from as early Chernoff's work in the 1960s (e.g. see \cite{chernoff1964estimation}), and were investigated through an integrated approach by Kim and Pollard \cite{kp90}, in the specific setting of `cube-root asymptotics' -- the estimators treated in that paper demonstrated an $n^{1/3}$ convergence rate and non-Gaussian limits -- and an important example in that paper was the maximum score estimator. There have been a variety of related developments but all work in this arena has also been in the fixed dimension paradigm. Our current study of the score estimator, to the best of our knowledge, is the first example of a systematic study of a non-regular estimator in growing dimensions. While concentration and minimaxity properties have been dealt with quite thoroughly, inferential questions remain open, and we view our contributions as an important foray into hitherto uncharted territory, but we are only scratching the tip of an iceberg. 
\newline
\newline
{\bf Major findings: } Here we articulate our findings and give a brief description about the organization of the rest of the paper. We note at the outset that the $\ell_2$ metric is a natural measure of distance in this problem since the angle between two unit-norm vectors, which measures their directional divergence, is a function of the $l_2$ norm of their difference. 

Section \ref{rate-p-less-n} deals with the moderate growth setting i.e. $p = o(n)$, while Section \ref{p-more-n} investigates the fast growth regime: $p \gg n$. 
In the moderate growth setting, we establish the rate of convergence of the maximum score estimator in the $\ell_2$ norm in terms of $(n,p,C_n)$ along with an exponential concentration bound, where $C_n$ is a sequence of constants appearing in Assumption \ref{ass:low_noise_binary} assumed non-increasing in terms of $n$. The magnitude of $C_n$ calibrates the difficulty of the estimation problem: sequences with $C_n$ bounded away from 0 present the hardest problems while $C_n$ decreasing to 0 makes the estimation problem easier, which reflects in the convergence rate derived in Theorem \ref{rate-manski-p-less-n}. An elaborate discussion on Assumption \ref{ass:low_noise_binary} and comparisons to a standard low noise Assumption (Assumption \ref{ass:low_noise_original}) is provided in Section \ref{sec:theory_binary}. We also establish both minimax lower and upper bounds for estimating $\beta^0$ and show that the maximum score estimator is minimax optimal up to a log factor. Furthermore, when $C_{n} \equiv C$, which is later argued to be statistically the most interesting regime, we are able to construct an alternative estimator with minimax optimal rate of convergence. 
%

In the $p \gg n$ regime, we demonstrate that under a sparsity constraint, an appropriate penalized risk minimization method provides a super-set of the active covariates with exponentially high probability. As before, we derive an exponential concentration bound for the penalized maximum score estimator in the $\ell_2$ norm, which now depends on $s_0$, the sparsity of $\beta_0$, in addition to $n, p, C_n$. Here also, smaller values of $C_n$ translate to improved convergence rates. We derive minimax lower and upper bounds which are again discrepant up to a log factor. 

In Section \ref{sec:mult} we deal with the multinomial response model. Assumption \ref{ass:rank_mult} guarantees the uniqueness of the population maximizer, while Assumptions \ref{ass:low_noise_mult} and \ref{ass:wedge_mult} are modified versions of Assumption \ref{ass:low_noise_binary} and \ref{ass:wedge_binary} tailored for the multinomial response model. Under these modified assumptions, we establish finite sample concentration bounds for the score estimator both in the slowly growing regime and the fast growing regimes. When $m = 2$, our obtained rates of convergence reduce to those obtained for the binary response model in Section \ref{sec:theory_binary}. 

In Section \ref{computation}, we present some simulation results for the binary choice model. As mentioned earlier, the maximum score estimator can not be computed in polynomial time in the dimension, owing to the discontinuity of the loss function $L_n$ defined previously in this section. A standard approach is to compute an approximate solution by minimizing a convex surrogate of the $0\mbox{-}1$ loss, as is evident from the copious amount of work in both the statistics and machine learning literatures on this topic (see e.g. \cite{friedman2001elements}): e.g., logistic regression replaces $0\mbox{-}1$ loss by the logit loss, SVM uses the hinge loss, while adaboost relies on the exponential loss. Another direction involves smoothing the $0\mbox{-}1$ loss via some distribution kernel (which makes the loss function differentiable) and computing the minimizer by some variant of gradient descent. Recently, a \emph{homotopic path following} approach to this problem has been proposed in \cite{feng2019nonregular}. We present a comparative study of three methods: SVM, logistic regression and the homotopic path following algorithm mentioned above. 
The main take away from this simulation study is that SVM performs better than logistic regression when $p/n \rightarrow 0$ under heterogeneity of errors, while the performance of the method proposed in \cite{feng2019nonregular} is comparable to SVM for $p \gg n$. As a matter of fact, the method based on homotopic path following performs somewhat better than SVM, but its run-time is also higher. 

Section \ref{discussion} presents a brief discussion of certain aspects of our work including certain natural extensions, some of which are elaborated on in the supplement, as well as future challenges of this direction of research. Section \ref{proofs} presents the proofs of two key results while the remaining proofs are relegated to the supplement in the interests of space. 


\section{Asymptotic properties and minimax bounds} 
\label{sec:theory_binary}
We now present concentration and rate of convergence results for the maximum score estimator in the binary response model in growing dimensions.  To that end, we start with some assumptions on the distribution on $X$ and the behavior of $P(Y = 1|X)$ near the Bayes hyperplane $X^{\top} \beta^0 = 0$, which play a central role in the subsequent development. To control the behavior of $P(Y = 1| X)$, we introduce a \emph{version} of \emph{Tsybakov's low noise assumption} (\cite{mammentsy},  \cite{2004optimal}) which has been used extensively in the classification literature. For the sake of convenience of the reader we first state the regular low noise condition below.  
\begin{assumption}[Soft margin Assumption]
\label{ass:low_noise_original}
Let $\P$ denote the joint distribution of $(X, \epsilon)$ in dimension $p  \equiv p_n$. Then, with $\eta(X) := \P(Y=1 | X)$,
$$\P\left(\left| \eta(X) - \frac{1}{2}\right| \le t \right) \le C t^{\alpha} \,\,\, \forall \,\, 0 \le t \le t^*\,,$$
for some constant $C$ and $0 < t^{\star} < 1/2$ and $\alpha > 0$. 
\end{assumption}
The soft margin condition quantifies how the conditional class probability deviates from $1/2$ near the Bayes' hyperplane in terms of a smoothness parameter $\alpha > 0$. Larger values of $\alpha$ translate to sharper changes of $\eta(X)$ around the Bayes' hyperplane and correspond to easier classification problems. For reasons to be explained below, we do not work with the above condition but a slightly tuned version of it: 
\begin{assumption}[Transition condition]
\label{ass:low_noise_binary}
Let $\P$ denote the joint distribution of $(X, \epsilon)$ in dimension $p  \equiv p_n$. Then, with $\eta(x) := \P(Y=1 | X=x)$,
$$\P\left(\left| \eta(X) - \frac{1}{2}\right| \le t \right) \le C_n t\,\,\, \forall \,\, 0 \le t \le t^*\,,$$
where $\{C_n\}$ is a bounded sequence of constants, and $t^{\star}$ lies strictly between 0 and  $1/2$. 
\end{assumption}
\noindent
{\bf Discussion of Assumption \ref{ass:low_noise_binary}: } To understand the effect of $C_n$, consider the special case when $C_n = C$, a fixed constant. Then, the modified condition is just the low noise condition with smoothness parameter $\alpha = 1$. 
Next, consider a situation where $C_n$ decreases to 0 with $n$ (we view $p$ as a function of $n$). In this case, the transition of $\eta(x)$ from below $1/2-t$ on one side of the hyperplane to above $1/2 + t$ on the other side is \emph{sharper} compared to the fixed $C$ case, since the probability mass assigned by the covariate distribution to the region where $\eta(x)$ is close to $1/2$ is of a \emph{smaller} order than with fixed $C$. 
This translates to an easier estimation problem as $n$ grows, and a corresponding improved rate of estimation: the smaller the order of $C_n$, the faster the rate. In fact, $C_n = 0$, corresponds to a jump around the Bayes' hyperplane and a best possible rate of order $1/n$ in fixed dimension. On the other hand, when $C_n$ is large, $\P\left(\left| \eta(X) - \frac{1}{2}\right| \le t \right)$ is substantially larger, which implies the presence of a fair amount of fuzziness near the Bayes' hyperplane -- there is now a substantial mass of points around the hyperplane with $\eta(x)$ values very close to $1/2$ which are hard to classify -- resulting in a slower rate of estimation. 
\newline
\newline
The \emph{transition condition} captures the intrinsic difficulty of the estimation problem in terms of the sequence of constants $\{C_n\}$ whereas the low-noise condition describes it in terms of the exponent $\alpha$ of $t$, with larger values of $\alpha$ 
corresponding to easier estimation problems (enhanced convergence rates for larger $\alpha$). Both formulations therefore capture the same phenomenon, albeit in somewhat different manner. Note that, the low noise assumption was originally formulated (\cite{mammentsy}) to deal with irregular boundaries, whereas, our condition is more naturally tuned to smooth hyperplane boundaries in discrete choice model. Our reason for favoring the modified low noise condition is that it is much more intuitive and allows a clean and integrated presentation of the minimax rates of convergence in this problem in terms of $\{C_n\}$, which does not appear to be the case with the low noise assumption. For a slightly different treatment of this problem under Assumption \ref{ass:low_noise_original}, see a previous draft of this manuscript 
\cite{mukherjee2019non}. 
\newline
\newline
We now show that the case $C_n = C$ in our transition condition arises naturally for a rich family of distributions 
under some natural assumptions. Observe that, the family of distributions with margin condition involving $C_n \downarrow $ is a sub-class of the family of distributions with  $C_n \equiv C$.  Assume, for example, that (a) $X \indep \epsilon$ and the density of $\epsilon$, say $f$, does not depend on $p$; (b) $f(x) \ge c_{\delta} > 0$ on $(-\delta, \delta)$ for some $\delta > 0$; (c) the density of $X^\mathsf{T}\beta^0$ is bounded by a positive number $\le k$ on $(-\delta', \delta')$ for some $\delta^{'} > 0$, with $k, \delta^{'}$  not depending on $p$. Then, for $0 \le t \le t^*$, where $\delta \wedge \delta'> (F_{\epsilon}^{-1}(0.5 + t^*) \vee -F_{\epsilon}^{-1}(0.5 - t^*))$:
\allowdisplaybreaks
\begin{align*}
\P_X\left(|\eta(X) - 0.5| \le t\right) &= \P_X\left(|F_{\epsilon}(-X^\mathsf{T}\beta^0) - F(0)| \le t\right) \\
& = \P_X\left( F_{\epsilon}^{-1}(0.5 - t) \le -X^{\top}\beta^0 \le F_{\epsilon}^{-1}(0.5 + t) \right) \\
& \le \P_X\left(|X^{\top}\beta^0| \le (F_{\epsilon}^{-1}(0.5 + t) \vee -F_{\epsilon}^{-1}(0.5 - t))\right) \\
& \le 2k(F_{\epsilon}^{-1}(0.5 + t) \vee -F_{\epsilon}^{-1}(0.5 - t)) \\
& \le \frac{2kt}{c_{\delta}}
\end{align*}
which is the condition corresponding $C_n = C = 2k/c_{\delta}$.
\newline
\newline
Using an inverse-Lipschitz type condition, one can also let $\epsilon$ depend on $X$. Suppose that the conditional distribution of $\epsilon$ given $X$ satisfies: $$|F_{\epsilon|X=x}(x^\mathsf{T}\beta^0)- 0.5| = |F_{\epsilon|X=x}(x^\mathsf{T}\beta^0)- F_{\epsilon|X=x} (0)| \ge C(|x^\mathsf{T}\beta^0| \wedge \xi)\,\, a.e. \,\, X$$ for some $C, \xi > 0$ independent of $p$, for almost surely $X \sim \P_X$. This holds, for example, if for $P_X$ almost all $x$, the conditional density $f_{\epsilon|X=x}(\zeta) \ge c > 0$ on a fixed neighborhood $(-\delta', \delta')$ around $0$, with $(c,\delta)$ not depending on $p$. The transition condition is now satisfied for fixed $C$ under the same condition on the density of $X^\mathsf{T}\beta^0$ as before.  An example of the dependence requirement of $\epsilon$ on $X$ is $\epsilon|X=x \sim N(0, 1+(\|x\|_2 \wedge 1))$. 

Our next assumption regarding the marginal distribution of $X$ is that the probability of the wedge shaped region between the true hyperplane and any other hyperplane under the distribution of $X$ is related to the angle between the corresponding normal vectors. 

\begin{assumption}[Distribution assumption on covariates]
\label{ass:wedge_binary}
The distribution of $X$ satisfies the following condition: $$\P_X(\s(X^\mathsf{T}\beta) \neq \s(X^\mathsf{T}\beta^0)) \ge c_1\|\beta - \beta^0\|_2 $$ for all $\beta \in S^{p-1}$, where the constant $c_1 > 0$, does not depend on $n,p$.
\end{assumption}

\noindent
{\bf Discussion of Assumption \ref{ass:wedge_binary}: }The above assumption plays a critical role in this paper, relating the underlying geometry in the problem to the probability distribution of the covariates. It is used, for example, in the below proposition, to relate the curvature of the population score function around its maximizer $\beta_0$ to the angle between $\beta_0$ and a generic unit vector $\beta$. The magnitude of the curvature plays a pivotal role in deriving the rate of convergence of Manski's estimator in both the slow and fast growth regimes (Theorems \ref{rate-manski-p-less-n} and \ref{high-dim-rate} respectively), where upper tail probabilities for $\|\hat{\beta} - \beta_0\|$ are related to upper tail probabilities for $S(\beta_0) - S(\hat{\beta})$ (which is also the difference in the population risks at these two vectors) via Assumption \ref{ass:low_noise_binary}. In that respect, this assumption can be viewed as an analogue of the compatibility or restricted eigenvalue condition in the classical high-dimensional linear regression problem, which helps convert bounds on the prediction error of the Lasso estimator to its estimation error. In this context, it is interesting to consider a specific violation of the assumption: namely, when $\P_X(\s(X^\mathsf{T}\beta) \neq \s(X^\mathsf{T}\beta^0)) = 0$ for all $\beta$ sufficiently close to $\beta_0$ in angular distance. In this case, if $X$ for example is supported on a compact domain, it is not difficult to see that one can perturb the Bayes hyperplane by small rotations, but as the corresponding wedges will not have any mass under $P_X$, there are no points available in such regions, and the Bayes hyperplane cannot be even uniquely identified. Examples of families of distributions (e.g. elliptically symmetric $X$) that satisfy Assumption \ref{ass:wedge_binary} are available in Section \ref{discussion}.  
\begin{proposition}
\label{alpha-kappa}
Under Assumptions \ref{ass:low_noise_binary} and \ref{ass:wedge_binary}, the curvature of the population score function around the truth satisfies:
$$S(\beta_0) - S(\beta) \ge  \left[\frac{\|\beta - \beta^0\|_2^2}{C_n}\mathds{1}_{\left(d_{\Delta}(\beta, \beta^0) \leq 2t^*C_n\right)} + 2t^*\|\beta - \beta^0\|_2 \mathds{1}_{\left(d_{\Delta}(\beta, \beta^0) > 2t^*C_n\right)}\right]$$ 
for all $\beta \in S^{p-1}$, where $d_{\Delta}(\beta, \beta^0) = \P_X(\s(X^{\top}\beta) \neq \s(X^{\top}\beta^0))$ and $t^*, C_n$ are same constants defined in Assumption \ref{ass:low_noise_binary}.
\end{proposition}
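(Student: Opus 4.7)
The strategy is to first rewrite $S(\beta_0)-S(\beta)$ as an excess classification risk, then split that expectation using a level $u$ into the high-margin region (where $|\eta-1/2|>u$ gives a clean pointwise bound) and the low-margin region (whose measure is controlled by Assumption~\ref{ass:low_noise_binary}), and finally optimize over $u$ and translate the resulting lower bound in terms of $d_\Delta(\beta,\beta^0)$ into one in terms of $\|\beta-\beta^0\|_2$ via Assumption~\ref{ass:wedge_binary}.

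\textbf{Step 1: Reduction to excess risk.} From the identities $S(\beta)=1-2L(\beta)$ and the standard classification identity
\[
L(\beta)-L(\beta_0)\;=\;\E\bigl[|2\eta(X)-1|\,\mathds{1}_{A(\beta)}\bigr],\qquad A(\beta):=\{\s(X^\top\beta)\neq \s(X^\top\beta^0)\},
\]
(which follows from $\beta^0$ realizing the Bayes classifier, as shown in the introduction), I get
\[
S(\beta_0)-S(\beta)\;=\;2\,\E\bigl[|2\eta(X)-1|\,\mathds{1}_{A(\beta)}\bigr].
\]

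\textbf{Step 2: Peeling at a level $u\in(0,t^\star]$.} Dropping the contribution from $\{|\eta(X)-1/2|\le u\}$ and using $|2\eta(X)-1|\ge 2u$ on the complement,
\[
\E\bigl[|2\eta(X)-1|\,\mathds{1}_{A(\beta)}\bigr]\;\ge\;2u\,\P\bigl(A(\beta)\cap\{|\eta(X)-\tfrac{1}{2}|>u\}\bigr)\;\ge\;2u\bigl(d_\Delta(\beta,\beta^0)-C_n u\bigr),
\]
where the last inequality uses Assumption~\ref{ass:low_noise_binary}. Hence
\[
S(\beta_0)-S(\beta)\;\ge\;4u\bigl(d_\Delta(\beta,\beta^0)-C_n u\bigr)\qquad\text{for every }u\in(0,t^\star].
\]

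\textbf{Step 3: Case split and optimization in $u$.} The unconstrained maximizer of the right-hand side is $u^\star=d_\Delta(\beta,\beta^0)/(2C_n)$. If $d_\Delta(\beta,\beta^0)\le 2t^\star C_n$, then $u^\star\le t^\star$ is admissible and plugging it in yields the lower bound $d_\Delta(\beta,\beta^0)^2/C_n$. If instead $d_\Delta(\beta,\beta^0)>2t^\star C_n$, then the objective in $u$ is still increasing at $u=t^\star$, and taking $u=t^\star$ gives $4t^\star(d_\Delta(\beta,\beta^0)-C_n t^\star)\ge 2t^\star d_\Delta(\beta,\beta^0)$, since in this regime $C_n t^\star<d_\Delta(\beta,\beta^0)/2$.

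\textbf{Step 4: From $d_\Delta$ to $\|\cdot\|_2$ via the wedge condition.} Finally, I apply Assumption~\ref{ass:wedge_binary}, namely $d_\Delta(\beta,\beta^0)\ge c_1\|\beta-\beta^0\|_2$, to both cases: in the first the squared lower bound becomes (up to the constant $c_1^2$) $\|\beta-\beta^0\|_2^2/C_n$, and in the second it becomes $2t^\star\|\beta-\beta^0\|_2$ up to the constant $c_1$. Absorbing $c_1$ into the statement's implicit constants produces the displayed inequality. The only genuinely delicate point is the case split in Step~3 — making sure the optimal $u^\star$ is admissible in the first regime and that the bound at $u=t^\star$ is sharp enough in the second — but both checks are elementary once the bound in Step~2 is in place.
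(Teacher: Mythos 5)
Your proof is correct and follows essentially the same route as the paper's: rewrite the excess score as $4\,\E\bigl[|\eta(X)-\tfrac12|\,\mathds{1}_{A(\beta)}\bigr]$, lower bound it by $4u\bigl(d_\Delta(\beta,\beta^0)-C_n u\bigr)$ via the transition condition, optimize over $u\in(0,t^\star]$ with the same case split at $d_\Delta=2t^\star C_n$, and finish with Assumption \ref{ass:wedge_binary}. The only cosmetic difference is that the paper writes the bound as a supremum over $t$ before evaluating it, and (like you) it carries the constants $c_1^2$ and $c_1$ that are suppressed in the proposition's display.
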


The proof of this proposition relies on relating $S(\beta) - S(\beta^0)$ to $\P_X(\s(X^{\top}\beta) \neq \s(X^{\top}\beta^0))$ via Assumption \ref{ass:low_noise_binary}, and the latter to $\|\beta - \beta^0\|_2$,  via Assumption \ref{ass:wedge_binary}. One takeaway from the proposition is that the excess risk is lower bounded by a dichotomous distance in terms of $\|\beta- \beta_0\|_2$ and $d_{\Delta}(\beta, \beta_0)$. For $\beta$ close to $\beta_0$ in the sense that  $d_{\Delta}(\beta, \beta_0)$ is small relative to $C_n$, we have a quadratic curvature whose sharpness is determined by the magnitude of $C_n$, while for $\beta$ away from $\beta_0$, the curvature is linear.  As we will see below, the dichotomous nature of the distance imposes a natural lower bound on the estimation error of the maximum score 
estimator, irrespective of how small $C_n$ is. 


\begin{remark}
Note that when $C_n = C$ fixed and assuming without loss of generality $2t^*C > 1$, we conclude: 
$$S(\beta^0) - S(\beta) \ge \frac{1}{C} \|\beta - \beta^0\|_2^2$$
for all $\beta \in S^{p-1}$. This same condition can be achieved using Assumption \ref{ass:low_noise_original} with $\alpha = 1$. 
\end{remark}


\subsection{\bf{Rate of convergence when: $p/n \rightarrow 0$}}\label{rate-p-less-n}
We first establish a rate of convergence for $\hat{\beta}$. 
\begin{theorem}
\label{rate-manski-p-less-n}
Let $\hat \beta_n$ and $\beta_0$ are the maximizer of $S_n(\beta)$ and $S(\beta)$ respectively. Then under Assumptions \ref{ass:low_noise_binary} and \ref{ass:wedge_binary}, for some constant $K > 0$ (not depending on $n, p$):
$$\P\left(\left(\frac{r_n}{\sqrt{C_n}} \wedge r_n^2\right)\|\hat{\beta}_n - \beta^0\|_2 \ge Ky\right) \le 2e^{-y}$$ for all $y \ge 1$, 
where: 
$$r_n = \left(\frac{n}{p\sqrt{C_n}\log{(n/pC^2_n)}}\right)^{1/3} \wedge \left(\frac{n}{p\log{(n/p)}}\right)^{1/2} \,.$$
This implies that, 
$$\sup_{\beta \equiv \beta(P)}\mathbb{E}_{\beta}\left(\left(\frac{r_n}{\sqrt{C_n}} \wedge r_n^2\right) \|\hat{\beta}_n - \beta\|_2\right) \le  K_1 \,, $$ 
where $K_1 > 0$ is some constant which depends on the model constants $t_*, c_1$ introduced in the assumptions and some other universal constants. Note that the supremum in the above 
display is taken over all distributions $P$ corresponding to binary response models 
satisfying Assumptions \ref{ass:low_noise_binary} and \ref{ass:wedge_binary} for some regression parameter $\beta \in \mathcal{S}^{p-1}$ (viewed as a functional of $P$) but with $t^*, c_1$ held fixed. 
\end{theorem}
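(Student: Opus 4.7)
The plan is to follow the standard M-estimator recipe: turn the basic inequality
\[
0 \le S(\beta_0) - S(\hat\beta_n) \le (S_n-S)(\hat\beta_n) - (S_n-S)(\beta_0)
\]
into a control on $\|\hat\beta_n-\beta_0\|_2$ by combining a curvature lower bound on the left with an empirical-process upper bound on the right, and then use peeling (with the corresponding union bound) to promote this pointwise inequality to a uniform tail inequality. Proposition \ref{alpha-kappa} supplies the curvature: on the set $\{d_{\Delta}(\beta,\beta_0)\le 2t^*C_n\}$ the excess risk is $\gtrsim \|\beta-\beta_0\|_2^2/C_n$, and off this set it is $\gtrsim \|\beta-\beta_0\|_2$. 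This dichotomy of the population landscape is exactly what produces the $r_n/\sqrt{C_n}\wedge r_n^2$ prefactor in the conclusion.

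\textbf{Empirical process piece.} The class $\mathcal{F}=\{(x,y)\mapsto y\,\s(x^\top\beta):\beta\in\mathcal{S}^{p-1}\}$ is the composition of a label with half-spaces in $\R^{p}$, hence VC of dimension $O(p)$; its uniform entropy at $L_2(P)$-radius $\sigma$ is $\lesssim p\log(1/\sigma)$. The elements of the localized class $\mathcal{F}_r=\{Y[\s(X^\top\beta)-\s(X^\top\beta_0)]:\|\beta-\beta_0\|_2\le r\}$ are uniformly bounded by $2$ and have variances of order $\P_X(\s(X^\top\beta)\ne\s(X^\top\beta_0))\lesssim \|\beta-\beta_0\|_2$ (Assumption \ref{ass:wedge_binary} supplies the lower bound, while the matching upper bound follows from standard bounded-density considerations on $X^\top\beta_0$). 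Combining Dudley's entropy integral with Talagrand's concentration inequality then yields, with probability at least $1-e^{-y}$,
\[
\sup_{\|\beta-\beta_0\|_2\le r}\bigl|(S_n-S)(\beta)-(S_n-S)(\beta_0)\bigr| \;\lesssim\; \sqrt{\tfrac{p\,r\log(1/r)}{n}}+\tfrac{p\log(1/r)+y}{n}+\sqrt{\tfrac{r\,y}{n}}.
\]

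\textbf{Peeling and balance.} Partition $\{\beta\in\mathcal{S}^{p-1}:\|\beta-\beta_0\|_2\ge u\}$ into dyadic shells $\mathcal{B}_j=\{2^{j-1}u\le\|\beta-\beta_0\|_2\le 2^j u\}$ and, on each shell, require the displayed empirical-process modulus to be strictly dominated by the lower bound from Proposition \ref{alpha-kappa}. On shells contained in the quadratic-curvature regime this balance reads $\sqrt{pr\log(1/r)/n}\lesssim r^2/C_n$, which forces $u\gtrsim (pC_n^2\log(n/pC_n^2)/n)^{1/3}=\sqrt{C_n}/r_n$; on shells in the linear-curvature regime the balance reads $\sqrt{pr\log(1/r)/n}\lesssim r$, which forces $u\gtrsim p\log(n/p)/n=1/r_n^{2}$. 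The union bound over the logarithmically many relevant shells is absorbed into the constant $K$, and one obtains the tail inequality $\P((r_n/\sqrt{C_n}\wedge r_n^2)\|\hat\beta_n-\beta_0\|_2\ge Ky)\le 2e^{-y}$ for $y\ge 1$. Integrating the tail yields $\mathbb{E}[(r_n/\sqrt{C_n}\wedge r_n^2)\|\hat\beta_n-\beta_0\|_2]\le K_1$, and uniformity over the model class holds because every step depends on $P$ only through the constants $t^*$ and $c_1$ that are held fixed.

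\textbf{Main obstacle.} The delicate point is tracking the two distinct logarithmic factors $\log(n/pC_n^2)$ and $\log(n/p)$, which arise because the natural $L_2$-localization radius $\sigma^2\sim r$ depends on the very rate $r$ one is solving for; resolving this requires a self-consistent inversion inside the Dudley integral and a careful shell-by-shell pairing of the correct curvature piece (quadratic vs.\ linear) with the correct empirical-process modulus. A secondary subtlety is that $\beta$ is constrained to the unit sphere rather than to a convex set, so the peeling must be carried out on a manifold; this is handled by working directly with $\|\beta-\beta_0\|_2$, since the sphere still induces a VC-type complexity of order $p$ on the class of half-space indicators.
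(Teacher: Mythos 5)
Your overall architecture (basic inequality, curvature lower bound from Proposition \ref{alpha-kappa}, localized empirical-process modulus, fixed-point/peeling balance) is the same as the paper's, which runs the identical argument through the packaged excess-risk bound of Theorem 2 of \cite{massart2006risk} rather than by hand. However, there is a genuine gap in the step where you control the variance of the localized class. You localize in the Euclidean metric and claim that on $\{\|\beta-\beta_0\|_2\le r\}$ the variance of $Y[\s(X^\top\beta)-\s(X^\top\beta_0)]$ is $\lesssim \P_X(\s(X^\top\beta)\ne\s(X^\top\beta_0))\lesssim \|\beta-\beta_0\|_2$, attributing the upper bound to ``standard bounded-density considerations.'' That upper bound is precisely part (1) of Assumption \ref{ass:A2:upper}, which is \emph{not} among the hypotheses of Theorem \ref{rate-manski-p-less-n} (it is invoked only for the grid estimator of Theorem \ref{multistage-estimator}); Assumption \ref{ass:wedge_binary} gives only the lower bound $\P_X(\s(X^\top\beta)\ne\s(X^\top\beta_0))\ge c_1\|\beta-\beta_0\|_2$. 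Without the matching upper bound, the variance proxy on a Euclidean shell of radius $r$ can be of order one rather than order $r$, your modulus degrades from $\sqrt{pr\log(1/r)/n}$ to $\sqrt{p\log(1/r)/n}$, and the quadratic-regime balance $\sqrt{p\log(1/r)/n}\asymp r^2/C_n$ yields only $r\asymp (pC_n^2/n)^{1/4}$ up to logs, strictly worse than the claimed cube-root rate.

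The repair — and this is exactly what the paper does — is to localize and peel in the pseudo-distance $d_{\Delta}(\beta,\beta_0)=\P_X(\s(X^\top\beta)\ne\s(X^\top\beta_0))^{1/2}$ rather than in $\|\beta-\beta_0\|_2$. With that choice the variance of the loss difference is bounded by $d_{\Delta}^2$ by construction, no upper wedge bound is needed, the modulus $\phi(\sigma)\lesssim \sigma\sqrt{p\log(1/\sigma)}$ is computed at $\sigma=\omega(\epsilon)$ where $\omega$ inverts the curvature relation of Proposition \ref{alpha-kappa} (giving $\omega(x)\asymp \sqrt{x}\,C_n^{1/4}\vee x$), and the fixed-point equation $\sqrt{n}\epsilon_*^2=\phi(\omega(\epsilon_*))$ produces the two logarithmic factors $\log(n/pC_n^2)$ and $\log(n/p)$ automatically. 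Assumption \ref{ass:wedge_binary} is then used only once, at the very end, to convert the resulting tail bound on the excess risk (equivalently on $d_{\Delta}(\hat\beta,\beta_0)$, split according to whether $d_\Delta\le 2t^*C_n$ or not, which is where the $e^{-y^2}+e^{-y}\le 2e^{-y}$ form of the tail comes from) into a tail bound on $\|\hat\beta_n-\beta_0\|_2$.
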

\noindent

\begin{remark}
\label{rem:low_dim_rate}
Note that our rate of convergence depends on three parameters $(n,p,C_n)$. To understand the implications of the obtained expression for the rate, assume initially that $C_n$ is a constant (statistically of primary interest as discussed in Section \ref{intro}), assumed without loss of generality to be 1. Then the value of $r_n$ reduces to: 
$$r_n = \left(\frac{n}{p\log{(n/p)}}\right)^{1/3} \wedge \left(\frac{n}{p\log{(n/p)}}\right)^{1/2} =  \left(\frac{n}{p\log{(n/p)}}\right)^{1/3}$$
and 
$$\frac{r_n}{\sqrt{C_n}} \wedge r_n^2 = r_n \wedge r_n^2 = r_n = \left(\frac{n}{p\log{(n/p)}}\right)^{1/3} \,.$$
Hence, up to a log factor, we recover the analogue of the cube-root rate for growing dimension. 
\\

One may wonder what is the best possible rate that can be obtained from the above expression. An inspection of the rate expression immediately implies that we cannot improve upon $n/(p\log{(n/p)})$, the high dimensional rate analogue of change-point estimation. Some more insight can be gleaned by ignoring the log-factor in the rate expression. In that case,  
$$r_n \approx \left(\frac{n}{p\sqrt{C_n}}\right)^{1/3} \wedge \left(\frac{n}{p}\right)^{1/2}\,$$ 
and the rate of convergence based on this approximation is given by  
\begin{align*}
\frac{r_n}{\sqrt{C_n}} \wedge r_n^2 & \approx \left(\frac{n}{pC_n^2}\right)^{1/3} \wedge \left(\frac{n}{pC_n}\right)^{1/2} \wedge \left(\frac{n}{p\sqrt{C_n}}\right)^{2/3} \wedge \left(\frac{n}{p}\right) \\
& = \left(\frac{n}{pC_n^2}\right)^{1/3}  \wedge \left(\frac{n}{p}\right) 
\end{align*}
which is shown to be the minimax optimal in Theorem \ref{minimax-lower-bound}. The above equality follows from the observation that, if $C_n \ge p/n$, then $(n/pC_n^2)^{1/3}$ is minimum among the four terms, while $(n/p)$ is the minimum otherwise. This indicates that the rate of convergence improves with decreasing $C_n$, but only up to $n/p$ modulo a log factor. 

Alternatively, one can study the exact expression for the rate by taking 
special but natural choices for $p, C_n$ in terms of $n$. Concretely, let 
$p \sim n^{\tau}$ and $C_n \sim n^{-\lambda}$ for $0 < \tau < 1$ 
and $\lambda > 0$. Note that $C_n$ is of order larger than $p/n$ when $\lambda > 1-\tau$, in which case some simple algebra shows the rate of convergence $(r_n/\sqrt{C_n}) \wedge r_n^2$ to be 
$((n/pC_n^2)/\log(n/pC_n^2))^{1/3}$. On the other hand when $\lambda 
\leq 1-\tau$, i.e. $C_n$ is of the same or lower order than $p/n$, the rate of convergence becomes $(n/p)/\log(n/p)$.
\end{remark}

The proof of Theorem \ref{rate-manski-p-less-n} relies on a concentration inequality (Theorem 2 from \cite{massart2006risk}) to obtain a bound on the excess risk $S(\hat{\beta}) - S(\beta^0)$, which, along with Assumption \ref{ass:wedge_binary}, yields a concentration bound on $\|\hat{\beta} - \beta^0\|_2$. A natural question that arises here is whether the logarithm in the above rate, which arises from the effect of growing dimension on the shattering numbers of the linear classifiers involved, can be dispensed with. While it is unclear whether the exact $(pC^2_n/n)^{1/3} \vee (p/n)$ rate is achievable, we demonstrate, in what follows, that for $C_n = C$, it is possible to construct an estimator whose rate of convergence is $(p/n)^{1/3}$ under the following additional assumption.

\begin{assumption}
\label{ass:A2:upper}
We impose some further constraints on the distribution of $X$ and the population score function: 
\begin{enumerate}
\item The distribution of $X$ satisfies 
$$ \P_X(\s(X^\mathsf{T}\beta) \neq \s(X^\mathsf{T}\beta^0)) \le C'\|\beta - \beta^0\|_2 $$ for all $\{\beta: \|\beta - \beta_0\|_2 \le 1\}$, where the constant $ C' > 0$ does not depend on $n$ and $p$.
\item For some small $u_0 > 0$, 
$$ S(\beta_0) - S(\beta) \le u_+\|\beta - \beta^0\|_2^2 $$ for all $\{\beta: \|\beta - \beta_0\|_2 \le u_0\}$, where the constant $ u_+ > 0$ does not depend on $n$ and $p$.
\end{enumerate}
\end{assumption}
The construction of the estimator can be briefly described as follows: Generate (enough) points randomly on the surface of the unit sphere, such that with high probability some of the generated points are in a sufficiently small neighborhood of $\beta^0$. Then, maximize the empirical score function on the generated points. We show in the following theorem that this empirical maximizer converges to the truth at rate $(p/n)^{1/3}$: 
\begin{theorem}
\label{multistage-estimator}
Suppose the margin condition (Assumption \ref{ass:low_noise_binary}) is satisfied for $C_n = C$ fixed, and that Assumptions \ref{ass:wedge_binary} and \ref{ass:A2:upper} hold. Then, there exists an estimator $\tilde{\beta}$, which can be constructed by the above recipe [with technical details of the construction available in the proof], such that $$\|\tilde{\beta} - \beta^0\|_2 = O_P\left(\left(p/n\right)^{\frac{1}{3}} \right)\,.$$
\end{theorem}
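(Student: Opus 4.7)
The plan is to take $\tilde\beta$ as the maximizer of the empirical score $S_n$ over a random candidate set consisting of $N$ i.i.d.\ uniform draws $\beta_1,\dots,\beta_N$ from $S^{p-1}$. Since the normalized surface measure of a spherical cap of angular radius $\epsilon$ around $\beta^{0}$ is of order $\epsilon^{p-1}$, choosing $N = C_{0}\,\epsilon^{-(p-1)}\log(1/\delta)$ ensures that, with probability at least $1-\delta$, the candidate set contains some point $\beta^{\ast}$ with $\|\beta^{\ast}-\beta^{0}\|_{2}\le \epsilon$. The target rate will arise from balancing this approximation radius $\epsilon$ against the empirical-process fluctuation incurred by maximizing over $N$ candidates.

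The key analytic step is a variance-scaled uniform concentration bound for the centered process $(P_{n}-P)(g_{\beta}-g_{\beta^{0}})$, where $g_{\beta}(X,Y):=Y\,\s(X^{\top}\beta)$. Part (1) of Assumption \ref{ass:A2:upper} supplies exactly the local variance control needed:
$$\mathrm{Var}\!\left(g_{\beta}-g_{\beta^{0}}\right)\le 4\,\P_{X}\!\left(\s(X^{\top}\beta)\ne\s(X^{\top}\beta^{0})\right)\le 4C'\,\|\beta-\beta^{0}\|_{2}.$$
Applying Bernstein's inequality to each fixed candidate and combining with a union bound over the $N$ candidates, stratified by their distance to $\beta^{0}$ (a peeling argument), yields
$$|(P_{n}-P)(g_{\beta_{i}}-g_{\beta^{0}})|\;\lesssim\;\sqrt{\|\beta_{i}-\beta^{0}\|_{2}\,\log(N/\delta)/n}\;+\;\log(N/\delta)/n$$
simultaneously for all $i$, on an event of probability at least $1-\delta$.

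To conclude, use $S_{n}(\tilde\beta)\ge S_{n}(\beta^{\ast})$ and the decomposition
$$S(\beta^{0})-S(\tilde\beta)\;\le\;[S(\beta^{0})-S(\beta^{\ast})]+(P_{n}-P)(g_{\tilde\beta}-g_{\beta^{0}})-(P_{n}-P)(g_{\beta^{\ast}}-g_{\beta^{0}}).$$
Part (2) of Assumption \ref{ass:A2:upper} bounds the first bracket by $u_{+}\epsilon^{2}$, while the remaining two terms are controlled by the display above, using that $\beta^{\ast}$ is at distance $\le \epsilon$ from $\beta^{0}$. Proposition \ref{alpha-kappa} (with $C_{n}=C$ fixed) provides the matching quadratic curvature lower bound $S(\beta^{0})-S(\tilde\beta)\gtrsim \|\tilde\beta-\beta^{0}\|_{2}^{2}$ once $\tilde\beta$ lies in a fixed neighborhood of $\beta^{0}$. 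Writing $\rho=\|\tilde\beta-\beta^{0}\|_{2}$, this produces an inequality
$$\rho^{2}\;\lesssim\;\epsilon^{2}+\sqrt{\rho\,\log(N/\delta)/n}+\sqrt{\epsilon\,\log(N/\delta)/n}+\log(N/\delta)/n,$$
whose solution satisfies $\rho\lesssim \max\bigl(\epsilon,\,(\log N/n)^{1/3}\bigr)$. The calibration $\epsilon\asymp (p/n)^{1/3}$ forces $\log N\asymp p$ (up to a logarithmic factor), equalizes the two contributions, and delivers $\rho=O_{P}\bigl((p/n)^{1/3}\bigr)$.

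The main obstacle is the interaction between the local variance control from Assumption \ref{ass:A2:upper}(1) and the exponentially-in-$p$ size of the random candidate set: without the local variance decay, a crude uniform bound would only yield a $\sqrt{\log N/n}$ fluctuation and hence a $(p/n)^{1/4}$ rate rather than $(p/n)^{1/3}$, so the peeling argument is essential. A secondary subtlety is the verification that the high-probability event ``$\tilde\beta$ lies in the neighborhood where the quadratic curvature bound of Proposition \ref{alpha-kappa} applies'' is not lost when passing from the Bernstein bound to the $\ell_{2}$ statement; this can be handled by first arguing via the linear-in-$\|\beta-\beta^{0}\|_{2}$ branch of Proposition \ref{alpha-kappa} that $\tilde\beta$ enters a fixed neighborhood of $\beta^{0}$ with high probability, and then invoking the quadratic branch to sharpen the rate, possibly iterated as a two-stage refinement to absorb residual logarithmic factors.
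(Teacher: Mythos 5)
Your construction (a random grid of $\approx \epsilon^{-(p-1)}$ uniform points on the sphere with $\epsilon \asymp (p/n)^{1/3}$, followed by empirical score maximization over the grid) and your single-candidate deviation analysis (variance $\lesssim \|\beta-\beta^0\|_2$ from Assumption \ref{ass:A2:upper}(1), mean gap $\gtrsim \|\beta-\beta^0\|_2^2$ from Proposition \ref{alpha-kappa}, hence Bernstein tails of order $\exp(-cn\|\beta-\beta^0\|_2^3)$) both match the paper's argument. The gap is in the union bound. You pay $\log(N/\delta)$ for \emph{every} candidate, and with $N \asymp \epsilon^{-(p-1)}$ and $\epsilon \asymp (p/n)^{1/3}$ you have $\log N \asymp p\log(n/p)$, so your final self-bounding inequality yields $\rho \lesssim (\log N/n)^{1/3} \asymp (p\log(n/p)/n)^{1/3}$. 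That is exactly the rate of the unrestricted maximum score estimator in Theorem \ref{rate-manski-p-less-n}; the entire point of this theorem is to remove that logarithm, so the conclusion $O_P((p/n)^{1/3})$ does not follow from what you have written. Your closing remark that a ``two-stage refinement'' absorbs the residual logarithm is not a proof: one refinement stage only replaces $\log(n/p)$ by $\log\log(n/p)$, and you would need an unbounded iteration with uniform control of constants.

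The missing ingredient is a \emph{localized} count of candidates. Because the grid is uniform on the sphere, the number of candidates within distance $r_i = T\epsilon\, 2^i$ of $\beta^0$ concentrates (via a binomial Chernoff bound) around $N\,\sigma(D(\beta^0,r_i)) \asymp a_n (T2^i)^{p-1}$, whose logarithm is only of order $p\,i$ --- the factors $(n/p)^{(p-1)/3}$ in $N$ and in the cap measure cancel. The union-bound cost in the shell at scale $r_i$ is therefore $\asymp p\,i$, which is dominated by the deviation exponent $n r_i^3 \asymp p\,T^3 2^{3i}$, and summing over shells gives the clean $(p/n)^{1/3}$ rate. This is precisely what the paper's Lemmas \ref{distance} and \ref{upper-bound} accomplish, and it is the step your peeling must incorporate: the stratification by distance has to enter not only through the variance of each candidate but also through the cardinality of each stratum. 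A secondary caveat: the paper compares $S_n(\beta)$ to $S_n(\tilde\beta)$ at the \emph{nearest grid point} rather than at $\beta^0$, so that the comparison stays inside the candidate set; your decomposition through $\beta^{\ast}$ achieves the same thing and is fine.
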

\noindent
\begin{remark}
Assumption \ref{ass:A2:upper} (2) as well as the construction of the grid estimator take into account the the fact that $C_n = C$ fixed. In that sense, the new estimator is not adaptive, whereas the maximum score estimator is agnostic to the value of $C_n$. We believe that the log factor in the  convergence rate is the price paid for adaptivity. For more insight into the Assumption \ref{ass:A2:upper}, see Section \ref{discussion}.
\end{remark}
Finally, we show that the generic minimax lower bound for this estimation problem (i.e the $C_n$'s are not restricted to be constant)  is $\left(pC^2_n/n\right)^{1/3} \vee \left(p/n\right)$ i.e. we cannot estimate the linear discriminator at a better rate without more assumptions:

\begin{theorem}[\textbf{Minimax Lower bound}]
\label{minimax-lower-bound}
We have : 
$$\inf_{\hat{\beta}_n}\sup_{\beta \equiv \beta(P)}\mathbb{E}_{\beta}\left(\|\hat{\beta}_n - \beta^0\|_2^2\right) \ge K_{L}\left[\left(\frac{pC^2_n}{n}\right)^{2/3} \vee \left(\frac{p}{n}\right)^2\right]$$
for some constant $K_{L}$ that does not depend on $(n,p)$. For $C_n = C$ fixed, the lower bound is of the order $(p/n)^{2/3}$. The supremum is taken over the same class of distributions as in Theorem \ref{rate-manski-p-less-n}. 
\end{theorem}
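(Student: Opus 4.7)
The plan is to prove the two parts of the maximum, $(pC_n^2/n)^{2/3}$ and $(p/n)^2$, separately via information-theoretic lower bounds. For the first, I use Fano's inequality together with a Varshamov--Gilbert packing of the unit sphere; for the second, a two-point Le Cam argument in a near change-point regime suffices, and the bound on the maximum then follows.

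For the cube-root term, first fix a canonical hard model: let $X$ be isotropically distributed on the unit ball of $\R^p$ (or a normalized Gaussian), and let $\epsilon$ be independent of $X$ with a tent-shaped density scaled so that $f_\epsilon(0) \asymp 1/C_n$, for instance $\epsilon \sim \mathrm{Uniform}(-C_n/2,\, C_n/2)$. A direct calculation verifies Assumption \ref{ass:low_noise_binary} with constant of order $C_n$, and the rotational symmetry of $X$ delivers Assumption \ref{ass:wedge_binary}. Next, by the Varshamov--Gilbert lemma, there is a subset $\Omega \subset \{0,1\}^{p-1}$ of cardinality at least $2^{c_0(p-1)}$ with pairwise Hamming distance at least $c_0(p-1)$. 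Parametrize alternatives on $S^{p-1}$ by $\beta_\omega = (\sqrt{1 - \delta^2\|\omega\|^2},\, \delta\omega_1, \ldots, \delta\omega_{p-1})^\top$; the minimum pairwise $\ell_2^2$ separation is then of order $p\delta^2$. Computing the per-sample Bernoulli KL between $P_{\beta_\omega}$ and $P_{\beta_{\omega'}}$ by linearizing $F_\epsilon$ inside the transition strip $\{x:|x^\top\beta_\omega| \lesssim C_n\}$ and integrating gives a KL of order $\|\beta_\omega - \beta_{\omega'}\|_2^2/C_n$, hence at most $p\delta^2/C_n$ across the packing. Fano's inequality then imposes $np\delta^2/C_n \lesssim \log|\Omega| \asymp p$, and a joint optimization of $\delta$ and the effective packing dimension extracts the cube-root balance to produce the lower bound of order $(pC_n^2/n)^{2/3}$. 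The parametric rate $(p/n)^2$ is obtained separately from a two-point Le Cam argument in the regime where $C_n$ is very small, so that the problem degenerates toward change-point estimation in which each coordinate of $\beta$ is recoverable at the parametric rate $1/n$, giving $(p/n)^2$ in squared $\ell_2$ loss.

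The principal technical hurdle is extracting the exponent $2/3$ in the cube-root term. A naive Fano application with a single-scale $\{0,1\}^{p-1}$ hypercube produces only the weaker bound $pC_n/n$; obtaining $(pC_n^2/n)^{2/3}$ requires a Mammen--Tsybakov style multi-scale balancing that jointly optimizes the packing cardinality, the per-sample KL, and the $\ell_2$ separation. A secondary but essential check is to verify that every element of the constructed hypothesis family satisfies Assumptions \ref{ass:low_noise_binary} and \ref{ass:wedge_binary} with the same constants $t_\ast$ and $c_1$, not depending on $n$ or $p$, so that the supremum in the minimax statement is taken over a uniformly valid sub-class of the model.
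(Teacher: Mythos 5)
There is a genuine gap, and you have in fact put your finger on it yourself without resolving it: your single-scale construction with $\epsilon \sim \mathrm{Unif}(-C_n/2, C_n/2)$ provably yields a per-flip divergence of order $\|\beta_\omega - \beta_{\omega'}\|_2^2/C_n$ and hence only the weaker bound $pC_n/n$, and the appeal to a ``Mammen--Tsybakov style multi-scale balancing'' is not a proof step. Worse, it points in the wrong direction: the paper's argument (Assouad's lemma with Hellinger control rather than Fano with KL, though that difference is cosmetic) is entirely single-scale. The cube root does not come from jointly optimizing the packing cardinality and an ``effective dimension'' --- re-running Fano over a sparse Varshamov--Gilbert packing of sparsity $s$ still gives $n s\delta^2/C_n \lesssim s\log(p/s)$ and a loss no better than $pC_n/n$. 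It comes instead from the design of the likelihood: the paper sets $P_\beta(Y=1\mid X) = \tfrac12 + \beta^{\top}X/C_n$ only on a strip whose half-width is capped at $\bigl[C_n\epsilon\sqrt{p} \vee |X_1|/(2m(\epsilon))\bigr]\wedge \tfrac14$, i.e. $|\eta(x)-\tfrac12|$ \emph{saturates at the perturbation scale} $\epsilon\sqrt{p}$ rather than at the fixed level $\tfrac12$ dictated by your uniform error. With that saturation, the squared difference $\epsilon^2 X_j^2/C_n^2$ is integrated only over an event of probability $O(\epsilon\sqrt{p})$, giving $H^2(P_{\beta_I},P_{\beta_J}) \lesssim \epsilon^3\sqrt{p}/C_n^2$ --- cubic, not quadratic, in $\epsilon$ --- while the contribution from the saturated region where the signs disagree is exponentially small. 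Assouad's constraint $nH^2 \lesssim 1$ then allows $\epsilon \asymp (C_n^2/(n\sqrt{p}))^{1/3}$ and a loss $p\epsilon^2/m(\epsilon)^2 \asymp (pC_n^2/n)^{2/3}$. Your fixed-width uniform-noise model cannot produce this cubic scaling, so the first (and dominant) term of the theorem is not established by your argument.

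Two smaller points. First, any such saturated construction must be re-checked against Assumption \ref{ass:low_noise_binary} (the paper does this in Lemma \ref{low_noise_minimax}); this is the ``uniformly valid sub-class'' check you flag, and it is nontrivial precisely because of the capping. Second, for the $(p/n)^2$ term a genuine two-point Le Cam bound only controls one direction and would give at best a one-dimensional rate; the paper instead reuses the full hypercube construction with $C_n=0$ (a jump of height comparable to the perturbation at the boundary), which is again a $p$-dimensional Assouad argument, not a parametric per-coordinate one.
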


\begin{remark}
\label{minimaxremark} 
The proof of the above result relies on constructing competing models from the collection of distributions that approach each other at the optimal rate, $(p C_n^2/n)^{1/3}$. The core challenge lies in constructing these alternative models with sufficient care, and then invoking Assouad's lemma (e.g. see chapter 2 of \cite{tsybakov2009introduction}) to establish the rate. The same minimax rate is true for the smaller class of distributions formed by intersecting $\mathcal{P}$ with the class of distributions satisfying Assumption \ref{ass:A2:upper} for some positive constants $C', u_0, U_{+}$, since the local alternatives constructed in the proof satisfy this assumption as well. Therefore, the grid estimator is minimax rate optimal for this smaller class of distributions. 
\end{remark} 

\subsection{\bf{Rate of convergence when $p \gg n$}}\label{p-more-n}
We now turn to the case where $p$, the dimension of the covariate vector, is larger than $n$. In this case, meaningful estimation and inference is only possible under structural assumptions on $\beta^0$ that regulate its complexity relative to the size of the data, and any meaningful estimation procedure needs to incorporate this constraint. Usually, such structural assumptions are handled by imposing a penalty on the underlying loss function. 
The most natural structural constraint on a regression type parameter is one of sparsity, i.e. only a small subset of the co-ordinates of $\beta_0$ influence the response (i.e. are different from zero). In the high-dimensional linear regression or GLM framework, the natural loss function is convex and the standard approach is to penalize the (convex) $\ell_1$ norm of the parameter, which gives rise to a clean convex optimization problem with a well-characterized solution (see \cite{tibshirani1996regression}, \cite{greenshtein2004persistence}, \cite{van2014asymptotically}, \cite{buhlmann2011statistics}, \cite{bickel2009simultaneous}, \cite{miolane2018distribution} and references therein). The corresponding optimizers are seen to have desirable statistical properties, e.g. consistency in various norms, minimax convergence rates and so-forth. Furthermore, $\ell_1$ penalization is a natural convex relaxation of $\ell_0$ penalization which is the most direct approach to the sparsity constraint. Another key feature of high dimensional inference is model selection. Under the sparsity constraint, most variables are inactive and a good model selection algorithm needs to include the active set with high probability but relatively few inactive variables. Though model selection/ feature selection in the high-dimensional linear regression model has been studied extensively over the past two decades (e.g. see \cite{zhao2006model}, \cite{huang2008adaptive}, \cite{wei2010consistent}, \cite{yuan2006model}, \cite{zhang2008sparsity} and references therein), the problem remains relatively unaddressed in the classification set-up.

Be that as it may, the optimization problem that produces the maximum score estimator is not only non-differentiable and non-convex, it is actually discontinuous and therefore adding a convex penalty like $\ell_1$ affords no computational advantage. While one possible route is to use an $\ell_1$ penalized version of a kernel smoothed loss function $L_n$ (following the line of work of Horowitz \cite{horowitz1992}), an approach that has recently been adapted by \cite{feng2019nonregular} in related problems, our goal in this section is to \emph{understand the behavior of the primal non-regular score estimator in high dimensions under minimal assumptions}. In what follows, we therefore penalize the score function in a way that is amenable to a proper analysis and produces a sparse estimator with near-optimal rate and a desirable screening property. A smoothed estimator can possibly yield a better convergence rate along with computational benefits but will require substantially stronger assumptions on the model. Recall, for example, that the smoothed score estimator in the fixed $p$ setting as studied in \cite{horowitz1992} does converge at a faster than $n^{1/3}$ rate to a Gaussian limit, but the model assumptions required to make this work are significantly stronger than Manski's original assumptions as well as Kim and Pollard's \cite{kp90}.

In what follows, we use the structural risk minimization (SRM) approach introduced in \cite{vapnik1974theory} for variable selection and estimation in this regime, which is closely related to $\ell_0$-penalized risk minimization or the best subset selection problem. Briefly speaking, the SRM approach consists of the following steps: 

\begin{enumerate}
\item Start with a large class of functions over which the loss function will be minimized. 
\item Divide this class into nested subsets of increasing complexity, and find empirical risk minimizer for each of these subsets. 
\item Add a penalty (here denoted by $\pen$) based on the complexity of the subclass to the minimum empirical risk for that subclass and return the classifier (and its corresponding subclass) with \emph{minimum penalized empirical risk}.
\end{enumerate}

The first step generally ensures that there is no bias (or very low bias) in the estimation problem. If one starts with a large function class, it is more likely that the population minimizer will be close (if not identical to) the minimizer within the selected class. 
But though bias can be largely eliminated by in this manner, the process of searching over a large function class incurs high variability and can lead to pessimistic convergence rates. Therefore, one needs to optimize the bias-variance trade-off, which happens over steps two and three. In step two, nested subsets are considered, hence the minimum value of the empirical risk keeps decreasing as the nesting (complexity) increases. The role of the  penalty function is to stabilize the bias-variance trade-off and strike a balance between risk minimization and complexity. The nature of the penalty is typically related to the complexity of the class of functions (complex classes are penalized at higher levels) as well as to the structure of the problem. For parametrically specified classes, one may use the $\ell_0, \ell_1$ or a more general $\ell_p$ norm of the parameter, or variants (e.g. Mallow's CP, AIC, BIC) as a notion of complexity, or may resort to other notions like VC dimension (see e.g. Chapter 8 of \cite{massart2007concentration} and the references therein). 

We now describe the details of the implementation of our SRM based method. We start by articulating our assumption on the sparsity of the Bayes' hyperplane: 
\begin{assumption}[Sparsity Assumption]
\label{ass:sparsity_binary}
There exists $s_0$ with $\|\beta^0\|_0 \le s_0$, where $s_0$ depends on $n,p$ in such a way that $\frac{s_0\log{p}}{n} \rightarrow 0$ as $n \rightarrow \infty$.
\end{assumption}

Under the above assumption, it is reasonable to search among all models with sparsity (by which we mean the number of active coefficients) bounded by  $C_1\lfloor n/\log{p} \rfloor$ for some universal constant $C_1$. For mathematical simplicity, we take $C_1=1/4$. Let $\mathscr{M}_i$ be the collection of all models with sparsity bounded by $i$ for $1 \le i \le \lfloor n/\log{p} \rfloor$, i.e.: 
$$\mathscr{M}_i = \{\text{all models such that } \|\beta^0\|_0 \le i \}\,.$$
Define $\mathscr{M}$ be the collection of all admissible models, i.e. $$\mathscr{M} = \cup_{i=1}^{\lfloor n/\log{p} \rfloor} \mathscr{M}_i \,.$$ 
Also define:
\begin{enumerate}
\item $\hat{\beta}_m = \argmin_{\beta: \|\beta\|_0 \le m}\left[ -\S_n(\beta) \right]$
\item $\hat{m} = \argmin_{1 \le m \le \lfloor n/\log{p} \rfloor} \left[-\S_n(\hat \beta_m) + \pen(\mathscr{M}_m)\right]$
\item $\beta^0_m = \argmax_{\beta: \|\beta\|_0 \le m} S(\beta)$
\vspace{0.05in}
\item $V_m = $ VC dimension of the collection $\mathscr{M}_m$. This is of the order $m \log (ep/m)$. 
\end{enumerate}
By the SRM principle, the best possible estimate is given by $\hat \beta_{\hat m}$. For the model collection $\mathscr{M}_i$, we use the penalty 
$$\pen(\mathscr{M}_i) = 2K\left[\left(\frac{V_i\sqrt{C_n}\log{(n/V_i\sqrt{C_n})}}{n}\right)^{2/3} \vee \left(\frac{V_i\log{(n/V_i)}}{n}\right)\right]$$
where $K$ is some absolute constant. Up to a (neligible) logarithmic term, the penalty function is proportional to $V_i$, the VC dimension of the model $\mathscr{M}_i$, which captures the richness of this collection. 
The following theorem provides a finite sample concentration bound of our estimator:
\begin{theorem}
\label{high-dim-rate}
Let $\hat \beta_{\hat m}$ and $\beta_0$ denote the penalized empirical minimizer and population minimizer of the binary choice model respectively. Then under assumptions \ref{ass:low_noise_binary}, \ref{ass:wedge_binary} and \ref{ass:sparsity_binary}, there exist constants $\Sigma, K_1>1, K_2$ (which are independent of $n,d,s_0$) such that for all $t \ge 1$: 
\begin{equation*}
   \P\left(\left(\frac{r_n}{\sqrt{C_n}} \wedge r_n^2\right) \left\|\hat \beta_{\hat m} - \beta^0\right\|_2 > K_1 + K_2s_n t\right) \le 2\Sigma e^{-t}
\end{equation*}
where,
$$r_n = \left(\frac{n}{V_{s_0}\sqrt{C_n}\log{(n/V_{s_0}C^2_n)}}\right)^{1/3} \wedge \left(\frac{n}{V_{s_0}\log{(n/V_{s_0})}}\right)^{1/2}$$
and $s_n$ is a specific sequence of constants going down to 0 (with details available in the proof). 

As a consequence of the exponential tail bound, one can establish the following upper bound on the minimax risk: 
$$\sup_{\beta \equiv \beta(P)}\E_{\beta}\left(\left(\frac{r_n}{\sqrt{C_n}} \wedge r_n^2\right) \|\hat \beta_n - \beta\|_2\right) \le K_1 + 2\Sigma K_2\sqrt{s_n} \,,$$
where 
the supremum in the above 
display is taken over all distributions $P$ corresponding to binary response models 
satisfying Assumptions \ref{ass:low_noise_binary}, \ref{ass:wedge_binary} and \ref{ass:sparsity_binary} with some regression parameter $\beta \in \mathcal{S}^{p-1}$ (viewed as a functional of $P$) with $\ell_0$ norm bounded below by $s_0$. 
\end{theorem}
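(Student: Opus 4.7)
The plan is to carry out a structural risk minimization analysis in the style of Massart (see Chapter 8 of \cite{massart2007concentration} and Theorem 2 of \cite{massart2006risk}), combining the transition condition with Proposition \ref{alpha-kappa} to convert excess-risk control into an $\ell_2$ estimation bound.

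First I would prove a uniform within-class excess-risk bound. For any $\beta$ in the sparse class $\mathscr{M}_m$, combining Assumption \ref{ass:low_noise_binary} with Assumption \ref{ass:wedge_binary} gives $\mathrm{Var}(\mathbf{1}(\s(X^\top\beta) \neq \s(X^\top\beta^0))) \le d_{\Delta}(\beta, \beta^0)$, and in the quadratic regime of Proposition \ref{alpha-kappa} this is in turn controlled by $\sqrt{C_n}\sqrt{S(\beta^0)-S(\beta)}$. A Talagrand-type peeling argument on the centered empirical process, together with the VC bound $V_m = O(m\log(ep/m))$ for half-spaces restricted to $m$-sparse vectors, then yields (with probability at least $1 - e^{-\xi_m - t}$)
\begin{equation*}
S(\beta^0_m) - S(\hat{\beta}_m) \;\le\; K\!\left[\left(\tfrac{V_m\sqrt{C_n}\log(n/V_m\sqrt{C_n})}{n}\right)^{2/3} \vee \tfrac{V_m\log(n/V_m)}{n}\right] + K'\tfrac{\xi_m + t}{n}.
\end{equation*}
Choosing weights $\{\xi_m\}$ with $\sum_m e^{-\xi_m} = \Sigma < \infty$ and applying a union bound produces a uniform-in-$m$ fluctuation bound matching $\pen(\mathscr{M}_m)$ up to constants.

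Second I would exploit the defining inequality of the SRM estimator: $-S_n(\hat{\beta}_{\hat m}) + \pen(\mathscr{M}_{\hat m}) \le -S_n(\hat{\beta}_m) + \pen(\mathscr{M}_m)$ for every admissible $m$. Specializing to $m = s_0$ (which is admissible because $s_0 \le \lfloor n/\log p\rfloor$ under Assumption \ref{ass:sparsity_binary}) and using $\beta^0 \in \mathscr{M}_{s_0}$, so that $\beta^0_{s_0} = \beta^0$ and the approximation error $S(\beta^0) - S(\beta^0_{s_0})$ vanishes, a standard rearrangement combining the step-one deviation bound with the penalty calibration yields, with probability at least $1 - 2\Sigma e^{-t}$,
\begin{equation*}
S(\beta^0) - S(\hat{\beta}_{\hat m}) \;\le\; 2\,\pen(\mathscr{M}_{s_0}) \;+\; K''\,\tfrac{s_n t}{n},
\end{equation*}
where $s_n$ is the slowly decaying sequence whose exact form is dictated by the union-bound weights $\xi_m$ and the $C_n$-dependent rescaling in the variance bound.

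Third I would invoke Proposition \ref{alpha-kappa} to turn this excess-risk bound into an $\ell_2$ bound. The dichotomous inequality $S(\beta^0) - S(\beta) \ge (\|\beta - \beta^0\|_2^2/C_n) \wedge (2t^*\|\beta - \beta^0\|_2)$ implies that either $\|\hat{\beta}_{\hat m} - \beta^0\|_2^2 \le C_n[S(\beta^0) - S(\hat{\beta}_{\hat m})]$ (quadratic branch) or $\|\hat{\beta}_{\hat m} - \beta^0\|_2 \le (2t^*)^{-1}[S(\beta^0) - S(\hat{\beta}_{\hat m})]$ (linear branch). Substituting the step-two bound into the worse of these two branches produces exactly the stated rate $(r_n/\sqrt{C_n}) \wedge r_n^2$ with the advertised exponential tail; the moment upper bound then follows by integrating the tail against $e^{-t}\,dt$, which converts the $K_1 + K_2 s_n t$ inside the probability into a $K_1 + 2\Sigma K_2\sqrt{s_n}$-type bound after an additional Cauchy--Schwarz or Jensen step absorbed into $\sqrt{s_n}$.

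The main obstacle will be step one: carrying out the Talagrand/peeling argument cleanly under the transition condition (in place of the usual fixed-exponent margin assumption) and verifying that the chosen penalty is simultaneously large enough to dominate the empirical fluctuation uniformly over $m \in \{1,\ldots,\lfloor n/\log p\rfloor\}$ and small enough that $\pen(\mathscr{M}_{s_0})$ still delivers the advertised rate. A secondary difficulty lies in tracking how the $\sqrt{C_n}$ factor propagates through the variance bound so that both the quadratic and linear regimes of Proposition \ref{alpha-kappa} contribute to the final $(r_n/\sqrt{C_n}) \wedge r_n^2$ envelope rather than a looser bound.
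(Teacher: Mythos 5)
Your proposal is correct and follows essentially the same route as the paper: the paper packages your steps one and two into a general model-selection oracle inequality (Theorem \ref{our_model_selection}, proved via Bousquet's form of Talagrand's inequality and a weighted maximal inequality), calibrates the penalty as $2K\epsilon_i^2$ with weights $x_i = n\epsilon_i^4/\omega^2(\epsilon_i)$ summing to $\Sigma$, specializes the infimum to $m=s_0$ where the approximation error vanishes, and then converts the excess-risk bound to the $\ell_2$ bound exactly via the two branches of Proposition \ref{alpha-kappa} before integrating the tail. The only bookkeeping detail you leave implicit is the precise form $s_n = b(n)/(n\epsilon_{s_0}^2)$ of the vanishing remainder, which the paper identifies explicitly.
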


\begin{remark} 
\label{rem:high-dim-rate}
A discussion similar to Remark \ref{rem:low_dim_rate} is in order. Here, the rate of convergence depends on four parameters $(n,p,C_n, s_0)$. Assuming $C_n = 1$, the value of $r_n$ becomes: 
$$r_n = \left(\frac{n}{V_{s_0}\log{(n/V_{s_0})}}\right)^{1/3} \wedge \left(\frac{n}{V_{s_0}\log{(n/V_{s_0})}}\right)^{1/2} =  \left(\frac{n}{V_{s_0}\log{(n/V_{s_0})}}\right)^{1/3}$$
and recalling that $V_{s_0} \asymp s_0 \log (ep/s_0)$, 
$$\frac{r_n}{\sqrt{C_n}} \wedge r_n^2 = r_n \wedge r_n^2 = r_n = \left(\frac{n}{V_{s_0}\log{(n/V_{s_0})}}\right)^{1/3} \asymp \left(\frac{n}{s_0\log{p}\log{n}}\right)^{1/3}\,,$$
where the last asymptotic equivalence while not always being true, nonetheless holds for most common scenarios. As an example, if we take $s_0 = n^{\gamma}, p = e^{n^{\delta}}$ for some $0 < \gamma, \delta < 1$ with $\gamma + \delta < 1$, the equivalence is valid. The condition $\gamma + \delta  < 1$ is forced by Assumption \ref{ass:sparsity_binary}. 
\\

It is immediate that the rate of convergence of our estimator cannot be faster than $n/\left(V_{s_0}\log{(n/V_{s_0})}\right)$. As before, one can gain useful insights by ignoring the log-factors in the rate expression. Thus, 
$$r_n \approx \left(\frac{n}{V_{s_0}\sqrt{C_n}}\right)^{1/3} \wedge \left(\frac{n}{V_{s_0}}\right)^{1/2}\,$$ 
and hence
\begin{align*}
\frac{r_n}{\sqrt{C_n}} \wedge r_n^2 & \approx \left(\frac{n}{pV_{s_0}C_n^2}\right)^{1/3} \wedge \left(\frac{n}{V_{s_0}C_n}\right)^{1/2} \wedge \left(\frac{n}{V_{s_0}\sqrt{C_n}}\right)^{2/3} \wedge \left(\frac{n}{V_{s_0}}\right) \\
& = \left(\frac{n}{V_{s_0}C_n^2}\right)^{1/3}  \wedge \left(\frac{n}{V_{s_0}}\right) \,.
\end{align*}
As in the case of slowly growing regime, this rate is also shown to be minimax optimal in Theorem \ref{minimax-high-lower}. The last equality follows from the fact that, if $C_n \ge V_{s_0}/n$, then $(n/V_{s_0}C_n^2)^{1/3}$ is the minimum among the four terms, otherwise $(n/V_{s_0})$ is the minimum. This implies that the rate can be made faster by decreasing the value of $C_n$, but cannot be improved upon $(n/V_{s_0})$ (up to log factors).

\end{remark}

As an immediate corollary of the above theorem, we establish that a superset of true model will be selected with high probability under an appropriate beta-min condition: 
\noindent
\begin{corollary}
\label{cor-high-dim-rate}
Suppose the minimum non-zero absolute value of $\beta^0$ satisfies the following bound: $$\beta^0_{\min} \ge (K_1 + K_2)\left(\frac{r_n}{\sqrt{C_n}} \wedge r_n^2\right)^{-1}$$ with $r_n$ is as in Theorem \ref{high-dim-rate}. Then under Assumptions \ref{ass:low_noise_binary},\ref{ass:wedge_binary} and \ref{ass:sparsity_binary}, we have: 
\begin{equation*}
\P\left(\hat m \supseteq m_0\right) \ge 1 - 2\Sigma e^{-\frac{1}{\sqrt{s_n}}} \,.
\end{equation*}
where $m_0$ is the true active set. This probability goes to 1 exponentially fast in $n$. 
\end{corollary}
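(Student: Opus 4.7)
The plan is to reduce the statement to a direct application of Theorem~\ref{high-dim-rate} followed by a one-line contradiction argument using the beta-min condition. The only two ingredients we need are (i)~a high-probability $\ell_2$ bound on $\hat\beta_{\hat m} - \beta^0$ whose magnitude is strictly below $\beta^0_{\min}$, and (ii)~the observation that any coordinate $j\in m_0\setminus\hat m$ forces a deviation of size $|\beta^0_j|$.

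Concretely, I would set $t = 1/\sqrt{s_n}$ in the tail bound of Theorem~\ref{high-dim-rate}. Since $s_n\downarrow 0$, for all $n$ large enough we have $\sqrt{s_n}\le 1$, and hence $K_2 s_n t = K_2\sqrt{s_n}\le K_2$. Thus with probability at least $1-2\Sigma e^{-1/\sqrt{s_n}}$,
\[
\|\hat\beta_{\hat m}-\beta^0\|_2 \;\le\; \bigl(K_1+K_2\sqrt{s_n}\bigr)\left(\frac{r_n}{\sqrt{C_n}}\wedge r_n^2\right)^{-1} \;<\; (K_1+K_2)\left(\frac{r_n}{\sqrt{C_n}}\wedge r_n^2\right)^{-1}.
\]

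On this event, I argue by contradiction. Suppose $m_0\not\subseteq\hat m$. Then there exists $j\in m_0\setminus\hat m$, so that the $j$-th coordinate of $\hat\beta_{\hat m}$ vanishes (by construction of the subset estimator supported on $\hat m$). Using the coordinate-wise lower bound $\|\hat\beta_{\hat m}-\beta^0\|_2 \ge |\hat\beta_{\hat m,j}-\beta^0_j| = |\beta^0_j|$ and the assumed beta-min condition,
\[
\|\hat\beta_{\hat m}-\beta^0\|_2 \;\ge\; |\beta^0_j| \;\ge\; \beta^0_{\min} \;\ge\; (K_1+K_2)\left(\frac{r_n}{\sqrt{C_n}}\wedge r_n^2\right)^{-1},
\]
which contradicts the strict inequality in the previous display. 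Hence $m_0\subseteq\hat m$ on this event, establishing the claim.

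There is no serious obstacle here: the entire argument is essentially bookkeeping, and the ``hard'' work is already done in Theorem~\ref{high-dim-rate}. The only subtlety is to choose $t$ so that (a)~the slack $K_2 s_n t$ is bounded strictly below $K_2$ eventually, and simultaneously (b)~the tail probability $e^{-t}$ still tends to zero; the choice $t=1/\sqrt{s_n}$ achieves both. One minor point to handle carefully is the fact that $\hat\beta_{\hat m}$ is by definition supported on $\hat m$, which justifies $\hat\beta_{\hat m,j}=0$ for $j\notin\hat m$; this is immediate from the definition of $\hat\beta_m$ as the constrained maximizer over $\{\beta:\|\beta\|_0\le m\}$ combined with the selection of $\hat m$ in the SRM scheme.
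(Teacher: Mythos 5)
Your proposal is correct and follows essentially the same route as the paper: plug $t=1/\sqrt{s_n}$ into the tail bound of Theorem \ref{high-dim-rate} to get $\|\hat\beta_{\hat m}-\beta^0\|_2 \le (K_1+K_2)\left(\frac{r_n}{\sqrt{C_n}}\wedge r_n^2\right)^{-1}$ with probability at least $1-2\Sigma e^{-1/\sqrt{s_n}}$, then invoke the beta-min condition to rule out any missed active coordinate. The paper leaves the final contradiction step implicit; you spell it out, which is the only difference.
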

\begin{remark}
Note that for $C_n = C$ fixed, the lower bound on $\beta^0_{\min}$ is proportional to $\left((s_0\log{p}\log{n})/n\right)^{1/3}$ which is same as the rate of convergence of $\hat \beta_{\hat m}$ (see Theorem \ref{high-dim-rate}). This should be compared to the $\beta^0_{\min}$ condition derived from the $\ell_2$ convergence analysis in high dimensional linear regression which is $(s_0 \log{p} /n)^{1/2}$. The slower convergence rate in this problem requires a more pronounced separation of the active coefficients of $\beta^0$ from the inactive ones in comparison to standard linear regression, to guarantee the screening property. 
\end{remark}
Our next result provides a lower bound on the minimax error rate.

%
%
%

\begin{theorem}
\label{minimax-high-lower}
We present our minimax lower bound result for the fast growth regime $p \gg n$: 
$$\inf_{\hat{\beta}} \sup_{\beta \equiv \beta(P)}\E_{\beta} \left(\|\hat{\beta} - \beta^0\|^2_2\right) \ge \tilde{K}_{L}\left[\left(\frac{s_0\log{(p/s_0)}C^2_n}{n}\right)^{2/3} \vee \left(\frac{s_0\log{(p/s_0)}}{n}\right)^2\right]$$
for some constant $\tilde{K}_{L} > 0$ not depending on $(n,p,s_0)$. For the case $C_n = C$ fixed, the lower bound is of the order of $\left(\frac{s_0\log{(p/s_0)}}{n}\right)^{2/3}$. The supremum is taken over the same class of distributions as in Theorem \ref{high-dim-rate}. 
\end{theorem}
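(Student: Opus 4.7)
The plan is to adapt the slow-growth construction behind Theorem~\ref{minimax-lower-bound} to the sparse regime by replacing the product-type Assouad perturbation over $p$ coordinates with a Varshamov--Gilbert combinatorial packing of $s_0$-sparse perturbations and then applying a Fano-type inequality. Morally, this substitutes the nominal dimension $p$ with the sparse effective dimension $s_0\log(p/s_0)$, transforming the slow-growth lower bound $(pC_n^2/n)^{2/3}\vee(p/n)^2$ of Theorem~\ref{minimax-lower-bound} into the advertised $(s_0\log(p/s_0)C_n^2/n)^{2/3}\vee(s_0\log(p/s_0)/n)^2$.

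Concretely, I would fix a base direction $\beta^\star = e_1$ on the unit sphere and, on a fixed index set $J\subseteq\{2,\dots,p\}$ of cardinality $2s_0$, invoke the Varshamov--Gilbert lemma (Lemma~2.9 of \cite{tsybakov2009introduction}) to extract binary vectors $\omega^{(1)},\dots,\omega^{(M)}\in\{0,1\}^p$ supported on $J$ with Hamming weight exactly $s_0$, pairwise Hamming distance at least $s_0/8$, and $\log M\gtrsim s_0\log(p/s_0)$. Setting $v^{(i)}=\omega^{(i)}/\sqrt{s_0}$ and
\[
\beta^{(i)} \;=\; \frac{\beta^\star + \delta_n v^{(i)}}{\|\beta^\star + \delta_n v^{(i)}\|_2},\qquad i=1,\dots,M,
\]
for a scale $\delta_n$ to be calibrated, yields candidate parameters on the unit sphere with $\|\beta^{(i)}\|_0 \leq s_0+1$ and mutual $\ell_2$ separation $\|\beta^{(i)}-\beta^{(j)}\|_2 \asymp \delta_n$. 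The law $P_{\beta^{(i)}}$ is then built by pairing $\beta^{(i)}$ with exactly the same covariate distribution and conditional error mechanism used in Theorem~\ref{minimax-lower-bound}: the $X$-distribution is chosen to verify Assumption~\ref{ass:wedge_binary} uniformly over the packing, while the conditional density of $\epsilon$ near the origin is calibrated so that Assumption~\ref{ass:low_noise_binary} holds with the prescribed $C_n$, and the sparsity budget matches Assumption~\ref{ass:sparsity_binary}.

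The next step is to bound the per-sample KL divergence $KL(P_{\beta^{(i)}}\|P_{\beta^{(j)}})$ by the chi-square surrogate, using the local expansion $\eta_{\beta}(x)-\eta_{\beta'}(x)\approx f_{\epsilon|X=x}(0)\,x^\top(\beta-\beta')$ and integrating only over the thin wedge between the competing hyperplanes; this reduces to the same two-regime per-sample bound $(\delta_n^3/C_n)\wedge\delta_n^2$ that drives the slow-growth proof. Imposing the Fano requirement $n\cdot\max_{i\neq j}KL(P_{\beta^{(i)}}\|P_{\beta^{(j)}}) \leq \tfrac{1}{8}\log M$ (as in Theorem~2.5 of \cite{tsybakov2009introduction}) then pins
\[
\delta_n^2 \;\asymp\; \Bigl(\tfrac{s_0\log(p/s_0)\,C_n^2}{n}\Bigr)^{2/3}\;\vee\;\Bigl(\tfrac{s_0\log(p/s_0)}{n}\Bigr)^{2},
\]
and converting the resulting pairwise separation into an expected $\ell_2$ risk lower bound via Markov's inequality yields the theorem.

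The main obstacle, as in Theorem~\ref{minimax-lower-bound}, is engineering the hypothesis family so that Assumptions~\ref{ass:low_noise_binary}, \ref{ass:wedge_binary}, and~\ref{ass:sparsity_binary} all hold \emph{uniformly} across the $M$ alternatives while still keeping the KL divergence small enough to saturate Fano. The sparse geometry adds a wrinkle not present in the slow-growth case: the wedge between two competing hyperplanes is now supported on the symmetric difference of $\mathrm{supp}(v^{(i)})$ and $\mathrm{supp}(v^{(j)})$, which varies across pairs. One must therefore verify that Assumption~\ref{ass:wedge_binary} holds with a constant $c_1$ that does not degrade along any pair in the packing, and that the effective $C_n$ in Assumption~\ref{ass:low_noise_binary} is not implicitly shrunk by the local low-dimensional structure. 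I expect this bookkeeping, rather than the Fano/KL calculus proper, to constitute the bulk of the technical work.
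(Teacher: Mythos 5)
Your overall strategy is exactly the paper's: replace the Assouad perturbation of the slow-growth proof by a Varshamov--Gilbert packing of $s_0$-sparse perturbations of a fixed first coordinate, normalize to the sphere, reuse the piecewise-linear construction of $P_\beta(Y=1\mid X)$ with Gaussian covariates, bound the pairwise KL, and invoke Fano; the second term of the lower bound is obtained by rerunning the argument with $C_n=0$. However, there is a genuine error in your packing step. You restrict the supports of the $\omega^{(i)}$ to a \emph{fixed} index set $J$ of cardinality $2s_0$ and simultaneously claim $\log M \gtrsim s_0\log(p/s_0)$. These are incompatible: the number of weight-$s_0$ binary vectors supported on a set of size $2s_0$ is at most $\binom{2s_0}{s_0}\le 4^{s_0}$, so any packing confined to $J$ has $\log M \lesssim s_0$, and Fano then only delivers a rate with $s_0$ in place of $s_0\log(p/s_0)$ --- the logarithmic factor, which is the entire point of the sparse lower bound, is lost. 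The supports must be allowed to range over all of $\{2,\dots,p\}$; the Gilbert--Varshamov lemma as stated in the paper (applied to $\{0,1\}^{p-1}$ with $\|w_J\|_0=s_0$ and $s_0\le (p-1)/8$) then gives $\log M \ge \tfrac{s_0}{8}\log\bigl(1+\tfrac{p-1}{2s_0}\bigr)\asymp s_0\log(p/s_0)$ together with pairwise Hamming separation $\ge s_0/2$, which is what converts into the $\ell_2$ separation $\asymp\delta_n$.

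A second, smaller inconsistency: you state the per-sample KL bound as $(\delta_n^3/C_n)\wedge\delta_n^2$, but the correct scaling (and the one the paper proves) is $\delta_n^3/C_n^2$. In the linear regime the Bernoulli means differ by $\asymp |(\beta_I-\beta_J)^\top X|/C_n$, so the squared difference carries $C_n^{-2}$, and the restriction to the wedge region contributes one further factor of $\delta_n$; only with $KL\lesssim \delta_n^3/C_n^2$ does the Fano budget $n\,KL\lesssim s_0\log(p/s_0)$ yield $\delta_n^2\asymp(s_0\log(p/s_0)C_n^2/n)^{2/3}$. Your final calibration is stated correctly but does not follow from the KL bound you wrote down. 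Finally, the uniformity concerns you flag about Assumptions \ref{ass:wedge_binary} and \ref{ass:low_noise_binary} across the packing are handled for free in the paper's construction because $X\sim\mathcal{N}(0,I_p)$ and the margin condition is verified directly for the explicitly defined $\eta$ (Lemma \ref{low_noise_minimax_high}); no pair-dependent degradation arises, so this is not where the technical work actually lies --- it lies in the Gaussian integrations bounding the KL.
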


\begin{remark}
As with the minimax lower bound proof in the moderate growth case, the proof of this theorem also relies on the construction of a sequence of competing models that approach one another, along with Fano's inequality (Chapter 2 of \cite{tsybakov2009introduction}). Similar to the moderate growth regime, by comparing the lower bound above to the rate of the score estimator in Remark \ref{rem:high-dim-rate}, we find that the former is better only by a logarithmic factor, which suggests that the penalized maximum score estimator is almost minimax optimal. 
\end{remark} 

%
%
%
%
\section{Multinomial discrete choice model}
\label{sec:mult} The score function for the multinomial discrete choice model, as explained in the Section \ref{intro}, is given by: 
$$S^{(mult)}_n(\beta) = \frac{1}{nm(m-1)}\sum_{i=1}^n \sum_{j=1}^m \left[Y_{i,j} \sum_{k \neq j} \mathds{1}(\mb{x}_{i,j}'\beta > \mb{x}_{i,k}'\beta)\right]$$ and the corresponding population version $\E S^{(mult)}_n(\beta)$ is given by:
 $$S^{(mult)}(\beta)= \frac{1}{m(m-1)}\sum_{j=1}^m \E\left[p_j(\mb{X})\left(\sum_{k \neq j} \mathds{1}((\mb{x}_j - \mb{x}_k)'\beta \ge 0)\right) \right]$$
where $p_j(\m{X}) = p(j|\mb{X},\beta_0) = \P(Y_{i,j} = 1| \mb{X}_i = \mb{X})$ and $m$ denotes the number of choices. 

In what follows, $m$ and $p$ should be viewed as growing as functions  of $n$. The following proposition establishes that 
$\beta^0$ is indeed the unique maximizer of the population score function: 

\begin{proposition}
\label{multi_pop_min}
Under Assumption \ref{ass:rank_mult} we have, $\beta^0 = \argmax_{\beta: \|\beta\|_2 = 1} S^{(mult)}(\beta)$, and that the maximizer is unique. 
\end{proposition}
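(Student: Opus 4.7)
The plan is to rewrite the population score function as a sum of pairwise contributions, upper bound each pair separately, and then show that $\beta^0$ attains every pairwise bound thanks to the rank ordering property.

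First I would swap the order of summation to collect ordered pairs and then fold into unordered ones. Writing
$$S^{(mult)}(\beta) = \frac{1}{m(m-1)}\sum_{j \neq k} \E\bigl[p_j(\mb{X})\,\mathds{1}((\mb{x}_j - \mb{x}_k)^\top\beta \ge 0)\bigr],$$
I would pair each ordered $(j,k)$ with its reverse $(k,j)$. Assuming (or invoking as a mild regularity condition on the joint law of $\mb{X}$) that the tie set $\{(\mb{x}_j - \mb{x}_k)^\top\beta = 0\}$ has $\P_X$-measure zero, the contribution of the unordered pair $\{j,k\}$ to $m(m-1)S^{(mult)}(\beta)$ is
$$\E\bigl[p_j(\mb{X})\,\mathds{1}((\mb{x}_j - \mb{x}_k)^\top\beta > 0) + p_k(\mb{X})\,\mathds{1}((\mb{x}_k - \mb{x}_j)^\top\beta > 0)\bigr],$$
which, pointwise in $\mb{X}$, equals whichever of $p_j(\mb{X}), p_k(\mb{X})$ the half-space selected by $\beta$ picks out. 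In particular, this integrand is bounded above by $\E[\max(p_j(\mb{X}), p_k(\mb{X}))]$, with equality if and only if $\mathrm{sgn}((\mb{x}_j - \mb{x}_k)^\top\beta)$ agrees a.s.\ with the sign of $p_j(\mb{X}) - p_k(\mb{X})$.

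Second, I would invoke Assumption \ref{ass:rank_mult} directly: the rank ordering property says precisely that at $\beta = \beta^0$ the sign of $(\mb{x}_j - \mb{x}_k)^\top\beta^0$ matches the sign of $p_j(\mb{X}) - p_k(\mb{X})$ almost surely. Hence, at $\beta^0$, each pairwise integrand attains its upper bound $\E[\max(p_j(\mb{X}), p_k(\mb{X}))]$, and summing over pairs shows that $\beta^0$ is a maximizer of $S^{(mult)}$.

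Third, for uniqueness, consider any other $\beta \in \mathcal{S}^{p-1}$. Because the rank ordering equivalence is an iff, on the event $\{\mathrm{sgn}((\mb{x}_j-\mb{x}_k)^\top\beta) \neq \mathrm{sgn}((\mb{x}_j-\mb{x}_k)^\top\beta^0),\ p_j(\mb{X}) \neq p_k(\mb{X})\}$ the pair $\{j,k\}$ picks out the \emph{smaller} of the two probabilities, incurring a strict pointwise loss; after taking expectation this loss is strictly positive provided the event has positive probability. The remaining task is thus to argue that for every $\beta \neq \beta^0$ with $\|\beta\|_2 = 1$ there is at least one pair $(j,k)$ for which this event has positive probability. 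This is essentially an identifiability statement: it is guaranteed by the nondegeneracy implicit in Assumption \ref{ass:rank_mult} together with a mild absolute continuity hypothesis on the distribution of the differences $\mb{x}_j-\mb{x}_k$, which ensures that a rotation of $\beta^0$ actually flips the sign of $(\mb{x}_j - \mb{x}_k)^\top\beta$ on a positive-measure set simultaneously with a strict rank ordering gap.

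The main obstacle is precisely this uniqueness step: existence of a maximizer at $\beta^0$ follows mechanically from the pairwise decomposition plus the iff in Assumption \ref{ass:rank_mult}, but strict separation $S^{(mult)}(\beta) < S^{(mult)}(\beta^0)$ for all $\beta \ne \beta^0$ on the sphere requires ruling out coincidental sign agreement for almost every $\mb{X}$ and every pair. This is the multinomial analogue of Assumption \ref{ass:wedge_binary} in the binary case, and I would either cite it as a standing regularity of the covariate law or embed it in the statement of the rank ordering assumption.
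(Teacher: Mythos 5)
Your argument for the maximizer part is correct, but it is packaged differently from the paper's. The paper rewrites the inner sum so that the contribution of utility $j$ is $p_j(\mb{X})\,(\mathrm{rank}(\mb{x}_j^{\top}\beta)-1)$, where the rank is taken among $\{\mb{x}_1^{\top}\beta,\dots,\mb{x}_m^{\top}\beta\}$, and then applies the rearrangement inequality pointwise in $\mb{X}$: since Assumption \ref{ass:rank_mult} says the $p_j(\mb{X})$'s are ordered exactly as the $\mb{x}_j^{\top}\beta^0$'s, the sum $\sum_j p_j(\mb{X})(\mathrm{rank}_j-1)$ is maximized when the ranks are induced by $\beta^0$. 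Your pairwise decomposition is essentially the rearrangement inequality unrolled into its adjacent-swap proof: each unordered pair $\{j,k\}$ contributes at most $\max(p_j(\mb{X}),p_k(\mb{X}))$ and $\beta^0$ attains every pairwise maximum. Both routes give the same pointwise-in-$\mb{X}$ optimality; yours has the advantage of localizing exactly where strictness must come from (a pair with disagreeing signs and $p_j\neq p_k$), while the paper's is a one-line global argument. Your measure-zero tie caveat is harmless since the score uses strict inequalities in the indicators, so ties contribute nothing under either $\beta$.

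On uniqueness, your diagnosis is accurate and worth stating: the rank-ordering assumption alone yields only the weak inequality $S^{(mult)}(\beta)\le S^{(mult)}(\beta^0)$, and strictness for $\beta\neq\beta^0$ requires that for some pair $(j,k)$ the event $\{\s((\mb{x}_j-\mb{x}_k)^{\top}\beta)\neq \s((\mb{x}_j-\mb{x}_k)^{\top}\beta^0)\}$ carries positive mass on which $p_j(\mb{X})\neq p_k(\mb{X})$. The paper's proof dismisses this with ``the proposition immediately follows,'' but the needed nondegeneracy is really supplied elsewhere, by part (2) of Assumption \ref{ass:wedge_mult} (which lower-bounds the wedge probability by $c_2\|\beta-\beta^0\|_2$) combined with the ``if and only if'' in Assumption \ref{ass:rank_mult} (so that sign disagreement off the boundary forces $p_j\neq p_k$). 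So your proposal is not missing anything the paper actually provides; if anything, you have made explicit a hypothesis that the stated proposition leaves implicit.
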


Let $\hat \beta_n$ denote a maximizer of $S^{(mult)}_n(\beta)$. This is a slight abuse of notation as $\hat \beta_n$ also is used to indicate the ERM estimator for the binary counterpart of this model. In this section, $\hat \beta_n$ will unambiguously denote the ERM estimator for the multinomial choice model. We state a set of further assumptions (which should be viewed as natural extensions of our assumptions for the binary case) to facilitate the asymptotic analysis of $\hat \beta_n$.  

\begin{assumption}[Transition condition]
\label{ass:low_noise_mult}
The multinomial choice model satisfies the modified transition condition uniformly for all pairs $(j,k)$, i.e. there exists constants $C > 0$ (not depending on $n$) such that 
for every $(j,k)$: $$\P\left(\left|\frac{p_j(\mb{X})}{p_j(\mb{X}) + p_k(\mb{X})}- \frac{1}{2}\right| \le t\right) \le C\,t$$ for all $0 \le t \le t^*$. We assume $2t^*C > 1$ for mathematical simplicity. 
\end{assumption}

\begin{assumption}[Restricted wedge assumption]
\label{ass:wedge_mult} There exist constants $c_1, c_2, c_3 > 0$ and $R > 0$ such that: 
\begin{enumerate}
\item $p_j(\mb{X}) \ge c_1 > 0$ for all $j \in \{1,2 \dots, m\}$ for all $\|\mb{X}\|_F \le R$, where $\|\;\|_F$ denotes Frobenius norm. Here the constant $c_1$ depends $m$, while the radius of choice $R$ does not depend on the specific utility, but may or may not depend on $p$. 
 \\
\item For all pairs $(j,k)$, $\P\left(\left\{\s((\mb{x}_j - \mb{x}_k)'\beta) \neq \s((\mb{x}_j - \mb{x}_k)'\beta_0)\right\} \cap \{\|\mb{X}\|_F \le R \}\right) \ge c_2 \|\beta - \beta^0\|_2$ where $c_2 > 0$ does not depend on $n$. 
\\
\item The effect of radius $R$ is asymptotically non-vanishing, i.e. for all pairs $(j,k)$: $$\frac{\P\left(\left\{\s((\mb{x}_j - \mb{x}_k)'\beta) \neq \s((\mb{x}_j - \mb{x}_k)'\beta^0)\right\}\right)}{\P\left(\left\{\s((\mb{x}_j -\mb{x}_k)'\beta) \neq \s((\mb{x}_j - \mb{x}_k)'\beta^0)\right\} \cap \{\|\mb{X}\|_F \le R \}\right)} \le c_3 \,.$$
and the constant $c_3$ does not depend on $n$. 
\end{enumerate}
\end{assumption}
\noindent

\begin{remark}
Assumption \ref{ass:low_noise_mult} should be viewed as the multinomial version of Assumption \ref{ass:low_noise_binary}. It quantifies the probability mass of the covariate space where the magnitude of the difference between $p_j(\m{X})$ and $p_k(\m{X})$ is small relative to their sum, in terms of a generic threshold $t$. This is easily seen by noting that $|p_j(\m{X})/(p_j(\m{X}) + p_k(\m{X})) - 1/2| = \left|p_j(\m{X}) - p_k(\m{X})\right|/(p_j(\m{X}) + p_k(\m{X}))$. The smaller this quantity, the harder it is to differentiate between utilities $k$ and $j$. 
We note that for the multinomial problem we confine ourselves to a fixed 
$C$ (as opposed to a general sequence $C_n$) in our low-noise assumption which allows a cleaner and less cumbersome presentation of our results. As in the binary case, the fixed $C$ assumption is statistically the most interesting version. The proof for a general $C_n$ that goes to 0 would work similarly as for fixed $C$, except for the fact that we would now need to keep explicit track of the $C_n$ throughout the steps of the proof. 
\end{remark}

\begin{remark}
It is clear that Assumption \ref{ass:wedge_mult} is in similar vein to Assumption \ref{ass:wedge_binary} for the binary response model, albeit somewhat more involved owing to the multinomial structure. Part (1) of Assumption \ref{ass:wedge_mult} postulates a ball of radius $R$ around the origin in $\mathbb{R}^{m \times p}$ where the probability of choosing any specific utility given $\m{X}$ is bounded away from 0: i.e., every alternative can be chosen with non-negligible probability, or in other words, all utilities are competitive. 
Part (2) of the assumption resembles Assumption \ref{ass:wedge_binary} exactly, modulo the fact that we are now interested in the wedge-shaped region within the ball of radius $R$. This is because part (1) of the Assumption restricts the main action to the ball of radius $R$ where the probability of choosing any item is non-negligible. Part (3) of the Assumption ensures that, the probability of the wedge-shaped region intersected with a ball of radius $R$ is not negligible with respect to the probability of the entire wedge-shaped region. In other words, the region of primary action is non-ignorable with respect to the entire region. This assumption helps us establish an upper bound on the variability of the empirical process relevant to our analysis of the concentration bounds for the estimator. 
\end{remark}

\begin{remark}
The results in this section presented below can also be derived by taking $R = \infty$ in Part (1) of Assumption \ref{ass:wedge_mult}. In this case, Part (1) becomes stronger as we now assume the lower bound on the conditional probabilities of choosing utilities for all $\m{X}$. On the other hand, Part (2) of the assumption is weakened: if the lower bound in Part (2) holds for finite $R$, it holds for $R = \infty$. Part (3) of the assumption is trivially satisfied for $R=\infty$ with $c_3 = 1$. 
\end{remark} 

\begin{remark} 
\label{rem:binary_mult_match}
The assumption that the conditional probability $p_j(\m{X})$ of each utility is bounded away from 0 (Part (1) of Assumption \ref{ass:wedge_mult}) can be easily relaxed. For example one may assume that $p_j(\m{X}) \vee p_k(\m{X}) \ge c_1$ for all $\|X\|_F \le R$ for all $1 \le j \neq k \le m$ without disturbing any of our calculations. Indeed, an inspection of the proof of Proposition \ref{multi-alpha-kappa} shows that what we crucially require to establish the curvature of $S^{(mult)}(\beta) - S^{(mult)}(\beta_0)$ is a lower bound on $p_j(\m{X}) + p_k(\m{X})$ for $\|X\|_F \le R$, and this is obviously true under the relaxed assumption. For the binary choice model $m=2$ this weaker assumption is automatic: $p_1(X) \vee p_2(X) \geq 1/2$ for all $X$, so that we can clearly take $R = \infty$ and Assumption \ref{ass:wedge_mult} boils down to Assumption \ref{ass:wedge_binary}.
\end{remark} 

The following Proposition (similar to Proposition \ref{alpha-kappa}) establishes a lower bound on the excess population risk: 
 
 \begin{proposition}
 \label{multi-alpha-kappa}
Under Assumptions \ref{ass:rank_mult}, \ref{ass:low_noise_mult} and \ref{ass:wedge_mult}, we have the following curvature condition for multinomial choice model:
$$S^{(mult)}(\beta^0) - S^{(mult)}(\beta) \ge \frac{c_1c^2_2}{4C} \|\beta - \beta^0\|_2^2 $$ 
for all $\beta \in S^{p-1}$ where $c_1, c_2, C, t^*$ are same constants as mentioned in Assumption \ref{ass:low_noise_mult} and \ref{ass:wedge_mult}. 
 \end{proposition}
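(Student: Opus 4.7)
The strategy parallels Proposition~\ref{alpha-kappa}: express the excess risk as a sum over pairs of utilities of quantities of the form $|p_j - p_k|\,\mathds{1}(\text{sign disagreement})$, then use Assumptions~\ref{ass:low_noise_mult} and~\ref{ass:wedge_mult} to lower-bound each summand. The first step is algebraic rewriting. For each unordered pair $\{j,k\}$ with $j \neq k$, combine the $(j,k)$ and $(k,j)$ contributions in the double sum defining $S^{(mult)}(\beta)$:
$$p_j(\m{X})\,\mathds{1}\{(\m{x}_j - \m{x}_k)'\beta \ge 0\} + p_k(\m{X})\,\mathds{1}\{(\m{x}_k - \m{x}_j)'\beta \ge 0\},$$
which almost surely in $\m{X}$ equals $p_j(\m{X})$ when $(\m{x}_j - \m{x}_k)'\beta > 0$ and $p_k(\m{X})$ otherwise. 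For $\beta = \beta^0$, Assumption~\ref{ass:rank_mult} forces $\s((\m{x}_j - \m{x}_k)'\beta^0) = \s(p_j(\m{X}) - p_k(\m{X}))$, so the per-pair contribution at $\beta^0$ equals $\max\{p_j(\m{X}), p_k(\m{X})\}$ a.s. Writing $E_{jk}$ for the sign-disagreement event $\{\s((\m{x}_j - \m{x}_k)'\beta) \neq \s((\m{x}_j - \m{x}_k)'\beta^0)\}$, this yields the decomposition
$$S^{(mult)}(\beta^0) - S^{(mult)}(\beta) \;=\; \frac{1}{m(m-1)}\sum_{\{j,k\}} \E\bigl[\,|p_j(\m{X}) - p_k(\m{X})|\,\mathds{1}(E_{jk})\,\bigr].$$

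Next, I would lower-bound each per-pair expectation. Introduce $q_{jk}(\m{X}) := p_j(\m{X})/(p_j(\m{X}) + p_k(\m{X})) - 1/2$, so that $|p_j - p_k| = 2(p_j + p_k)|q_{jk}|$. Restricting attention to the ball $\{\|\m{X}\|_F \le R\}$ and invoking Assumption~\ref{ass:wedge_mult}(1) gives $p_j + p_k \ge 2c_1$, hence $|p_j - p_k| \ge 4c_1 |q_{jk}|$ on this set. For any threshold $t \in (0, t^*]$,
$$\E[|p_j - p_k|\,\mathds{1}(E_{jk})] \;\ge\; 4c_1 t \,\P\bigl(E_{jk} \cap \{\|\m{X}\|_F \le R\} \cap \{|q_{jk}| > t\}\bigr).$$
Bounding the probability from below by $\P(E_{jk} \cap \{\|\m{X}\|_F \le R\}) - \P(|q_{jk}| \le t)$ and applying Assumptions~\ref{ass:wedge_mult}(2) and~\ref{ass:low_noise_mult} yields $c_2\|\beta - \beta^0\|_2 - Ct$.

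The final step is to optimize the resulting bound $4c_1 t(c_2\|\beta - \beta^0\|_2 - Ct)$ over $t$. The optimum is attained at $t = c_2\|\beta - \beta^0\|_2/(2C)$, which lies in $(0, t^*]$ thanks to the stipulation $2t^* C > 1$ in Assumption~\ref{ass:low_noise_mult} together with $\|\beta - \beta^0\|_2 \le 2$ (with mild bookkeeping on the size of $c_2$). Substituting in, each per-pair term is bounded below by a constant multiple of $c_1 c_2^2 \|\beta - \beta^0\|_2^2 / C$. Summing over the $\binom{m}{2} = m(m-1)/2$ unordered pairs and dividing by $m(m-1)$ produces the stated quadratic curvature $\tfrac{c_1 c_2^2}{4C}\|\beta - \beta^0\|_2^2$.

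The main obstacle I anticipate is the bookkeeping in the first step: verifying that the pairing of ordered contributions reduces cleanly to the sign-disagreement form, with $\beta^0$ picking out the maximum of $\{p_j, p_k\}$, requires a careful use of the rank-ordering property (Assumption~\ref{ass:rank_mult}) and tracking of the almost-sure equalities on the boundary $\{(\m{x}_j - \m{x}_k)'\beta = 0\}$, which must be argued to have zero mass under the relevant distributions. The radius-$R$ truncation from Assumption~\ref{ass:wedge_mult}(1) is the one ingredient new to the multinomial setting relative to Proposition~\ref{alpha-kappa}; its sole role is to transform the margin-style control of $|q_{jk}|$ into margin-style control of $|p_j - p_k|$, and Assumption~\ref{ass:wedge_mult}(3) is not needed for this curvature statement (it will enter later in the concentration arguments).
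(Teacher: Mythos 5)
Your proposal is correct and follows essentially the same route as the paper's proof: decompose the excess risk into pairwise terms $\E[|p_j-p_k|\mathds{1}(E_{jk})]$, pass to $|q_{jk}| = |p_j/(p_j+p_k)-1/2|$ via the lower bound on $p_j+p_k$ inside the ball of radius $R$, subtract the transition-condition mass $Ct$, and optimize over $t$, with $2t^*C>1$ guaranteeing the quadratic regime. Your observations that the rank-ordering property identifies the sign of $p_j-p_k$ on the disagreement event and that Assumption \ref{ass:wedge_mult}(3) is not needed here are both consistent with the paper.
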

The proof of the above proposition is conceptually similar to that of Proposition \ref{alpha-kappa}, as it relies on relating $S^{(mult)}(\beta_0) - S^{(mult)}(\beta)$ to the average of the probabilities of truncated wedge-shaped regions for all possible pairs of $m$ utilities. Note that for $m=2$, this corresponds to a single wedge-shaped region. 
The average probability is then bounded using Part (2) of Assumption \ref{ass:wedge_mult} to conclude the proof. 
\begin{theorem}[When $p/n \rightarrow 0$]
\label{multi_low_dim}
If $nc_1^2/(m^2p) \to \infty$, then under Assumptions \ref{ass:rank_mult}, \ref{ass:low_noise_mult} and \ref{ass:wedge_mult}, we have: 
$$\P\left(r_n\|\hat{\beta}_n - \beta^0\|_2 \ge Ky\right) \le e^{-y^2}$$ for all $y \ge 1$, where 
$$r_n = \left(\frac{nc^2_1}{m^2p}\right)^{1/3}\left(\log{\left(\frac{nc_1^2}{m^2p}\right)}\right)^{-1/3} $$
and $c_1$ is the same constant defined in Assumption \ref{ass:wedge_mult}, $K$ is some constant which does not depend on $n$. 
\end{theorem}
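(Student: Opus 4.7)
The plan is to mirror the strategy for the binary analogue (Theorem~\ref{rate-manski-p-less-n}): first bound the uniform fluctuations of the centered empirical process $\nu_n(\beta) := S_n^{(mult)}(\beta) - S^{(mult)}(\beta)$ in a neighborhood of $\beta^0$, then convert the resulting empirical-process bound into an $\ell_2$ estimation bound via the curvature condition of Proposition~\ref{multi-alpha-kappa}. Since $\hat\beta_n$ maximizes $S_n^{(mult)}$ on $\mathcal{S}^{p-1}$,
\begin{equation*}
\frac{c_1 c_2^2}{4C}\,\|\hat\beta_n - \beta^0\|_2^2 \;\le\; S^{(mult)}(\beta^0) - S^{(mult)}(\hat\beta_n) \;\le\; \nu_n(\hat\beta_n) - \nu_n(\beta^0),
\end{equation*}
so it suffices to control $Z_n(\delta) := \sup_{\|\beta-\beta^0\|_2 \le \delta}|\nu_n(\beta) - \nu_n(\beta^0)|$ and then close a peeling argument over shells $\{\beta : 2^j r_n^{-1} \le \|\beta - \beta^0\|_2 \le 2^{j+1} r_n^{-1}\}$.

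Next, I would bound $Z_n(\delta)$ using a sharp concentration inequality (Massart's Theorem~2, as in the binary proof, or Talagrand's inequality). The function class indexing $\nu_n$ is built from linear combinations of indicators $\mathds{1}((\m{x}_j - \m{x}_k)^\top \beta > 0)$ over the $m(m-1)$ ordered pairs $(j,k)$; each such family has VC dimension at most $p+1$, and the normalization $1/(m(m-1))$ in the definition of $S_n^{(mult)}$ introduces a $1/m^2$ factor in both the envelope and the variance. For the local variance, I would use that each single-pair indicator difference is supported on a wedge whose $\P_X$-mass is $O(\|\beta-\beta^0\|_2)$; Part~(3) of Assumption~\ref{ass:wedge_mult} ensures this behavior persists after intersecting with $\{\|\m{X}\|_F \le R\}$, the domain on which Part~(1) supplies the lower bound $p_j(\m{X}) \ge c_1$ needed by Proposition~\ref{multi-alpha-kappa}. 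These ingredients yield a bound of the form $\mathbb{E} Z_n(\delta) \lesssim m^{-2}\sqrt{p\delta \log(n/(m^2 p \delta))/n}$ up to lower-order terms.

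Balancing this fluctuation against the curvature contribution $c_1\delta^2/m^2$ gives $\delta \asymp r_n^{-1}$ with the stated rate, while the sub-Gaussian tail $e^{-y^2}$ is most naturally produced by Hoeffding-type chaining on the bounded envelope $O(1/(m-1))$ per observation, or by the sub-Gaussian regime of Talagrand's inequality. The main obstacle I anticipate is the careful bookkeeping of the $m$- and $c_1$-dependent constants: the multinomial score is a double sum whose $1/(m(m-1))$ normalization interacts delicately with the VC complexity, the envelope, and the local variance bound, and the truncation $\{\|\m{X}\|_F \le R\}$ in Assumption~\ref{ass:wedge_mult} forces repeated use of Part~(3) to transfer bounds between the truncated and full-space wedge probabilities. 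Ensuring that the hypothesis $nc_1^2/(m^2 p) \to \infty$ is precisely what makes $r_n \to \infty$, and that the logarithm inside $r_n$ matches the log factor produced by the concentration inequality, is the most delicate piece of the accounting.
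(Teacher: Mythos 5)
Your overall strategy is the same as the paper's: use the curvature bound of Proposition \ref{multi-alpha-kappa} to reduce the problem to controlling the excess risk, bound the local fluctuations of the centered multinomial score process via VC/entropy arguments together with the variance bound supplied by Parts (2)--(3) of Assumption \ref{ass:wedge_mult}, and balance. The paper packages the peeling inside Massart's Theorem 2 (with distance $d(\beta,\beta^0)=\sqrt{d_\Delta(\beta,\beta^0)}$, modulus $\omega(x)=(4C/c_1)^{1/4}\sqrt{x}$, and $\phi(\sigma)=\sigma\sqrt{m^2p\log(1/\sigma)}$), whereas you do the shelling by hand; that difference is cosmetic. The $e^{-y^2}$ tail in the paper comes not from a sub-Gaussian regime of the concentration inequality but simply from the quadratic relation $S^{(mult)}(\beta^0)-S^{(mult)}(\hat\beta)\gtrsim\|\hat\beta-\beta^0\|_2^2$, which turns the $e^{-y}$ bound on the excess risk into an $e^{-y^2}$ bound on the distance.

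However, your $m$-bookkeeping is wrong in a way that does not reproduce the stated rate. The normalization $1/(m(m-1))$ does \emph{not} inject an $m^{-2}$ factor into the expected supremum: since $f_\beta$ is a normalized sum of $m(m-1)$ indicator families, each of VC dimension of order $p$, the uniform covering numbers multiply, giving $N(\epsilon,\mathcal{F},L_2(Q))\lesssim(1/\epsilon)^{m^2p}$, so the effective dimension is $m^2p$ and the local fluctuation is of order
\begin{equation*}
\sqrt{\frac{m^2\,p\,\sigma^2\log(1/\sigma)}{n}}\qquad\text{with }\sigma^2=d_\Delta(\beta,\beta^0)\asymp\|\beta-\beta^0\|_2,
\end{equation*}
i.e.\ it carries a factor $m\sqrt{p}$ in the numerator, not $m^{-2}\sqrt{p}$. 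Likewise the curvature constant is $c_1c_2^2/(4C)$ with no explicit $m^{-2}$ (the $m$-dependence is hidden inside $c_1\le 1/m$). Balancing your claimed $m^{-2}\sqrt{p\delta\log(\cdot)/n}$ against $c_1\delta^2/m^2$ cancels the $m$'s entirely and yields $\delta^3\asymp p\log(\cdot)/(nc_1^2)$, i.e.\ $r_n=(nc_1^2/p)^{1/3}(\log(\cdot))^{-1/3}$, which is missing the $m^2$ in the denominator. The correct balance $m\sqrt{p\delta\log(\cdot)/n}\asymp c_1\delta^2$ gives $\delta^3\asymp m^2p\log(\cdot)/(nc_1^2)$ and hence the stated $r_n=(nc_1^2/(m^2p))^{1/3}(\log(nc_1^2/(m^2p)))^{-1/3}$. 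Since the precise $m$- and $c_1$-dependence is the entire content of the theorem beyond the binary case, this is the step you would need to fix.
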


\begin{remark}
\label{rem:multi_low_dim}
It is instructive to relate this theorem with its counterpart for the binary choice model, Theorem \ref{rate-manski-p-less-n}. For the case $m=2$, it is clear from Remark \ref{rem:binary_mult_match} that we can always take $c_1 = 1/2$, and the rate of convergence becomes $\left(n/(p\log{(n/p)})\right)^{1/3}$ which is identical to that from Theorem \ref{rate-manski-p-less-n} when $C_n$ is fixed (See remark \ref{rem:low_dim_rate}). The additional term in the current rate, viz. $(c_1/m)^2$ can be viewed as an adjustment for the number of utilities. Notice that $c_1$ itself depends non-trivially on $m$: as 
$\sum_{j=1}^m p_j(\m{X}) = 1$, we need $c_1 \leq 1/m$ for Part 1 of Assumption \ref{ass:wedge_mult} to make sense. 
\end{remark}

Next, we present our result for the fast growth regime, i.e. when $p \gg n$. As before, we require a sparsity assumption for the identification of the model. Our following assumption encodes the rate at which we can allow the sparsity to grow for our asymptotic analysis:

\begin{assumption}[Sparsity condition for Multinomial model]
\label{ass:sparsity_mult}
Under the fast growth regime, i.e. when $p \gg n$, we assume that there exists $s_0$ with $\|\beta^0\|_0 = s_0$ which satisfies: 
$$\frac{m^2 s_0\log{p}\log{n}}{nc_1^2} \longrightarrow 0$$ as $n \rightarrow \infty$. 
\end{assumption}

This assumption is identical in spirit to Assumption \ref{ass:sparsity_binary}. The only difference is that now both $m$ and $c_1$ play a role in determining the permissible rate of sparsity of the true vector $\beta^0$. As mentioned before, the factor $m^2$ appears due to pairwise comparison of utilities and the factor $c_1$ relates to the curvature condition established in Proposition \ref{multi-alpha-kappa}. 
\begin{theorem}[When $p \gg n$]
\label{multi_high_dim}
Under Assumptions \ref{ass:rank_mult}, \ref{ass:low_noise_mult} and \ref{ass:wedge_mult}, there exists a constant $\Sigma > 0, K_1 > 1, K_2 > 0$ (not depending on $n$) such that for all $y \ge 0$: 
\begin{equation*}
   \P\left(r_n \|\hat \beta_{\hat m} - \beta^0\|_2 \ge K_1 + K_2s_ny \right) \le \Sigma e^{-y^2}\end{equation*}
where,
$$r_n =   \left(\frac{nc_1^2}{m^2s_0\log{(ep/s_0)}}\right)^{1/3}\log{\left(\frac{nc_1^2}{m^2s_0\log{(ep/s_0)}}\right)}^{-1/3} \,.$$
and $s_n \to 0$ as $n \to \infty$.  
%
\end{theorem}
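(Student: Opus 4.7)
The proof follows the template of Theorem \ref{high-dim-rate}, replacing the binary empirical score with its multinomial analogue and substituting the quadratic curvature of Proposition \ref{alpha-kappa} with that of Proposition \ref{multi-alpha-kappa}. I keep the SRM architecture from Section \ref{p-more-n} intact: nested sparsity classes $\mathscr{M}_i = \{\beta \in \mathcal{S}^{p-1}: \|\beta\|_0 \le i\}$ for $i \le \lfloor n/\log p\rfloor$, restricted maximizers $\hat{\beta}_m = \argmax_{\beta \in \mathscr{M}_m} S_n^{(mult)}(\beta)$, their population counterparts $\beta^0_m$, and the VC dimension $V_m \asymp m\log(ep/m)$ of each pair-indicator classifier family $\{\mathbf{1}((\m{x}_j - \m{x}_k)^\top\beta > 0) : \beta \in \mathscr{M}_m\}$. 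The only structural change is that the penalty must now scale like $r_n^{-2}$, so I take
\begin{equation*}
\pen(\mathscr{M}_m) = K\left(\frac{m^2 V_m \log(nc_1^2/(m^2 V_m))}{nc_1^2}\right)^{2/3}
\end{equation*}
for a sufficiently large absolute constant $K$.

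The first substantive step is a uniform excess-risk bound over each $\mathscr{M}_m$. The per-observation summand inside $S_n^{(mult)}$ is a bounded linear combination (with coefficient $1/(m(m-1))$) of the $m(m-1)$ pair indicators, each drawn from a VC-subgraph class of dimension at most $V_m$. Proposition \ref{multi-alpha-kappa} provides the variance-to-excess-risk calibration, the range of each summand is $O(1)$, and the effective variance carries an extra $1/(m^2 c_1)$ from Assumption \ref{ass:wedge_mult}(1). Plugging these quantities into a Talagrand/Massart-type concentration inequality (Theorem 2 of \cite{massart2006risk}) produces, for each $m$ and all $y \ge 1$, an event of probability at least $1 - e^{-y^2}$ on which
\begin{equation*}
S^{(mult)}(\beta^0_m) - S^{(mult)}(\hat{\beta}_m) \le \kappa\,\pen(\mathscr{M}_m) + \kappa\,\frac{m^2 y^2}{nc_1^2}
\end{equation*}
for an absolute constant $\kappa$. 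The sub-Gaussian $e^{-y^2}$ tail here is what ultimately yields the $e^{-y^2}$ tail in the theorem statement (in contrast to the sub-exponential tail of Theorem \ref{high-dim-rate}).

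The second step is the standard SRM balancing argument. I attach weights $e^{-m}$ to the classes (so $\Sigma = \sum_m e^{-m} < \infty$), take a weighted union bound, and use $\beta^0 \in \mathscr{M}_{s_0}$ together with the definition of $\hat m$ as the minimizer of $-S_n^{(mult)}(\hat{\beta}_m) + \pen(\mathscr{M}_m)$ to deduce that, with probability at least $1 - \Sigma e^{-y^2}$,
\begin{equation*}
S^{(mult)}(\beta^0) - S^{(mult)}(\hat{\beta}_{\hat m}) \lesssim \pen(\mathscr{M}_{s_0}) + \frac{m^2 y^2}{nc_1^2}.
\end{equation*}
Inverting this inequality via Proposition \ref{multi-alpha-kappa} gives
\begin{equation*}
\|\hat{\beta}_{\hat m} - \beta^0\|_2^2 \lesssim \frac{1}{c_1}\left[\pen(\mathscr{M}_{s_0}) + \frac{m^2 y^2}{nc_1^2}\right].
\end{equation*}
Taking square roots, substituting $V_{s_0} \asymp s_0\log(ep/s_0)$, and matching constants yields the stated bound with rate $r_n$; the sequence $s_n \to 0$ (guaranteed by Assumption \ref{ass:sparsity_mult}) absorbs the lower-order term $r_n \cdot \sqrt{m^2 y^2/(nc_1^2)}/\sqrt{c_1}$ times $y$.

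The main obstacle is the uniform empirical-process step. Relative to the binary case, one must track the $m$-dependence of the range and variance of the multinomial score summand so that the Massart bound produces the penalty $\pen(\mathscr{M}_m)$ with exactly the exponent $2/3$ and the advertised $m^2/c_1^2$ factor, rather than something weaker. A second subtlety, absent from the binary argument, is that Proposition \ref{multi-alpha-kappa} controls the curvature in terms of the \emph{truncated} wedge probabilities on $\{\|\m X\|_F \le R\}$, whereas the empirical process is driven by the untruncated indicator probabilities; bridging these two requires Assumption \ref{ass:wedge_mult}(3), which must be invoked at the correct place to avoid losing a factor of $c_3$ inside the exponent. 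Once the variance-to-curvature reduction is carried out with these constants in hand, the remainder of the proof is a transcription of the argument for Theorem \ref{high-dim-rate}, with the binary $C_n$ replaced uniformly by the multinomial scaling $m^2/c_1^2$.
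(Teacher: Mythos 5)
Your proposal follows essentially the same route as the paper: the SRM architecture with sparsity-indexed classes of effective entropy $m^2V_i$, a Massart-type oracle inequality for the penalized excess risk, and inversion through the quadratic curvature of Proposition \ref{multi-alpha-kappa}; the paper implements the union-bound/weighting step through its Theorem \ref{our_model_selection} with weights $x_i = n\epsilon_i^4/\omega^2(\epsilon_i)$ rather than your $e^{-m}$, but your choice also satisfies the summability and penalty-domination requirements, so this is a cosmetic difference. One point needs correcting: the $e^{-y^2}$ tail does \emph{not} come from the concentration inequality for the excess risk, which is sub-exponential in its natural parametrization (deviation linear in $y$, tail $e^{-y}$) exactly as in Theorem \ref{high-dim-rate}; your displayed per-model bound is merely that inequality reparametrized by $y \mapsto y^2$. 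The sub-Gaussian tail in $\|\hat\beta_{\hat m}-\beta^0\|_2$ arises downstream, because Assumption \ref{ass:low_noise_mult} imposes $2t^*C>1$, which forces the curvature in Proposition \ref{multi-alpha-kappa} to be purely quadratic; taking the square root of the excess-risk deviation then converts the $e^{-y}$ tail into $e^{-y^2}$. This is also the genuine reason Theorem \ref{high-dim-rate} only achieves $e^{-t}$: its curvature has a linear branch far from $\beta^0$, and on that branch the $\ell_2$ deviation inherits the sub-exponential tail directly. With that mechanism in place, the rest of your argument matches the paper's.
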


\begin{remark}
\label{rem:multi_high_dim}
Recall from Remark \ref{rem:multi_low_dim}, when $m=2$, one can always take $c_1 = 1/2$. The rate of convergence then becomes $\left(n/(s_0\log{(ep/s_0)}\log{(n/s_0\log{(ep/s_0)})})\right)^{1/3}$ (similar to the rate obtained in Theorem \ref{high-dim-rate} when $C_n$ is fixed), which can be further simplified to $(n/\left(s_0\log{p}\log{n}\right))$ under the specific choices of $s_0, p$ taken in Remark \ref{rem:high-dim-rate}. As in the moderate growth regime, the additional term $(c_1/m)^2$ in $r_n$ above is the adjustment for the growing number of utilities. 
\end{remark}

\section{Computational Aspects}\label{computation}
In this section we investigate the performance of a number of procedures employ for estimating $\beta_0$ in the binary choice model and compare their performances. Specifically, we consider the following three methods: 
\begin{enumerate}
\item Logistic regression.
\item Support Vector Machine. 
\item Homotopy path-following framework adapted in \cite{feng2019nonregular}. 
\end{enumerate} 

We divide our simulation studies into two sections: the slowly growing regime, i.e. $p/n \rightarrow 0$ and the fast growing regime i.e. $p \gg n$. The algorithm based on homotopy path-following framework described in \cite{feng2019nonregular} is tailored to the scenario $p \gg n$, hence we only compare SVM and logistic regression under the slowly growing regime, and all three methods (with the $\ell_1$ penalized versions of SVM (see \cite{zhu20041}) and logistic regression) when $p \gg n$. Our primary data generation mechanism is common to both regimes (with slight changes in the $p \gg n$ to accommodate sparsity considerations) and is described below:

\begin{enumerate}
\item {\bf Generation of the true $\beta^0$}: For the regime $p/n \rightarrow 0$, each entry of $\beta^0$ is generated from the $\text{Unif}(1,2)$ distribution and then normalized to make its $\ell_2$ norm 1. For the regime $p \gg n$, each of the $s_0$ active entries is generated randomly from $\text{Unif} (2,3)$ and then normalized to keep $\|\beta_0\|_2 = 1$. This $\beta_0$ remains fixed over all monte-carlo iterations. 
\vspace{0.1in}
 \item Generate $X_1, \dots, X_n$ from $\mathcal{N}(0,\Sigma)$ where the dispersion matrix $\Sigma$ has the following form: 
 $$
\Sigma_{i,j}=
\begin{cases}
1, & \text{if } i = j\\
\rho^{|i-j|} & \text{otherwise} \,,
\end{cases}
$$
where we take $\rho = 0.5$ for our simulations. 
\item We generate the co-variate dependent errors $\epsilon_i's$ as follows: for $1 \le i \le n$, $$\epsilon_i|X_i \sim \mathcal{N}(0, \sigma^2_i)$$ where $\sigma^2_i = 1 \vee |X_i^{\top} \beta_0|$.
\vspace{0.1in}
\item Finally we set $Y_i = \s(X_i^{\top}\beta^0 + \epsilon_i)$ for all $1 \le i \le n$. (or one can set $Y_i = \mathds{1}(X_i^{\top}\beta^0 + \epsilon_i \ge 0)$ depending on how one wants to encode the binary variable). 
\end{enumerate}
The idea behind this model is that, the data close to the boundary are more informative than the data far away. Note also, that when the variability of the error near the boundary is low, estimation is a relatively easy task. Hence, to challenge the existing methods, we assume non-negligible error variance near the boundary. For the simulation setting above, for a point $X$ near the boundary, i.e. $X^{\top}\beta^0 \approx 0$, the (conditional) variance of the error is $\approx 1$, and as one moves to points away from the boundary, the (conditional) variability of the response increases depending on their distance from the true hyperplane. 

\subsection{Estimation error \textbf{$p = o(n)$}}
We explore three different growth patterns of $p$ relative to $n$: 
\begin{align*}
1. \ p = \lfloor n^{1/4} \rfloor \hspace{1in}  2.\  p = \lfloor n^{1/2} \rfloor   \hspace{1in} 3. \  p = \lfloor n^{3/4} \rfloor
\end{align*}
\noindent
where $\lfloor x \rfloor$ is the floor function. The sample size $n$ ranges as: $n = 12000, 14000, 16000, 18000, 20000$. 
\newline
Consider first the performance of SVM on the generated data. Below are three density plots of scaled estimation error $(n/p)^{1/3}\|\hat \beta - \beta_0\|_2$ based on 500 monte-carlo iterations: 

\begin{figure}[H]
\centering
\begin{subfigure}{.5\textwidth}
  \centering
  \includegraphics[width=1\linewidth]{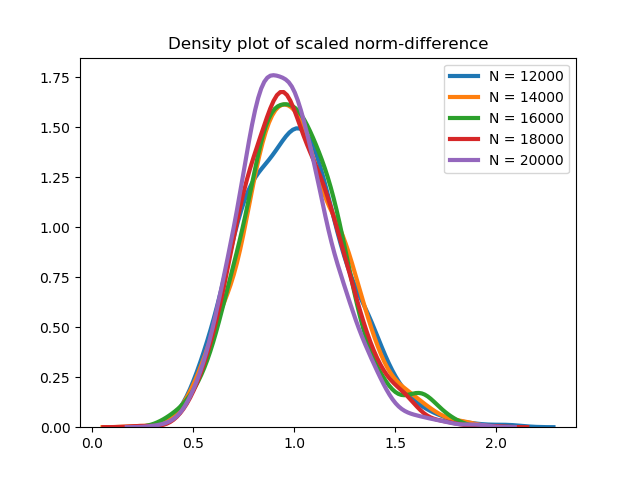}
 \caption{Density plot using SVM when $p = \lfloor n^{1/4} \rfloor$}
  \label{fig:sub1}
\end{subfigure}%
\begin{subfigure}{.5\textwidth}
  \centering
  \includegraphics[width=1\linewidth]{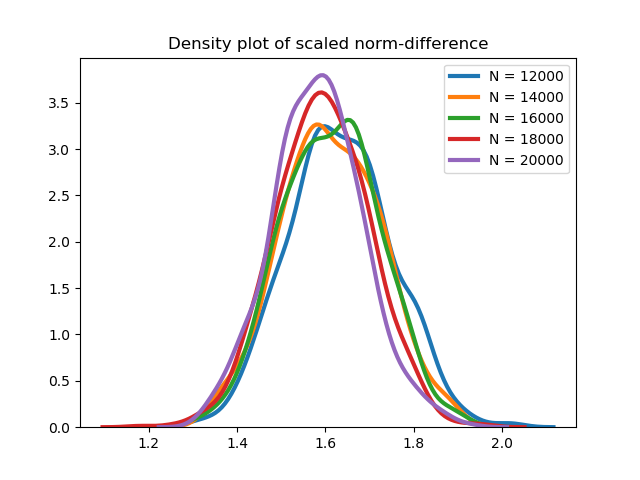}
  \caption{Density plot using SVM when $p = \lfloor n^{1/2} \rfloor$}
  \label{fig:sub2}
\end{subfigure}
\begin{subfigure}{.5\textwidth}
  \centering
  \includegraphics[width=1\linewidth]{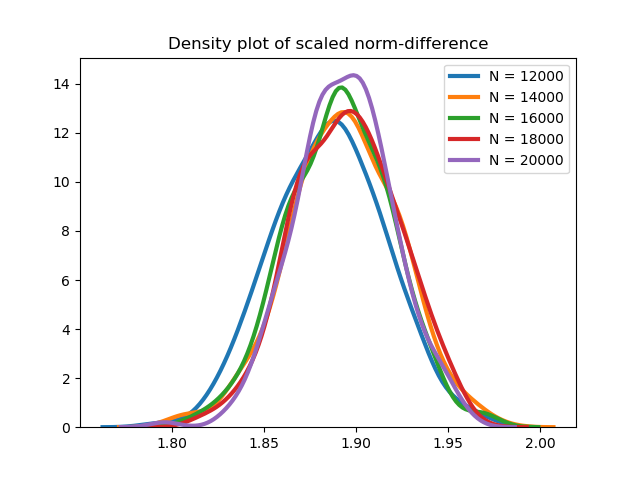}
  \caption{Density plot using SVM when $p = \lfloor n^{3/4} \rfloor$}
  \label{fig:sub2}
\end{subfigure}
\caption{Density plots of scaled estimation error $(n/p)^{1/3}\|\hat \beta - \beta^0\|_2$ using SVM}
\label{fig:svm}
\end{figure}

As is evident from the density plots above, the distribution of the normalized errors is quite stable across the different values of $n$ suggesting that the SVM method is giving a quite decent approximation to the actual score estimator. 
We next apply simple logistic regression for estimating $\beta^0$. As we except, this does not perform as well as SVM owing to model mis-specification. The reason we study logistic regression is because it is typically the bread and butter option for dealing with binary response regression, but as we see below is quite suspect in this situation. Below are the plots from logistic regression:

\begin{figure}[H]
\centering
\begin{subfigure}{.5\textwidth}
  \centering
  \includegraphics[width=1\linewidth]{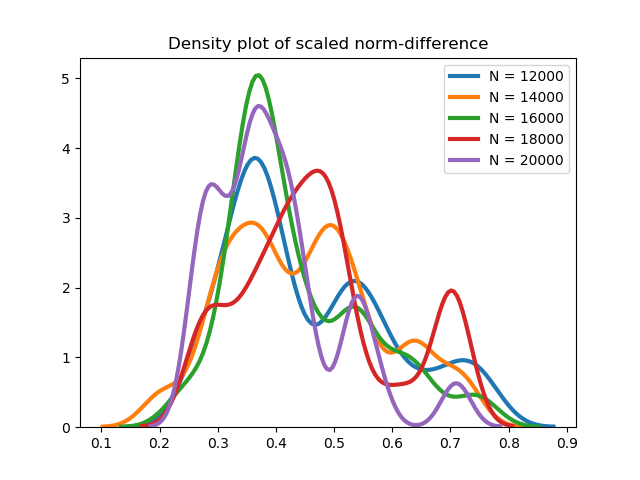}
 \caption{Density plot using LR when $p = \lfloor n^{1/4} \rfloor$}
  \label{fig:sub1}
\end{subfigure}%
\begin{subfigure}{.5\textwidth}
  \centering
  \includegraphics[width=1\linewidth]{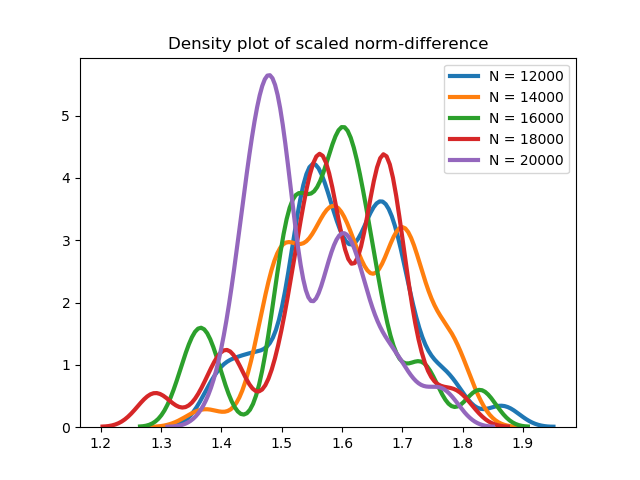}
  \caption{Density plot using LR when $p = \lfloor n^{1/2} \rfloor$}
  \label{fig:sub2}
\end{subfigure}
\begin{subfigure}{.5\textwidth}
  \centering
  \includegraphics[width=1\linewidth]{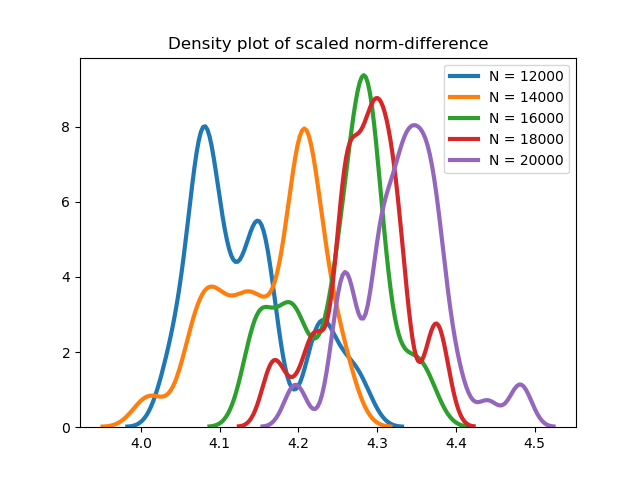}
  \caption{Density plot using LR when $p = \lfloor n^{3/4} \rfloor$}
  \label{fig:sub2}
\end{subfigure}
\caption{Density plots of scaled estimation error $(n/p)^{1/3}\|\hat \beta - \beta^0\|_2$ using Logistic Regression}
\label{fig:svm}
\end{figure}

It is quite clear from the plots that the scaled error is not converging with $n$ and behaves in a rather erratic manner, and the SVM algorithm is markedly superior.  Further investigation into SVM based algorithms in this and similar models can constitute a potentially interesting topic for future research.

\subsection{Model selection and estimation when $p \gg n$}
As mentioned in the Introduction, our problem can be viewed as a binary classification problem with a linear Bayes' classifier. Under the sparsity assumption, only a few covariates contribute to the classification. To identify these covariates, we  resort to a penalized classification approach. We here employ three methods: 
\begin{enumerate}
\item $\ell_1$ penalized SVM.
\item $\ell_1$ penalized logistic regression. 
\item Homotopy path-following framework adapted in \cite{feng2019nonregular}. 
\end{enumerate}
Recall that the data generating mechanism has already been described. We take $n= 2000, p = 10000$ for our simulations and use five different values of $s_0 = 10, 20, 30, 40, 50$. Our goal is to investigate how the performance of the classifier changes as we increase $s_0$ keeping $n$ and $p$ fixed. The penalty parameters for logistic regression and SVM are selected using grid search and two-fold cross-validation. We also implement the algorithm based on homotopy path-following framework adapted in \cite{feng2019nonregular}. We assess the performances of these three approaches based on the following discrepancy measures: 
\begin{enumerate}
\item Misclassification error. 
\item Norm difference between $\hat \beta$ and $\beta^0$. 
\item No. of true active variables not selected (denoted by Type 2 error).
\item No. of true null variables selected (denoted by Type 1 error).
\end{enumerate}
The following plots provide a visual representation of the comparisons for different values of sparsity and across the three methods. 
\begin{figure}[H]
\centering
\begin{subfigure}{.48\textwidth}
  \centering
  \includegraphics[width=1\linewidth]{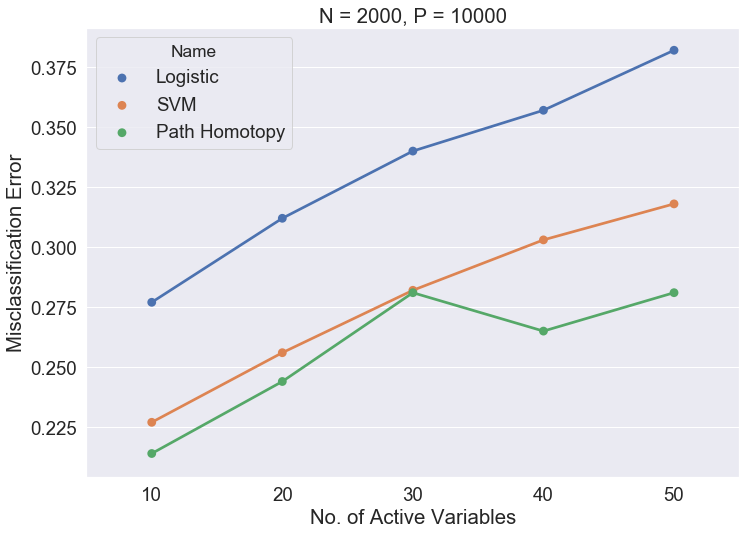}
 \caption{Misclassification error}
  \label{fig:sub1}
\end{subfigure}%
\begin{subfigure}{.48\textwidth}
  \centering
  \includegraphics[width=1\linewidth]{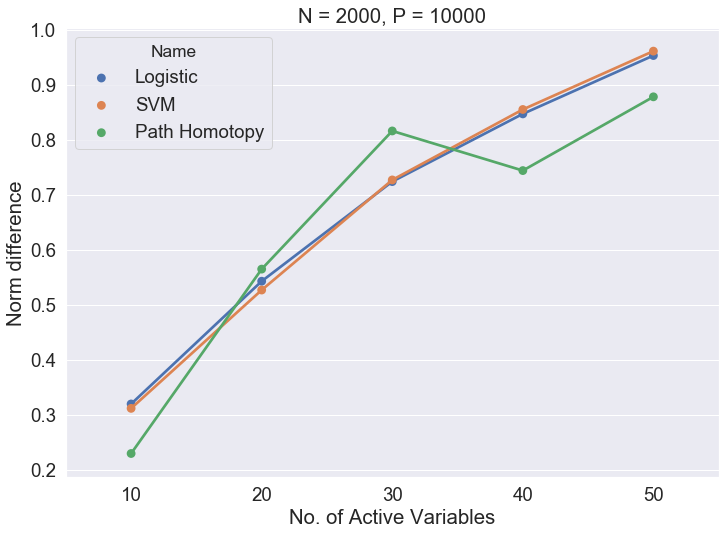}
  \caption{Estimation error using $\ell_2$ norm}
  \label{fig:sub2}
\end{subfigure}
\begin{subfigure}{.48\textwidth}
  \centering
  \includegraphics[width=1\linewidth]{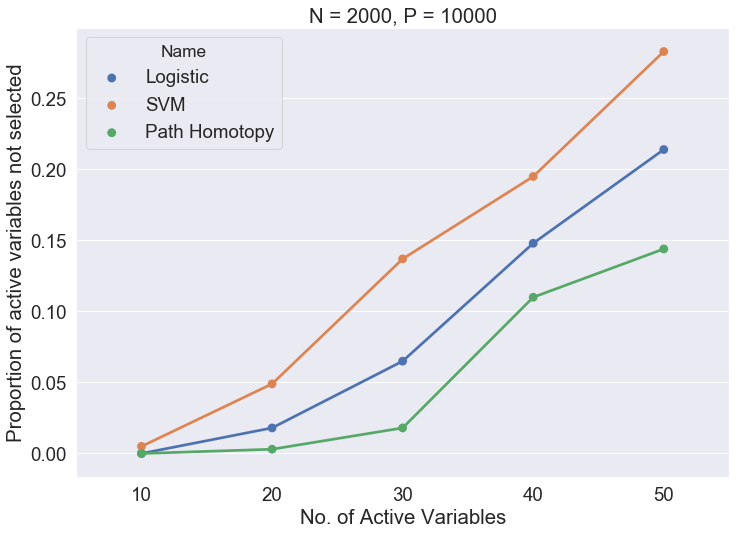}
  \caption{Type 2 error}
  \label{fig:sub2}
\end{subfigure}
\begin{subfigure}{.48\textwidth}
  \centering
  \includegraphics[width=1\linewidth]{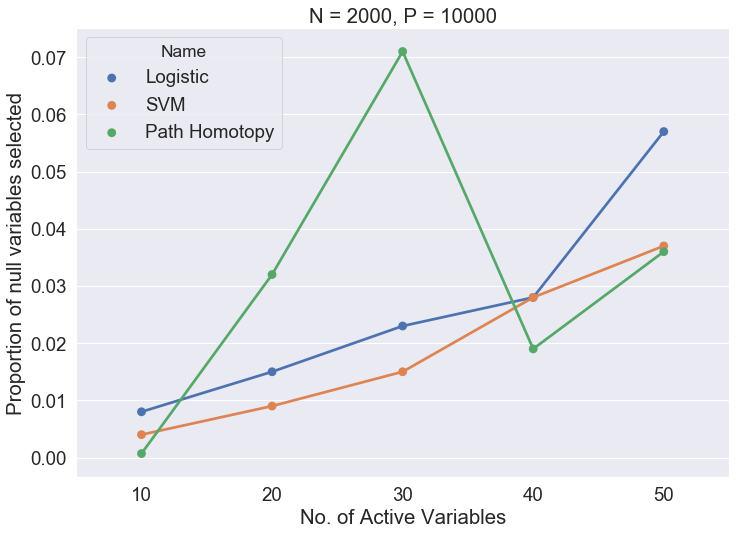}
  \caption{Type 1 error}
  \label{fig:sub2}
\end{subfigure}
\caption{Comparisons among the three approaches}
\label{fig:svm}
\end{figure}

It is clear that logistic regression is generally outperfomed by some other method in this model. The performance of SVM and the algorithm proposed in \cite{feng2019nonregular} are generally at par, though from eye-inspection, the later seems superior. For example, in terms of  mis-classification error and norm-difference, their performance is similar; in some cases algorithm of \cite{feng2019nonregular} performs better than SVM, while in other cases SVM wins marginally. Type 2 error (proportion of true active variables missed by the method) is generally lower for the algorithm in \cite{feng2019nonregular} when compared to SVM, whilst Type 1 error (proportion of null variables declared active) is generally higher: SVM is more conservative in terms of selecting variables. 

Depending on one's priorities, one may weigh Type 1 and Type 2 errors differently to generate a weighted misclassification error. In the absence of any such information, it is natural to assign equal weights, which leads to the sum of thes two errors, as shown in the following plot: 

\begin{figure}[H]
\centering
\includegraphics[scale = 0.5]{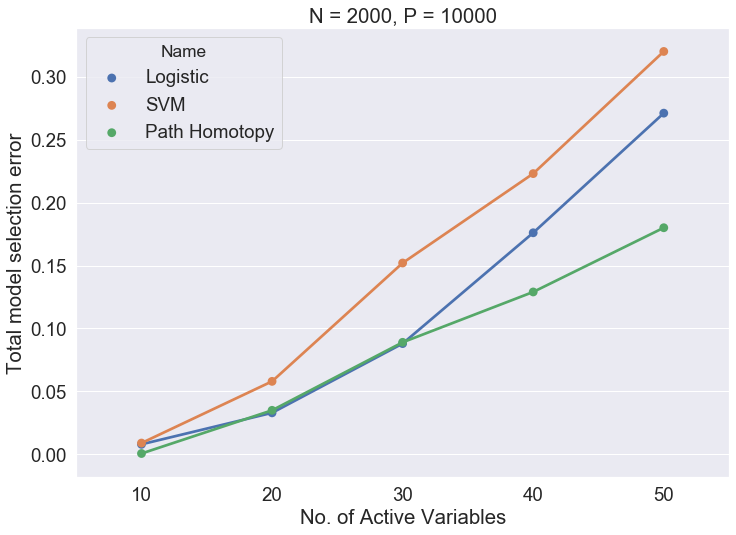}
\end{figure}

We find that the algorithm proposed in \cite{feng2019nonregular} better under this metric especially for large $s_0$ which is explained by tendency of SVM not selecting enough active variables. 

Based on our study, it appears that one is better off with the algorithm proposed in \cite{feng2019nonregular} in the $p \gg n$ scenario, though, of course much larger scale simulations would be necessary to make any general recommendations. As the focus of our paper is largely theoretical, we do not develop these studies any further but note that a thorough investigation of computationally feasible methods in this and related problems involving optimization of discontinuous functions along with analytical assessments of their performance constitutes an open direction of research. 
\section{Concluding Discussion}
\label{discussion}
We close with a discussion of various aspects of the high-dimensional binary choice model and our approach to the problem. 

\subsection{Exploring and relaxing our assumptions:}
\label{Assumption A2}
It is of interest to investigate sufficient conditions under which Assumption \ref{ass:wedge_binary}  and Assumption \ref{ass:A2:upper} hold. We show in Lemma \ref{A2lemma} in the supplement that these two assumptions hold simultaneously when $X$ arises from an elliptically symmetric distribution centered at 0, under some restrictions on the minimum and maximum eigenvalues of its orientation matrix. Assumption \ref{ass:A2:upper} also holds for elliptically symmetric distributions centered at 0 but under some further mild conditions, as demonstrated in Lemma \ref{Lipschitz-lemma}.

\subsection{Model with intercept: }
Our treatment thus far has considered a model of the form $Y^{\star} = X^{\top} \beta_0 + \epsilon$ for a \emph{random} $X$. However, many practical scenarios necessitate the inclusion of an intercept term where the term $X^{\top} \beta_0$ is replaced by $(1, X^{\top}) (\tau_0, (\beta^0)^{\top})^{\top}$. Assumption \ref{ass:wedge_binary} then naturally generalizes to
$$P(\s(\tau^0 + X^{\top} \beta^0) \ne s(\tau + X^{\top} \beta)) \geq c' \sqrt{(\tau - \tau^0)^2 + \|(\beta - \beta^0)\|_2^2} \,.$$
However, we cannot expect this to be satisfied for all $(\tau, \beta)$ with $\|\beta\| = 1$ when $\tau$ varies in an unconstrained manner. Consider for example the case that $X \sim N(0, I_p))$ so that $X^{\top} \beta$ and $X^{\top} \beta^0$ are both standard normal. In this case, when $\tau$ and $\tau^0$ are very large, the signs of $\tau^0 + X^{\top} \beta^0$ and $\tau + X^{\top} \beta$ are primarily driven by the magnitudes of $\tau$ and $\tau^0$, so that if these two parameters have sign, the probability of the signs being different can be made as small as one pleases depending on the magnitudes of the $\tau$'s. This entails controlling the magnitudes of the $\tau$'s relative to the $\beta$'s; in particular, if the absolute magnitudes of the $\tau$'s are kept bounded away from $\infty$, a restricted version of Assumption \ref{ass:wedge_binary}, in the sense that the inequality in Assumption \ref{ass:wedge_binary} is fulfilled for all $\beta$ sufficiently close to $\beta_0$, is, indeed, verifiable for certain families of distributions including elliptically symmetric $X$ centered at the origin, as well as $X$'s with independent components where each component has a symmetric log-concave density with mode at 0. The $\ell_2$ convergence and minimax lower bound results established in this paper  still continue to hold, but to accommodate the restricted version of this assumption, the proofs presented in the supplement need to be slightly modified. An elaborate and rigorous discussion of such models with intercept is available in section \ref{Discussion of intercept} of the supplement. 

\subsection{Asymptotic distribution} In their seminal paper, Kim and Pollard \cite{kp90} proved that for fixed $p$, $n^{1/3}(\hat{\beta} - \beta^0)$ converges in distribution to the maximizer of a Gaussian process with quadratic drift. Our treatment of the binary choice model should be contrasted with their approach: while they assumed the continuous differentiability of both the density of $X$ and $\eta(x) = P(Y=1|X=x)$ and a compact support for $X$, we have made no such assumptions. We have tackled those aspects of this problem from the classification point of view, with assumptions on the growth of $P(Y=1|X=x)$ near the Bayes hyperplane and in addition, conditions on the distribution of $X$ to ensure that sufficiently many observations are available around the Bayes hyperplane. 
As far as the asymptotic distribution of the score estimator in growing dimensions (or functionals thereof) is concerned, this is, in itself, a mathematically formidable problem, well outside the scope of this paper. Based on what we know in the fixed $p$ setting, the forms of such distributions are likely to be extremely complicated. 
The question remains whether tractable asymptotic distributions for making inference on components of $\beta^0$ in the growing $p$ setting could be obtained for smoothed versions of the score estimator, in the spirit of Horowitz's paper \cite{horowitz1992}. This is likely to be an interesting but challenging avenue for future research on this subject.

\section{Selected Proofs}
\label{proofs}
\subsection{Proof of Theorem \ref{multistage-estimator}}
We generate $a_n(8n/p)^{\frac{p-1}{3}}$ points uniformly from the surface of the sphere (where $a_n \uparrow \infty$ will be chosen later), maximize the empirical score function $S_n(\beta)$ over these selected points and show that the maximizer achieves the desired rate. Define $T_n = a_n(8n/p)^{(p-1)/3}$ and $E_n$ to be the collection of $T_n$ points generated uniformly. \\
We start with the following technical lemma that plays a key role in the proof. 
\begin{lemma}
\label{distance}
Suppose $D(x,r)$ denotes a spherical cap around $x$ of radius $r$, i.e. $$D(x,r) = \{y \in S^{p-1}: \|x-y\|_2 \le r\}$$ Then we have $$\frac{1}{2}(r/2)^{p-1} \le \sigma(D(x,r)) \le \frac{1}{2\sqrt{2}}r^{p-1} $$ for $0 \le r \le 1$ and $p \ge 8$, where $\sigma$ is the uniform measure on the sphere, i.e. the proportion of the surface of the spherical cap to the surface area of the sphere.
\end{lemma}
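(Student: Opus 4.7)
My plan is to compute $\sigma(D(x,r))$ by reducing to the marginal distribution of a single inner product. If $Z$ is uniform on $S^{p-1}$ and $x \in S^{p-1}$ is fixed, the scalar $U = \langle x, Z\rangle$ has the classical density $f_U(u) = B(1/2, (p-1)/2)^{-1}(1-u^2)^{(p-3)/2}$ on $[-1,1]$. Since $\|x - y\|_2^2 = 2 - 2\langle x, y\rangle$ on the sphere, the cap $D(x,r)$ corresponds exactly to $\{U \ge 1 - r^2/2\}$. Changing variables $v = 1 - u$ and factoring $1 - u^2 = v(2-v)$ gives
\begin{equation*}
\sigma(D(x,r)) = \frac{1}{B(1/2, (p-1)/2)} \int_0^{r^2/2} v^{(p-3)/2}(2-v)^{(p-3)/2}\, dv.
\end{equation*}

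The crucial observation is that $0 \le r \le 1$ forces $v \in [0, 1/2]$, so the factor $(2-v)^{(p-3)/2}$ is sandwiched between the explicit constants $(3/2)^{(p-3)/2}$ and $2^{(p-3)/2}$. Pulling this factor out of the integral and evaluating the remaining polynomial integral $\int_0^{r^2/2} v^{(p-3)/2}\, dv = 2 \cdot 2^{-(p-1)/2} r^{p-1}/(p-1)$ reduces both the upper and lower bounds to explicit multiples of $r^{p-1}/\bigl((p-1)\, B(1/2, (p-1)/2)\bigr)$, differing only by the ratio $(4/3)^{(p-3)/2}$.

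To remove the beta function I would invoke Gautschi's inequality, which yields $\sqrt{2\pi/p} \le B(1/2, (p-1)/2) \le \sqrt{2\pi/(p-2)}$. Substituting the upper estimate into the upper bound gives $\sigma(D(x,r)) \le r^{p-1}\sqrt{p}/\bigl((p-1)\sqrt{2\pi}\bigr)$, and to conclude $\sigma(D(x,r)) \le r^{p-1}/(2\sqrt{2})$ it suffices to check $2\sqrt{p} \le (p-1)\sqrt{\pi}$, equivalently $4p \le \pi(p-1)^2$, a routine verification for $p \ge 8$. Symmetrically, inserting the lower estimate into the lower bound and cancelling powers of $2$ and $3/2$ reduces the claim $\sigma(D(x,r)) \ge (r/2)^{p-1}/2$ to the inequality $8 \cdot 3^{(p-3)/2}\sqrt{p-2} \ge (p-1)\sqrt{2\pi}$, which pits exponential against linear growth in $p$ and holds comfortably from $p=8$ onward.

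I do not anticipate any conceptual obstacle: everything is classical. The main piece of care is algebraic bookkeeping to ensure that the constants match the precise numerical values $1/(2\sqrt{2})$ and $1/2$ claimed in the lemma. The hypothesis $p \ge 8$ is essentially the threshold at which the $\sqrt{p/(p-2)}$ slack in the Gautschi envelope, combined with the $(4/3)^{(p-3)/2}$ spread between the upper and lower estimates of $(2-v)^{(p-3)/2}$, becomes small enough to yield the claimed clean constants.
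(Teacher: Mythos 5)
Your proof is correct, and it takes a genuinely different route from the paper's. The paper does not prove this lemma from scratch: in its appendix it quotes a standard convex-geometry bound for the \emph{angular} cap, $\sigma(C(x,\epsilon)) \le \tfrac{1}{2\epsilon\sqrt{p}}(1-\epsilon^2)^{(p-1)/2}$ for $\epsilon \ge \sqrt{2/p}$, converts it to the distance parametrization via $\epsilon = 1 - r^2/2$ to get the upper bound, and simply cites the literature for the lower bound. You instead derive the exact integral representation
$\sigma(D(x,r)) = B(1/2,(p-1)/2)^{-1}\int_0^{r^2/2} \bigl(v(2-v)\bigr)^{(p-3)/2}\,dv$
from the beta-law marginal of $\langle x, Z\rangle$, and both bounds then fall out of sandwiching $(2-v)^{(p-3)/2}$ between $(3/2)^{(p-3)/2}$ and $2^{(p-3)/2}$ on $v \in [0,1/2]$, plus Gautschi's inequality for the beta constant. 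I checked the bookkeeping: the upper bound reduces to $4p \le \pi(p-1)^2$ and the lower bound to $8\cdot 3^{(p-3)/2}\sqrt{p-2} \ge (p-1)\sqrt{2\pi}$, both of which hold for $p \ge 8$ (indeed well below that threshold for the upper bound), and your Gautschi envelope $\sqrt{2\pi/p} \le B(1/2,(p-1)/2) \le \sqrt{2\pi/(p-2)}$ is the correct specialization with $x=(p-2)/2$, $s=1/2$. What your approach buys is a self-contained proof of both inequalities from one formula --- in particular it supplies the lower bound, which the paper leaves entirely to a citation; what it costs is a little more algebra than the paper's two-line conversion of a quoted estimate.
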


\noindent
For a brief discussion on this Lemma, see section \ref{Convex geometry}. The next lemma shows that we can find at least one point in our collection which is within a distance of $(p/n)^{1/3}$  of $\beta_0$ with probability $\uparrow 1$.  
\begin{lemma}
\label{cover}
Let $\Omega_{-1,n}$ denote the event that there exists at least one $\beta' \in E_n$ such that $\|\beta' - \beta^0\|_2 \le (p/n)^{1/3}$. Then $P(\Omega_{-1,n}) 
\rightarrow 1$. 
\end{lemma}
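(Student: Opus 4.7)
My plan is to use Lemma \ref{distance} to lower bound the uniform measure of a spherical cap of radius $(p/n)^{1/3}$ around $\beta^0$, and then show that among the $T_n$ independent uniform draws on $S^{p-1}$, at least one falls in this cap with probability tending to $1$.

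Concretely, set $r_n = (p/n)^{1/3}$. For $n$ large enough (so that $r_n \le 1$ and $p \ge 8$), Lemma \ref{distance} gives
\[
\sigma\bigl(D(\beta^0, r_n)\bigr) \;\ge\; \tfrac{1}{2}(r_n/2)^{p-1} \;=\; \tfrac{1}{2^p}\,(p/n)^{(p-1)/3}.
\]
Since the $T_n$ points are drawn independently and uniformly from $S^{p-1}$, the probability that none of them lies in $D(\beta^0, r_n)$ is at most
\[
\bigl(1 - \sigma(D(\beta^0, r_n))\bigr)^{T_n} \;\le\; \exp\!\bigl(-T_n \,\sigma(D(\beta^0, r_n))\bigr).
\]

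Plugging in $T_n = a_n (8n/p)^{(p-1)/3}$, the exponent becomes
\[
T_n \,\sigma(D(\beta^0, r_n)) \;\ge\; a_n \,(8n/p)^{(p-1)/3} \cdot \tfrac{1}{2^p}\,(p/n)^{(p-1)/3} \;=\; a_n \cdot \frac{8^{(p-1)/3}}{2^p} \;=\; a_n \cdot \frac{2^{p-1}}{2^p} \;=\; \frac{a_n}{2},
\]
where the simplification uses $8^{(p-1)/3}=2^{p-1}$. Hence $\P(\Omega_{-1,n}^c) \le e^{-a_n/2}$, which tends to $0$ because $a_n \uparrow \infty$ by construction. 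I don't foresee a real obstacle here; the only thing to be slightly careful about is that $r_n \le 1$ and $p \ge 8$, which are both eventually satisfied since $p/n \to 0$, so the hypothesis of Lemma \ref{distance} is available for all large $n$.
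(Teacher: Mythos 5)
Your argument is correct and is essentially the paper's own proof: both lower-bound $\sigma(D(\beta^0,(p/n)^{1/3}))$ via Lemma \ref{distance} and then bound the probability that all $T_n$ independent uniform points miss the cap, the only difference being that you make the bound $(1-x)^{T_n}\le e^{-T_n x}$ explicit and compute the exponent as $a_n/2$. No gaps.
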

\begin{proof}
Using Lemma \ref{distance} we have the following bound:
\allowdisplaybreaks
\begin{align*}
\P(\exists \beta' \in E_n \ \text{such that} \,\,\, \|\beta' - \beta^0\|_2 \le (p/n)^{1/3}) & \ge 1 - \left(1- \frac{1}{2}(p/8n)^{\frac{p-1}{3}}\right)^{a_n(8n/p)^{\frac{p-1}{3}}} \\
& \rightarrow 1 \,\,\, \text{as} \,\,\, n \rightarrow \infty \,.
\end{align*}
\end{proof}
\noindent
Let $\tilde{\beta}$ denote the point closest to $\beta^0$. On $\Omega_{-1,n}$, $\|\tilde{\beta} - \beta^0\| \leq (p/n)^{1/3}$. 
To establish the convergence rate, we will use a specific version of the shelling argument. Fix $T > 0$, sufficiently large. (In fact, as we work our way through the proof we will keep enhancing the value of $T$ as and when necessary, but as this will be done finitely many times, it won't have a bearing on the rate of convergence.) Consider shells $C_i$ around the true parameter $\beta^0$, where $$C_i = \{\beta \in S^{p-1}: \|\beta - \beta^0\|_2 \le T(p/n)^{1/3}2^i\} = D(\beta^0, r_i)$$ with $r_i = T(p/n)^{1/3}2^i$, for $i = 0,1, .. A_n$ and $A_n \overset{\Delta} = \frac{1}{3}\log_2{(n/p)} - \log_2{T}$. We will compute an upper bound on the number of elements of $B_i = E_n \cap C_i$ for all $i \in \{0,1, \cdots, A_n\}$.
\begin{lemma}
\label{upper-bound}
For all $i \in \{0,1, \cdots, A_n\}$, $$|B_i| = |E_n \cap C_i| \le 2T_np_i \le \frac{1}{2\sqrt{2}}a_n(T2^{i+1})^{p-1} \leq a_n (T 2^{i+1})^p$$ with exponentially high probability where $p_i = \sigma(D(\beta^0, r_i))$.
\end{lemma}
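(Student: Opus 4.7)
The plan is to recognize that $|B_i|$ is exactly $\mathrm{Binomial}(T_n, p_i)$, so the claimed bound is a one-sided multiplicative deviation bound around the mean. Since the points in $E_n$ are drawn i.i.d.\ uniformly from $S^{p-1}$ and $C_i = D(\beta^0, r_i)$, each point lies in $C_i$ independently with probability $p_i = \sigma(D(\beta^0,r_i))$, hence $\mathbb{E}\,|B_i| = T_n p_i$. The target is therefore to show that $|B_i|$ does not exceed twice its mean with probability exponentially close to one.

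For this I would invoke the standard multiplicative Chernoff bound: if $X \sim \mathrm{Binomial}(N, q)$, then $\P(X \ge 2Nq) \le \exp(-Nq/3)$. Applied here,
\begin{equation*}
\P\bigl(|B_i| > 2 T_n p_i\bigr) \le \exp\!\Bigl(-\tfrac{1}{3}\, T_n p_i\Bigr).
\end{equation*}
The main step is to verify that $T_n p_i$ grows exponentially in $p$, so that this tail probability is exponentially small. Using the lower bound from Lemma \ref{distance} with $r_i = T(p/n)^{1/3} 2^i$, we have $p_i \ge \tfrac{1}{2}(r_i/2)^{p-1}$. Substituting $T_n = a_n (8n/p)^{(p-1)/3}$ and exploiting the telescoping identity $(8n/p)^{(p-1)/3}(p/n)^{(p-1)/3} = 2^{p-1}$, straightforward algebra yields
\begin{equation*}
T_n p_i \;\ge\; \tfrac{a_n}{2}\,(T\cdot 2^i)^{p-1},
\end{equation*}
which is exponentially large in $p$ for any $T\ge 1$. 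Thus $\exp(-T_n p_i/3) \le \exp\!\bigl(-\tfrac{a_n}{6}(T\cdot 2^i)^{p-1}\bigr)$, the required exponential smallness.

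The remaining chain in the statement is just a computation: applying the upper half of Lemma \ref{distance}, $p_i \le \tfrac{1}{2\sqrt{2}} r_i^{p-1}$, and the same telescoping identity give $T_n p_i \le \tfrac{a_n}{2\sqrt{2}}(T\cdot 2^{i+1})^{p-1}$, which yields the second inequality, and the last inequality $\le a_n (T 2^{i+1})^p$ is an obvious absorption. Finally, since the number of shells is $A_n = O(\log n)$, a union bound over $i \in \{0,1,\ldots,A_n\}$ preserves the exponential-in-$p$ control and delivers the conclusion uniformly. The only non-routine ingredient is the algebraic cancellation that produces the clean lower bound $T_n p_i \gtrsim a_n (T\cdot 2^i)^{p-1}$; once this is in hand the Chernoff step is automatic.
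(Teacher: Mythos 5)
Your proof is correct and follows essentially the same route as the paper: represent $|B_i|$ as $\mathrm{Binomial}(T_n,p_i)$, apply a Chernoff bound at twice the mean, and use the lower bound of Lemma \ref{distance} together with the cancellation $(8n/p)^{(p-1)/3}(p/n)^{(p-1)/3}=2^{p-1}$ to show $T_np_i\gtrsim a_n(T2^i)^{p-1}$. The only cosmetic difference is that the paper derives the tail exponent via the KL form of the Chernoff bound, obtaining the constant $\log 4-1$ in place of your $1/3$, which is immaterial.
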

\begin{proof}
Let $N_i$ denote the number of points in $E_n \cap B_i$. Then $N_i \sim \text{Bin}(T_n , p_i)$ where $p_i = \sigma(D(\beta^0, r_i))$. For $i = A_n$, $p_i = 1$. So $P(N_i > 2T_np_i) = 0$. Hence we will only confine ourselves to the case $i \in \{0,1, \cdots, A_n -1\}$. In this case, $r_i \le 1$ and hence from Lemma \ref{distance} we have $p_i \le \frac{1}{2\sqrt{2}} < \frac{1}{2}$. From the Chernoff tail bound for the Binomial 
distribution we have, for each $i$: $P(N_i > 2T_np_i) \le \exp(-T_nD(2p_i || p_i))$, where 
$$D(2p_i || p_i) = 2p_i\log{\frac{2p_i}{p_i}} + (1-2p_i)\log{\frac{1-2p_i}{1-p_i}} = p_i\log{4} + (1-2p_i)\log{\frac{1-2p_i}{1-p_i}}$$ 
is the Kullback-Liebler divergence between Bernoulli$(p_i)$ and Bernoulli$(2p_i)$. This can be lower bounded thus:
\allowdisplaybreaks
\begin{align*}
D(2p_i || p_i) & = p_i\log{4} + (1-2p_i)\log{\frac{1-2p_i}{1-p_i}} \\
& = p_i\log{4} + (1-2p_i)\log{\frac{1-p_i - p_i}{1-p_i}} \\
& = p_i\log{4} + (1-2p_i)\log{\left(1-\frac{p_i}{1-p_i}\right)} \\
& \ge p_i\log{4} - (1-2p_i)\frac{\frac{p_i}{1-p_i}}{1-\frac{p_i}{1-p_i}} \hspace*{0.2in} [\because \log{1-x} \ge \frac{-x}{1-x}, \ p_i \le \frac{1}{2}] \\
& = p_i\left(\log{4} - 1\right) \\
& \ge \frac{1}{2}(T2^{i})^{p-1}\left(\frac{p}{8n}\right)^{p-1}(\log{4} - 1) \,.\hspace*{0.2in} [\text{Lemma} \ \ref{distance}]
\end{align*}
Using this upper bound we have:
\allowdisplaybreaks
\begin{align*}
P(N_i > 2T_np_i) & \le e^{\left(-T_n \frac{1}{2}(T2^{i})^{p-1}\left(\frac{p}{8n}\right)^{p-1}(\log{4} - 1)\right)} \\
& = e^{\left(- a_n(8n/p)^{p-1}\frac{1}{2}(T2^{i})^{p-1}\left(p/8n\right)^{p-1}(\log{4} - 1)\right)} \\
& = e^{\left(- a_n\frac{1}{2}(T2^{i})^{p-1}(\log{4} - 1)\right)}
\end{align*}
\end{proof}
\noindent
Define $\Omega_{i,n} = {N_i \le 2T_np_i}$ for $i = \{0,1,\cdots, A_n-1\}$ and let $\Omega_{n} = \cap_{i=-1}^{A_n-1}\Omega_{i,n}$. The following lemma says that the event $\Omega_n$ happens with high probability:
\begin{lemma}
For any $T>1$, $\P(\Omega_n) \rightarrow 1$ as $n \rightarrow \infty$.
\end{lemma}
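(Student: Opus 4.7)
The plan is to apply a simple union bound across the $A_n+1$ events $\Omega_{-1,n},\Omega_{0,n},\ldots,\Omega_{A_n-1,n}$, recycling the exponential estimate extracted in the proof of Lemma~\ref{upper-bound} together with the coverage estimate from Lemma~\ref{cover}. Concretely, I would write
\[
\P(\Omega_n^c) \;\le\; \P(\Omega_{-1,n}^c) + \sum_{i=0}^{A_n-1}\P(\Omega_{i,n}^c),
\]
and control each piece separately.

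For the zeroth piece, Lemma~\ref{cover} already gives $\P(\Omega_{-1,n}^c)\to 0$ (in fact the bound $\bigl(1-\tfrac12 (p/8n)^{(p-1)/3}\bigr)^{T_n}\le \exp(-a_n/2)$ is exponentially small since $a_n\uparrow\infty$). For the remaining pieces, I would invoke the intermediate estimate derived inside the proof of Lemma~\ref{upper-bound}, namely
\[
\P(\Omega_{i,n}^c)\;=\;\P(N_i>2T_n p_i)\;\le\;\exp\!\Bigl(-\tfrac{a_n}{2}(T2^{i})^{p-1}(\log 4 -1)\Bigr),
\]
valid for every $i\in\{0,1,\dots,A_n-1\}$, on which range we have $r_i\le 1$ so that Lemma~\ref{distance} applies.

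The crucial observation is that the exponent $(T2^{i})^{p-1}$ is increasing in $i$, so the worst (largest) term in the sum is the one at $i=0$. Therefore
\[
\sum_{i=0}^{A_n-1}\P(\Omega_{i,n}^c)
\;\le\; A_n\,\exp\!\Bigl(-\tfrac{a_n}{2}T^{p-1}(\log 4-1)\Bigr).
\]
Because $A_n=\tfrac13\log_2(n/p)-\log_2 T$ grows only logarithmically in $n$ while $a_n\uparrow\infty$ and $T>1$, this upper bound tends to $0$; in fact $\log A_n - \tfrac{a_n}{2}T^{p-1}(\log 4-1)\to -\infty$ as soon as $a_n$ is chosen to diverge (no matter how slowly). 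Combining the two pieces yields $\P(\Omega_n)\to 1$.

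The step requiring the most care is aligning the choice of the free sequence $a_n$ with the eventual rate target: $a_n$ must diverge (so that the exponential bounds beat the logarithmic factor $A_n$), but slowly enough that $T_n=a_n(8n/p)^{(p-1)/3}$ does not inflate subsequent concentration arguments. Since this trade-off has already been absorbed in the statement ``$a_n\uparrow\infty$ will be chosen later,'' the obstacle is essentially notational rather than mathematical; the hard analytic work has been done in Lemmas~\ref{distance}, \ref{cover}, and \ref{upper-bound}.
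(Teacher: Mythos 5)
Your proof is correct and follows essentially the same route as the paper: a union bound over $\Omega_{-1,n},\Omega_{0,n},\ldots,\Omega_{A_n-1,n}$, with the $i=-1$ term handled by Lemma~\ref{cover} and the remaining terms by the exponential estimate from Lemma~\ref{upper-bound}. The only (immaterial) difference is that you control the tail sum by $A_n$ times its largest term, whereas the paper dominates it by a convergent series of the form $\sum_{i}e^{-k2^{i}}$; both yield the same conclusion.
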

\begin{proof}
It is enough to show that $\sum_{i=-1}^{A_n -1}\P(\Omega_{i,n}^c) \rightarrow 0$ as $n \rightarrow \infty$. We have already established in Lemma \ref{cover} that $\P(\Omega_{-1,n}^c) \rightarrow 0$ as $n \rightarrow \infty$. Using Lemma \ref{upper-bound}:
\allowdisplaybreaks
\begin{align*}
\sum_{i=0}^{A_n -1}\P(\Omega_{i,n}^c) & \le \sum_{i=0}^{A_n -1} e^{\left(- a_n\frac{1}{2}(T2^{i})^{p-1}(\log{4} - 1)\right)} \hspace{0.2in} \,.
\end{align*}
Now for any fixed $n$, the maximum term obtains when $i=0$ i.e. $e^{\left(- a_n\frac{1}{2}(T)^{p-1}(\log{4} - 1)\right)}$ which goes to 0 for $T > 1$. 
Furthermore, the series under consideration is easily dominated by $\sum_{i=1}^{\infty}\,e^{-k 2^{i}}$ for some constant $k > 0$, which is clearly finite. 
Hence the series on the right-side of the above display goes to 0 with increasing $n$. 
\end{proof}

\noindent
The rest of the analysis will be done conditioning on the event $\Omega_n$. Define $\P_n(A) = \P(A \ \vert \ \Omega_n)$. Then we have:
\allowdisplaybreaks
\begin{align*}
& \hspace{-0.2in}\P\left(\|\hat{\beta}_{new} - \beta^0\|_2 > 3T(p/n)^{1/3} \right) \\
& \le \P\left(\|\hat{\beta}_{new} - \beta^0\|_2 > 3T(p/n)^{1/3} \ \vert \ \Omega_n\right)P(\Omega_n) + \P(\Omega_n^c) \\
& \le \P_n \left(\|\hat{\beta}_{new} - \beta^0\|_2 > 3T(p/n)^{1/3}\right) + \P(\Omega_n^c) \\
& \le \P_n \left(\|\hat{\beta}_{new} - \tilde{\beta}\|_2 > 2T(p/n)^{1/3}\right) + \P(\Omega_n^c) \\
& \le \P_n\left(\sup_{\beta \in B_0^c} S_n(\beta) - S_n(\tilde{\beta}) \ge 0\right) + \P(\Omega_n^c)  \\
& = \P_n\left(\cup_{i=1}^{A_n} \left\{\sup_{\beta \in B_i \cap B_{i-1}^c} S_n(\beta) - S_n(\tilde{\beta}) \ge 0\right\} \right) + \P(\Omega_n^c) \\
& \le \sum_{i=1}^{A_n} \P_n \left(\sup_{\beta \in B_i \cap B_{i-1}^c} S_n(\beta) - S_n(\tilde{\beta}) \ge 0\right) + \P(\Omega_n^c) \,.
\end{align*}
Since $\P(\Omega_n^c) \rightarrow 0$ as $n \rightarrow \infty$, we omit this term henceforth. Next, we analyze a general summand. Define $Z_i(\beta) = Y_i\s(X_i^{\top}\beta) - Y_i\s(X_i^{\top}\tilde{\beta})$. Then  $\frac{1}{n}\sum_{i=1}^n Z_i(\beta) = S_n(\beta) - S_n(\tilde{\beta})$ and $Z_i(\beta)$ assumes values $\{-2,0,2\}$. Also, $\E(Z_i(\beta)) = S(\beta) - S(\tilde{\beta})$. Using Proposition \ref{alpha-kappa} and Assumption \ref{ass:A2:upper} we have: 
\allowdisplaybreaks
\begin{align*}
S(\beta) - S(\tilde{\beta}) & =  S(\beta) - S(\beta^0) + S(\beta^0) - S(\tilde{\beta}) \\
& \le -u_- \|\beta - \beta^0\|^2_2 + u^+\|\tilde{\beta} - \beta^0\|^2_2 \\
& \le - (u_-/2) \|\beta - \beta^0\|^2_2
\end{align*}
for $T> \sqrt{(2u_+)/u_-}$. This implies $Z_i(\beta)$ has high probability of being negative. We exploit this to prove the concentration. To simplify the calculations,  define $\{Y_i(\beta)\}_{i=1}^n$ be to be a collection of independent random variables with $$Y_i(\beta) =
\begin{cases}
 2, & \text{with prob.}\, P(Z_i(\beta) = 2\ \vert \ Z_i(\beta) \neq 0)\\
 -2, & \text{with prob.}\, P(Z_i(\beta) = -2\ \vert \ Z_i(\beta) \neq 0)\\
\end{cases}$$
Hence the expectation of $Y_i(\beta)$ is:
\allowdisplaybreaks
\begin{align}
E(Y_i(\beta)) &= E(Z_i(\beta)|Z_i(\beta) \neq 0) \notag \\
& = \frac{ E(Z_i(\beta)|Z_i(\beta) \neq 0)P(Z_i(\beta) \neq 0)}{P(Z_i(\beta) \neq 0)} \notag  \\
& =  \frac{E(Z_i(\beta))}{P(\s(X^{\top}\beta) \neq \s(X^{\top}\tilde{\beta}))} \notag  \\
& \le \label{expectation} \frac{-(u_-/2)\|\beta - \beta^0\|^2_2}{P(\s(X^{\top}\beta) \neq \s(X^{\top}\tilde{\beta}))} 
\end{align}
For the rest of the calculations we need to bound $\P(\s(X^{\top}\beta) \neq \s(X^{\top}\tilde{\beta}))$. Towards that direction we have the following: 
\allowdisplaybreaks
\begin{align*}
& \P(\s(X^{\top}\beta) \neq \s(X^{\top}\tilde{\beta})) \\
& = \P(\s(X^{\top}\beta) \neq \s(X^{\top}\tilde{\beta}), \s(X^{\top}\tilde{\beta}) \neq \s(X^{\top}\beta^0)) \\ & \ \ \ \ \  +  \P(\s(X^{\top}\beta) \neq \s(X^{\top}\tilde{\beta}), \s(X^{\top}\tilde{\beta}) = \s(X^{\top}\beta^0)) \\
& \le \P(\s(X^{\top}\beta) \neq \s(X^{\top}\beta^0), \s(X^{\top}\tilde{\beta}) = \s(X^{\top}\beta^0)) +  \P(\s(X^{\top}\tilde{\beta}) \neq \s(X^{\top}\beta^0)) \\
& \le \P(\s(X^{\top}\beta) \neq \s(X^{\top}\beta^0)) +  \P(\s(X^{\top}\tilde{\beta}) \neq \s(X^{\top}\beta^0)) \\
& \le 2C' \|\beta - \beta^0\|_2
\end{align*}
for $T > 1$. For the lower bound we have: 
\allowdisplaybreaks
\begin{align*}
& \P(\s(X^{\top}\beta) \neq \s(X^{\top}\tilde{\beta})) \\
& \ge \P(\s(X^{\top}\beta) \neq \s(X^{\top}\tilde{\beta}), \s(X^{\top}\tilde{\beta}) = \s(X^{\top}\beta^0)) \\
& = \P(\s(X^{\top}\beta) \neq \s(X^{\top}\beta^0), \s(X^{\top}\tilde{\beta}) = \s(X^{\top}\beta^0)) \\
& \ge  \P(\s(X^{\top}\beta) \neq \s(X^{\top}\beta^0)) - \P(\s(X^{\top}\tilde{\beta}) \neq \s(X^{\top}\beta^0)) \\
& \ge c' \|\beta - \beta^0\|_2 - C'(p/n)^{1/3} \\
& \ge (c'/2) \|\beta - \beta^0\|_2 
\end{align*}
when $T>2C'/c'$, where we are also using the fact that $\beta \in B_0^c$. Putting the upper bound in equation \eqref{expectation} we have: 
$$E(Y_i(\beta)) \le - \frac{u_-}{4C'}\|\beta - \beta^0\|_2$$
So, if $P(Y_i(\beta) = 2) = P(Z_i(\beta) = 2\ \vert \ Z_i(\beta) \neq 0) = p_2$ (say) then $4p_2-2 \le (-u_-/4C')\|\beta - \beta^0\|_2$ which implies $p_2 \le \frac{1}{2} - (-u_-/16C')\|\beta - \beta^0\|_2$. Define $W_i(\beta) = \frac{Y_i(\beta) + 2}{4}$. Then $W_i(\beta) \sim \text{Ber}(p_2)$. Let $N$ denote the number of non-zero $Z_i(\beta)$'s. Then $N \sim \text{Bin}(n,p_1)$where $p_1 = \P(\s(X^{\top}\beta) \neq \s(X^{\top}\tilde{\beta}))$.
\allowdisplaybreaks
\begin{align*}
& \hspace*{-0.3in}\P_n\left(\sup_{\beta \in B_i \cap B_{i-1}^c} S_n(\beta) - S_n(\tilde{\beta}) \ge 0\right) \\
& \le \sum_{j:\beta_j \in B_i\cap B_{i-1}^c}\P_n\left(S_n(\beta_j) - S_n(\tilde{\beta}) \ge 0\right) \\
& \le \sum_{j:\beta_j \in B_i\cap B_{i-1}^c}\P_n\left(\sum_{i=1}^n Z_i(\beta_j) \ge 0\right) \\
& \le \sum_{j:\beta_j \in B_i\cap B_{i-1}^c}\sum_{m=1}^n \P_n\left(\sum_{i=1}^n Z_i(\beta_j) \ge 0 | N=m\right)\P(N = m) \\
& \le \sum_{j:\beta_j \in B_i\cap B_{i-1}^c}\sum_{m=1}^n \P_n\left(\sum_{i=1}^m Y_i(\beta_j) \ge 0 \right)\P(N = m) \\
& \le \sum_{j:\beta_j \in B_i\cap B_{i-1}^c}\sum_{m=1}^n \P_n\left(\sum_{i=1}^m W_i(\beta_j) \ge \frac{m}{2} \right)\P(N = m) \\
& \le  \sum_{j:\beta_j \in B_i\cap B_{i-1}^c}\sum_{m=1}^n (4p_2q_2)^{m/2}\dbinom{n}{m}p_1^m(1-p_1)^{n-m} \hspace{0.2in} [\text{Chernoff bound for Binomial tail probability}]\\
& \le  \sum_{j:\beta_j \in B_i\cap B_{i-1}^c} (1-p_1 + 2p_1\sqrt{p_2q_2})^n \\
& \le \sum_{j:\beta_j \in B_i\cap B_{i-1}^c} e^{n\log{(1-p_1(1 - \sqrt{4p_2q_2}))}} \\
& \le \sum_{j:\beta_j \in B_i\cap B_{i-1}^c} e^{n\log{\left(1-p_1\left(1 - \sqrt{1-(u_-^2/64C'^2)\|\beta - \beta^0\|^2}\right)\right)}} \\
& \le \sum_{j:\beta_j \in B_i\cap B_{i-1}^c} e^{n\log{\left(1-p_1\left((u_-^2/128C'^2)\|\beta - \beta^0\|^2_2\right)\right)}} \\
& \le \sum_{j:\beta_j \in B_i\cap B_{i-1}^c} e^{-np_1\left((u_-^2/128C'^2)\|\beta - \beta^0\|^2_2\right)} \\
& \le \sum_{j:\beta_j \in B_i\cap B_{i-1}^c} e^{-n\left(u_-^2c'/256C'^2)\|\beta - \beta^0\|^3_2\right)} \\
& \le \sum_{j:\beta_j \in B_i\cap B_{i-1}^c} e^{- p(u_-^2c'/256C'^2)T^32^{3(i-1)}} \\
& \le e^{\log{2T_np_i} - p(u_-^2c'/256C'^2)T^32^{3(i-1)}} \\
& \le e^{p\left[(\log{a_n}/p) + \log{T}+(i+1)\log{2} - (u_-^2c'/256C'^2)T^32^{3(i-1)}\right]} \hspace*{0.2in} [\text{by Lemma} \ \ref{upper-bound}]
\end{align*}
Thus we can take $a_n = p$ to ignore the effect of $(\log{a_n}/p)$. Putting this back in equation (1) we get:
\allowdisplaybreaks
\begin{align*}
& \hspace{-0.5in} \sum_{i=1}^{A_n} \P\left(\sup_{\beta \in B_i \cap B_{i-1}^c} S_n(\beta) - S_n(\tilde{\beta}) \ge 0\right) \\
& \le 2 \sum_{i=1}^{A_n} e^{p\left[\log{T}+ i\log{2} - (u_-^2c'/256C'^2)T^32^{3(i-1)}\right]} \\
& \rightarrow 0 \,\,\, \text{as} \,\,\, p \rightarrow \infty \,, 
\end{align*}
for large enough $T$ (by using similar arguments to the one used for handling the earlier series) which proves the theorem.

\subsection{Proof of Theorem \ref{minimax-high-lower}}
We use Fano's inequality along with the Gilbert-Varshamov Lemma to prove the minimax lower bound. Fano's inequality (or Local-Fano's inequality) gives us a lower bound on the minimax risk as follows:  
If $\Theta' \subseteq \Theta$ is a finite $2\epsilon$ packing set, i.e. for any two $\theta_i, \theta_j \in \Theta'$, $\|\theta_i - \theta_j\|_2 \ge 2\epsilon$ with $|\Theta'| = M$, then, based on $n$ i.i.d. samples $z_1, z_2, \dots, z_n \sim P_{\theta}$ we have the following minimax lower bound: $$\inf_{\hat{\theta}}\sup_{\theta \in \Theta} \E\left(\|\hat{\theta} - \theta\|^2\right) \ge \epsilon^2\left(1-\frac{\frac{n}{M^2}\sum_{i,j}KL(\P_{\theta_i} || \P_{\theta_j}) + \log{2}}{\log{(M-1)}}\right)$$
The crux of the proof relies on constructing competing models that approach each other at an optimal rate, as $n$ increases. We start with a 
preliminary lemma. 
\begin{lemma}
\label{KL}
If $P \sim Ber(p_1)$ and $Q \sim Ber(q_1)$ and if $\frac{1}{4} \le q_1 \le \frac{3}{4}$, then $KL(P||Q) \le \frac{16}{3}(p-q)^2$
\end{lemma}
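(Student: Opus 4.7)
The plan is to bypass a direct Taylor expansion of $p_1\log(p_1/q_1) + (1-p_1)\log((1-p_1)/(1-q_1))$ and instead pass through the $\chi^2$-divergence, which is both tight enough for our purposes and algebraically trivial in the Bernoulli case. Recall the standard majorization
\[
KL(P\|Q) \;\le\; \chi^2(P\|Q) \;=\; \sum_x \frac{(P(x)-Q(x))^2}{Q(x)},
\]
which is a one-line consequence of $\log t \le t-1$ applied to $t = P(x)/Q(x)$ (expand $\log(P/Q) = -\log(Q/P)$ inside the expectation and use the bound).

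Specializing to two Bernoulli measures gives
\[
\chi^2(\mathrm{Ber}(p_1)\,\|\,\mathrm{Ber}(q_1)) \;=\; \frac{(p_1-q_1)^2}{q_1} + \frac{(p_1-q_1)^2}{1-q_1} \;=\; \frac{(p_1-q_1)^2}{q_1(1-q_1)}.
\]
The assumption $1/4 \le q_1 \le 3/4$ forces $q_1(1-q_1) \ge 3/16$ (the quadratic $q(1-q)$ is concave and symmetric around $1/2$, so its minimum on $[1/4,3/4]$ occurs at the endpoints and equals $3/16$). Combining the two displays yields
\[
KL(\mathrm{Ber}(p_1)\,\|\,\mathrm{Ber}(q_1)) \;\le\; \frac{(p_1-q_1)^2}{q_1(1-q_1)} \;\le\; \frac{16}{3}(p_1-q_1)^2,
\]
which is the claimed bound.

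There is essentially no obstacle here: the only non-obvious ingredient is the $KL \le \chi^2$ inequality, which is standard, and the only numerical step is minimizing $q_1(1-q_1)$ on the prescribed interval. A direct proof via Taylor expansion of $KL$ around $p_1=q_1$ is also possible but would require controlling a second-order remainder uniformly over $q_1\in[1/4,3/4]$, which is strictly more work for no gain. The $\chi^2$ route gives the constant $16/3$ on the nose without any further optimization.
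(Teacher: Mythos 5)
Your proof is correct and is essentially the paper's own argument in disguise: the paper applies $\log x \le x-1$ term by term and simplifies to $(p_1-q_1)^2/\bigl(q_1(1-q_1)\bigr)$, which is exactly the $\chi^2$-divergence you invoke, followed by the same bound $q_1(1-q_1)\ge 3/16$ on $[1/4,3/4]$.
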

\noindent
The proof of the Lemma appears in supplement in section \ref{Proof of KL}. We next state the \emph{Gilbert-Varshamov Lemma} for convenience (see \cite{raskutti2011minimax} and references therein), that guides the construction of $\Theta^{'}$ in our problem.

\begin{lemma}[{\bf Gilbert-Varshamov}]
Define $d_H$ to be the Hamming distance, i.e. $d_H(x,y) = \sum_{i=1}^d\mathds{1}_{x_i \neq y_i}$ with $d$ being the underlying dimension. Given any $s$ with $1 \le s \le \frac{d}{8}$, we can find $w_1, \cdots, w_M \in \{0,1\}^d$ such that:
\begin{enumerate}[a)]
\item $d_H(w_i, w_j) \ge \frac{s}{2} \,\, \forall \,\, i \neq j \in \{1,2, \cdots, M\}$.
\item $\log{M} \ge \frac{s}{8}\log{\left(1+\frac{d}{2s}\right)}$
\item $\|w_j\|_0 = s \ \forall j \ \in\{1,2, \cdots, M\}$.
\end{enumerate}
\end{lemma}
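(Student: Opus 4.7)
The plan is to establish this classical combinatorial fact via a greedy packing argument on the weight-$s$ slice of the Hamming cube. Let $\mathcal{W}_s = \{w \in \{0,1\}^d : \|w\|_0 = s\}$, so $|\mathcal{W}_s| = \binom{d}{s}$. I would start by picking $w_1 \in \mathcal{W}_s$ arbitrarily, then greedily selecting $w_j \in \mathcal{W}_s$ at Hamming distance at least $s/2$ from all previously selected points, continuing until no further admissible point exists. Let $M$ denote the cardinality of the resulting maximal set; then properties (a) and (c) hold by construction.

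To control $M$, I would use a volume argument. By maximality, every $w \in \mathcal{W}_s$ satisfies $d_H(w, w_j) < s/2$ for some $j$, hence
\begin{equation*}
M \cdot \max_{w \in \mathcal{W}_s} \bigl|B_H(w, s/2) \cap \mathcal{W}_s\bigr| \;\ge\; \binom{d}{s}.
\end{equation*}
For any fixed $w \in \mathcal{W}_s$, the Hamming distance to $w' \in \mathcal{W}_s$ is necessarily even, say $2a$, because $w'$ is obtained by flipping $a$ of the $s$ ones of $w$ to zeros and $a$ of the $d-s$ zeros of $w$ to ones. Thus
\begin{equation*}
\bigl|B_H(w, s/2) \cap \mathcal{W}_s\bigr| \;=\; \sum_{a=0}^{\lfloor (s-1)/4 \rfloor} \binom{s}{a}\binom{d-s}{a}.
\end{equation*}

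The remaining step is purely analytic. I would upper bound the sum by a polynomial-in-$s$ factor times its dominant term $\binom{s}{\lfloor s/4\rfloor}\binom{d-s}{\lfloor s/4 \rfloor}$, then apply the standard estimates $\binom{n}{k}\le (en/k)^k$ for the upper bound and $\binom{d}{s}\ge (d/s)^s$ for the lower bound. The ratio $\binom{d}{s}/\binom{d-s}{\lfloor s/4\rfloor}$ produces the $(d/s)^{\Theta(s)}$ factor that drives the logarithm, and collecting exponents while absorbing the polynomial factor into the constant in front of $s$ yields $\log M \ge (s/8)\log(1 + d/(2s))$, which is property (b). The constraint $s \le d/8$ is used here to ensure that $\lfloor s/4 \rfloor$ is comfortably smaller than $d-s$ so that the monotonicity of the binomial coefficient up to its mode is available, and that $d/(2s)$ and $d/s$ agree up to an absolute constant.

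The main obstacle is arithmetic bookkeeping to extract the specific constants $1/8$ and $1 + d/(2s)$ rather than some weaker variant such as $\log M \gtrsim s \log(d/s)$; while no new ideas are needed beyond the standard binomial inequalities, one must be careful when substituting $\binom{d-s}{a}$ for $\binom{d}{a}$ and when absorbing a $\log s$ term into the exponential bound, which is harmless precisely because $s/8 \cdot \log(1 + d/(2s))$ grows at least like $s$ in the regime $d \ge 16 s$ covered by the hypothesis.
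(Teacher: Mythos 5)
The paper does not actually prove this lemma: it is imported from the literature (the text points to \cite{raskutti2011minimax} and references therein), so there is no in-paper argument to compare yours against. Your proposal is the standard route to such constant-weight Gilbert--Varshamov bounds, and its skeleton is correct: a maximal $s/2$-packing of the slice $\mathcal{W}_s$ delivers (a) and (c) by construction, maximality gives the covering inequality $M\cdot\max_{w}\bigl|B_H(w,s/2)\cap\mathcal{W}_s\bigr|\ge\binom{d}{s}$, and your count of the ball is exactly right (distances within the slice are even, with $\binom{s}{a}\binom{d-s}{a}$ vectors at distance $2a$, and the relevant range is $2a<s/2$).

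The one point I would press you on is the final constant-chasing, which you flag as the obstacle but treat too casually. With the specific estimates you name ($\binom{n}{k}\le(en/k)^k$ upstairs and $\binom{d}{s}\ge(d/s)^s$ downstairs) one gets roughly $\log M\ge\tfrac{3s}{4}\log(d/s)-\tfrac{s}{4}\log(16e^2)-\log(1+s/4)$; at the boundary $d=8s$ the margin over the target $\tfrac{s}{8}\log(1+\tfrac{d}{2s})$ is only about $0.17\,s$, which the $\log(1+s/4)$ term wipes out for small $s$ (e.g.\ $s\le 4$). So ``absorbing the polynomial factor into the constant'' does not work uniformly over $1\le s\le d/8$. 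A clean uniform fix is to bound the ratio directly rather than numerator and denominator separately: $\sum_{2a<s/2}\binom{s}{a}\binom{d-s}{a}\big/\binom{d}{s}$ is the probability that a uniform weight-$s$ vector $w'$ overlaps $\mathrm{supp}(w)$ in at least $\lceil 3s/4\rceil$ coordinates, and the elementary hypergeometric tail bound $\P(Z\ge k)\le\binom{s}{k}(s/d)^k\le 2^s(s/d)^{3s/4}$ gives $\log M\ge\tfrac{3s}{4}\log\bigl(d/(2^{4/3}s)\bigr)$. Property (b) then reduces to the inequality $(x/2^{4/3})^6\ge 1+x/2$ for $x=d/s\ge 8$, which holds (at $x=8$ the left side is $2^{10}$ versus $5$, and the ratio is increasing in $x$). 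With that replacement, or with a separate direct check for bounded $s$, your proof is complete.
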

\noindent
Fix $0 < \delta < 1/4$. To construct a $2\epsilon$ packing set $\left(\epsilon = \frac{\delta}{4}\right)$ of $S^{p-1}$, consider the following vectors: $$\beta_J = \frac{\left(1, \frac{\delta}{\sqrt{s}}w_J\right)}{\sqrt{1+\delta^2}} \,.$$
where $w_J \in W$, a subset of $\{0,1\}^{p-1}$ constructed using GV lemma. Let $\Theta' = \{\beta_J: J \in \{1,2, \cdots, M\}\} \subseteq \Theta = S^{p-1}$. For $I \neq J$, $$\|\beta_I- \beta_J\|_2^2 = \frac{\delta^2}{s(1+\delta^2)}d_H(w_I, w_J) \ge \frac{\delta^2}{4} \,.$$
For notational simplicity define $m(\delta) = \sqrt{1+\delta^2}$ and $C = C_n$. Fix $\alpha \ge 1$. Denote $\P_{\beta_J}(X,Y)$ as the joint distribution of $(X,Y)$ where $X \sim \mathcal{N}(0,I_p)$ and   $$P_{\beta_J}(Y = 1|X)=
    \begin{cases}
     \frac{1}{2}+ (\beta_J^\mathsf{T}X)/C, & \text{if} \,\, |\beta_J^\mathsf{T}X| \le \left(C\delta \vee \frac{|X_1|}{2m(\delta)}\right)\wedge \frac{1}{4}  \\
      \frac{1}{2}+\left[\left(\delta \vee \frac{|X_1|}{2Cm(\delta)}\right)\wedge \frac{1}{4}\right]\s(\beta_J^\mathsf{T}X), & \text{if} \,\, |\beta_J^\mathsf{T}X| > \left(C\delta \vee \frac{|X_1|}{2m(\delta)}\right)\wedge \frac{1}{4}
    \end{cases}$$
for all $J \in \{1,2, \cdots, M\}$. Now, for any $\beta_J \in \Theta'$, we have $\beta_J^\mathsf{T}X = \frac{X_1}{m(\delta)} + \tilde{\beta}_J^\mathsf{T}\tilde{X}$ where $\tilde{a} = (a_2, a_3, \cdots. a_p)$. As $\|\tilde{\beta}_J\|_0 = s$ by construction, we know $\tilde{\beta}_J^\mathsf{T}\tilde{X} = \frac{\delta Z_J}{m(\delta)}$ where $Z_J \sim \mathcal{N}(0,1)$ and independent of $X_1$. Thus, we have $\beta_J^\mathsf{T}X = \frac{X_1 + \delta Z_J}{m(\delta)}$. 

\begin{lemma} 
\label{low_noise_minimax_high}
The above family of distributions satisfy the margin assumption (Assumption \ref{ass:low_noise_binary}) for all $0 < t < 1/4$. 
\end{lemma}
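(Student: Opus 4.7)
The strategy is a direct case analysis over the two regions appearing in the definition of $\eta$, combined with standard Gaussian density bounds. Following the notation introduced just above the lemma, set $W := \beta_J^{\mathsf{T}}X = (X_1 + \delta Z_J)/m(\delta)$ with $Z_J \sim \mathcal{N}(0,1)$ independent of $X_1$, and define the threshold $A := (C\delta \vee |X_1|/(2m(\delta))) \wedge (1/4)$ that separates the two cases of the construction. By the definition of $\P_{\beta_J}(Y = 1 \mid X)$, one has $|\eta(X) - 1/2| = |W|/C$ on $\{|W| \le A\}$ and $|\eta(X) - 1/2| = (\delta \vee |X_1|/(2Cm(\delta))) \wedge (1/4)$ on $\{|W| > A\}$.

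Fix $0 < t < 1/4$. On the first piece, the event $\{|\eta(X)-1/2| \le t\}$ reduces to $\{|W| \le Ct\}$. On the second piece, because $t < 1/4$ the cap at $1/4$ is inactive, so the event reduces to $\{\delta \vee |X_1|/(2Cm(\delta)) \le t\}$, which is contained in $\{|X_1| \le 2Ctm(\delta)\}$. Combining the two cases yields
\[
\{|\eta(X)-1/2| \le t\} \;\subseteq\; \{|W| \le Ct\} \,\cup\, \{|X_1| \le 2Ctm(\delta)\}\,.
\]

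To finish, observe that both $W$ and $X_1$ are standard normal (the former because $X_1 + \delta Z_J \sim \mathcal{N}(0, m(\delta)^2)$, as noted in the excerpt). Bounding the standard normal density by $1/\sqrt{2\pi}$ gives $\P(|W| \le Ct) \le \sqrt{2/\pi}\,Ct$ and $\P(|X_1| \le 2Ctm(\delta)) \le 2\sqrt{2/\pi}\,Ctm(\delta)$; combining these with $m(\delta) = \sqrt{1+\delta^2} < \sqrt{2}$ for $\delta < 1/4$ yields $\P(|\eta(X)-1/2| \le t) \le KCt$ for some universal constant $K$. This is precisely Assumption \ref{ass:low_noise_binary} with $C_n := KC$ and $t^{\star} = 1/4$. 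The only mild subtlety is isolating the cap at $1/4$, which is why the restriction $t < 1/4$ is imposed: otherwise one would also have to account for the jump discontinuity of $\eta$ across $\{|W|=A\}$ when $A = 1/4$, which never arises in the regime of $t$ considered.
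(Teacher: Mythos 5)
Your proof is correct and follows essentially the same route as the paper's: the same decomposition of the event according to whether $|\beta_J^{\mathsf{T}}X|$ exceeds the threshold, the same containment of the second piece in $\{|X_1|\le 2Ctm(\delta)\}$, and the same Gaussian density bound $\sqrt{2/\pi}$ on each marginal. The only cosmetic differences are that you make the role of the cap at $1/4$ explicit and obtain a slightly sharper constant via $m(\delta)<\sqrt{2}$ where the paper uses $m(\delta)\le 2$.
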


\begin{proof}
Fix any $0 <  t < \frac{1}{4}$.
\begin{align*}
\allowdisplaybreaks
 \P_X(|\eta(X) - 0.5| \le t)
& =   \P_X\left(|\eta(X) - 0.5| \le t, |\beta_J^\mathsf{T}X| \le \left[C\delta \vee \frac{|X_1|}{2m(\delta)} \right ]\wedge 1/4\right) \\ + & \P_X\left(|\eta(X) - 0.5| \le t, |\beta_J^\mathsf{T}X| > \left[C\delta \vee \frac{|X_1|}{2m(\delta)} \right ]\wedge 1/4\right) \\
& =   \P_X\left( |\beta_J^\mathsf{T}X| \le Ct, |\beta_J^\mathsf{T}X| \le \left[C\delta \vee \frac{|X_1|}{2m(\delta)} \right ]\wedge 1/4\right) \\ +& \P_X\left(\left[\delta \vee \frac{|X_1|}{2Cm(\delta)} \right ]\wedge 1/4 \le t, |\beta_J^\mathsf{T}X| > \left[C\delta  \vee \frac{|X_1|}{2m(\delta)} \right ]\wedge 1/4\right) \\
& \le  \, \P_X(|\beta_J^\mathsf{T}X| \le t/\xi) + \P_X\left(\left[\delta \vee \frac{|X_1|}{2Cm(\delta)} \right ]\wedge 1/4 \le t\right) \\
& \le \, \P_X(|\beta_J^\mathsf{T}X| \le Ct) + \P_X(|X_1| \le 2Cm(\delta)t) \\
& \le  \, \sqrt{\frac{2}{\pi}}\left[Ct + 2Cm(\delta)t\right] \\
& \le  \, 5\sqrt{\frac{2}{\pi}} Ct \, .
\end{align*}
\end{proof}
\noindent
Define the event $A_I = \left\{|\beta_I^\mathsf{T}X| \le \left(C\delta \vee \frac{|X_1|}{2m(\delta)}\right)\wedge \frac{1}{4}\right\}$. Then we have the following lemma:
\begin{lemma}
If $X \in A_i \cup A_j$, then $$\left|P_{\beta_I}(Y=1|X) - P_{\beta_J}(Y=1|X)\right| \le \frac{1}{C} \left|\beta_I^\mathsf{T}X - \beta_J^\mathsf{T}X\right| =  \frac{1}{C} \left|\tilde{\beta}_I^\mathsf{T}\tilde{X} - \tilde{\beta}_J^\mathsf{T}\tilde{X}\right|$$
\end{lemma}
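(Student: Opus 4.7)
The key observation is that, for fixed $X$, the conditional probability $P_{\beta}(Y=1 \mid X)$ depends on $\beta$ only through the scalar $u = \beta^{\top} X$; thus we may define $h_X(u) = \frac12 + u/C$ for $|u| \le T_1$ and $h_X(u) = \frac12 + T_2\,\s(u)$ for $|u| > T_1$, where $T_1 := (C\delta \vee |X_1|/(2m(\delta)))\wedge \frac14$ and $T_2 := (\delta \vee |X_1|/(2Cm(\delta)))\wedge \frac14$. Writing $u_I := \beta_I^{\top}X$ and $u_J := \beta_J^{\top}X$, the lemma becomes the claim $|h_X(u_I) - h_X(u_J)| \le |u_I - u_J|/C$ whenever at least one of $u_I, u_J$ lies in $[-T_1, T_1]$. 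The final equality in the lemma is automatic from the decomposition $\beta^{\top}X = X_1/m(\delta) + \tilde\beta^{\top}\tilde X$ established just before the statement, since the $X_1$-term is common to $\beta_I$ and $\beta_J$ and cancels in the difference.

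I would then split into two cases. \emph{Case~1:} $X \in A_I \cap A_J$, so both $u_I, u_J$ belong to $[-T_1, T_1]$; on that interval $h_X$ is affine with slope $1/C$, and the inequality holds with equality. \emph{Case~2:} $X$ lies in exactly one of $A_I, A_J$; by symmetry of the target in $(I,J)$ we may assume $X \in A_I \setminus A_J$, and by the symmetry $u \mapsto -u$ of $h_X$ we may further assume $u_J > T_1$. Then $u_I \in [-T_1, T_1]$, $h_X(u_I) - h_X(u_J) = u_I/C - T_2$, and the target bound rearranges into the two inequalities $u_J \ge CT_2$ and $u_J \ge 2u_I - CT_2$.

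The principal technical step is therefore the boundary-matching identity $CT_2 = T_1$, encoding continuity of $h_X$ at the threshold; granted this, both inequalities follow at once, since $u_J > T_1 = CT_2$ gives the first, and $u_I \le T_1 = CT_2$ together with $u_J > T_1$ gives $2u_I - CT_2 \le 2T_1 - T_1 = T_1 < u_J$ for the second. Writing $T_1' := C\delta \vee |X_1|/(2m(\delta))$, one computes $T_1 = T_1' \wedge \frac14$ and $CT_2 = T_1' \wedge (C/4)$, so $CT_2 = T_1$ holds exactly on the regime $T_1' \le \frac14$; this is the statistically relevant regime and is secured by taking $\delta$ sufficiently small (e.g.\ $\delta \le 1/(4C)$) on the high-probability event $|X_1| \le m(\delta)/2$. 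The main obstacle in executing this plan is the careful bookkeeping of the two $\wedge \frac14$ truncations across sub-regimes of $(X_1, \delta, C)$, together with verifying that the complementary regime either does not arise for the minimax construction or contributes negligibly to the downstream KL bound in the proof of Theorem~\ref{minimax-high-lower}.
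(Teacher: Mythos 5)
Your reduction to the scalar function $h_X$, the clean handling of the case $X \in A_I \cap A_J$, and the rearrangement of the mixed case into the two inequalities $u_J \ge CT_2$ and $u_J \ge 2u_I - CT_2$ are all correct, and this is essentially the paper's route: the paper defers to Lemma \ref{intermediate-lemma}, whose proof disposes of the mixed case with a bare ``Hence.'' You have actually located the real difficulty more precisely than the paper does. But your resolution of it contains a computational slip and then an unclosed gap. The slip: with $T_1 = T_1' \wedge \frac14$ and $CT_2 = T_1' \wedge \frac{C}{4}$ and $C<1$, the boundary-matching identity $CT_2 = T_1$ holds iff $T_1' \le C/4$, not iff $T_1' \le 1/4$. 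Since $\delta < 1/4$ already gives $C\delta < C/4$, the good regime is exactly $\{|X_1| \le C m(\delta)/2\}$, whose complement has probability bounded away from zero (close to one when $C$ is small), so it can be dismissed neither as non-arising nor as negligible. The gap: on that complement the stated inequality genuinely fails pointwise. Take $u_I = T_1$ and $u_J = T_1 + \eta$: then $|h_X(u_I) - h_X(u_J)| = (T_1 - CT_2)/C$ is a fixed positive number while $|u_I - u_J|/C = \eta/C \to 0$, and such configurations of $(X_1, \tilde\beta_I^{\top}\tilde X, \tilde\beta_J^{\top}\tilde X)$ occur with positive probability.

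The honest conclusion is that the lemma, read literally against the displayed construction, is false, and the defect is in the construction rather than in your plan: the $\wedge\frac14$ truncation sits on the wrong side of the factor $C$, which makes $h_X$ discontinuous (with a downward jump) at the threshold and even lets $\frac12 + u/C$ leave the interval $[1/4,3/4]$ required by Lemma \ref{KL} when $C < 1/2$. The repair is to take the threshold in the case-split to be $CT_2 = \left(C\delta \vee \frac{|X_1|}{2m(\delta)}\right) \wedge \frac{C}{4}$, i.e.\ exactly $C$ times the jump height; then $h_X(u) - \frac12$ is the truncation of $u/C$ to $[-T_2, T_2]$, hence $h_X$ is continuous, monotone and $1/C$-Lipschitz on all of $\mathbb{R}$, and the lemma (without even restricting to $X \in A_I \cup A_J$) follows in one line, with the rest of the proof of Theorem \ref{minimax-high-lower} unaffected. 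If you insist on keeping the construction as written, you must replace the lemma by a weaker bound on the mixed-case event and verify that it still delivers the $\delta^3/C^2$ KL estimate; that is the work your proposal defers and cannot be waved through.
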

The proof follows the same arguments as that of Lemma \ref{intermediate-lemma} and is skipped. 
Next, we upper-bound the KL divergence:
\begin{lemma}
For any $I \neq J \in \{1,2, \cdots, M\}$, we have $$KL(\P_{\beta_I} || \P_{\beta_J}) \le \frac{128}{3}\sqrt{\frac{2}{\pi}}\left[6 + \sqrt{6} + \frac{4\phi(3)}{27}\right]\frac{\delta^3}{C^2}\,,$$
where $\phi$ is the standard normal density. 
\end{lemma}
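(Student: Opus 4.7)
The starting point is the chain rule for the KL divergence. Under both $\P_{\beta_I}$ and $\P_{\beta_J}$ the marginal of $X$ is $\mathcal{N}(0, I_p)$, so
$$KL(\P_{\beta_I}\|\P_{\beta_J}) \;=\; \E_X\!\left[KL(\mathrm{Ber}(\eta_I(X))\,\|\,\mathrm{Ber}(\eta_J(X)))\right],$$
where $\eta_K(X) := \P_{\beta_K}(Y=1\mid X)$. The piecewise construction of $\eta_K$ forces both conditional probabilities into $[1/4,3/4]$, so Lemma~\ref{KL} reduces matters to controlling $\E_X[(\eta_I(X)-\eta_J(X))^2]$. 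I would split this expectation according to whether $X \in A_I \cup A_J$ or $X \in (A_I \cup A_J)^c$. On $A_I \cup A_J$ the preceding intermediate lemma yields $|\eta_I(X)-\eta_J(X)| \le (1/C)\,|\tilde{\beta}_I^\mathsf{T}\tilde{X} - \tilde{\beta}_J^\mathsf{T}\tilde{X}|$; on the complement both $\eta_I$ and $\eta_J$ sit on the plateau $1/2 + [(\delta \vee |X_1|/(2Cm(\delta))) \wedge 1/4]\,\s(\beta^\mathsf{T}X)$, so the integrand vanishes unless $\s(\beta_I^\mathsf{T}X) \neq \s(\beta_J^\mathsf{T}X)$, in which case it is bounded in absolute value by $2[(\delta\vee|X_1|/(2Cm(\delta)))\wedge 1/4]$.

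The main work is the $A_I \cup A_J$ contribution. Set $W := \tilde{\beta}_I^\mathsf{T}\tilde{X} - \tilde{\beta}_J^\mathsf{T}\tilde{X} = (\delta/m(\delta))(Z_I-Z_J)$ with $Z_K := w_K^\mathsf{T}\tilde{X}/\sqrt{s} \sim \mathcal{N}(0,1)$, and decompose $Z_J = \rho Z_I + \sqrt{1-\rho^2}\,V$ where $\rho = 1-d_H(w_I,w_J)/(2s)$ and $V \sim \mathcal{N}(0,1)$ is independent of $(X_1, Z_I)$. Since the event $A_I$ depends only on $(X_1, Z_I)$, the orthogonality of $V$ gives
$$\E[W^2 \mathds{1}_{A_I}] \;=\; \left(\frac{\delta(1-\rho)}{m(\delta)}\right)^2 \E[Z_I^2 \mathds{1}_{A_I}] + \left(\frac{\delta\sqrt{1-\rho^2}}{m(\delta)}\right)^2 \P(A_I),$$
and symmetrically for $A_J$. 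With $U := X_1 + \delta Z_I \sim \mathcal{N}(0,m(\delta)^2)$, $A_I$ equals $\{|U| \le m(\delta)(C\delta \vee |X_1|/(2m(\delta))) \wedge m(\delta)/4\}$, which I would analyse by splitting on $|X_1| \le 2Cm(\delta)\delta$ versus $|X_1| > 2Cm(\delta)\delta$. In the first regime, Gaussian anticoncentration gives $\P(|U|\le Cm(\delta)\delta) \le \sqrt{2/\pi}\,C\delta$, together with $\E[Z_I^2 \mathds{1}_{A_I \cap \{|X_1|\le 2Cm(\delta)\delta\}}]$ of order $C^2 m(\delta)^2\delta$, so that after the $1/C^2$ factor the contribution is of order $\delta^3$ rather than the naive $\delta^2$. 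In the second regime the constraint $|X_1+\delta Z_I| \le |X_1|/2$ forces $Z_I$ into a narrow interval far in the tail, and after a change of variables the Mills-type estimate $\int_3^\infty \phi(z)/z^2\,dz \le \phi(3)/27$ (valid once the threshold is tuned so that $mC \ge 3$) delivers the $\phi(3)/27$ contribution. The complement $(A_I \cup A_J)^c$ is treated similarly: the sign-disagreement event there simultaneously forces $|X_1+\delta Z_I|$ and $|X_1+\delta Z_J|$ to exceed the threshold with opposite signs, again a Gaussian tail event controlled by the same Mills-ratio bound, and integrating the plateau square against its probability gives another $O(\delta^3/C^2)$ term.

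\textbf{Main obstacle.} The conceptual point is clean: $A_I$ has probability of order $\delta$, which combined with $W^2$ of order $\delta^2$ furnishes the extra $\delta$ factor that upgrades the naive $\delta^2$ bound to the required $\delta^3$. The real labour is the case analysis on the three sub-regimes of $|X_1|$ and the careful tracking of explicit constants, most notably the Mills-ratio integral $\int_3^\infty \phi(z)/z^2\,dz \le \phi(3)/27$ that produces the last summand in the stated coefficient $(128/3)\sqrt{2/\pi}\,[6+\sqrt{6}+4\phi(3)/27]$.
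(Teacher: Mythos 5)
Your plan follows the paper's proof almost step for step: the chain rule plus Lemma~\ref{KL} reduce the problem to bounding $\frac{16}{3}\E_X[(\eta_I(X)-\eta_J(X))^2]$; the expectation is split over $A_I\cup A_J$ versus $A_I^c\cap A_J^c$; the intermediate lemma supplies the linear bound $|\eta_I-\eta_J|\le|\tilde{\beta}_I^\mathsf{T}\tilde{X}-\tilde{\beta}_J^\mathsf{T}\tilde{X}|/C$ on the first set, and on the second set only the sign-disagreement event contributes the plateau value; and the decisive observation in both arguments is that $\P(A_I)=O(\delta)$, which upgrades the naive $\delta^2$ to the required $\delta^3$. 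The one place you genuinely depart from the paper is the evaluation of $\E[W^2\mathds{1}_{A_I}]$: you write $Z_J=\rho Z_I+\sqrt{1-\rho^2}\,V$ with $V$ independent of $(X_1,Z_I)$ and exploit the fact that $A_I$ is $(X_1,Z_I)$-measurable to get an exact second-moment identity in terms of $\E[Z_I^2\mathds{1}_{A_I}]$ and $\P(A_I)$, whereas the paper conditions on $\tilde{X}$, uses anticoncentration of $X_1$ to reach $\E_{\tilde{X}}\bigl[W^2|\tilde{\beta}_I^\mathsf{T}\tilde{X}|\bigr]$, and then applies Cauchy--Schwarz with a fourth moment of $W$; your identity is cleaner and sidesteps the fourth-moment step, and both yield $O(\delta^3/C^2)$. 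Two small caveats. First, your Mills-ratio estimate $\int_3^\infty\phi(z)z^{-2}\,dz\le\phi(3)/27$ is correct (integrate by parts using $\phi(z)=-\phi'(z)/z$), but the tuning ``$mC\ge 3$'' is not available when $C<1$; in that regime the far-tail event is not automatically three standard deviations out, and one must instead argue as the paper does, by noting that the sign-disagreement constraint confines $X_1$ to an interval of length $O(\delta|Z_J|)$ and then bounding $\E\bigl[Z_J^2|Z_J+3Cm(\delta)|\bigr]$ by absolute Gaussian moments. Second, neither route reproduces the displayed constant exactly; the paper's own proof in fact concludes with $6+\sqrt{3}+2(\sqrt{3}+6)/27+\sqrt{\pi}/(8\sqrt{2})$ rather than the $6+\sqrt{6}+4\phi(3)/27$ in the statement, but the precise constant is immaterial for the minimax application, which only needs $KL\lesssim\delta^3/C^2$.
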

\begin{proof}
\begin{align}
KL(\P_{\beta_I} || P_{\beta_J}) \notag
 & =  \E_X\left(KL(P_{\beta_I}(Y|X) || P_{\beta_J}(Y|X))\right) \notag \\
& \le \frac{16}{3}\E_X(P_{\beta_I}(Y|X)- P_{\beta_J}(Y|X))^2 \notag \\
& \le   \frac{16}{3}\left[\frac{1}{C^2}\E_X\left((\beta_I^\mathsf{T}X - \beta_J^\mathsf{T}X)^2\mathds{1}_{X \in A_i \cup A_j}\right) \right. \notag \\ & \ \ \ \ \ \ \ \ \  \left. + 4\E_X\left(\left[\left(\delta \vee \frac{|X_1|}{2Cm(\delta)}\right)\wedge \frac{1}{4}\right]^2\mathds{1}_{X \in A_i^c \cap A_j^c, \s(\beta_I^{\top}X) \neq \s(\beta_J^{\top}X)}\right)\right] \notag \\
\label{eq1-mlbh}& = \frac{16}{3}(S_1 + 4S_2) \hspace*{0.2in} [\text{say}] 
\end{align}
We analyze each summand separately, starting with $S_1$. 
\allowdisplaybreaks
\begin{align}
& \E_X\left((\beta_I^\mathsf{T}X - \beta_J^\mathsf{T}X)^2\mathds{1}_{X \in A_i \cup A_j}\right) \notag \\
& \le \, 2 \E_X\left((\beta_I^\mathsf{T}X - \beta_J^\mathsf{T}X)^2\mathds{1}_{X \in A_i}\right) \notag \\
& \le \, 2\E_X\left((\beta_I^\mathsf{T}X - \beta_J^\mathsf{T}X)^2\mathds{1}_{|\beta_I^\mathsf{T}X| \le \left(C\delta  \vee \frac{|X_1|}{2m(\delta)}\right)}\right) \notag \\
& \le \, 2\left[\E_X\left((\beta_I^\mathsf{T}X - \beta_J^\mathsf{T}X)^2\mathds{1}_{|X_1| \le 2m(\delta)|\tilde{\beta}_I^\mathsf{T}\tilde{X}|}\right)+\E_X\left((\beta_I^\mathsf{T}X - \beta_J^\mathsf{T}X)^2\mathds{1}_{|X_1| \le 2Cm(\delta)\delta}\right)\right] \notag \\
& \le \, 2\left[\E_{\tilde{X}}\left((\tilde{\beta}_I^\mathsf{T}\tilde{X} - \tilde{\beta}_J^\mathsf{T}\tilde{X})^2 P_{X_1|\tilde{X}}\left(|X_1| \le 2m(\delta)|\tilde{\beta}_I^\mathsf{T}\tilde{X}|\right)\right)+\E_{\tilde{X}}\left((\tilde{\beta}_I^\mathsf{T}\tilde{X} - \tilde{\beta}_J^\mathsf{T}\tilde{X})^2\right)P_{X_1}\left(|X_1| \le 2Cm(\delta)\delta\right)\right] \notag  \\
& \le \, \sqrt{\frac{8}{\pi}} \left[2m(\delta)\E_{\tilde{X}}\left((\tilde{\beta}_I^\mathsf{T}\tilde{X} - \tilde{\beta}_J^\mathsf{T}\tilde{X})^2|\tilde{\beta}_I^\mathsf{T}\tilde{X}|\right) + \frac{8}{m(\delta)}C\delta^3\right] \notag \\
& \le \, 4\sqrt{\frac{2}{\pi}} \left[m(\delta)\left(\E_{\tilde{X}}(\tilde{\beta}_I^\mathsf{T}\tilde{X} - \tilde{\beta}_J^\mathsf{T}\tilde{X})^4\right)^{\frac{1}{2}}\left(\E_{\tilde{X}}|\tilde{\beta}_I^\mathsf{T}\tilde{X}|^2\right)^{\frac{1}{2}} + \frac{4}{m(\delta)}\delta^3\right] \hspace{0.2in} [\because C < 1]\notag \\
& \le \, 4\sqrt{\frac{2}{\pi}} \left[\frac{\sqrt{12}}{m(\delta)^2} + \frac{4}{m(\delta)}\right]\delta^3 \notag \\
& \le \, 8\sqrt{\frac{2}{\pi}} \left[2 + \sqrt{3}\right]\delta^3 \,.\label{eq2-mlbh}
\end{align}
Now, on to $S_2$:
\allowdisplaybreaks
\begin{align}
& \E_X\left(\left(\left(\delta \vee \frac{|X_1|}{2C m(\delta)}\right)\wedge 1/4\right)^2\mathds{1}_{X \in A_i^c \cap A_j^c, \s(X^{\top}\beta_I) \neq \s(X^{\top}\beta_J)}\right) \notag \\
& = 2\E_X\left(\left(\left(\delta \vee \frac{|X_1|}{2Cm(\delta)}\right)\wedge 1/4\right)^2\mathds{1}_{\frac{X_1 + \delta Z_I}{m(\delta)} \ge \left(C\delta \vee \frac{|X_1|}{2m(\delta)}\right) \wedge 1/4} \mathds{1}_{\frac{X_1 + \delta Z_J}{m(\delta)} \le -\left(\left(C\delta  \vee \frac{|X_1|}{2m(\delta)}\right)\wedge 1/4\right)}\right) \notag \\
& = 2\E_X\left(\delta^2 \mathds{1}_{\frac{X_1 + \delta Z_I}{m(\delta)} \ge \delta} \mathds{1}_{\frac{X_1 + \delta Z_J}{m(\delta)} \le -\delta}\mathds{1}_{|X_1| \le 2Cm(\delta)\delta}\right) \notag \\ 
&  \qquad + 2\E_X\left(\frac{1}{C^2}\frac{|X_1|^2}{4m(\delta)^2}\mathds{1}_{X_1 + \delta Z_I \ge \frac{|X_1|}{2}} \mathds{1}_{X_1 + \delta Z_J \le -\frac{|X_1|}{2}}\mathds{1}_{\delta \le \frac{|X_1|}{2Cm(\delta)} \le 1/4}\right)  \notag \\ 
& \qquad \qquad + 2\E_X\left(\frac{1}{16}\mathds{1}_{X_1 + \delta Z_I \ge \frac{m(\delta)}{4}} \mathds{1}_{X_1 + \delta Z_J \le -\frac{m(\delta)}{4}}\mathds{1}_{\frac{|X_1|}{2Cm(\delta)} \ge 1/4}\right) \notag \\
& \le 2\E_X\left(\delta^2 \mathds{1}_{\frac{X_1 + \delta Z_I}{m(\delta)} \ge \delta} \mathds{1}_{\frac{X_1 + \delta Z_J}{m(\delta)} \le -\delta}\mathds{1}_{|X_1| \le 2Cm(\delta)\delta}\right) \notag \\ 
& \qquad + 2\E_X\left(\frac{1}{C^2}\frac{|X_1|^2}{4m(\delta)^2}\mathds{1}_{X_1 + \delta Z_I \ge \frac{|X_1|}{2}} \mathds{1}_{X_1 + \delta Z_J \le -\frac{|X_1|}{2}}\mathds{1}_{\delta \le \frac{|X_1|}{2Cm(\delta)}}\right)  \notag \\
& \qquad \qquad + 2\E_X\left(\frac{1}{16}\mathds{1}_{X_1 + \delta Z_I \ge \frac{1}{4}} \mathds{1}_{X_1 + \delta Z_J \le -\frac{1}{4}}\mathds{1}_{|X_1| \ge C/2}\right) \hspace{0.2in} [\because m(\delta) \ge 1]\notag \\
& \le 2\left[2\sqrt{\frac{2}{\pi}}Cm(\delta)\delta^3 + 2\E_X\left(\frac{1}{C^2}\frac{|X_1|^2}{4m(\delta)^2}\mathds{1}_{X_1 + \delta Z_I \ge \frac{X_1}{2}} \mathds{1}_{X_1 + \delta Z_J \le -\frac{X_1}{2}}\mathds{1}_{X_1 \ge 2Cm(\delta)\delta}\right) \right. \notag \\ 
& \qquad \qquad \left. + 2\E_X\left(\frac{1}{16}\mathds{1}_{X_1 + \delta Z_I \ge \frac{1}{4}} \mathds{1}_{X_1 + \delta Z_J \le -\frac{1}{4}}\mathds{1}_{X_1 \ge C/2}\right)\right] \notag \\
& \le 2\left[2\sqrt{\frac{2}{\pi}}Cm(\delta)\delta^3 + 2\E_X\left(\frac{1}{C^2}\frac{|X_1|^2}{4m(\delta)^2}\mathds{1}_{X_1 \ge - 2\delta Z_I} \mathds{1}_{X_1  \le -\frac{2}{3}\delta Z_J}\mathds{1}_{X_1 \ge 2Cm(\delta)\delta}\right)\right. \notag\\ 
& \qquad \qquad + \left. 2\E_X\left(\frac{1}{16}\mathds{1}_{X_1 + \delta Z_J \le -\frac{1}{4}}\mathds{1}_{X_1 \ge C/2}\right)\right] \notag \\
& \le 2\left[2\sqrt{\frac{2}{\pi}}Cm(\delta)\delta^3 + 2\E_X\left(\frac{1}{C^2}\frac{|X_1|^2}{4m(\delta)^2}\mathds{1}_{(- 2\delta Z_I \vee 2Cm(\delta)\delta) \le X_1 \le -\frac{2}{3}\delta Z_J}\right)\right. \notag\\ 
& \qquad \qquad + \left. 2\E_X\left(\frac{1}{16}\mathds{1}_{Z_J \le -\frac{3}{4\delta}}\mathds{1}_{X_1 \ge C/2}\right)\right] \notag \\
& \le 2\left[2\sqrt{\frac{2}{\pi}}m(\delta)\delta^3 + 2\E_X\left(\frac{1}{C^2}\frac{|X_1|^2}{4m(\delta)^2}\mathds{1}_{ 2Cm(\delta)\delta \le X_1 \le -\frac{2}{3}\delta Z_J}\mathds{1}_{Z_J \le -3m(\delta)}\right) + \frac{1}{8}e^{-\frac{9}{32 \delta^2}}\right] \notag \\
& \le 2\left[2\sqrt{\frac{2}{\pi}}m(\delta)\delta^3 + \frac{2\delta^2}{9C^2m(\delta)^2}\E_X\left(Z_J^2\mathds{1}_{ 2Cm(\delta)\delta \le X_1 \le -\frac{2}{3}\delta Z_J}\mathds{1}_{Z_J \le -3m(\delta)}\right)+ \frac{1}{8}e^{-\frac{9}{32 \delta^2}}\right] \notag \\
& \le 4\left[\sqrt{\frac{2}{\pi}}m(\delta)\delta^3 + \frac{\delta^2}{9C^2m(\delta)^2}\E_{Z_J}\left(Z_J^2\left[\Phi\left(-\frac{2}{3}\delta Z_J\right) - \Phi \left(2Cm(\delta)\delta \right)\right]\mathds{1}_{Z_J \le -3m(\delta)}\right) + \frac{1}{16}e^{-\frac{9}{32 \delta^2}}\right] \notag \\
& \le 4\sqrt{\frac{2}{\pi}}\left[m(\delta)\delta^3 + \frac{\delta^3}{27C^2m(\delta)^2}\E_{Z_J}\left(Z_J^2\left(-Z_J - 3Cm(\delta)\right)\mathds{1}_{Z_J \le -3m(\delta)}\right)+\frac{\sqrt{\pi}}{16\sqrt{2}}e^{-\frac{9}{32 \delta^2}}\right] \notag \\
& \le 4\sqrt{\frac{2}{\pi}}\left[m(\delta)\delta^3 + \frac{\delta^3}{27C^2m(\delta)^2}\E_{Z_J}\left(Z_J^2|Z_J + 3Cm(\delta)|\right)+\frac{\sqrt{\pi}}{16\sqrt{2}}e^{-\frac{9}{32 \delta^2}}\right] \notag \\
& \le 4\sqrt{\frac{2}{\pi}}\left[m(\delta) + \frac{1}{27m(\delta)^2}(\sqrt{3} +6) + \frac{\sqrt{\pi}}{16\sqrt{2}}\right]\frac{\delta^3}{C^2} \hspace{0.2in} [ \because e^{-\frac{9}{32 \delta^2}} \le \delta^3, \ m(\delta) \le 2, \ C < 1] \notag \\
\label{eq3-mlbh} & \le 8\sqrt{\frac{2}{\pi}}\left[1 + \frac{\sqrt{3} + 6}{54} +  \frac{\sqrt{\pi}}{32\sqrt{2}}\right]\frac{\delta^3}{C^2} 
\end{align}
Combining equations \ref{eq1-mlbh}, \ref{eq2-mlbh} and \ref{eq3-mlbh}  we conclude that: $$KL(\P_{\beta_I} || \P_{\beta_J}) \le \frac{128}{3}\sqrt{\frac{2}{\pi}}\left[6 + \sqrt{3} + \frac{2(\sqrt{3}+ 6)}{27} + \frac{\sqrt{\pi}}{8\sqrt{2}}\right]\frac{\delta^3}{C^2} \,.$$
\end{proof}
The final step is a direct application of Fano's inequality. According to our construction, $\Theta'$ is a $2\epsilon$ packing set with $\epsilon = \frac{\delta}{4}$. For notational simplicity, set $$U_c \triangleq \frac{128}{3}\sqrt{\frac{2}{\pi}}\left[6 + \sqrt{3} + \frac{2(\sqrt{3}+ 6)}{27} + \frac{\sqrt{\pi}}{8\sqrt{2}}\right]\,.$$ 
The upper bound on the KL divergences, in conjunction with Fano's inequality, gives: 
$$\inf_{\hat{\beta}}\sup_{\P_{\beta}} \E\left(\|\hat{\beta} - \beta\|^2\right) \ge \frac{\delta^2}{16}\left(1-\frac{(nU_c\delta^3)/C^2+ \log{2}}{\frac{s}{32}\log{\frac{p}{s}}}\right) \,.$$ 
Taking $\delta = \left(\frac{\frac{s}{64}\log{\frac{p}{s}}}{nU_c}\right)^{\frac{1}{3}}C^{\frac{2}{3}}$, then we have: $$\inf_{\hat{\beta}}\sup_{\P_{\beta}} \E\left(\|\hat{\beta} - \beta\|^2\right) \ge \frac{1}{256U_c^{\frac{2}{3}}}\left(\frac{s\log{\frac{p}{s}}}{n}\right)^{\frac{2}{3}}C^{\frac{4}{3}}\left(1-\frac{\frac{s}{64}\log{\frac{p}{s}}+ \log{2}}{\frac{s}{32}\log{\frac{p}{s}}}\right) \ge \frac{1}{2^{10}U_c^{\frac{2}{3}}}\left(\frac{s\log{\frac{p}{s}}C^2}{n}\right)^{\frac{2}{3}}\;,$$ the last inequality holding true when $\log{2}\le \frac{s}{128}\log{\frac{p}{s}}$, which is true for all large $s,p$ as $s\log{\frac{p}{s}} \rightarrow \infty$. 
\\\\
The other inequality (i.e. we cannot estimate at a better rate than $(s_0\log{p/s_0}/n)$), essentially follows from the same argument with taking $C_n = 0$. We skip the details here for the sake of brevity.  $\Box$

\begin{appendix}
\section{Some important results}
In this section we state some results from the existing literature for the convenience of the readers which we use in our proofs. Theorem \ref{massart} is Theorem 2 of \cite{massart2006risk} which provides some exponential concentration bound on the ERM estimators for bounded loss functions. Lemma \ref{miwep} is a classical maximal inequality, which is used to bound the fluctuations of an empirical process. A simple proof of this Lemma can be found in \cite{massart2006risk}. Theorem \ref{our_model_selection} is a modified version of Theorem 8.5 of \cite{massart2007concentration}, which we use for our model selection consistency results in case of $p \gg n$. We provide the proof of Theorem \ref{our_model_selection} in this supplement. Theorem \ref{thm:bousquet_talagrand} is a version of Talagrand's inequality (also known as Bousquet's version of Talagrand inequality, see \cite{bousquet2002bennett}) which we use to prove Theorem \ref{our_model_selection}.

\begin{theorem}
\label{massart}
Let $\{Z_i = (X_i, Y_i)\}_{i=1}^n$ be i.i.d. observations taking values in the sample space $\mathcal{Z} : \mathcal{X} \times \mathcal{Y}$ and let $\mathcal{F}$ be a class of real-valued functions defined on $\mathcal{X}$. Let $\gamma: \mathcal{F} \times \mathcal{Z} \rightarrow [0,1]$ be a loss function, and suppose that $f^* \in \mathcal{F}$ uniquely minimizes the expected loss function $P(\gamma(f, .))$ over  $\mathcal{F}$. Define the empirical risk as $\gamma_n(f) = (1/n) \sum_{i=1}^n \gamma(f, Z_i)$, and $\bar{\gamma}_n(f) = \gamma_n(f) - P(\gamma(f, .))$. Let $l(f^*, f) = P(\gamma(f, .)) - P(\gamma(f^*, .))$ be the excess risk. Consider a pseudo-distance $d$ on $\mathcal{F} \times \mathcal{F}$ satisfying $Var_P[\gamma(f, .) - \gamma(g, .)] \le d^2(f,g)$. Finally, let $C_1$ be the collection of all functions $\{h: \mathbb{R}^+ \rightarrow \mathbb{R}^+\}$ such that, $h$ is non-decreasing, continuous with $h(x)/x$ is non-increasing on $[0, \infty)$ and $h(1) \ge 1$. Assume that: 
\begin{enumerate}[(1)]
\item There exists $F \subseteq \mathcal{F}$ and a countable subset $F' \subseteq F$, such that for each $f \in F$, there is a sequence $\{f_k\}$ of elements of $F'$ satisfying $\gamma(f_k, z) \rightarrow \gamma(f, z)$ as $k \rightarrow \infty$, for every $z \in \mathcal{Z}$. 
\item $d(f,f^*) \le \omega\left(\sqrt{l(f^*, f)}\right) \ \forall \ f \in \mathcal{F}$, for some function $\omega \in C_1$.
\item For every $f \in F'$ $$\sqrt{n}\E\left[\sup_{g \in F': d(f,g) \le \sigma}\left[\bar{\gamma}_n(f)- \bar{\gamma}_n(g)\right]\right] \le \phi(\sigma)$$ for every $\sigma > 0$ such that $\phi(\sigma) \le \sqrt{n} \sigma^2$, where $\phi \in C_1$. 
\end{enumerate}
Let $\epsilon_*$ be the unique positive solution of $\sqrt{n}\epsilon_*^2 = \phi(\omega(\epsilon_*))$. Let $\hat{f}$ be the (empirical) minimizer of $\gamma_n$ over $F$ and $l(f^*, F) = \inf_{f \in F}l(f^*, f)$.Then, there exists an absolute constant $K$ such that for all $y \ge 1$, the following inequality holds: $$\P\left(l(f^*, \hat{f}) > 2l(f^*, F) + Ky\epsilon_*^2\right) \le e^{-y} \,.$$ 
\end{theorem}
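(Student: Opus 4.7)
The plan is to prove this via a standard peeling (slicing) argument combined with Bousquet's version of Talagrand's inequality (Theorem \ref{thm:bousquet_talagrand}, which the appendix notes is available).

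First, by the separability/approximation hypothesis (1) we may replace suprema over $F$ with suprema over the countable $F'$, avoiding measurability issues. Let $\bar f$ attain (or approximately attain) $\inf_{f \in F} l(f^*, f) = l(f^*, F)$. Starting from the ERM's basic inequality $\gamma_n(\hat f) \leq \gamma_n(\bar f)$ and rearranging,
\[
l(f^*, \hat f) - l(f^*, F) \leq \bar\gamma_n(\bar f) - \bar\gamma_n(\hat f),
\]
so the excess risk over $F$ is controlled by the empirical process fluctuation at $\hat f$.

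Second, set up dyadic shells $S_j = \{f \in F' : 2^{j-1} \epsilon_*^2 < l(f^*, f) \leq 2^{j} \epsilon_*^2\}$ for $j \geq 1$ (and an innermost ball $S_0$ of radius $\epsilon_*$). On $S_j$, hypothesis (2) gives $d(f, f^*) \leq \omega(2^{j/2} \epsilon_*)$, so by the triangle inequality $d(f, \bar f) \leq 2\omega(2^{j/2} \epsilon_*)$ (after possibly adjusting $\bar f$'s position). By hypothesis (3), the expectation of $\sup_{f \in S_j}[\bar\gamma_n(\bar f) - \bar\gamma_n(f)]$ is at most $\phi(2\omega(2^{j/2}\epsilon_*))/\sqrt n$. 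The defining relation $\sqrt n \epsilon_*^2 = \phi(\omega(\epsilon_*))$ together with the sub-multiplicativity coming from $\phi, \omega \in C_1$ (i.e.\ $\phi(cx) \leq c\phi(x)$ and similarly for $\omega$, valid for $c \geq 1$) yields
\[
\frac{\phi(2\omega(2^{j/2}\epsilon_*))}{\sqrt n} \;\leq\; C\, 2^{j}\, \epsilon_*^2
\]
for some absolute $C$.

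Third, apply Bousquet's inequality on each shell: with variance bounded by $d^2(f, \bar f) \leq 4\omega^2(2^{j/2}\epsilon_*)$ and envelope $\leq 1$, the supremum exceeds its mean by at most $K_1\sqrt{y_j \omega^2(2^{j/2}\epsilon_*)/n} + K_2 y_j/n$ with probability $\geq 1 - e^{-y_j}$. Choose $y_j = y + j$ so that a union bound over shells gives total failure probability $\leq e^{-y}\sum_j e^{-j} \leq 2 e^{-y}$; each deviation term can again be compared to $2^j \epsilon_*^2$ using the fixed-point equation and sub-multiplicativity. Combining, on the good event, if $\hat f \in S_j$ then $2^{j-1}\epsilon_*^2 - l(f^*,F) \leq C' (1 + y) 2^j \epsilon_*^2$, forcing $j \leq j_0(y)$ with $2^{j_0} \epsilon_*^2 \leq K y \epsilon_*^2$. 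This yields the claimed bound $l(f^*, \hat f) \leq 2 l(f^*, F) + K y \epsilon_*^2$.

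The main obstacle will be the bookkeeping of the peeling step: one must verify that the shell-wise deviation terms coming out of Bousquet's inequality, after being scaled by $2^{-j}$, remain summable over $j \geq 1$, and that the sub-multiplicativity properties of $\phi$ and $\omega$ indeed propagate through the fixed-point identity in the form $\phi(\omega(c\epsilon_*)) \leq C c^2 \phi(\omega(\epsilon_*))$. The rest — the ERM basic inequality, Bousquet's concentration, and the scalar fixed-point equation — is routine machinery from the Massart–Koltchinskii school.
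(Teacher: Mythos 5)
Your proposal is a valid route to this result, but it is not the route the paper (or its source) takes: the paper imports this theorem verbatim from \cite{massart2006risk} and does not reprove it; the closest in-house argument is the proof of Theorem \ref{our_model_selection}, which establishes the model-selection generalization. That proof does not peel. Instead it forms the \emph{weighted} empirical process $V_m = \sup_t [\bar\gamma_n(s_{m'})-\bar\gamma_n(t)]/(\ell(s,t)+\ell(s,s_{m'})+y_m^2)$, applies Bousquet's inequality \emph{once} to this normalized class, and controls $\E[V_m]$ via the maximal inequality for weighted processes (Lemma \ref{miwep}) — the peeling you propose is effectively hidden inside that lemma. The trade-off: your shell-by-shell argument is more elementary and transparent, but the constant bookkeeping is heavier and extracting the sharp factor $2$ in front of $l(f^*,F)$ (rather than some larger constant) is delicate; the weighted-process approach localizes all the combinatorics in one clean lemma, keeps the reference point $\bar f$ and the bias $l(f^*,F)$ in the denominator from the start (so no triangle-inequality fudging is needed), and generalizes painlessly to the penalized/model-selection setting.

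Two places where your write-up, as stated, does not quite close. First, your shells are centered at $f^*$ in excess risk while the empirical process is anchored at $\bar f$; the claim $d(f,\bar f)\le 2\omega(2^{j/2}\epsilon_*)$ requires $l(f^*,F)\lesssim 2^{j}\epsilon_*^2$ on the shells that matter, and this is exactly where the constant in front of $l(f^*,F)$ is generated — it should be made explicit rather than deferred to ``adjusting $\bar f$'s position.'' Second, the concluding implication ``$2^{j-1}\epsilon_*^2 - l(f^*,F)\le C'(1+y)2^j\epsilon_*^2$ forces $j\le j_0(y)$'' is vacuous as written (it holds for every $j$ once $C'(1+y)\ge 1/2$). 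To close the argument you must show that on shell $j$ the mean plus the Bousquet deviation is at most, say, $\tfrac14\,2^{j}\epsilon_*^2$ plus an additive $O\left(l(f^*,F)+y\epsilon_*^2\right)$ term; this uses $\phi(\omega(2^{j/2}\epsilon_*))/\sqrt n\le 2^{j/2}\epsilon_*^2$ (from $h(cx)\le ch(x)$ for $c\ge1$, $h\in C_1$) together with $\omega^2(\epsilon_*)\le n\epsilon_*^4$, which itself needs the observation that $\phi(x)\ge x$ for $x\le 1$ combined with the fixed-point equation. With those two repairs the peeling argument goes through.
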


\begin{lemma}[A maximal inequality for weighted empirical process]
\label{miwep}
Let $S$ be a countable set, $u \in S$ and $a: S \rightarrow \mathbb{R}_+$ such that $a(u) = \inf_{t \in S} a(t)$. Let $Z$ be a process indexed by $S$ and assume that the non-negative random variable $\sup_{t \in \mathcal{B}(\epsilon)}[Z(u) - Z(t)]$ has finite expectation for any positive number $\epsilon$, where $\mathcal{B} (\epsilon) = \{t \in S, \  a(t) \le \epsilon\}$. Let $\psi$ be a non-negative function on $\mathbf{R}_+$ such that $\psi(x)/x$ is non-increasing on $\mathbf{R}_+$ and satisfies for some positive number $\epsilon_*$: $$\E\left[\sup_{t \in \mathcal{B}(\epsilon)}[Z(u) - Z(t)]\right] \le \psi(\epsilon) \ \forall \ \epsilon \ge \epsilon_* \,.$$ Then, one has, for any positive number $x \ge \epsilon_*$, $$\E\left[\sup_{t \in S}\frac{[Z(u) - Z(t)]}{a^2(t) + x^2}\right] \le \frac{4\psi(x)}{x^2} \,.$$
\end{lemma}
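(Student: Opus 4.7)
This is a classical maximal inequality, and the natural plan is a dyadic peeling (slicing) argument over the level sets of $a$, converting the global weighted supremum into a geometric series of shell-wise bounds.

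First I would fix $x \ge \epsilon_*$ and partition $S = \mathcal{S}_0 \cup \bigcup_{k \ge 1} \mathcal{S}_k$, where $\mathcal{S}_0 = \mathcal{B}(x)$ and $\mathcal{S}_k = \mathcal{B}(x 2^k) \setminus \mathcal{B}(x 2^{k-1})$ for $k \ge 1$. On $\mathcal{S}_0$ the denominator satisfies $a^2(t) + x^2 \ge x^2$, and since $x \ge \epsilon_*$ the hypothesis on $\psi$ gives $\E[\sup_{t \in \mathcal{B}(x)}(Z(u) - Z(t))] \le \psi(x)$, so this inner shell contributes at most $\psi(x)/x^2$ to the global expectation.

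On $\mathcal{S}_k$ for $k \ge 1$, the key observation is that $a(t) > x 2^{k-1}$, so $a^2(t) + x^2 \ge x^2 2^{2k-2}$. Since $\mathcal{S}_k \subseteq \mathcal{B}(x 2^k)$, the expected supremum of $Z(u) - Z(t)$ over $\mathcal{S}_k$ is at most $\psi(x 2^k)$, which by the monotonicity of $y \mapsto \psi(y)/y$ is at most $2^k \psi(x)$. Combining these yields a shell-wise bound of order $4 \psi(x)/(x^2 2^k)$, and summing over $k \ge 0$ reduces to a geometric series. Note that the positivity of $\sup_{t \in S}[Z(u) - Z(t)]/(a^2(t) + x^2)$ (it equals at least $0$ by taking $t = u$) lets us bound the expectation of the overall supremum by the sum of the shell-wise expectations without sign complications.

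The main obstacle is arranging the peeling tightly enough to recover the sharp constant $4$ asserted in the lemma rather than the cruder constant $5$ that the naive partition above produces; this requires either folding the inner ball into the geometric series via a slightly tighter choice of radii, or using the refined bound $a^2(t) + x^2 \ge x^2(1 + 2^{2k-2})$ on the outer shells. Measurability is not an issue thanks to the assumed countability of $S$, so no separability or outer-expectation apparatus is needed.
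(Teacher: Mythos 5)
Your proof is correct and is essentially the argument of the cited source: the paper itself defers the proof of this lemma to \cite{massart2006risk}, whose appendix lemma uses exactly this dyadic peeling with the refined denominator bound $a^2(t)+x^2 \ge x^2(1+4^{k-1})$ on the $k$-th shell, yielding $1+\sum_{k\ge1}2^k/(4^{k-1}+1)\approx 3.77\le 4$, so your proposed fix for recovering the constant $4$ is the right one. One small point to tighten: the reduction of the maximum over shells to a sum of expectations should invoke the nonnegativity of each \emph{ball-wise} supremum $\sup_{t\in\mathcal{B}(x2^k)}[Z(u)-Z(t)]$ (which holds because $u$ itself lies in every nonempty ball, $a(u)$ being the infimum of $a$), so that each shell supremum can be replaced by its positive part while its expectation is still controlled by $\psi(x2^k)$; nonnegativity of the global supremum alone does not justify bounding a maximum by a sum.
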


\begin{theorem}[Model selection consistency]
\label{our_model_selection}
Let $\xi_1, \dots, \xi_n$ be independent observations taking their values in the measurable space $\Xi$ with common distribution $P$. Let $\mathcal{S}$ be some set, $\gamma: \mathcal{S} \times \Xi \rightarrow [0,1]$, be a measurable function such that for every $t\in \mathcal{S}$, $x \rightarrow \gamma(t,x)$ is measurable. Assume that there exists some minimizer $s$ of $P(\gamma(t,\cdot))$ over $\mathcal{S}$ and define $\ell(s,t)$ as the excess risk:  
$$\ell(s, t) = P(\gamma(t,\cdot)) - P(\gamma(s,\cdot))$$ for every $t \in \mathcal{S}$. Let $\gamma_n$ be the empirical risk: 
$$\gamma_n(t) = P_n(\gamma(t,\cdot)) = \frac1n\sum_{i=1}^n \gamma(t,\xi_i), \ \text{for every} \ t \in \mathcal{S}$$ 
and $\bar \gamma_n$ be corresponding centered empirical process defined by $$\bar \gamma_n(t) = P_n(\gamma(t,\cdot)) - P(\gamma(t, \cdot)) \ \text{for every} \ t \in \mathcal{S}$$ Let $d$ be some psuedo-distance on $\mathcal{S} \times \mathcal{S}$ such that $$P\left((\gamma(t,\cdot) - \gamma(s,\cdot))^2\right) \le d^2(s,t) \ \text{for every} \ t \in \mathcal{S}\,.$$
Let $\{S_m\}_{m \in \mathcal{M}}$ be some, at most, countable collection of subsets of $\mathcal{S}$, each model $S_m$ admitting some countable subset $S'_m$ such that for every $t \in S_m$, there exists some sequence $\{t_k\}_{k\ge1}$ of elements of $S'_m$ satisfying $\gamma(t_k, \xi) \to \gamma(t, \xi)$ as $k \to \infty$, for every $\xi \in \Xi$. Let $\omega$ and $\phi_m$ belong to class of functions $C_1$ (defined in Theorem \ref{massart})  for all $m \in \mathcal{M}$. Assume one hand $$d(s,t) \le \omega(\sqrt{\ell(s,t)}) \ \text{for every} \ t \in \mathcal{S} \,,$$ and on the other hand one has for every $m \in \mathcal{M}$ and $u \in S'_m$: 
$$\sqrt{n}\E\left[\sup_{t \in S'_m, d(u,t) \le \sigma}|\bar{\gamma_n}(t) - \bar{\gamma_n(s)}|\right] \le \phi_m(\sigma)$$ for every positive $\sigma$ such that $\phi_m(\sigma) \le \sqrt{n}\sigma^2$. Let $\epsilon_m$ be the unique solution of the equation:
$$\sqrt{n}\epsilon_m^2 = \phi_m(\omega(\epsilon_m)) \,,$$ 
with $\epsilon_m \le 1 \ \forall \ m \in \mathcal{M}$. Let $\hat s_m \in S_m$ be the empirical minimizer: 
$$\gamma_n(\hat s_m) = \inf_{t \in S_m} \gamma_n(t) \,,$$ and $\{x_m\}_{m \in \mathcal{M}}$ be some family of nonnegative weights such that $$\sum_{m \in M}e^{-x_m} \le \Sigma < \infty.$$ 
Consider a penalty function \pen: $\mathcal{M} \rightarrow R_+$ such that for every $m \in \mathcal{M}$, $$\pen(m) \ge K\left(\epsilon_m^2 + \frac{\omega^2(\epsilon_m)}{n\epsilon_m^2}x_m\right)$$
for some judiciously chosen constant $K$. Define the chosen model as $\hat m$, i.e.: 
$$\hat m = \argmin_{m \in \mathcal{M}} \left[\gamma_n(\hat s_m) + \pen(m)\right] \,.$$ 
Also, define $m_{(1)} = \arg\min_{m \in \mathcal{M}}\epsilon_m$ and $b(n) = \omega^2\left(\epsilon_{m_{(1)}}\right)/\epsilon^2_{m_{(1)}}$. Then the penalized estimator $\tilde{s} = \hat s_{\hat m}$ satisfies the following inequality: 
$$\P\left(\ell(\tilde{s}, s)) > C\left[\inf_{m \in \mathcal{M}}\left(\ell(s, S_m) + \pen(m)\right)\right] + C_1\frac{tb(n)}{n}\right) \le \Sigma e^{-t}$$
where the constants $C, C_1$ depend on $K$. This immediately implies: 
$$\E(\ell(\tilde{s}, s))\le C_2\left[\inf_{m \in \mathcal{M}}\left(\ell(s, S_m) + \pen(m)\right) + \frac{\Sigma b(n)}{n}\right]$$
for some constant $C_2$ depending on $C, C_1$. 
\end{theorem}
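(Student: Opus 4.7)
The plan is to follow the template for oracle inequalities via penalized empirical risk minimization, adapted from Chapter 8 of \cite{massart2007concentration}. The starting point is the deterministic \emph{basic inequality}: for any $m \in \mathcal{M}$ and any near-minimizer $s_m \in S_m$, the definition of $\hat m$ gives $\gamma_n(\hat s_{\hat m}) + \pen(\hat m) \le \gamma_n(s_m) + \pen(m)$, which on writing $\gamma_n = P\gamma + \bar\gamma_n$ rearranges to
\[
\ell(s, \hat s_{\hat m}) + \pen(\hat m) \le \ell(s, S_m) + \pen(m) + [\bar\gamma_n(s_m) - \bar\gamma_n(s)] + [\bar\gamma_n(s) - \bar\gamma_n(\hat s_{\hat m})].
\]
The first bracketed term is a single-point deviation at $s_m$ and is controlled by Bernstein's inequality, using $d^2(s, s_m) \le \omega^2(\sqrt{\ell(s, s_m)})$. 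The task reduces to uniformly controlling the second, supremum-type, term involving $\hat s_{\hat m}$.

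For each $m$, I would apply the weighted maximal inequality of Lemma \ref{miwep} to the centered process $Z^{(m)}(t) := \bar\gamma_n(t) - \bar\gamma_n(s)$, $t \in S'_m$, with radius $a(t) := \omega(\sqrt{\ell(s,t)})$. Using $d(s,t) \le \omega(\sqrt{\ell(s,t)})$, the modulus assumption on $\phi_m$, and the calibration $\sqrt n\, \epsilon_m^2 = \phi_m(\omega(\epsilon_m))$, a peeling of $S'_m$ into shells $\{t : \ell(s,t) \asymp 2^j \epsilon_m^2\}$ followed by a geometric summation yields $\mathbb{E}[\sup_{t \in S'_m} |Z^{(m)}(t)|/(a^2(t) + \omega^2(\epsilon_m))] \le C\epsilon_m^2/\omega^2(\epsilon_m)$. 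Bousquet's version of Talagrand's inequality (Theorem \ref{thm:bousquet_talagrand}), with variance proxy dominated by $\omega^2(\epsilon_m) + a^2(t)$ and sup-norm bounded by $1$, then upgrades this expectation bound into an exponential deviation: for any $u > 0$, with probability at least $1 - e^{-u}$,
\[
|\bar\gamma_n(s) - \bar\gamma_n(\hat s_m)| \le \tfrac{1}{2}\ell(s, \hat s_m) + C\Big(\epsilon_m^2 + \frac{\omega^2(\epsilon_m)}{n\epsilon_m^2}\,u\Big),
\]
where the $\tfrac{1}{2}\ell(s,\hat s_m)$ comes from $2ab \le a^2 + b^2$ applied to the factor $a^2(t) \le \omega^2(\sqrt{\ell(s,t)})$ appearing in the denominator. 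Setting $u = x_m + t$ and union-bounding over $m \in \mathcal{M}$ via $\sum_m e^{-x_m} \le \Sigma$ produces a good event of probability $\ge 1 - \Sigma e^{-t}$ on which this bound holds for every $m$ simultaneously, and in particular for $m = \hat m$.

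Inserting this into the basic inequality, the term $\tfrac{1}{2}\ell(s, \hat s_{\hat m})$ migrates to the left-hand side, and choosing the constant $K$ in $\pen(m) \ge K(\epsilon_m^2 + \omega^2(\epsilon_m)x_m/(n\epsilon_m^2))$ large enough causes the $\epsilon_m^2$- and $x_m$-dependent stochastic fluctuations to be absorbed into $\pen(\hat m)$ (and the Bernstein term at $s_m$ into $\pen(m)$). The only residual is of order $\omega^2(\epsilon_{\hat m})\,t/(n\epsilon_{\hat m}^2)$, which is dominated by $t\,b(n)/n$ since $\omega(x)/x$ is non-increasing and hence $\omega^2(\epsilon_m)/\epsilon_m^2$ is maximized at $m = m_{(1)}$. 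Taking infimum over $m$ on the right yields the stated tail bound; integrating the tail from $0$ to $\infty$ gives the expectation bound. The main obstacle I anticipate is the joint calibration of the peeling and Talagrand's inequality: within each shell $\{t : \ell(s,t) \asymp 2^j \epsilon_m^2\}$ the variance proxy must be matched precisely against $\omega^2$ so that both the expected supremum (bounded by $\phi_m(\omega(2^{(j+1)/2}\epsilon_m))/\sqrt n$) and the sub-Gaussian/sub-exponential deviation terms are dominated by geometrically summable multiples of $\omega^2(\epsilon_m) + a^2(t)$; any looser calibration of $\pen$ would fail to absorb the fluctuations uniformly in $m$ and forfeit the oracle inequality.
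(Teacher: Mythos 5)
Your proposal is correct and follows essentially the same route as the paper's proof: the basic inequality from the definition of $\hat m$, control of the localized supremum via the weighted maximal inequality (Lemma \ref{miwep}) combined with Bousquet's form of Talagrand's inequality, a union bound over $\mathcal{M}$ using the weights $x_m$, absorption of the fluctuations into $\pen$ for $K$ large, and the $b(n)$ term via monotonicity of $\omega(x)/x$. The only cosmetic difference is that the paper packages the deviation and the excess-risk normalization into a single ratio variable $V_m$ (with denominator $\ell(s,t)+\ell(s,s_{m'})+y_m^2$) and shows $\P(V_m>1/2)\le e^{-(x_m+y)}$, whereas you separate a pointwise Bernstein step at $s_m$ from the uniform Talagrand step over $S_m$; both are equivalent instances of the standard argument.
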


\begin{theorem}[Bousquet's version of Talagrand inequality]
\label{thm:bousquet_talagrand}
Let $\mathcal{F}$ be a countable family of measurable functions such that for some positive constants $v,b$ one has for all $f \in \mathcal{F}$, $\mbox{Var}_P(f) \le v$ and $\|f\|_{\infty} \le \mathsf{b}$. Then for all $y \ge 0$: 
$$\P\left(Z - \E(Z) \ge \sqrt{2\frac{\left(v + 4b\E(Z)\right)y}{n}} + \frac{2\mathsf{b}y}{3n}\right) \le e^{-y} \,,$$
where $Z = \sup_{f \in \mathcal{F}}\left(\P_n - P\right)f$. 
\end{theorem}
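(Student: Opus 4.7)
The plan is to proceed via the entropy (log-Sobolev) method of Ledoux as refined by Boucheron–Lugosi–Massart, which is the modern workhorse for Talagrand-type supremum inequalities with sharp constants. Work with the unnormalized centered supremum $W := nZ = \sup_{f \in \F} \sum_{i=1}^n (f(X_i) - Pf)$; the goal is to establish a sub-gamma cumulant bound of the form $\log \E e^{\lambda(W - \E W)} \le \lambda^2 (nv + 4b\,\E W)/(2(1 - 2b\lambda/3))$ valid for $\lambda \in [0, 3/(2b))$. Exponential Markov and the standard inversion of a sub-gamma bound then produce $\P(W - \E W \ge \sqrt{2(nv + 4b\E W)\, y} + (2b/3)\, y) \le e^{-y}$, which after dividing through by $n$ is exactly the announced inequality with effective variance $\sigma^2 = nv + 4b\,\E W$.

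First, I would tensorize entropy: for any non-negative $\Phi(X_1, \ldots, X_n)$, $\mathrm{Ent}\,\Phi \le \sum_{i=1}^n \E[\mathrm{Ent}_i\,\Phi]$, where $\mathrm{Ent}_i$ is the entropy taken conditional on $(X_j)_{j \ne i}$; apply this with $\Phi = e^{\lambda W}$. To handle each pointwise conditional entropy, introduce the leave-one-out supremum $W^{(i)} := \sup_{f \in \F} \sum_{j \ne i}(f(X_j) - Pf)$, which is $(X_j)_{j \ne i}$-measurable and satisfies $0 \le W - W^{(i)} \le b$ almost surely. A one-dimensional variational estimate for entropy (equivalently, Ledoux's modified log-Sobolev inequality) then yields the conditional bound $\mathrm{Ent}_i\, e^{\lambda W} \le \E_i\!\left[e^{\lambda W}\,\psi\!\left(-\lambda(W - W^{(i)})\right)\right]$ with $\psi(u) = e^u - u - 1$; using $\psi(-u) \le u^2/2$ for $u \ge 0$ and summing over $i$ reduces matters to controlling $\E\!\left[e^{\lambda W} \sum_i (W - W^{(i)})^2\right]$.

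The central step — and the main obstacle — is the self-bounding variance estimate: one must show that $\sum_i (W - W^{(i)})^2$ is dominated in mean by $nv + 4b\,\E W$. Picking an approximate maximizer $f^{*}$ of $W$ gives $W - W^{(i)} \le f^{*}(X_i) - Pf^{*}$ and hence $\sum_i (W - W^{(i)})^2 \le \sum_i (f^{*}(X_i) - Pf^{*})^2$; but because $f^{*}$ depends on the full sample, the naive conditional expectation does not equal $n v$, and a careless step here only recovers Talagrand's original (suboptimal) constants. Bousquet's improvement comes from a delicate conditioning and symmetrization argument that extracts precisely the extra $4b\,\E W$ contribution from the supremum structure, leveraging the fact that the argmax $f^{*}$ contributes an $\E W$ correction to the second moment of its own centred values.

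Plugging this variance estimate back into the tensorized entropy inequality and invoking the duality $\mathrm{Ent}(e^{\lambda W}) = \lambda^2 (\E e^{\lambda W})\, (d/d\lambda)(\phi(\lambda)/\lambda)$ yields a differential inequality for $\phi(\lambda) = \log \E e^{\lambda(W - \E W)}$ of the form $\lambda \phi'(\lambda) - \phi(\lambda) \le \lambda^2 (nv + 4b\,\E W)/(2(1 - 2b\lambda/3))$; Herbst's integration argument then delivers the advertised sub-gamma MGF bound, and Chernoff plus algebraic inversion (standard for sub-gamma variables with parameters $(\sigma^2, c) = (nv + 4b\E W,\, 2b/3)$) produce the final $e^{-y}$ tail. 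The delicate part throughout is preserving the constant $4$ in front of $b\,\E W$ rather than some larger quantity, since this is what distinguishes Bousquet's tight form from cruder Talagrand-type estimates that would yield a substantially weaker inequality.
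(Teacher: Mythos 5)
First, a point of comparison: the paper does not prove this statement. Theorem \ref{thm:bousquet_talagrand} is quoted as a known result from \cite{bousquet2002bennett} and is used purely as an external tool in the proof of Theorem \ref{our_model_selection}. So your attempt is not competing with an argument in the paper; it is attempting to reprove Bousquet's theorem itself, and it should be judged on whether it actually does so.

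It does not, for two concrete reasons. (i) The intermediate claim $0 \le W - W^{(i)} \le b$ is false in this setting: since $Z = \sup_{f}(\P_n - P)f$, the relevant class is the \emph{centered} one $\{f - Pf\}$, whose members have sup-norm up to $2b$ and are not nonnegative; one only gets $f^{*(i)}(X_i) - Pf^{*(i)} \le W - W^{(i)} \le f^{*}(X_i) - Pf^{*}$, and the left-hand quantity can be negative. The inequality $\psi(-u) \le u^2/2$ that you invoke after the modified log-Sobolev step holds only for $u \ge 0$, so the argument as written breaks exactly where you apply it. (Relatedly, the constants $4b\,\E W$ and $2b/3$ in the target arise precisely from applying Bousquet's theorem, stated for classes bounded by $1$ with $Pf=0$, to the centered class with envelope $2b$; your bookkeeping should reflect that.) (ii) More fundamentally, the step you label as "the central step and the main obstacle" — extracting the effective variance $nv + 4b\,\E W$ with the sharp constant rather than the cruder bounds that the naive choice of an approximate maximizer $f^{*}$ yields — is the entire mathematical content of Bousquet's contribution, and you explicitly defer it to "a delicate conditioning and symmetrization argument" without supplying it. As it stands the proposal is an accurate road map to the known entropy-method proof, not a proof. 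If the aim is merely to justify the theorem's use in this paper, the correct course is the one the paper takes: cite \cite{bousquet2002bennett} (or the textbook treatment of Bousquet's inequality) rather than rederive it.
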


\begin{remark} 
The result above extends to an uncountable family $\mathcal{F}$ if there exists a countable $\mathcal{F}' \subset \mathcal{F}$ with the property that 
for every $\tilde{f} \in \mathcal{F}$, there is a sequence $\{\tilde{f}_j\}$ belonging to $\mathcal{F}'$ such that $\tilde f_j(\cdot) \rightarrow \tilde f(\cdot)$ 
pointwise. This is indeed the case for all applications of this result in our paper. 
\end{remark}

\section{Proofs of Theorems and Lemmas}\label{proofs}
\subsubsection{Proof of Proposition \ref{alpha-kappa}}
To prove Proposition \ref{alpha-kappa} at first we relate the excess risk $S(\beta^0) - S(\beta)$ to $d_{\Delta}(\beta, \beta^0)$. Define for notational simplicity: 
$$W(\beta, \beta^0) =  \left\{x: \s(x^{\top}\beta) \neq \s(x^{\top}{\beta^0})\right\}$$ 
for $\beta \in S^{p-1}$. We have, 
\allowdisplaybreaks
\begin{align*}
S(\beta^0) - S(\beta) & =  2\int_{X_{\beta}} |E(Y|X)| f(x) \,\,dx \\
&=  4\int_{W(\beta, \beta^0)} |1-F_{\epsilon|X}(-x^\mathsf{T}\beta^0) - 0.5| f(x) \,\, dx \\
& =  4\int_{W(\beta, \beta^0)}|\eta(x) - 0.5| f(x) \,\, dx \\
& \ge  \,\,4 \sup_{0 \le t \le t^*} \left[t\,\P(|\eta(x) - 0.5| \ge t, W(\beta, \beta^0))\right] \\
& \ge \,\, 4 \sup_{0 \le t \le t^*} \left[t \left(d_{\Delta}(\beta, \beta^0) - \P_X\left(|\eta(x) - 0.5| \le t\right)\right)\right] \\
& \ge  \,\,4 \sup_{0 \le t \le t^*} \left[t \left(d_{\Delta}(\beta, \beta^0) - C_nt\right)\right] 
\end{align*}
A straightforward derivative calculation implies that the suprema is attained at $d_{\Delta}(\beta, \beta^0)/2C_n$ if $d_{\Delta}(\beta, \beta^0) < 2t^*C_n$ and at $t^*\left(d_{\Delta}(\beta, \beta^0) - C_nt^*\right)$ otherwise. Hence we conclude: 
\begin{align*}
S(\beta^0) - S(\beta) & \ge \left[\frac{d^2_{\Delta}(\beta, \beta^0)}{C_n}\mathds{1}_{\left(d_{\Delta}(\beta, \beta^0) \leq 2t^*C_n\right)} + 2t^*d_{\Delta}(\beta, \beta^0)\mathds{1}_{\left(d_{\Delta}(\beta, \beta^0) > 2t^*C_n\right)}\right] 
\end{align*}
Combining this with Assumption \ref{ass:wedge_binary} we conclude: $$d^2(\beta, \beta^0) \ge \left[c_1^2\frac{\left\|\beta - \beta^0\right\|^2_2}{C_n}\mathds{1}_{\left(d_{\Delta}(\beta, \beta^0) \le 2t^*C_n\right)} + 2t^*c_1\left\|\beta - \beta^0\right\|_2\mathds{1}_{\left(d_{\Delta}(\beta, \beta^0) > 2t^*C_n\right)}\right]$$
which completes the proof. 


\subsection{Some sufficient conditions for Assumptions \ref{ass:wedge_binary} and \ref{ass:A2:upper}} 
In this subsection we provide some sufficient conditions for Assumption \ref{ass:wedge_binary} and \ref{ass:A2:upper}. We break the analysis into two lemmas. Lemma \ref{A2lemma} below exerts some sufficient conditions for Assumption \ref{ass:wedge_binary} and part (i) of Assumption \ref{ass:A2:upper}. Lemma \ref{Lipschitz-lemma} yields sufficient conditions for part (ii) of Assumption \ref{ass:A2:upper}.

\begin{lemma}
\label{A2lemma} 
Suppose that $X_{p \times 1}$ follows an elliptically symmetric distribution centered at 0, with density $f_X(x) = |\Sigma_p|^{-1/2} g(x^T \Sigma_p^{-1} x)$, where $g$ is a non-negative function. Assume that:
$$\inf_{p} \frac{\lambda_{min}(\Sigma_p)}{\lambda_{max}(\Sigma_p)} \ge c_{\lambda} > 0\,.$$
where $c_{\lambda}$ does not depend on $n,p$. Then $X$ satisfies Assumption \ref{ass:wedge_binary} and part (i) of Assumption \ref{ass:A2:upper}. 
\end{lemma}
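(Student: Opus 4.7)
The plan is to reduce the event $\{\s(X^\top \beta) \neq \s(X^\top \beta^0)\}$ to a statement about the angle between two transformed vectors. Since $X$ has density $|\Sigma_p|^{-1/2} g(x^\top \Sigma_p^{-1} x)$, one can write $X = \Sigma_p^{1/2} Z$ where $Z$ has the spherically symmetric density proportional to $g(\|z\|_2^2)$. Setting $v := \Sigma_p^{1/2}\beta$ and $v^{(0)} := \Sigma_p^{1/2}\beta^0$, we have $X^\top \beta = v^\top Z$ and $X^\top \beta^0 = (v^{(0)})^\top Z$. A standard computation using the rotational invariance of $Z$ and the decomposition $Z = \|Z\| \cdot U$ with $U$ uniform on $S^{p-1}$ gives
$$\P_X\bigl(\s(X^\top \beta) \neq \s(X^\top \beta^0)\bigr) \;=\; \frac{\theta(v, v^{(0)})}{\pi},$$
where $\theta(v, v^{(0)}) \in [0, \pi]$ is the (undirected) angle between the two nonzero vectors. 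Thus it suffices to show
$$c_{(1)} \|\beta - \beta^0\|_2 \;\leq\; \frac{\theta(v, v^{(0)})}{\pi} \;\leq\; c_{(2)} \|\beta - \beta^0\|_2$$
for constants depending only on $c_\lambda$.

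The second step is to convert the angle to a chord. With $a := \|v\|_2$, $b := \|v^{(0)}\|_2$, $\hat v := v/a$, $\hat v^{(0)} := v^{(0)}/b$, one has $\|\hat v - \hat v^{(0)}\|_2 = 2\sin(\theta/2)$, and the elementary two-sided bound
$$2\sin(\theta/2) \;\leq\; \theta \;\leq\; \pi \sin(\theta/2) \qquad (\theta \in [0, \pi])$$
shows $\theta \asymp \|\hat v - \hat v^{(0)}\|_2$. Writing
$$\hat v - \hat v^{(0)} \;=\; \frac{\Sigma_p^{1/2}(b\beta - a\beta^0)}{ab},$$
the task reduces to two-sided control of $\|b\beta - a\beta^0\|_2$, with $\Sigma_p^{1/2}$-action absorbed into the eigenvalue extremes and $ab \in [\lambda_{\min}(\Sigma_p), \lambda_{\max}(\Sigma_p)]$.

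The algebraic key, obtained by expansion and using $\|\beta\|_2 = \|\beta^0\|_2 = 1$, is the identity
$$\|b\beta - a\beta^0\|_2^2 \;=\; ab\,\|\beta - \beta^0\|_2^2 + (b - a)^2.$$
Dropping the nonnegative term $(b-a)^2$ gives the clean lower bound $\|b\beta - a\beta^0\|_2^2 \geq ab \|\beta - \beta^0\|_2^2$, which after dividing by $(ab)^2$ and multiplying by $\lambda_{\min}(\Sigma_p)$ yields
$$\|\hat v - \hat v^{(0)}\|_2^2 \;\geq\; \frac{\lambda_{\min}(\Sigma_p)}{\lambda_{\max}(\Sigma_p)}\|\beta - \beta^0\|_2^2 \;\geq\; c_\lambda \|\beta - \beta^0\|_2^2,$$
which with $\theta \geq 2\sin(\theta/2)$ proves Assumption \ref{ass:wedge_binary}. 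For the upper direction, the reverse triangle inequality gives $|b - a| \leq \sqrt{\lambda_{\max}(\Sigma_p)}\,\|\beta - \beta^0\|_2$, so $\|b\beta - a\beta^0\|_2^2 \leq 2\lambda_{\max}(\Sigma_p)\|\beta - \beta^0\|_2^2$, and combined with $ab \geq \lambda_{\min}(\Sigma_p)$ plus $\theta \leq \pi \sin(\theta/2)$ this yields part (i) of Assumption \ref{ass:A2:upper} with $C'$ depending only on $c_\lambda$.

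The main obstacle is not conceptual but one of finding the right decomposition: the naive estimate $\|v - v^{(0)}\|_2 \leq \sqrt{\lambda_{\max}(\Sigma_p)}\|\beta - \beta^0\|_2$ only controls the \emph{unnormalized} difference, which does not give a useful lower bound on $\|\hat v - \hat v^{(0)}\|_2$ because the two can differ drastically when $a \neq b$. The identity $\|b\beta - a\beta^0\|_2^2 = ab \|\beta - \beta^0\|_2^2 + (b-a)^2$, which essentially uses that $\beta$ and $\beta^0$ both lie on $S^{p-1}$, is what makes the normalization by $ab$ innocuous and allows both directions to be carried out with constants depending only on the condition number bound $c_\lambda$.
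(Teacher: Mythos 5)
Your proof is correct, and it reaches the conclusion by a genuinely different route from the paper's. The paper first treats the Gaussian case: it builds an orthogonal matrix whose first two rows span $\{\beta,\beta^0\}$, marginalizes to the induced bivariate normal, lower-bounds that anisotropic density by an isotropic one at the cost of a factor $\sqrt{\lambda_2/\lambda_1}$, evaluates the wedge probability in polar coordinates as $\frac{1}{\pi}\sqrt{\lambda_2/\lambda_1}\,(\tan^{-1}(-a_1/a_2)+\pi/2)$, and then shows by a monotonicity argument that the arctangent expression is sandwiched between $\|\beta-\beta^0\|_2$ and $(\pi/2)\|\beta-\beta^0\|_2$; the general elliptical case is then reduced to the Gaussian one through the stochastic representation. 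You instead whiten first, observe that for the spherically symmetric $Z=\Sigma_p^{-1/2}X$ the wedge probability is \emph{exactly} $\theta(v,v^{(0)})/\pi$ with $v=\Sigma_p^{1/2}\beta$, $v^{(0)}=\Sigma_p^{1/2}\beta^0$, and push all of the condition-number dependence into the algebraic comparison of $\|\hat v-\hat v^{(0)}\|_2=2\sin(\theta/2)$ with $\|\beta-\beta^0\|_2$ via the identity $\|b\beta-a\beta^0\|_2^2=ab\,\|\beta-\beta^0\|_2^2+(b-a)^2$ (which I have checked, along with the two-sided bound $2\sin(\theta/2)\le\theta\le\pi\sin(\theta/2)$ on $[0,\pi]$). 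Your route buys an exact probability formula with no density bounding and no polar integral, handles the Gaussian and general elliptical cases in a single stroke, and delivers the lower and upper bounds symmetrically — the paper only asserts the upper direction as "a similar calculation." What the paper's computation buys in exchange is the explicit closed form of the two-dimensional wedge integral, which it recycles in the proof of Lemma \ref{Lipschitz-lemma} for part (ii) of Assumption \ref{ass:A2:upper}, a statement your angle argument does not address (and is not required to). The resulting constants, $c_1=\sqrt{c_\lambda}/\pi$ for Assumption \ref{ass:wedge_binary} and $C'=1/(\sqrt{2}\,c_\lambda)$ for part (i) of Assumption \ref{ass:A2:upper}, depend only on $c_\lambda$ as required.
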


\begin{proof} 
First, we prove that for $X \sim \mathcal{N}(0,\Sigma_p)$ with the above displayed condition holding. Observe that $\P_X(\s(X^\mathsf{T}\beta) \neq \s(X^\mathsf{T}\beta^0))$ depends on the two-dimensional geometry of $X$, i.e. only on the distribution of $(X^\mathsf{T}\beta, X^\mathsf{T}\beta^0)$. To make the calculations easier, we transform $X$ into $Y$ where the first two-coordinates of $Y$ corresponds to $(X^\mathsf{T}\beta, X^\mathsf{T}\beta^0)$. Consider the following orthogonal matrix:
\allowdisplaybreaks
\[
P = \begin{bmatrix}
\beta^{0'} \\
\frac{\beta' - \langle \beta^0,\beta \rangle \beta^{0'}} {\sqrt{1-\langle \beta^0,\beta \rangle^2}} \\
v_3 \\
\vdots \\
v_{p}
\end{bmatrix}
\]
where $\beta^{0'}, \frac{\beta' - \langle \beta^0,\beta \rangle \beta^{0'}} {\sqrt{1-\langle \beta^0,\beta \rangle^2}}, v_3, \dots , v_p$ forms an orthonormal basis of $\mathbb{R}^p$ (For example the vectors $v_3, \dots , v_p$ can be constructed using the Gram-Schimdt algorithm). If we define $Y = PX$, then $Y_1 = X^\mathsf{T}\beta^0$ and $X^\mathsf{T}\beta = a_1Y_1 + a_2Y_2$ where $a_1 = \langle \beta^0, \beta \rangle, a_2 = \sqrt{1-\langle \beta, \beta^0 \rangle^2}$. Then the probability of the wedge shaped region becomes:
\allowdisplaybreaks
\begin{align}
& \P(\s(X^\mathsf{T}\beta) \neq \s(X^\mathsf{T}\beta^0)) \notag\\
= & \,  \int_{\substack{x'\beta^0 \ge 0 \\ \beta'x < 0}} f_{X}(x) \,\, dx + \int_{\substack{\beta'x \ge 0 \\ x'\beta^0 < 0}} f_{X}(x) \,\, dx \notag \\
= & \,  \int_{\substack{y_1 \ge 0 \\ a_1y_1 + a_2y_2 < 0}} f_{X} (P^{-1}y) \,\, dy + \int_{\substack{y_1 < 0 \\ a_1y_1 + a_2y_2 \ge 0}} f_{X} (P^{-1}y) \,\, dy \notag \\
= & \,  \int_{\substack{y_1 \ge 0 \\ a_1y_1 + a_2y_2 < 0}} \frac{1}{\sqrt{|2\pi (P\Sigma P^\mathsf{T})|}}e^{-\frac{1}{2}y^\mathsf{T}(P\Sigma P^\mathsf{T})^{-1}y} \, dy+  \int_{\substack{y_1 < 0 \\ a_1y_1 + a_2y_2 <\ge 0}} \frac{1}{\sqrt{|2\pi (P\Sigma P^\mathsf{T})|}}e^{-\frac{1}{2}y^\mathsf{T}(P\Sigma P^\mathsf{T})^{-1}y} \, dy \notag \\
= & \,  \int_{\substack{y_1 \ge 0 \\ a_1y_1 + a_2y_2 < 0}} \frac{1}{\sqrt{|2\pi \tilde{\Sigma}|}}e^{-\frac{1}{2}y^\mathsf{T}\tilde{\Sigma}^{-1}y} \, dy_1 dy_2+  \int_{\substack{y_1 < 0 \\ a_1y_1 + a_2y_2 <\ge 0}} \frac{1}{\sqrt{|2\pi \tilde{\Sigma}|}}e^{-\frac{1}{2}y^\mathsf{T}\tilde{\Sigma}^{-1}y} \, dy_1 dy_2 \notag \\
 & \, \hspace*{1in} [\text{Mariginalise over} \,\, y_3, \dots, y_p, \text{with} \,\, \tilde{\Sigma} \,\, \text{being the leading} \,\,  2 \times 2 \,\, \text{block of} \,\, (P\Sigma P^\mathsf{T})] \notag \\
 \ge & \,\, \frac{1}{2\pi \sqrt{\lambda_1 \lambda_2}}\left[\int_{\substack{y_1 \ge 0 \\ a_1y_1 + a_2y_2 < 0}}e^{-\frac{1}{2\lambda_2}(y_1^2 + y_2^2)} \, dy_1 dy_2+  \int_{\substack{y_1 < 0 \\ a_1y_1 + a_2y_2 <\ge 0}} e^{-\frac{1}{2\lambda_2}(y_1^2 + y_2^2)} \, dy_1 dy_2\right] \notag \\
& \, \hspace*{3in} [\lambda_1 \ge \lambda_2, \text{are two eigenvalues of} \,\, \tilde{\Sigma}] \notag \\
\ge & \,\, \frac{1}{2\pi \sqrt{\lambda_1 \lambda_2}} \left[\int_{\frac{3\pi}{2}}^{\tan^{-1}(-a_1 / a_2)+2\pi} \int_0^{\infty} r e^{-\frac{1}{2\lambda_2}r^2} \,\, dr \,\, d\theta + \int_{\frac{\pi}{2}}^{\tan^{-1}(-a_1 / a_2)+\pi} \int_0^{\infty} r e^{-\frac{1}{2\lambda_2}r^2} \,\, dr \,\, d\theta\right] \hspace{0.2in} [\text{Polar}] \notag \\
\label{eqn1}
= & \,\,  \frac{1}{\pi} \sqrt{\frac{\lambda_2}{\lambda_1}} \left[\tan^{-1}(-a_1 / a_2)+ \pi/2\right]
\end{align}
\noindent
Now, for $||\beta - \beta^0|| = \delta, a_1 = \langle \beta, \beta^0 \rangle = 1 - \frac{\delta^2}{2}$. Hence we get, $$\frac{\lambda_1}{\lambda_2} = \frac{a_1}{\sqrt{1 - a_1^2}} = \frac{1 - \frac{\delta^2}{2}}{\sqrt{1-\bigg(1 - \frac{\delta^2}{2}\bigg)^2}} = \frac{1 - \frac{\delta^2}{2}}{\delta \sqrt{1 - \frac{\delta^2}{4}}}$$Using this we obtain,
\begin{equation*}
\begin{split}
&(\tan^{-1}(-a_1 / a_2)+\pi/2)
=   \left(\tan^{-1}\left[-\frac{1 - \frac{\delta^2}{2}}{\delta \sqrt{1 - \frac{\delta^2}{4}}}\right]+\frac{\pi}{2}\right)
\end{split}
\end{equation*}
It can be easily seen (i.e. by differentiating) that the function $\frac{\tan^{-1}\left[-\frac{1 - \frac{\delta^2}{2}}{\delta \sqrt{1 - \frac{\delta^2}{4}}}\right] + \frac{\pi}{2}}{\delta}$ is an increasing function of ${\delta}$ for $0 \le \delta \le 2$. More precisely, observing that 
$$\lim_{\delta \downarrow 0}\frac{\tan^{-1}\left[-\frac{1 - \frac{\delta^2}{2}}{\delta \sqrt{1 - \frac{\delta^2}{4}}}\right] + \frac{\pi}{2}}{\delta} = 1 $$ 
we conclude, for $0 \le \delta \le 2$: 
$$1 \le \frac{\tan^{-1}\left[-\frac{1 - \frac{\delta^2}{2}}{\delta \sqrt{1 - \frac{\delta^2}{4}}}\right] + \frac{\pi}{2}}{\delta} \le \frac{\pi}{2}\,.$$ 
In conjunction with $\ref{eqn1}$, this gives: 
$$\P_X(\s(X^\mathsf{T}\beta) \neq \s(X^\mathsf{T}\beta^0)) \ge \frac{1}{\pi} \sqrt{\frac{\lambda_2}{\lambda_1}} \, \|\beta - \beta^0\|_2 \,.$$ 
Finally using the fact that 
$$\lambda_{min}(P\Sigma P^\mathsf{T}) = \lambda_{min}(\Sigma) \le \lambda_2 \le \lambda_1 \le \lambda_{max}(P\Sigma P^\mathsf{T}) = \lambda_{max} (\Sigma) $$ 
we have $\sqrt{\frac{\lambda_2}{\lambda_1}} \ge \sqrt{c_{\lambda}}$. Combining these, we conclude: 
$$\P_X(\s(X^\mathsf{T}\beta) \neq \s(X^\mathsf{T}\beta^0)) \ge \sqrt{c_{\lambda}} \,  \|\beta - \beta^0\|_2 \, .$$ 
Now, on to general $X$. By our assumption on $X$ in the statement of the lemma, $X \sim \mathcal{E}(0, \Sigma_p)$ i.e. $X = \Sigma_p^{1/2} Y$ for some spherically symmetric random variable $Y$. We know $$Y \overset{d}= \frac{Z}{\|Z\|} g(\|Z\|)$$ for some $g: \mathbb{R}^+ \rightarrow \mathbb{R}^+$ with $Z \sim \mathcal{N}(0, I_p)$. Using the relation we have:
\allowdisplaybreaks
\begin{align*}
& \P_X(\s(X^\mathsf{T}\beta) \neq \s(X^\mathsf{T}\beta^0)) \\
= & \,  \P_X\left(\s\left(\frac{(\Sigma_p^{1/2}Z)^\mathsf{T}\beta}{\|Z\|} g(\|Z\|)\right) \neq \s\left(\frac{(\Sigma_p^{1/2}Z)^\mathsf{T}\beta^0}{\|Z\|} g(\|Z\|)\right)\right) \\
= & \,  \P_X\left(\s\left((\Sigma_p^{1/2}Z)^\mathsf{T}\beta\right) \neq \s\left((\Sigma_p^{1/2}Z)^\mathsf{T}\beta^0\right)\right)
\end{align*}
which again falls back to $\mathcal{N}(0, \Sigma_p)$ situation. The upper bound can be established via a similar calculation, where we need a finite upper bound on $\sup_{p} \frac{\lambda_{max}(\Sigma_p)}{\lambda_{min}(\Sigma_p)}$: this is given by $\frac{1}{c_{\lambda}}$.
\end{proof} 

\begin{lemma}
\label{Lipschitz-lemma} 
Assume that the function $\eta(x)$ satisfies that $$|\eta(x) - 1/2| = |F_{\epsilon|X=x}(-x^T\beta^0) - F_{\epsilon|X=x}(0)| \le k|x^T\beta^0|\,,$$ for some constant $k$ a.e. with respect to the measure of $X$ and the distribution of $X$ follows a consistent family of elliptical distribution with $f_X(x) = |\Sigma_p|^{-1/2}g_p(x^T\Sigma_p^{-1}x)$. Also assume that $g_2$ (the density component corresponding to the two dimensional marginal of $X$) is a decreasing function on $R$ and the eigenvalues of orientation matrix $\Sigma_p$ satisfies: $$0 < \lambda_- \le \lambda_{min}(\Sigma_p) \le \lambda_{max} (\Sigma_p) \le \lambda_+ < \infty$$ for all $p$. 
Then, under part (i) of Assumption \ref{ass:A2:upper} we have $$S(\beta^0) - S(\beta) \le u_+ \|\beta - \beta^0\|_2^2$$ for all $\beta \in S^{p-1}$ where $u_+ = 4\pi kk_1\frac{\lambda_+}{\sqrt{\lambda_-}}$ for some constant $k_1$ defined in the proof.
\end{lemma}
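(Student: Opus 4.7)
The plan is to control the excess risk through the Lipschitz hypothesis on $\eta$ and then reduce the resulting wedge integral in $\mathbb{R}^p$ to a clean two-dimensional integral via the consistency of the elliptical family, carefully tracking the eigenvalues of the induced $2\times 2$ orientation matrix.

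First, from the derivation in Proposition \ref{alpha-kappa} we already have the identity $S(\beta^0) - S(\beta) = 4\int_W |\eta(x)-1/2|\,f_X(x)\,dx$, where $W = \{x: \s(x^\top\beta) \neq \s(x^\top\beta^0)\}$. The Lipschitz bound $|\eta(x)-1/2| \le k|x^\top\beta^0|$ immediately yields $S(\beta^0) - S(\beta) \le 4k\int_W |x^\top\beta^0|\,f_X(x)\,dx$. Next I would introduce the same orthogonal matrix $P$ as in Lemma \ref{A2lemma} and set $Y = PX$, so that $Y_1 = X^\top\beta^0$ and $X^\top\beta = a_1 Y_1 + a_2 Y_2$ with $a_1 = \langle\beta,\beta^0\rangle = \cos\phi$ and $a_2 = \sin\phi$ for some $\phi \in [0,\pi]$. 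By the consistency assumption on the elliptical family, the joint density of $(Y_1, Y_2)$ equals $|\tilde\Sigma|^{-1/2}g_2(y^\top\tilde\Sigma^{-1}y)$, where $\tilde\Sigma$ is the upper-left $2\times 2$ block of $P\Sigma_p P^\top$.

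In polar coordinates $y_1 = r\cos\theta$, $y_2 = r\sin\theta$, a direct computation shows that $W$ maps onto the union of two angular sectors of total width $2\phi$ (centered near $\theta = \pm\pi/2$), and that $|\cos\theta| \le \sin\phi$ on each sector. Cauchy interlacing implies that both eigenvalues $\lambda_1 \ge \lambda_2$ of $\tilde\Sigma$ lie in $[\lambda_-,\lambda_+]$, so $y^\top\tilde\Sigma^{-1} y \ge r^2/\lambda_1$ and monotonicity of $g_2$ gives $g_2(y^\top\tilde\Sigma^{-1} y) \le g_2(r^2/\lambda_1)$. Collecting these estimates and substituting $u = r/\sqrt{\lambda_1}$,
$$\int_W |y_1|\,|\tilde\Sigma|^{-1/2}g_2(y^\top\tilde\Sigma^{-1}y)\,dy \;\le\; 2\phi\sin\phi\cdot\frac{1}{\sqrt{\lambda_1\lambda_2}}\int_0^\infty r^2 g_2(r^2/\lambda_1)\,dr \;=\; 2\phi\sin\phi\cdot\frac{\lambda_1}{\sqrt{\lambda_2}}\,k_1,$$
where $k_1 := \int_0^\infty u^2 g_2(u^2)\,du$ is a finite constant independent of $p$.

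To finish, I would use the identity $\|\beta-\beta^0\| = 2\sin(\phi/2)$ together with $\sin(\phi/2) \ge \phi/\pi$ on $[0,\pi]$ to obtain $\phi \le (\pi/2)\|\beta-\beta^0\|$ and $\sin\phi \le 2\sin(\phi/2) = \|\beta-\beta^0\|$, whence $\phi\sin\phi \le (\pi/2)\|\beta-\beta^0\|^2$. Combined with $\lambda_1/\sqrt{\lambda_2} \le \lambda_+/\sqrt{\lambda_-}$, this yields $S(\beta^0) - S(\beta) \le 4\pi k k_1\,(\lambda_+/\sqrt{\lambda_-})\,\|\beta-\beta^0\|_2^2$, as claimed. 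The main obstacle is the bookkeeping in the polar integral: one must keep the two eigenvalues of $\tilde\Sigma$ separate throughout and only substitute the global bounds $\lambda_\pm$ after evaluating the $r$-integral. Replacing $|\tilde\Sigma|^{-1/2}$ prematurely by $\lambda_-^{-1}$ and $g_2$ by $g_2(r^2/\lambda_+)$ would produce a superfluous factor of $\sqrt{\lambda_+/\lambda_-}$, giving a strictly weaker constant. Part (i) of Assumption \ref{ass:A2:upper} is consistent with the geometric estimates developed above (it in fact follows from them for elliptically symmetric $X$), so it enters the hypotheses to anchor the result within the framework of the rest of the paper rather than as a step in the computation.
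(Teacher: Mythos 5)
Your proof is correct and follows essentially the same route as the paper's: the Lipschitz bound on $\eta$, rotation to the $(X^{\top}\beta^0, X^{\top}\beta)$ plane, polar coordinates over the two angular sectors of the wedge, monotonicity of $g_2$ with the interlaced eigenvalues of the $2\times 2$ block, and the final trigonometric bound $\phi\sin\phi \le (\pi/2)\|\beta-\beta^0\|_2^2$. Your only (harmless) deviation is defining $k_1 = \int_0^\infty u^2 g_2(u^2)\,du$ free of the eigenvalue, which is in fact cleaner than the paper's $\lambda$-dependent version, and your closing observation that Assumption \ref{ass:A2:upper}(i) plays no computational role matches the paper's proof as well.
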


\begin{proof}
As in the proof of proposition \ref{alpha-kappa} we have (with the same notation):
\allowdisplaybreaks
\begin{align*}
& S(\beta^0) - S(\beta) \\
& = 4\int_{X_{\beta}} |\eta(x) - 1/2| f(x) \ dx \\
& \le 4k\int_{X_{\beta}} |x^T\beta^0| f(x) \ dx \\
& =  4k\left[\int_{\substack{y_1 \ge 0 \\ a_1y_1 + a_2y_2 < 0}} |y_1| \ f_{Y_1, Y_2} (y_1, y_2) \,\, dy_1 dy_2 + \int_{\substack{y_1 < 0 \\ a_1y_1 + a_2y_2 \ge 0}} |y_1| \ f_{Y_1, Y_2} (y_1, y_2) \,\, dy_1 dy_2\right] \\
& =  4k|\tilde{\Sigma}|^{-1/2}\left[\int_{\substack{y_1 \ge 0 \\ a_1y_1 + a_2y_2 < 0}} |y_1| \  g_2(y^T\tilde{\Sigma}^{-1} y) \,\, dy_1 dy_2 + \int_{\substack{y_1 < 0 \\ a_1y_1 + a_2y_2 \ge 0}} |y_1| \ g_2(y^T\tilde{\Sigma}^{-1} y) \,\, dy_1 dy_2\right] \\
& =  4k|\tilde{\Sigma}|^{-1/2}\left[\int_{\frac{3\pi}{2}}^{\tan^{-1}(-a_1 / a_2)+2\pi}\int_0^{\infty}  r^2|\cos(\theta)| \  g_2(r^2/\lambda_2) \,\, dy_1 dy_2  \right. \\
& \ \ \ \ \ \ \ \  + \left. \int_{\frac{\pi}{2}}^{\tan^{-1}(-a_1 / a_2)+\pi} \int_0^{\infty} r^2|\cos(\theta)| \ g_2(r^2/\lambda_2) \,\, dr \ d\theta\right] \hspace{0.2in} [\lambda_1 \le \lambda_2 \ \text{are the two eigenvalues of} \ \tilde{\Sigma}]\\
& \le 8kk_1\frac{\lambda_2}{\sqrt{\lambda_1}} \cos{(\tan^{-1}(a_1 / a_2))}(\tan^{-1}(a_1 / a_2) + \pi/2) \hspace{0.1in} \left[k_1 = \int_0^{\infty} r^2 g_2(r^2/\lambda_2) \ dr\right]\\
& \le 4\pi kk_1\frac{\lambda_2}{\sqrt{\lambda_1}}\|\beta - \beta^0\|^2_2 \le 4\pi kk_1\frac{\lambda_+}{\sqrt{\lambda_-}}\|\beta - \beta^0\|^2_2 
\end{align*}
\end{proof}\noindent
\begin{remark}
The Lipschitz type condition i.e. $|\eta(x) - 1/2|  \le k|x^T\beta^0|$ controls how the function varies around the true hyperplane. This condition is easily satisfied if we assume that the conditional density of $\epsilon$ given $X$ has an uniform upper bound over all $x$ and dimension. Note that the two conditions in the above lemma and Assumption \ref{ass:A2:upper} are readily satisfied, for example, for a broad class of elliptically symmetric densities centered at 0. 
\end{remark}

\subsection{Proof of Theorem \ref{rate-manski-p-less-n}}
In this proof, $K$ will denote a generic constant (not depending on $(n, C_n, p)$) which may change from line to line. We use Theorem \ref{massart} to establish the rate of convergence of the maximum score estimator. In our problem, the set of classifiers $$\mathcal{F} = \mathcal{F}_p = \{f_{\beta}: \R^p \rightarrow \{-1,1\}, \ f_{\beta}(X) =  \s(X^T{\beta}), \beta \in S^{p-1}\}\,,$$ and $\mathcal{Z} = \mathbb{R}^p \times \{-1,1\}$. We define the following affine transformations of our score functions: 
\begin{enumerate}
\item $\gamma(f_{\beta}, (X,Y)) = (1- Y\s(X^T\beta))/2$. 
\item $\gamma_n(f_{\beta}) =(1 -S_n(\beta))/2$.  
\item $P(\gamma(f_{\beta}, .)) = (1-S(\beta))/2$.
\item $\bar{\gamma}_n(f_{\beta}) = -(S_n(\beta) - S(\beta))/2$. 
\end{enumerate}
Also, note that $f^*$ in Theorem \ref{massart} is $f_{\beta^0}$ in our situation and the excess risk is $l(f_{\beta^0}, f_{\beta}) = (S(\beta^0) - S(\beta))/2$. Next we argue that the assumptions of Theorem \ref{massart} hold in our situation. 
For the first assumption, take $\mathcal{F} = F$ and take $F' = \{f_{\beta} \in \mathcal{F}: \beta \in S_1\}$ where $S_1$ is a countable dense subset of $S^{p-1}$. It is easy to check that the convergence criterion in condition (1) of 
Theorem \ref{massart} is satisfied on the set $\mathcal{X}_0 \times \{-1,1\}$ where $\mathcal{X}_0$ is the set of all $x$ such that $\beta^T x \ne 0$ for all $\beta \in S_1$. Since the random variable $X$ is continuous and $S_1$ is countable, $\mathcal{X}_0$ has probability 1, and this is sufficient for the conclusions of the theorem to hold. Also note that the collection $\mathcal{F}$ is VC class of functions with VC dimension $V \lesssim p$. 

We apply Theorem \ref{massart} with the distance metric $d_{\Delta}$. From Proposition \ref{alpha-kappa}: 
\begin{align}
\ell(f_{\beta^0}, f_{\beta}) & = \frac{S(\beta^0) - S(\beta)}{2} \notag\\
\label{eq:loss_lower_bound} & \ge \frac{1}{2}\left[\frac{d^2_{\Delta}(\beta, \beta^0)}{C_n}\mathds{1}_{\left(d_{\Delta}(\beta, \beta^0) \le 2t^*C_n\right)} + 2t^*d_{\Delta}(\beta, \beta^0)\mathds{1}_{\left(d_{\Delta}(\beta, \beta^0) > 2t^*C_n\right)}\right] 
\end{align}
Next, we construct a function $\omega$ which satisfies condition (2) of Theorem \ref{massart} with respect to the distance $\sqrt{d_{\Delta}}$. Note that we need $\omega$ to satisfy: 
$$\sqrt{d_{\Delta}(\beta, \beta^0)} \le \omega\left(\sqrt{\ell(f_{\beta^0}, f_{\beta})}\right)$$
or inverting it, 
$$\left(\omega^{-1}\left(\sqrt{d_{\Delta}(\beta, \beta^0)}\right)\right)^2 \le \ell(f_{\beta^0}, f_{\beta}) \,.$$
Hence, from Proposition \ref{alpha-kappa} we need $\omega$ to satisfy:
$$\left(\omega^{-1}\left(\sqrt{d_{\Delta}(\beta, \beta^0)}\right)\right)^2 = \frac{d^2_{\Delta}(\beta, \beta^0)}{2C_n}\mathds{1}_{\left(d_{\Delta}(\beta, \beta^0) \leq 2t^*C_n\right)} + t^*d_{\Delta}(\beta, \beta^0)\mathds{1}_{\left(d_{\Delta}(\beta, \beta^0) > 2t^*C_n\right)}$$ 
which further implies: 
$$\omega^{-1}\left(\sqrt{d_{\Delta}(\beta, \beta^0)}\right) = \frac{d_{\Delta}(\beta, \beta^0)}{\sqrt{C_n}}\mathds{1}_{\left(d_{\Delta}(\beta, \beta^0) \leq 2t^*C_n\right)} + \sqrt{2t^*d_{\Delta}(\beta, \beta^0)}\mathds{1}_{\left(d_{\Delta}(\beta, \beta^0) > 2t^*C_n\right)}$$
Parametrizing $\sqrt{d_{\Delta}(\beta, \beta^0)} = t$ we have: 
$$\omega^{-1}\left(t\right) = \frac{t^2}{\sqrt{C_n}}\mathds{1}_{\left(t < \sqrt{2t^*C_n}\right)} + \sqrt{2t^*}t\mathds{1}_{\left(t > \sqrt{2t^*C_n}\right)}$$
Hence inverting: 
\begin{align*}
\omega(x) & = \sqrt{x}C^{1/4}_{n}\mathds{1}_{\left(x < 2t^*\sqrt{C_n}\right)} + \frac{x}{\sqrt{2t^*}}\mathds{1}_{\left(x > 2t^*\sqrt{C_n}\right)} \\
& = \sqrt{x}C^{1/4}_{n} \vee \frac{x}{\sqrt{2t^*}}
\end{align*}
which immediately implies $\omega \in C_1$ as defined in Theorem \ref{massart}. 

It also follows that this pseudo-distance $\sqrt{d_{\Delta}}$ provides an upper bound on the variability of the difference between the loss functions at any two $\beta_1, \beta_2 \in S^{p-1}$:   
\allowdisplaybreaks
\begin{align*}
Var_P(\gamma(f_{\beta_1}, .) - \gamma(f_{\beta_2}, .)) & = \frac{1}{4}\,Var_P(Y\s(X^T\beta_1) - Y\s(X^T\beta_2)) \\
& \le\frac{1}{4}\, \E\left(\s(X^T\beta_1) - \s(X^T\beta_2)\right)^2 \\
& = \E(\mathds{1}(\s(X^T\beta_1) \neq \s(X^T\beta_2))) \\
& = \P(\s(X^T\beta_1) \neq \s(X^T\beta_2)) = d_{\Delta}^2(\beta_1, \beta_2)
\end{align*}

Finally we need to find $\phi$ which satisfies condition (3) of Theorem \ref{massart}. As $\mathcal{F}$ is a VC class of functions, we can follow the same line of argument in Section 2.4 of \cite{massart2006risk}: 
\begin{align*}
\phi(\sigma) & = K\sigma\sqrt{V\left(1 + \log{\left(\frac{1}{\sigma} \vee 1\right)}\right)} \\
& \le K \sigma\sqrt{p\log{\left(\frac{1}{\sigma}\right)}}
\end{align*} 
for all $\sigma \le 1$. The quantity $V$ in the above display is the VC-dimension of the class of all half-spaces in $\mathbb{R}^p$ where $V \lesssim p$.  Solving the equation $\sqrt{n}\epsilon_*^2 \ge \phi(\omega(\epsilon_*))$
we get:  
\begin{align*}
\epsilon_*^{-2}\phi(\omega(\epsilon_*)) & \le K \epsilon_*^{-2}\omega(\epsilon_*)\sqrt{p\log{\left(\frac{1}{\omega(\epsilon_*)}\right)}} \\
& = K \epsilon_*^{-2}\left(\sqrt{\epsilon_*}C^{1/4}_{n} \vee \frac{\epsilon_*}{\sqrt{2t^*}}\right)\sqrt{p\log{\left(\frac{1}{\sqrt{\epsilon_*}C^{1/4}_{n} \vee \frac{\epsilon_*}{\sqrt{2t^*}}}\right)}} \\
& \le K \epsilon_*^{-3/2}C^{1/4}_{n}\sqrt{p\log{\left(\frac{1}{C^{1/2}_{n}\epsilon_*}\right)}} \vee \frac{K}{\sqrt{2t^*}}\epsilon_*^{-1}\sqrt{p\log{\left(\frac{\sqrt{2t^*}}{\epsilon_*}\right)}}
\end{align*}
Hence we need to find $\epsilon_*$ such that: 
$$K \epsilon_*^{-3/2}C^{1/4}_{n}\sqrt{p\log{\left(\frac{1}{\sqrt{C_n}\epsilon_*}\right)}} \le \sqrt{n} \ \ \ \& \ \ \ \frac{K}{\sqrt{2t^*}}\epsilon_*^{-1}\sqrt{p\log{\left(\frac{\sqrt{2t^*}}{\epsilon_*}\right)}} \le \sqrt{n} \,.$$
Solving these two inequalities and ignoring constants we get: 
$$\epsilon_* = \left(\frac{p\sqrt{C_n}\log{(n/pC^2_{n})}}{n}\right)^{1/3} \vee \left(\frac{p\log{(n/p)}}{n}\right)^{1/2}$$
Using the above $\epsilon_*$ we conclude using Theorem \ref{massart}: 
\begin{equation}
\label{ineq_1_bound}
\P\left(S(\beta^0) - S(\hat{\beta}) \ge K y\epsilon_*^2\right) \le e^{-y}
\end{equation}
for all $y \ge 1$. Here also $K$ is a different constant than before, which is now a function of some universal constant and $t^*$, but it does not depend on $(n, p, s_0)$. Using Proposition \ref{alpha-kappa} and equation \eqref{ineq_1_bound}, we get the following concentration bound:
\begin{equation*}
\P\left(\epsilon_*^{-2} \left[\frac{d^2_{\Delta}(\hat \beta, \beta^0)}{C_n}\mathds{1}_{\left(d_{\Delta}(\hat \beta, \beta^0) \le 2t^*C_n\right)} + 2t^*d_{\Delta}(\hat \beta, \beta^0)\mathds{1}_{\left(d_{\Delta}(\hat \beta, \beta^0) > 2t^*C_n\right)}\right]  \ge Ky\right) \le e^{-y}
\end{equation*}
Consequently: 
\begin{align*}
\P\left(\epsilon_*^{-2} \left[\frac{d^2_{\Delta}(\hat \beta, \beta^0)}{C_n}\mathds{1}_{\left(d_{\Delta}(\hat \beta, \beta^0) \le 2t^*C_n\right)} \right]  \ge Ky\right) \le e^{-y} 
\end{align*}
which, along with Assumption \ref{ass:wedge_binary} yields: 
\begin{align}
\label{subbound_1}  \P\left(\frac{\epsilon_*^{-1}}{\sqrt{C_n}}\left[\|\hat \beta - \beta^0\|_2 \mathds{1}_{\left(d_{\Delta}(\hat \beta, \beta^0) \le 2t^*C_n\right)} \right]  \ge Ky\right) \le e^{-y^2} 
\end{align}
and 
\begin{align*}
\P\left(\epsilon_*^{-2} \left[2t^*d_{\Delta}(\hat \beta, \beta^0)\mathds{1}_{\left(d_{\Delta}(\hat \beta, \beta^0) > 2t^*C_n\right)}\right]  \ge Ky\right) \le e^{-y} 
\end{align*}
which can be rewritten using Assumption \ref{ass:wedge_binary} as:
\begin{align}
\label{subbound_2} \P\left(\epsilon_*^{-2} \left[\|\hat \beta - \beta^0\|_2\mathds{1}_{\left(d_{\Delta}(\hat \beta, \beta^0) > 2t^*C_n\right)}\right]  \ge Ky\right) \le e^{-y} \,.
\end{align} 
Combining equation \eqref{subbound_1} and \eqref{subbound_2} we conclude: 
\begin{equation*}
\P\left(\left(\frac{r_n}{\sqrt{C_n}} \wedge r_n^2 \right) \|\hat \beta - \beta^0\|_2 \ge Ky\right) \le e^{-y^2} + e^{-y} \le 2e^{-y}
\end{equation*}
for all $y \ge 1$ and for some constant $K$ not depending on $n,p$ with $r_n = \epsilon_*^{-1}$. 
which completes the proof of the concentration bound.

The upper bound on the expectation follows from this exponential tail bound using the following calculation:
\allowdisplaybreaks
\begin{align*}
& \E\left(\left(\frac{r_n}{\sqrt{C_n}} \wedge r_n^2 \right) \|\hat{\beta} - \beta^0\|_2\right) \\
& = \int_{0}^{\infty} \P\left(\left(\frac{r_n}{\sqrt{C_n}} \wedge r_n^2 \right)\|\hat{\beta} - \beta^0\|_2 \ge T\right) \, dT \\
& = \int_{0}^{1} \P\left(\left(\frac{r_n}{\sqrt{C_n}} \wedge r_n^2 \right)\|\hat{\beta} - \beta^0\|_2 \ge T\right) \, dT + \int_{1}^{\infty} \P\left(\left(\frac{r_n}{\sqrt{C_n}} \wedge r_n^2 \right)\|\hat{\beta} - \beta^0\|_2 \ge T\right) \, dT \\
& \le 1 + 2\int_{1}^{\infty} e^{-T/K} \, dT \\
& = 1 + 2K \int_{1/K}^{\infty} e^{-T} \, dT = 1 + 2Ke^{-1/K} < \infty
\end{align*}
which completes the proof of minimax upper bound.

\subsection{Proof of Theorem \ref{minimax-lower-bound}}
To obtain a lower bound on the minimax error, we use Assouad's Lemma \cite{assouad1983deux} which we state below for convenience:
\begin{lemma}{\bf [Assouad's Lemma]}
\label{lem:Assouad}
Let $\Omega = \{0,1\}^m$ (or $\{-1,1\}^m$) be the set of all binary sequences of length $m$. Let $P_{\omega} , \omega \in \Omega$ be a set of $2^m$ measures on some space $\{\mathcal{X}, A\}$ and let the corresponding expectations be $\E_{\omega}$. Then: $$\inf_{\hat{\omega}}\sup_{\omega \in \Omega} \E_{\omega}(d_H(\hat{\omega}, \omega)) \ge \frac{m}{2}(1 - \max_{\omega \sim \omega'}\|P^n_{\omega} - P^n_{\omega'}\|_{TV})$$ where $\hat{\omega}$ is an estimator based on $n$ i.i.d. observations $z_1, \dots, z_n \sim P_{\omega}$, $P^n_{\omega}$ denotes the $n$-fold product measure of $P_{\omega}$, $d_H$ is the Hamming distance and $\omega \sim \omega'$ means $d_H(\omega , \omega') = 1$.\footnote{For some discussions and applications of this lemma, see \cite{tsybakov2009introduction}.} 
\end{lemma}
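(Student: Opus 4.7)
The plan is to prove Assouad's lemma by the standard two-point testing reduction applied coordinate by coordinate. Without loss of generality I may assume $\hat\omega$ takes values in $\{0,1\}^m$: if it does not, replacing it by its coordinatewise rounding $\tilde\omega_j = \mathbf{1}(\hat\omega_j \geq 1/2)$ only decreases the Hamming risk. First, I would lower bound the supremum by the uniform average, writing
\[
\sup_{\omega \in \Omega} \E_{\omega} d_H(\hat\omega, \omega) \;\geq\; \frac{1}{2^m}\sum_{\omega \in \Omega} \E_{\omega} d_H(\hat\omega, \omega) \;=\; \sum_{j=1}^m \frac{1}{2^m}\sum_{\omega \in \Omega} P^n_\omega(\hat\omega_j \neq \omega_j),
\]
using the decomposition $d_H(\hat\omega,\omega) = \sum_j \mathbf{1}(\hat\omega_j \neq \omega_j)$ and exchanging sums.

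Next, for each fixed coordinate $j$, I would pair up the $2^m$ sequences: write $\omega^{(j)}$ for the sequence obtained from $\omega$ by flipping coordinate $j$, and group $\omega$ with $\omega^{(j)}$. Then
\[
\frac{1}{2^m}\sum_{\omega \in \Omega} P^n_\omega(\hat\omega_j \neq \omega_j) \;=\; \frac{1}{2^m}\sum_{\omega:\,\omega_j=0} \Bigl[ P^n_\omega(\hat\omega_j = 1) + P^n_{\omega^{(j)}}(\hat\omega_j = 0)\Bigr].
\]
Each bracketed term is the sum of type~I and type~II errors of the test $\hat\omega_j$ for distinguishing $P^n_\omega$ from $P^n_{\omega^{(j)}}$, which are neighboring in Hamming distance. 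The key inequality (Le Cam's two-point bound) is that for any measurable set $A$ and any pair of probability measures $P,Q$,
\[
P(A) + Q(A^c) \;\geq\; 1 - \|P - Q\|_{TV},
\]
which follows directly from $\|P-Q\|_{TV} = \sup_B |P(B)-Q(B)| \geq P(A) - Q(A) = 1 - (P(A^c)+Q(A))$, after swapping $A$ with $A^c$ if needed.

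Applying this bound to each pair with $A = \{\hat\omega_j = 1\}$, each bracket is at least $1 - \|P^n_\omega - P^n_{\omega^{(j)}}\|_{TV} \geq 1 - \max_{\omega \sim \omega'}\|P^n_\omega - P^n_{\omega'}\|_{TV}$. There are $2^{m-1}$ such pairs in the sum over $\{\omega:\omega_j=0\}$, so the inner sum is at least $\tfrac{1}{2}(1 - \max_{\omega \sim \omega'}\|P^n_\omega - P^n_{\omega'}\|_{TV})$. Summing over the $m$ coordinates gives the claimed bound. The main conceptual step is the coordinatewise pairing that reduces a high-dimensional estimation problem to $m$ essentially independent two-point testing problems; the rest is algebra and the standard Le Cam bound. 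There is no real analytic obstacle here beyond getting the pairing bookkeeping right, since boundedness and measurability of $d_H$ and of the tests $\mathbf{1}(\hat\omega_j = 1)$ are automatic.
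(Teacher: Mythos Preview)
Your proof is correct and is the standard coordinatewise two-point reduction argument for Assouad's lemma. The paper, however, does not supply its own proof of this lemma: it merely states it for convenience and cites \cite{assouad1983deux} and \cite{tsybakov2009introduction}, then proceeds to apply it. So there is nothing in the paper to compare your argument against; your write-up simply fills in the classical proof that the authors chose to reference rather than reproduce.
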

\noindent
To apply this lemma in our model, define for small $\epsilon > 0$: $$\tilde{\Theta} = \left\{\beta: \,\,\, \text{where} \,\,\, \beta = \frac{\gamma}{\|\gamma\|_2}, \gamma_1 = 1, \gamma_j \in \{-\epsilon, \epsilon\}, \forall \,\,\, 2\le j \le p\right\} \,.$$ We will motivate the choice of $\epsilon$ in the later part of the proof. Observe that, $\|\gamma\|_2$ is same for all $\gamma \in \Theta$ and equals $\sqrt{1+(p-1)\epsilon^2}$. For notational simplicity, define $m(\epsilon) = \sqrt{1+(p-1)\epsilon^2}$. Now, for any $\omega \in \{-1,1\}^{p-1}$, define $\gamma_{\omega} = (1, \epsilon \omega)$ and $\beta_{\omega} = \gamma_{\omega}/ \|\gamma_{\omega}\|_2$. This establishes a 1-1 correspondence between $\Omega$ and $\tilde{\Theta}$, with $m = p-1$.  For any $\beta \in \tilde{\Theta}$ define the joint distribution $P_{\beta}$ of $(X,Y)$ as:
\begin{enumerate}
\item $X \sim \mathcal{N}(0, I_p)$
\item $P_{\beta}(Y = 1| X) =
\begin{cases}
  \frac{1}{2} + \frac{1}{C_n}\beta'X, & \text{if } |\beta'X| \le \left[C_n\epsilon\sqrt{p} \vee \frac{|X_1|}{2 m(\epsilon)} \right ]\wedge 1/4. \\
  \frac{1}{2} + \left(\left[\epsilon\sqrt{p} \vee \frac{|X_1|}{2 C_nm(\epsilon)} \right ]\wedge 1/4\right)\text{sgn}(\beta'X), & \text{otherwise}.
\end{cases}$
\end{enumerate}

\noindent
The Gaussian distribution of $X$ trivially satisfies Assumption (A2). In the following lemma we show that this construction also satisfies Assumption (A1).

\noindent
From now on, we define $C = C_n$ for notational simplicity. 
\begin{lemma}
\label{low_noise_minimax}
The above construction of satisfies a part of Assumption 1, i.e. 
$$\P\left(|\eta(X) - 1/2| \le t \right) \le 5\sqrt{\frac{2}{\pi}}Ct \ \ \forall \ t \in (0, 1/4) \,.$$
\end{lemma}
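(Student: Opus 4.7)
The plan is to decompose the event $\{|\eta(X)-1/2|\le t\}$ according to the piecewise definition of $\eta$ and bound the (Gaussian) probability of each piece. Write $\m{1}_A$ for the transition region $A = \{|\beta'X| \le [C\epsilon\sqrt{p} \vee |X_1|/(2m(\epsilon))]\wedge 1/4\}$ and $\m{1}_{A^c}$ for the saturated region, so that
\[
\P(|\eta(X)-1/2|\le t) \;\le\; \P(|\eta(X)-1/2|\le t,\,A) + \P(|\eta(X)-1/2|\le t,\,A^c).
\]

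On $A$ we have $|\eta(X)-1/2| = |\beta'X|/C$, hence the first term is dominated by $\P(|\beta'X|\le Ct)$. Since $\|\beta\|_2 = 1$ and $X\sim\N(0,I_p)$, the linear form $\beta'X$ is standard normal, and the density bound $\phi(0) = 1/\sqrt{2\pi}$ immediately gives $\P(|\beta'X|\le Ct) \le \sqrt{2/\pi}\,Ct$.

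On $A^c$ we have $|\eta(X)-1/2| = [\epsilon\sqrt{p}\vee |X_1|/(2Cm(\epsilon))]\wedge 1/4$. Since $t<1/4$, the constraint $|\eta(X)-1/2|\le t$ forces $\epsilon\sqrt{p}\vee |X_1|/(2Cm(\epsilon))\le t$; in particular $|X_1|\le 2Cm(\epsilon)t$. Dropping the side condition $\epsilon\sqrt{p}\le t$ and bounding with the marginal $\N(0,1)$ density of $X_1$ yields the second-term bound $\sqrt{2/\pi}\cdot 2Cm(\epsilon)t$. Summing the two pieces gives $\sqrt{2/\pi}\,Ct(1+2m(\epsilon))$.

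The only remaining ingredient is the a priori bound $m(\epsilon)=\sqrt{1+(p-1)\epsilon^2}\le 2$, which will hold for the ultimate (small) choice of $\epsilon$ calibrated at the end of the Assouad argument (cf.\ the analogous control $m(\delta)\le 2$ used in the proof of Lemma \ref{low_noise_minimax_high}); under it, $1+2m(\epsilon)\le 5$ and the claimed bound $5\sqrt{2/\pi}\,Ct$ follows. No step is really an obstacle — the only thing to be careful about is recognizing that on $A^c$ the saturated level is at least $\epsilon\sqrt{p}$, so the event is empty unless $\epsilon\sqrt{p}\le t$, but since we only want an upper bound we simply discard this constraint and retain the Gaussian slab $\{|X_1|\le 2Cm(\epsilon)t\}$.
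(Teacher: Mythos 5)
Your proof is correct and follows essentially the same route as the paper's: the same decomposition into the transition region $A$ and its complement, the same Gaussian density bounds $\P(|\beta'X|\le Ct)\le \sqrt{2/\pi}\,Ct$ and $\P(|X_1|\le 2Cm(\epsilon)t)\le 2\sqrt{2/\pi}\,Cm(\epsilon)t$, and the same final appeal to $m(\epsilon)\le 2$ for the calibrated choice of $\epsilon$. Your remark that on $A^c$ one may simply discard the constraint $\epsilon\sqrt{p}\le t$ is exactly what the paper does implicitly.
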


\begin{proof}
Fix $t$ such that $0 < t < \frac{1}{4}$. Then,
\allowdisplaybreaks
\begin{align*}
& \P_X(|\eta(X) - 0.5| \le t) \\
& =  \P_X\left(|\eta(X) - 0.5| \le t, \frac{|\beta'X|}{C} \le \left[\epsilon\sqrt{p} \vee \frac{|X_1|}{2Cm(\epsilon)} \right ]\wedge 1/4\right) \\ & \qquad \qquad+ \P_X\left(|\eta(X) - 0.5| \le t, \frac{|\beta'X|}{C}> \left[\epsilon\sqrt{p} \vee \frac{|X_1|}{2Cm(\epsilon)} \right ]\wedge 1/4\right) \\
& =   \P_X\left(\frac{|\beta'X|}{C} \le t, \frac{|\beta'X|}{C} \le \left[\epsilon\sqrt{p} \vee \frac{|X_1|}{2Cm(\epsilon)} \right ]\wedge 1/4\right) \\ & \qquad \qquad + \P_X\left(\left[\epsilon\sqrt{p} \vee \frac{|X_1|}{2Cm(\epsilon)} \right ]\wedge 1/4 \le t, \frac{|\beta'X|}{C} > \left[\epsilon\sqrt{p} \vee \frac{|X_1|}{2Cm(\epsilon)} \right ]\wedge 1/4\right) \\
& \le \P_X(|\beta^TX| \le Ct) + \P_X\left(\left[\epsilon\sqrt{p} \vee \frac{|X_1|}{2Cm(\epsilon)} \right ]\wedge 1/4 \le t\right) \\
& \le \P_X(|\beta^TX| \le Ct) + \P_X(|X_1| \le 2Cm(\epsilon)t) \\
& \le \sqrt{\frac{2}{\pi}}\left[Ct + 2Cm(\epsilon)t\right] \le 5\sqrt{\frac{2}{\pi}}Ct
\end{align*}
The last inequality is valid when $m(\epsilon) \le 2$, which happens for $\epsilon \sqrt{p}$ sufficiently small. 
\end{proof}

\noindent
We use the notation $\beta \sim_j \beta'$ if $\beta$ and $\beta'$ differs only in $j^{th}$ position for $2 \le j \le p$. So, in order use Assouad's lemma, we need an  on $\|P^n_{\beta}-P^n_{\beta'}\|_{TV}$ when $\beta \sim_j \beta'$ for any $2 \le j \le p$.  Fix $\beta_1$ and $\beta_2$ and $j \in \{2, \cdots, p\}$ such that $\beta_1 \sim_j \beta_2$. Using the standard relation between the total variation norm and Hellinger distance, we have:  
$$\|P^n_{\beta_1}-P^n_{\beta_2}\|_{TV} \le \sqrt{H^2(P^n_{\beta_1}, P^n_{\beta_2})} \le \sqrt{n H^2(P_{\beta_1}, P_{\beta_2})}$$ 
To make the minimax lower bound non-trivial, we will choose $\epsilon = \epsilon(n,p)$ in a way that ensures $H^2(P_{\beta_1}, P_{\beta_2}) \sim n^{-1}$. Towards that, we need the following lemma:

\begin{lemma}
\label{Hellinger}
If $P_1$ = Ber($p_1$) and $P_2$ = Ber($p_2$) with $p_1, p_2 \in [1/4, 3/4]$, then $H^2(P_1, P_2) \le \frac{\nu^2}{4\sqrt{3}s(1-s)}$ where $\nu = p_2 - p_1, s = (p_1+p_2)/2$.
\end{lemma}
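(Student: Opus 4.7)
My plan begins from the standard closed-form expression for the squared Hellinger distance between two Bernoulli distributions,
$$H^2(P_1,P_2) = (\sqrt{p_1}-\sqrt{p_2})^2 + (\sqrt{1-p_1}-\sqrt{1-p_2})^2,$$
followed by rationalizing each squared difference via the identity $(\sqrt{a}-\sqrt{b})^2 = (a-b)^2/(\sqrt{a}+\sqrt{b})^2$. Since $((1-p_1)-(1-p_2))^2 = \nu^2 = (p_1-p_2)^2$, this collapses the Hellinger distance to
$$H^2(P_1,P_2) = \nu^2 \left[\frac{1}{(\sqrt{p_1}+\sqrt{p_2})^2} + \frac{1}{(\sqrt{1-p_1}+\sqrt{1-p_2})^2}\right],$$
and reduces the lemma to producing sharp lower bounds on the two denominators in terms of $s$ and $1-s$ respectively, uniformly over $p_1,p_2 \in [1/4,3/4]$.

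For the first denominator I would expand
$$(\sqrt{p_1}+\sqrt{p_2})^2 = (p_1+p_2) + 2\sqrt{p_1 p_2} = 2s + 2\sqrt{s^2 - \nu^2/4},$$
and analogously $(\sqrt{1-p_1}+\sqrt{1-p_2})^2 = 2(1-s) + 2\sqrt{(1-s)^2 - \nu^2/4}$. The constraint $p_1,p_2 \in [1/4,3/4]$ pins both $\nu^2/(4s^2)$ and $\nu^2/(4(1-s)^2)$ uniformly away from $1$ on the admissible region, which lets me extract multiplicative lower bounds of the form $(\sqrt{p_1}+\sqrt{p_2})^2 \ge \alpha s$ and $(\sqrt{1-p_1}+\sqrt{1-p_2})^2 \ge \alpha(1-s)$ for some explicit constant $\alpha > 2$. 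Plugging these back and simplifying $\tfrac{1}{\alpha s} + \tfrac{1}{\alpha(1-s)} = \tfrac{1}{\alpha\, s(1-s)}$ gives the desired form of the bound.

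The main technical step is extracting the sharp value of the constant. Differentiating $H^2\cdot s(1-s)/\nu^2$ in $\nu$ at fixed $s$ shows the ratio is monotone in $|\nu|$, so the extremum is attained on the boundary of the admissible rectangle (namely on $p_1=1/4$ or $p_2=3/4$), reducing the verification of the constant $1/(4\sqrt{3})$ to a one-variable calculation along that boundary. The rest of the argument is purely algebraic bookkeeping; I expect the only genuinely delicate step to be choosing the right representation of $(\sqrt{p_1}+\sqrt{p_2})^2$ in terms of $(s,\nu)$ so that the worst-case constant can be identified transparently.
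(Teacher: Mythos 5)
Your plan has one genuine problem: the normalization of $H^2$. The paper works with the convention
\begin{equation*}
H^2(P_1,P_2) \;=\; 1-\sqrt{p_1p_2}-\sqrt{(1-p_1)(1-p_2)} \;=\; \tfrac12\left[(\sqrt{p_1}-\sqrt{p_2})^2+(\sqrt{1-p_1}-\sqrt{1-p_2})^2\right],
\end{equation*}
and the constant $1/(4\sqrt 3)$ in the lemma is calibrated to that. Your starting identity omits the factor $\tfrac12$, so you are bounding twice the intended quantity, and under your convention the stated inequality is actually \emph{false}, not merely off by a recoverable constant: letting $p_2\to p_1=1/2$, your $H^2/\nu^2 \to \frac{1}{(\sqrt{p_1}+\sqrt{p_2})^2}+\frac{1}{(\sqrt{1-p_1}+\sqrt{1-p_2})^2} = 1$, while the claimed bound is $\frac{1}{4\sqrt3\, s(1-s)} = 1/\sqrt3 \approx 0.577$. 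Since the lemma feeds into $\|P^n_{\beta_1}-P^n_{\beta_2}\|_{TV}\le\sqrt{nH^2}$ in the minimax lower bound, the convention is not cosmetic. With the $\tfrac12$ restored, your argument closes: each term becomes exactly $\frac{x^2}{s+\sqrt{s^2-x^2}}$ with $x=\nu/2$ (and its mirror with $1-s$), the constraint $p_1,p_2\in[1/4,3/4]$ gives $|x|\le s/2$ and $|x|\le(1-s)/2$, hence $s+\sqrt{s^2-x^2}\ge s(1+\sqrt3/2)\ge\sqrt3\,s$, and summing yields $\frac{x^2}{\sqrt3\,s(1-s)}=\frac{\nu^2}{4\sqrt3\,s(1-s)}$.

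Apart from this, your route is essentially the paper's argument in different clothing: the paper writes $p_1=s+x$, $p_2=s-x$, gets $H^2=(s-\sqrt{s^2-x^2})+((1-s)-\sqrt{(1-s)^2-x^2})$, and bounds each term by the mean value theorem applied to $\sqrt{1-t}$, using $|x|/s\le1/2$ to control the intermediate point by $(1-\tilde x_1^2/s^2)^{-1/2}\le 2/\sqrt3$. Your rationalization computes those same two terms exactly rather than via an intermediate point, which is marginally cleaner and even gives a slightly better constant ($1+\sqrt3/2$ in place of $\sqrt3$); the boundary/monotonicity analysis you sketch for identifying the worst case is correct but unnecessary, since the crude bound $|x|\le s/2$ already suffices.
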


\noindent
The proof of this Lemma can be found in section \ref{Hellinger Proof} of supplement. For the rest of the proof,  define 
$$A_i = \left[|\beta_i'X| \le \left\{C\epsilon\sqrt{p} \vee \frac{|X_1|}{2 m(\epsilon)} \right\}\wedge 1/4 \right ]$$ for $i=1,2$. 
Now,
\begin{equation*}
\begin{split}
& H^2(P_{\beta_1}, P_{\beta_2}) \\
= & E_X\left[H^2(P_{\beta_1}(Y=1|X), P_{\beta_2}(Y=1|X))\right] \\
= & E_X\left[H^2(P_{\beta_1, X}, P_{\beta_2, X})\right]  \hspace{0.3in} [\text{Say}]
\end{split}
\end{equation*}
We next divide the domain of $X$ into two sub-parts and compute the corresponding values of $\nu_X = P_{\beta_1}(Y=1|X) - P_{\beta_2}(Y=1|X)$, on these sub-parts.
\\\\
\textbf{Case 1:} $X \in A_1 \cup A_2$.
\\\\
\textbf{Case 2: } $X \in A_1^c \cap A_2^c$. Note that, in this case, $|\nu_X| = 0$, if $\text{sign}(\beta_1'X) = \text{sign}(\beta_2'X)$, $|\nu_X| \le 2\left(\epsilon\sqrt{p} \vee \frac{|X_1|}{2Cm(\epsilon)}\right)$ otherwise.
\begin{lemma}
\label{intermediate-lemma}
Under \textbf{Case 1}, $|\nu_X| = \left|P_{\beta_1}(Y=1|X) - P_{\beta_2}(Y=1|X)\right| \le 2\epsilon|X_j|/Cm(\epsilon)$ where $\beta_1 \sim_j\beta_2$.
\end{lemma}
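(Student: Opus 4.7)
The plan is to leverage both the single-coordinate difference between $\beta_1$ and $\beta_2$ and the piecewise structure of $P_\beta(Y=1|X)$. Since $\beta_i = \gamma_i/m(\epsilon)$ with $\gamma_1$ and $\gamma_2$ agreeing in every coordinate except the $j$-th (where they differ by $\pm 2\epsilon$), one immediately has $|(\beta_1 - \beta_2)^\top X| = 2\epsilon|X_j|/m(\epsilon)$, so the assertion reduces to the Lipschitz-type estimate $|\nu_X| \le |(\beta_1 - \beta_2)^\top X|/C$ on $A_1 \cup A_2$. I would split the analysis according to how many of $\beta_1^\top X$, $\beta_2^\top X$ lie in the linear regime $[-\tau(X), \tau(X)]$, where $\tau(X) := [C\epsilon\sqrt{p} \vee |X_1|/(2m(\epsilon))] \wedge 1/4$ is the threshold separating the two pieces of $P_\beta(Y=1|X)$.

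In the principal sub-case $X \in A_1 \cap A_2$, both $P_{\beta_i}(Y=1|X)$ equal $1/2 + \beta_i^\top X/C$, so subtracting yields $\nu_X = (\beta_1 - \beta_2)^\top X/C$ and the desired bound holds with equality. For $X$ in the symmetric difference $A_1 \triangle A_2$, by symmetry assume $X \in A_1 \setminus A_2$, so that $|\beta_1^\top X| \le \tau(X) < |\beta_2^\top X|$. Writing $P_{\beta_2}(Y=1|X) = 1/2 + \tau'(X)\,\mathrm{sgn}(\beta_2^\top X)$ with $\tau'(X) := [\epsilon\sqrt{p} \vee |X_1|/(2Cm(\epsilon))] \wedge 1/4$, a short case analysis on the magnitudes of $\epsilon\sqrt{p}$ and $|X_1|/(2Cm(\epsilon))$ yields the monotone relation $C\tau'(X) \le \tau(X)$ (consistent with the interesting small-$C$ regime for the lower bound). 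This inequality, combined with $|\beta_1^\top X| \le \tau(X) < |\beta_2^\top X|$, places $\tau'(X)\,\mathrm{sgn}(\beta_2^\top X)$ between $\beta_1^\top X/C$ and $\beta_2^\top X/C$, so the triangle inequality delivers $|\nu_X| \le |\beta_1^\top X - \beta_2^\top X|/C$ and the proof is complete.

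The main obstacle is the symmetric-difference sub-case: the map $\beta \mapsto P_\beta(Y=1|X)$ is not globally Lipschitz in $\beta^\top X$ since it saturates beyond $\pm \tau(X)$, so one cannot simply invoke a Lipschitz-modulus bound. The key leverages are the proximity $|\beta_1^\top X - \beta_2^\top X| = 2\epsilon|X_j|/m(\epsilon)$ and the sandwich $C\tau'(X) \le \tau(X) < |\beta_2^\top X|$, which together prevent the saturated side from overshooting the linear side. Once the pointwise estimate is available on $A_1 \cup A_2$, it plugs directly into the Hellinger computation via Lemma~\ref{Hellinger} that is needed to apply Assouad's inequality in the next step of the minimax lower-bound argument.
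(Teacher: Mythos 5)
Your case decomposition is exactly the paper's: on $A_1\cap A_2$ the bound holds with equality, and all the content sits in the symmetric difference, which the paper's proof simply asserts. Your attempt to actually justify that step, however, uses the key inequality in the wrong direction. Set $\tau(X)=[C\epsilon\sqrt p\vee |X_1|/(2m(\epsilon))]\wedge\tfrac14$ and $\tau'(X)=[\epsilon\sqrt p\vee |X_1|/(2Cm(\epsilon))]\wedge\tfrac14$, and take $X\in A_1\setminus A_2$ with $\s(\beta_2^{\top}X)=+1$. The sandwich requires two inequalities: $\tau'(X)\le \beta_2^{\top}X/C$, which your relation $C\tau'(X)\le\tau(X)<\beta_2^{\top}X$ does deliver, and $\beta_1^{\top}X/C\le\tau'(X)$, i.e.\ $\beta_1^{\top}X\le C\tau'(X)$. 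But all you know about $\beta_1$ is $|\beta_1^{\top}X|\le\tau(X)$, and since $C\tau'(X)\le\tau(X)$ this does \emph{not} imply $\beta_1^{\top}X\le C\tau'(X)$; you would need the reverse inequality $\tau(X)\le C\tau'(X)$. Explicitly, $C\tau'(X)=[C\epsilon\sqrt p\vee |X_1|/(2m(\epsilon))]\wedge(C/4)$, so $C\tau'(X)=\tau(X)$ only when the bracket is at most $C/4$ (i.e.\ the $\wedge\tfrac14$ caps are inactive); only then does the sandwich close. When the cap binds the claimed pointwise bound can actually fail: with $\tau(X)=\tau'(X)=\tfrac14$, $\beta_1^{\top}X=\tfrac14$ and $\beta_2^{\top}X=\tfrac14+\delta$ for tiny $\delta>0$, one computes $|\nu_X|=(1-C)/(4C)$ while $|\beta_1^{\top}X-\beta_2^{\top}X|/C=\delta/C$.

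This is as much a defect of the construction as of your argument: the two branches of $P_{\beta}(Y=1|X)$ agree at the threshold $|\beta^{\top}X|=\tau(X)$ only when $\tau(X)=C\tau'(X)$, so the map $\beta^{\top}X\mapsto P_{\beta}(Y=1|X)$ is discontinuous (and not a genuine $1/C$-Lipschitz truncation) exactly on the event where the $\wedge\tfrac14$ cap in the first branch is active. The paper's one-line proof tacitly assumes the matched-threshold version (first branch capped at $\wedge\tfrac{C}{4}$ rather than $\wedge\tfrac14$), under which $C\tau'(X)=\tau(X)$ identically and your sandwich argument goes through verbatim. As written, the step ``places $\tau'(X)\s(\beta_2^{\top}X)$ between $\beta_1^{\top}X/C$ and $\beta_2^{\top}X/C$'' does not follow from what you established; to repair it you should either work with the continuity-corrected construction, or restrict to the event where the caps are inactive and control the complementary event (which has exponentially small probability in $p$) separately in the Hellinger/KL computation.
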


\begin{proof}
First assume that, $X \in A_1 \cap A_2$. Then,
\begin{equation*}
\begin{split}
|\nu_X| = \left|P_{\beta_1}(Y=1|X) - P_{\beta_2}(Y=1|X)\right| & =  \frac{|\beta_1' X - \beta_2' X|}{C} = \frac{2\epsilon |X_j|}{Cm(\epsilon)}
\end{split}
\end{equation*}
Next, consider the case that $X \in A_1 \cap A_2^c$. Then, $|\beta_2'X| > \left(C\epsilon\sqrt{p} \vee \frac{|X_1|}{2 m(\epsilon)}\right) \wedge 1/4$ but $|\beta_1'X| < \left(C\epsilon\sqrt{p} \vee \frac{|X_1|}{2 m(\epsilon)}\right) \wedge 1/4$. Hence,
\begin{equation*}
\begin{split}
|\nu_X| = \left|P_{\beta_1}(Y=1|X) - P_{\beta_2}(Y=1|X)\right| \le \frac{|\beta_1'X - \beta_2'X|}{C} = \frac{2\epsilon |X_j|}{Cm(\epsilon)}\,.
\end{split}
\end{equation*}
The case when $A_1^c\cap A_2$ follows in the exact same manner, by symmetry. 
\end{proof}
We are now in a position to tackle $H^2(P_{\beta_1}, P_{\beta_2})$ as shown below. 
\begin{align}
& H^2(P_{\beta_1}, P_{\beta_2}) \\
& \E_X\left[H^2(P_{\beta_1, X}, P_{\beta_2, X})\right] \notag \\
& \le \frac{1}{4\sqrt{3}}\E\left[\frac{\nu_X^2}{s(1-s)}\right] \notag \\
& = \frac{1}{4\sqrt{3}}\E\left[\frac{\nu_X^2}{s(1-s)}\mathds{1}_{X \in A_1 \cup A_2} + \frac{\nu_X^2}{s(1-s)}\mathds{1}_{X \in A_1^c \cap A_2^c}\right] \notag \\
& \le \frac{4}{3\sqrt{3}} \E\left[\nu_X^2 \mathds{1}_{X \in A_1 \cup A_2} + \nu_X^2\mathds{1}_{X \in A_1^c \cap A_2^c}\right] \hspace*{1in}[\because \frac{1}{4} \le s \le \frac{3}{4}] \notag \\
& \le \frac{16}{3\sqrt{3}} \E\left[\frac{\epsilon^2 X_j^2}{C^2m(\epsilon)^2} \mathds{1}_{X \in A_1 \cup A_2} +  4\left(\epsilon\sqrt{p} \vee \frac{|X_1|}{2Cm(\epsilon)}\right)^2\mathds{1}_{X \in A_1^c \cap A_2^c, \,\,\, \text{sign}(\beta_1'X) \neq \text{sign}(\beta_2'X)}\right] \notag\\
& = \frac{16}{3\sqrt{3}}\E[T_1 + 4T_2] \hspace*{0.3in} [\text{Say}] \,.\label{eq1-mlb-sg}
\end{align}
We will analyze the expectation of each summand separately. Define $\tilde{\beta} \overset{\Delta} = \beta_{[2:p]}$ i.e. $\tilde{\beta}$ is a vector of dimension $(p-1)$ which we obtain by removing the first co-ordinate of $\beta$, and let $\tilde{X}$ be defined similarly in terms of $X$. We have:  
\begin{align}
\E(T_1) = & \frac{1}{C^2m(\epsilon)^2}\E\left(\epsilon^2X_j^2 \mathds{1}_{X \in A_1 \cup A_2}\right) 
\notag \\
\le & \frac{1}{C^2m(\epsilon)^2}\left[\E\left(\epsilon^2X_j^2 \mathds{1}_{X \in A_1}\right) + \E\left(\epsilon^2X_j^2 \mathds{1}_{X \in A_2}\right)\right]\notag \\
= & \frac{2}{C^2m(\epsilon)^2}\E\left(\epsilon^2X_j^2 \mathds{1}_{X \in A_1}\right) \hspace*{0.3in} [\because \text{both terms are identically distributed}] 
\notag \\
\le & \frac{2}{C^2m(\epsilon)^2}\E\left(\epsilon^2X_j^2\mathds{1}_{|\beta_1'X| \le C\epsilon\sqrt{p} \vee \frac{|X_1|}{2 m(\epsilon)}} \right) 
\notag \\
\le & \frac{2}{C^2m(\epsilon)^2}\left[\epsilon^2 \E\left(X_j^2\mathds{1}_{|\beta_1'X| \le \frac{|X_1|}{2 m(\epsilon)}}\right) + \epsilon^2\E\left(X_j^2\mathds{1}_{\frac{|X_1|}{2Cm(\epsilon)} \le \epsilon\sqrt{p}}\right)\right] 
\notag \\
\le & \frac{2}{C^2m(\epsilon)^2}\left[\epsilon^2 \E\left(X_j^2\mathds{1}_{\frac{|X_1|}{m(\epsilon)} - |\tilde{\beta_1}'\tilde{X}| \le \frac{|X_1|}{2 m(\epsilon)}}\right) + \epsilon^2\E\left(X_j^2\mathds{1}_{\frac{|X_1|}{2m(\epsilon)} \le C\epsilon\sqrt{p}}\right)\right] \hspace*{0.2in} [\because |a+b| \ge |a| - |b|]
\notag\\
\le & \frac{2\epsilon^2}{C^2m(\epsilon)^2} \left[\E\left(X_j^2 \mathds{1}_{\frac{|X_1|}{2m(\epsilon)} \le |\tilde{\beta_1}'\tilde{X}|}\right) + \E(X_j^2)P\left(|X_1| \le 2Cm(\epsilon)\epsilon\sqrt{p}\right)\right] \notag\\
\le & \frac{2\epsilon^2}{C^2m(\epsilon)^2} \E_{\tilde{X}}\left(X_j^2\E_{X_1}\left(\mathds{1}_{|X_1| \le 2m(\epsilon)|\tilde{\beta_1}'\tilde{X}|} \,\,\, | \,\,\, \tilde{X}\right)\right) + 4\sqrt{\frac{2}{\pi}}\frac{\epsilon^3\sqrt{p}}{Cm(\epsilon)}  
\notag\\
\le & \frac{2\epsilon^2}{C^2m(\epsilon)^2} \E_{\tilde{X}}\left(X_j^2P\left(|X_1| \le 2m(\epsilon)|\tilde{\beta_1}'\tilde{X}| \,\,\, \mid \,\,\, \tilde{X}\right)\right) + 4\sqrt{\frac{2}{\pi}}\frac{\epsilon^3\sqrt{p}}{Cm(\epsilon)^2} 
\notag\\
\le & \sqrt{\frac{8}{\pi}}\left[\frac{\epsilon^2}{C^2 m(\epsilon)} \E_{\tilde{X}}\left(X_j^2|\tilde{\beta}'\tilde{X}|\right) + \frac{2\epsilon^3\sqrt{p}}{C^2m(\epsilon)}\right] \hspace*{1in} [ \text{as} \,\,X_1 \& \tilde{X} \,\,\,\, \text{are independent}]
\notag\\
\le & 4\sqrt{\frac{2}{\pi}}\left[\frac{\epsilon^2}{C^2 m(\epsilon)} (\E(X_j^4))^{\frac{1}{2}}(\E((\tilde{\beta_1}'\tilde{X})^2))^{\frac{1}{2}} + \frac{\epsilon^3\sqrt{p}}{C^2m(\epsilon)}\right]  
\notag\\
\le & 4\sqrt{\frac{6}{\pi}}\frac{\epsilon^2}{C^2m(\epsilon)}(\E((\tilde{\beta_1}'\tilde{X})^2))^{\frac{1}{2}} + 4\sqrt{\frac{2}{\pi}}\frac{\epsilon^3\sqrt{p}}{C^2m(\epsilon)}  \notag\\
\le & 4\sqrt{\frac{2}{\pi}}(1+\sqrt{3})\frac{\epsilon^3\sqrt{p}}{C^2}\left[\frac{1}{m(\epsilon)^2} + \frac{1}{m(\epsilon)}\right] \notag \\
\le & 8\sqrt{\frac{2}{\pi}}(1+\sqrt{3})\frac{\epsilon^3\sqrt{p}}{C^2} \hspace{0.2in} [\because m(\epsilon) \ge 1] \label{eq2-mlb-sg}
\end{align}
For the second part, observe that,
\allowdisplaybreaks
\begin{align}
\left\{X \in A_1^c \cap A_2^c \,\,\, \text{and} \,\,\, \text{sign}(\beta_1' X) \neq \text{sign}(\beta_2' X)\right\} \Rightarrow & |\beta_1'X - \beta_2'X| \ge 2C\epsilon\sqrt{p}\notag \\
\Rightarrow & \frac{2\epsilon|X_j|}{m(\epsilon)} \ge 2C\epsilon\sqrt{p} \notag \\
\Rightarrow & |X_j| \ge Cm(\epsilon)\sqrt{p} \notag
\end{align}
Using this observation, we get,
\allowdisplaybreaks
\begin{align}
E(T_2) \le & \left(E\left(\left(\epsilon\sqrt{p} \vee \frac{|X_1|}{2Cm(\epsilon)}\right)\wedge 1/4\right)^4\right)^{\frac{1}{2}}\left(E\left(\mathds{1}_{X \in A_1^c \cap A_2^c, \text{sign}(\beta_1'X) \neq \text{sign}(\beta_2'X)}\right)\right)^{\frac{1}{2}} \notag \\
\le & K\left(E\left(\mathds{1}_{X \in A_1^c \cap A_2^c, \text{sign}(\beta_1'X) \neq \text{sign}(\beta_2'X)}\right)\right)^{\frac{1}{2}} \notag \\
\le & K\left(E\left(\mathds{1}_{|X_j| \ge  Cm(\epsilon)\sqrt{p}}\right)\right)^{\frac{1}{2}} \notag \\
\le & Ke^{-\frac{C^2m(\epsilon)^2p}{4}} \label{eq3-mlb-sg}\,,
\end{align}
where $K$ is an absolute constant. Putting together \ref{eq1-mlb-sg}, \ref{eq2-mlb-sg} and \ref{eq3-mlb-sg}, we get, 
$$H^2(P_{\beta_1}, P_{\beta_2}) = E_{X}\left(H^2(P_{\beta_1, X}, P_{\beta_2, X})\right)  \le \frac{16}{3\sqrt{3}}\left[8\sqrt{\frac{2}{\pi}}(1+\sqrt{3})\frac{\epsilon^3\sqrt{p}}{C^2} + 4Ke^{-\frac{C^2m(\epsilon)^2p}{4}}\right]\,.$$  
Set $\zeta = \frac{128}{3\sqrt{3}}\sqrt{\frac{2}{\pi}}(1+\sqrt{3})$. If we choose $\epsilon = \left(\frac{1}{2\zeta}\right)^{\frac{1}{3}}n^{-\frac{1}{3}}p^{-\frac{1}{6}}C^{\frac{2}{3}}$, then $E_{X}\left(H^2(P_{\beta_1}, P_{\beta_2})\right) \le \frac{1}{2n} + \frac{64K}{3\sqrt{3}}e^{-\frac{C^2m(\epsilon)^2p}{4}}$. So we have 
$$\sqrt{nH^2(P_{\beta_1}, P_{\beta_2})} \le \sqrt{\frac{1}{2} + \frac{64K}{3\sqrt{3}}ne^{-\frac{C^2m(\epsilon)^2p}{4}}} \le \sqrt{\frac{2}{3}}$$
for all large $n$, as $ne^{-\frac{C^2m(\epsilon)^2p}{4}} \rightarrow 0$. Now we can relate Hamming distance to $\ell_2$ distance via $$\|\hat{\beta} - \beta^0\|_2^2 = \frac{\epsilon^2}{m(\epsilon)^2}d_H(\hat{\beta}, \beta^0)$$ and use Assouad's lemma to deduce: 
\allowdisplaybreaks
\begin{align*}
\inf_{\hat{\beta}_n \in \tilde{\Theta}}\sup_{P_{\beta}: \beta \in \tilde{\Theta}}\mathbb{E}_{\beta}\left(\|\hat{\beta}_n - \beta\|_2^2\right) & \ge \frac{\epsilon^2(p-1)}{2m(\epsilon)^2}(1-\max_{\beta \sim \beta'}\|P_{\beta}^n - P_{\beta'}^n\|_{TV}) \\
& \ge \frac{\epsilon^2(p-1)}{4}(1-\sqrt{2/3}) \hspace{0.3in}[\because m(\epsilon) \rightarrow 1 \ \text{as} \ p \rightarrow \infty]\\
& \ge \tilde{K_L}\left(\frac{pC^2}{n}\right)^{2/3} \hspace*{0.2in} [\because \epsilon = \left(\frac{1}{2\zeta}\right)^{\frac{1}{3}}n^{-\frac{1}{3}}p^{-\frac{1}{6}}C^{\frac{2}{3}}]
\end{align*}
for some constant $\tilde{K}_L$. Finally, let $\hat{\beta}$ be any estimator assuming values in $S^{p-1}$. Define $\tilde{\beta}$ to be the projection of $\hat{\beta}$ on the hypercube, i.e. $$\tilde{\beta} = \argmin_{\beta \in \tilde{\Theta}} \|\hat{\beta} - \beta\|_2$$ Then for any $\beta \in \tilde{\theta}$ we have: 
\allowdisplaybreaks
\begin{align*}
\|\tilde{\beta} - \beta\|_2 & \le \|\tilde{\beta} - \hat{\beta} + \hat{\beta} - \beta\|_2 \le 2\| \beta - \hat{\beta}\|_2 
\end{align*}

Using this relation we can conclude that: 
\allowdisplaybreaks
\begin{align*}
\inf_{\hat{\beta}_n}\sup_{P_{\beta}}\mathbb{E}_{\beta}\left(\|\hat{\beta}_n - \beta\|_2^2\right) & \ge \frac{1}{4}\inf_{\hat{\beta}_n \in \tilde{\theta}}\sup_{P_{\beta}: \beta \in \tilde{\theta}}\mathbb{E}_{\beta}\left(\|\hat{\beta}_n - \beta\|_2^2\right) \\
& \ge K_L \left(\frac{pC^2}{n}\right)^{2/3}\,,
\end{align*}
where $K_L = \tilde{K}_L/4$. To prove that the minimax rate cannot be improved upon $(p/n)$, one can resort to a minimax construction taking $C = 0$. The construction and the rest of the proof follow a similar pattern as above, and are skipped for the sake of brevity. 


\subsection{Proof of Theorem \ref{high-dim-rate}}
This proof is based on Theorem \ref{our_model_selection}. Recall that $\mathscr{M}_i$  is the collection of all models with  $\|\beta\|_0 \le i$ for $1 \le i \le L = \lfloor n/4\log{p} \rfloor$. As mentioned previously, the model $\mathscr{M}_i$ has VC dimension $V_i \asymp i\log{(ep/i)}$. Following the same line of argument as in the proof of Theorem \ref{rate-manski-p-less-n} we can conclude: 
$$\omega(x) = \sqrt{x}C^{1/4}_{n} \vee \frac{x}{\sqrt{2t^*}}$$
and the values of $\epsilon_i$ can be taken as: 
$$\epsilon_i^2 = \left(\frac{V_i\sqrt{C_n}\log{(n/V_iC^2_{n})}}{n}\right)^{2/3} \vee \left(\frac{V_i\log{(n/V_i)}}{n}\right) \,.$$
We know the function $f(x) = x\log{(t/x)}$ increases between $(0,t/e)$ and then decreases. As $L < p$, the sequence of VC dimensions is increasing: $V_1 \le V_2 \le \cdots \le V_L$. Next, we establish that $V_L \le n/e$ for all large $n$. Assume to the contrary that $V_L > n/e$. Then: 
\allowdisplaybreaks
\begin{align*}
    V_{L} > n/e & \Rightarrow \frac{1}{4\log{p}}\log{\left(\frac{2ep\log{p}}{n}\right)} >  1/e\\
    & \Rightarrow \frac{1+ \log{p} + \log{\log{p}} - \log{n}}{\log{p}} > 4/e 
\end{align*}
which is a contradiction since the LHS goes to 1 as $n\rightarrow \infty$. This immediately implies that: 
$$\left(\frac{V_1\sqrt{C_n}\log{(n/V_1C^2_{n})}}{n}\right)^{2/3} \le \dots \le \left(\frac{V_L\sqrt{C_n}\log{(n/V_LC^2_{n})}}{n}\right)^{2/3}$$
and 
$$\left(\frac{V_1\log{(n/V_1)}}{n}\right) \le \dots \le \left(\frac{V_L\log{(n/V_L)}}{n}\right) \,.$$
This proves that 
$$\min_{1 \le i \le L}\epsilon_i^2 = \epsilon_1^2 =  \left(\frac{V_1\sqrt{C_n}\log{(n/V_1C^2_{n})}}{n}\right)^{2/3} \vee \left(\frac{V_1\log{(n/V_1)}}{n}\right)$$ 
Hence in our case, 
\begin{align*}
    b(n) = \frac{\omega^2(\epsilon_1)}{\epsilon_1^2} = \frac{C^{1/2}_{n}}{\epsilon_1} \vee \frac{1}{2t^*} = \frac{C^{1/2}_{n}}{\left(\frac{V_1\sqrt{C_n}\log{(n/V_1C^2_{n})}}{n}\right)^{1/3} \vee \sqrt{\left(\frac{V_1\log{(n/V_1)}}{n}\right)}} \vee  \frac{1}{2t^*} \\
\end{align*}
Now we need to choose a penalty function such that: 
$$\pen(\mathscr{M}_i) \ge K\left(\epsilon_i^2 + \frac{\omega^2(\epsilon_i)x_i}{n\epsilon_i^2}\right) \,.$$
If we choose 
$$x_i = \frac{n\epsilon_i^4}{\omega^2(\epsilon_i)} = \frac{n\epsilon_i^3}{\sqrt{C_n} \vee \frac{\epsilon_i}{2t^*}}$$
then a permissible penalty function is given by $\text{Pen}(\mathscr{M}_i) = 2K\epsilon_i^2$, provided we can show that $\sum_{i=1}^{L} e^{-x_i} < \infty$. 
Towards that end:
\begin{align*}
\sum_{i=0}^{L} e^{-x_i} & = \sum_{i=0}^{L} e^{-\frac{n\epsilon_i^3}{\sqrt{C_n} \vee \frac{\epsilon_i}{2t^*}}} \\
& = \sum_{i: \epsilon_i \le 2t^*\sqrt{C_n}} e^{-\frac{n\epsilon_i^3}{\sqrt{C_n}}}  + \sum_{i: \epsilon_i \ge 2t^*\sqrt{C_n}} e^{-\frac{2t^* n\epsilon_i^3}{\epsilon_i}} \\
& \le \sum_{i: \epsilon_i \le 2t^*\sqrt{C_n}} e^{-V_i\log{\left(\frac{n}{V_iC_n^2}\right)}}  + \sum_{i: \epsilon_i \ge 2t^*\sqrt{C_n}} e^{-2t^*V_i\log{\left(\frac{n}{V_i}\right)}} \\
& \le \sum_{i: \epsilon_i \le 2t^*\sqrt{C_n}} e^{-V_i\log{\left(\frac{n}{V_i}\right)}}  + \sum_{i: \epsilon_i \ge 2t^*\sqrt{C_n}} e^{-2t^*V_i\log{\left(\frac{n}{V_i}\right)}} \hspace{0.2in} [\because C_n \le 1]\\
& \le \sum_{i: \epsilon_i \le 2t^*\sqrt{C_n}} e^{-2t^* V_i\log{\left(\frac{n}{V_i}\right)}}  + \sum_{i: \epsilon_i \ge 2t^*\sqrt{C_n}} e^{-2t^*V_i\log{\left(\frac{n}{V_i}\right)}}  \hspace{0.2in} [\because 2t^* \le 1]\\
& = \sum_{i=1}^L e^{-2t^* V_i\log{\left(\frac{n}{V_i}\right)}}  \\
& \le \exp{\left(\log{L} - 2t^* V_1\log{\left(\frac{n}{V_1}\right)}\right)} \\
& = \exp{\left(\log{\frac{n}{4\log{p}}} - 2t^*\log{p}\log{\frac{n}{\log{p}}}\right)} \overset{n \to \infty} \to 0 \,.
\end{align*}
This ensures that our choice of $x_i's$ are valid. Applying Theorem \ref{our_model_selection} along with the penalty function $\pen(\mathscr{M}_i) = 2K\epsilon_i^2$ we obtain the following concentration bound on the excess risk: 
\begin{equation}
\label{final_con_highdim_1}
\P\left(\ell\left(\hat \beta_{\hat m}, \beta^0\right) > C\left[\inf_{1 \le i \le L}\left(\ell(\beta_i , \beta^0) + \pen(\mathscr{M}_i)\right)\right] + C_1\frac{tb(n)}{n}\right) \le \Sigma e^{-t}\,.
\end{equation}
Taking $i = s_0$: 
\begin{align*}
\left[\inf_{1 \le i \le L}\left(\ell(\beta_i , \beta^0) + \pen(\mathscr{M}_i)\right)\right] & \le \pen(\mathscr{M}_{s_0}) \\
& = 2K\epsilon_{s_0}^2 \\
& = 2K\left[\left(\frac{V_{s_0}\sqrt{C_n}\log{(n/V_{s_0}C_n^2)}}{n}\right)^{1/3} \vee \left(\frac{V_{s_0}\log{(n/V_{s_0})}}{n}\right)^{1/2}\right]\,.
\end{align*}
Putting this back in equation \eqref{final_con_highdim_1} we get: 
\begin{equation*}
\P\left(\ell\left(\hat \beta_{\hat m}, \beta^0\right) > 2KC\epsilon_{s_0}^2 + C_1\frac{tb(n)}{n}\right) \le \Sigma e^{-t}
\end{equation*}
which further implies: 
\begin{equation}
\label{final_con_highdim_2}
\P\left(\epsilon_{s_0}^{-2} \ell\left(\hat \beta_{\hat m}, \beta^0\right) > 2KC + C_1\frac{tb(n)}{n\epsilon_{s_0}^2}\right) \le \Sigma e^{-t}\,.
\end{equation}
We now argue that the remainder term $b(n)/(n\epsilon_{s_0}^2) \to 0$ as $n \to \infty$. First, observe that $n\epsilon_{s_0}^2 \to 0$ as $n \to \infty$. This is because: 
\begin{align*}
n\epsilon_{s_0}^2 & = \left(\sqrt{n}V_{s_0}\sqrt{C_n}\log{(n/V_{s_0}C_n^2)}\right)^{2/3} \vee \left(V_{s_0}\log{(n/V_{s_0})}\right) \to \infty \,.
\end{align*}
As both $V_{s_0}$ and $n/V_{s_0}$ diverge as $n \to \infty$, it suffices to establish $\sqrt{C_n}/(n\epsilon_1 \epsilon^2_{s_0}) \to 0$, to demonstrate that $b(n)/(n\epsilon_{s_0}^2) \to 0$. Towards that end:
\begin{align*}
\frac{\sqrt{C_n}}{n\epsilon_1 \epsilon^2_{s_0}} & = \frac{\sqrt{C_n}}{n\epsilon_{s_0}^2 \left[\left(\frac{V_1\sqrt{C_n}\log{(n/V_1C_n^2)}}{n}\right)^{1/3} \vee \sqrt{\left(\frac{V_1\log{(n/V_1)}}{n}\right)}\right]} \\
& \le \frac{\sqrt{C_n}}{n\epsilon_{s_0}^2 \left(\frac{V_1\sqrt{C_n}\log{(n/V_1C_n^2)}}{n}\right)^{1/3}} \\
& = \frac{\sqrt{C_n}}{\left(V_1\sqrt{C_n}\log{(n/V_1C_n^2)}\right)^{1/3}\left(V_{s_0}\sqrt{C_n}\log{(n/V_{s_0}C_n^2)}\right)^{2/3}} \\
& = \frac{1}{\left(V_1\log{(n/V_1C_n^2)}\right)^{1/3}\left(V_{s_0}\log{(n/V_{s_0}C_n^2)}\right)^{2/3}} \to 0 \,.
\end{align*}
This completes the proof of the concentration bound on the excess risk. Using Proposition \ref{alpha-kappa} we have: 
\begin{equation*}
\P\left(\epsilon^{-2}_{s_0}\left[\frac{\|\hat \beta - \beta^0\|_2^2}{C_n}\mathds{1}_{\left(d_{\Delta}(\beta, \beta^0) \leq 2t^*C_n\right)} + 2t^*\|\hat \beta - \beta^0\|_2 \mathds{1}_{\left(d_{\Delta}(\beta, \beta^0) > 2t^*C_n\right)}\right] \ge 2KC + C_1s_n t\right) \le \Sigma e^{-t}\,.
\end{equation*}
Thus, we have: 
\begin{equation}
\label{eq:part_1}
\P\left(\frac{\epsilon^{-1}_{s_0}}{\sqrt{C_n}}\left[\|\hat \beta - \beta^0\|_2\mathds{1}_{\left(d_{\Delta}(\beta, \beta^0) \leq 2t^*C_n\right)}\right] \ge \sqrt{2KC} + \sqrt{C_1s_n} t\right) \le \Sigma e^{-t^2}
\end{equation}
and 
\begin{equation}
\label{eq:part_2}
\P\left(\epsilon^{-2}_{s_0}\left[\|\hat \beta - \beta^0\|_2 \mathds{1}_{\left(d_{\Delta}(\beta, \beta^0) > 2t^*C_n\right)}\right] \ge 2KC + C_1s_n t\right) \le \Sigma e^{-t} \,.
\end{equation}
Combining equations \eqref{eq:part_1} and \eqref{eq:part_2} we conclude: 
\begin{equation}
\label{final_con_highdim_3}
\P\left(\left(\frac{r_n}{\sqrt{C_n}} \wedge r_n^2\right) \left\|\hat \beta_{\hat m} - \beta^0\right\|_2 > K_1 + K_2\sqrt{s_n}t\right) \le \Sigma e^{-t^2} + \Sigma e^{-t} \le 2\Sigma e^{-t} 
\end{equation}
for all $t \ge 1$, for some constant $K_1, K_2$ not depending on $(n, p, s_0, C_n)$, with $r_n = \epsilon_{s_0}^{-1}$ and 
$s_n = \frac{b(n)}{n\epsilon^2_{s_0}} \to 0$.

The proof of minimax upper bound of this estimation problem follows immediately from the exponential concentration bound on the estimation error:
\begin{align*}
& \E\left(\left(\frac{r_n}{\sqrt{C_n}} \wedge r_n^2\right)\left\|\hat \beta_{\hat m} - \beta^0\right\|_2\right) \\
& = \int_0^{\infty} \P\left(\left(\frac{r_n}{\sqrt{C_n}} \wedge r_n^2\right)\left\|\hat \beta_{\hat m} - \beta^0\right\|_2 > y\right) \ dy \\
& = \int_0^{K_1} \P\left(\left(\frac{r_n}{\sqrt{C_n}} \wedge r_n^2\right)\left\|\hat \beta_{\hat m} - \beta^0\right\|_2 > y\right) \ dy + \int_{K_1}^{\infty} \P\left(\left(\frac{r_n}{\sqrt{C_n}} \wedge r_n^2\right) \left\|\hat \beta_{\hat m} - \beta^0\right\|_2 > y\right) \ dy \\
& \le K_1 + 2\Sigma  \int_{K_1}^{\infty} e^{-\frac{y - K_1}{K_2 \sqrt{s_n}}} \ dy \hspace{0.2in} [\because K_1 \ge 1]\\
& \le K_1 + 2\Sigma K_2 \sqrt{s_n}  \int_0^{\infty} e^{-y} \ dy  \\
& = K_1 + 2\Sigma K_2\sqrt{s_n} < \infty \,.
\end{align*}

\subsection{Proof of Corollary \ref{cor-high-dim-rate}}
In Theorem \ref{high-dim-rate} we have established: 
$$\P\left(\left(\frac{r_n}{\sqrt{C_n}} \wedge r_n^2\right) \left\|\hat \beta_{\hat m} - \beta^0\right\|_2 > K_1 + K_2\sqrt{s_n} t\right) \le 2\Sigma e^{-t}$$
for some constant $K_1, K_2$ which does not depend on $(n, p, s_0, C_n)$. Taking $t = 1/\sqrt{s_n}$ we get: 
\begin{equation*}
\P\left(\left\|\hat \beta_{\hat m} - \beta^0\right\|_2 > (K_1 + K_2)\left(\frac{r_n}{\sqrt{C_n}} \wedge r_n^2\right)^{-1} \right) \le 2\Sigma e^{-\frac{1}{\sqrt{s_n}}}  \to 0
\end{equation*}
because $s_n \to 0$ as $n \to \infty$. Hence, if $\beta^0_{\min} > (K_1 + K_2)r_n^{-1}$, we conclude from above concentration bound: 
\begin{equation*}
\P\left(\hat m \supseteq m_0\right) \ge 1 - 2\Sigma e^{-\frac{1}{\sqrt{s_n}}} \,.
\end{equation*} 
which completes the proof.

\subsection{Proof of Proposition \ref{multi_pop_min}}
By the definition of population loss function we have: 
\begin{align*}
S^{(mult)}(\beta) & = \frac{1}{m(m-1)}\sum_{j=1}^m \E\left[p_j(\mb{X})\left(\sum_{k \neq j} \mathds{1}((\mb{x}_j - \mb{x}_k)'\beta >  0)\right) \right] \\
& = \frac{1}{m(m-1)}\sum_{j=1}^m \E\left[p_j(\mb{X})\left(\text{rank}(\mb{x}_j^T\beta) - 1\right) \right] \\
\end{align*}
where we define $\text{rank}(\mb{x}_j^T\beta)$ as the rank of the scalar number $\mb{x}_j^T\beta$ among the $m$ numbers $\{\mb{x}_1^T\beta, \mb{x}_2^T\beta, \dots, \mb{x}_m^T\beta\}$ in increasing order. Our claim is that for any realization of the vectors $\mb{x}_1, \dots, \mb{x}_m$, we have: 
$$\sum_{j=1}^m \left[p_j(\mb{X})\left(\text{rank}(\mb{x}_j^T\beta^0) - 1\right) \right] \ge \sum_{j=1}^m \left[p_j(\mb{X})\left(\text{rank}(\mb{x}_j^T\beta) - 1\right) \right]$$
To observe this, first note that from Assumption \ref{ass:rank_mult}, the ordering of the vectors $\{p_j(\mb{X})\}_{j=1}^m$ is same as $\{\mb{x}_j^T\beta^0\}_{j=1}^m$. Hence the above inequality follows from applying rearrangement inequality. The proof of the proposition also immediately follows from the inequality.

\subsection{Proof of Proposition \ref{multi-alpha-kappa}}
First we show that under Assumption \ref{ass:low_noise_mult} we can lower bound the excess risk in terms of the probability of a suitably chosen wedge shaped region. For $j \neq k \in \{1,2,\dots, m\}$ and for any $\beta \in S^{p-1}$ define the region $X_{j,k,\beta}$ as: $$X_{j,k,\beta} = \{x \in \mathbb{R}^{m \times p}: \s((x_{j,*} - x_{k,*})^T\beta) \neq \s((x_{j,*} - x_{k,*})^T \beta^0)\}$$ where for any matrix $A$, its $i^{th}$ row is denoted by $A_{i,*}$. Then we have:
\allowdisplaybreaks
\begin{align*}
& S^{(mult)}(\beta^0) - S^{(mult)}(\beta) \\
& = \frac{1}{m(m-1)}\sum_{(j \neq k)}\E\left[\left(p_j(X) - p_k(X)\right)\left(\mathds{1}((X_j - X_k)'\beta^0 \ge 0) - \mathds{1}((X_j - X_k)'\beta \ge 0\right))\right] \\
& = \frac{1}{m(m-1)}\sum_{(j \neq k)} \int_{X_{j,k,\beta}} |p_j(x) - p_k(x)| \ dF(x) \\
& = \frac{1}{m(m-1)}\sum_{(j \neq k)} \int_{X_{j,k,\beta}} \frac{|p_j(x) - p_k(x)|}{p_j(x) + p_k(x)}(p_j(x) + p_k(x))\ dF(x) \\
& \ge \frac{1}{m(m-1)}\sum_{(j \neq k)} \int_{X_{j,k,\beta} \cap\{\|x\|_F \le R\}} \frac{|p_j(x) - p_k(x)|}{p_j(x) + p_k(x)}(p_j(x) + p_k(x))\ dF(x) \\
& \ge \frac{c_1}{m(m-1)}\sum_{(j \neq k)}\int_{X_{j,k,\beta} \cap\{\|x\| \le R|\}}\frac{|p_j(x) - p_k(x)|}{p_j(x) + p_k(x)}\ dF(x) \\
& \ge \frac{c_1}{m(m-1)}\sum_{(j \neq k)}\int_{X_{j,k,\beta} \cap\{\|x\| \le R|\}} \left|\frac{p_j(x)}{p_j(x) + p_k(x)}- \frac{1}{2}\right|\ dF(x) \\
& \ge\frac{c_1}{m(m-1)}\sum_{(j \neq k)}\int_{X_{j,k,\beta} \cap\{\|x\| \le R|\}} \left|\frac{p_j(x)}{p_j(x) + p_k(x)}- \frac{1}{2}\right|\mathds{1}_{ \left|\frac{p_j(x)}{p_j(x) + p_k(x)}- \frac{1}{2}\right| \ge t}\ dF(x) \\
& \ge   \frac{c_1t}{m(m-1)}\sum_{(j \neq k)} \P\left( \left|\frac{p_j(x)}{p_j(x) + p_k(x)}- \frac{1}{2}\right| \ge t, \left(X_{j,k,\beta} \cap \{\|X\|_F\le R\}\right)\right) \\
& \ge \frac{c_1t}{m(m-1)}\sum_{(j \neq k)} \left[\P\left(\left(X_{j,k,\beta} \cap \{\|X\|\le R\}\right)\right) - \P\left( \left|\frac{p_j(x)}{p_j(x) + p_k(x)}- \frac{1}{2}\right| \le t, \right)\right] \\
& \ge \frac{c_1t}{m(m-1)}\sum_{(j \neq k)} \left[\P\left(\left(X_{j,k,\beta} \cap \{\|X\|\le R\}\right)\right) -Ct\right] \\
& = c_1t \left[\frac{1}{m(m-1)}\sum_{(j \neq k)} \P\left(\left(X_{j,k,\beta} \cap \{\|X\|\le R\}\right)\right)\right] -Cc_1t^2 
\end{align*}
Defining $d_{\Delta}(\beta, \beta^0)$ to be: 
$$d_{\Delta}(\beta_1, \beta_2) = \left[\frac{1}{m(m-1)}\sum_{(j \neq k)} \P\left(\left(X_{j,k,\beta} \cap \{\|X\|\le R\}\right)\right)\right] $$
we obtain: 
$$S^{(mult)}(\beta^0) - S^{(mult)}(\beta) \ge c_1\left[td_{\Delta}(\beta, \beta^0) -Ct^2\right] $$
As this inequality is true for any $0 \le t \le t^*$,  optimizing the same way as in the proof of Proposition \ref{alpha-kappa} we conclude that: 
\begin{align}
&S^{(mult)}(\beta^0) - S^{(mult)}(\beta) \notag \\
\label{eq:mult_wedge_bound}  & \ge c_1\left[\frac{d^2_{\Delta}(\beta, \beta^0)}{4C_n}\mathds{1}_{\left(d_{\Delta}(\beta, \beta^0) \leq 2t^*C_n\right)} + t^*\frac{d_{\Delta}(\beta, \beta^0)}{2}\mathds{1} _{\left(d_{\Delta}(\beta, \beta^0) > 2t^*C_n\right)}\right] \\
& = c_1 \frac{d^2_{\Delta}(\beta, \beta^0)}{4C}  \hspace{0.2in} [\text{As }2t^*C > 1 \text{ by definition of }C] \notag \\
& \ge \frac{c_1c^2_2}{4C}\|\beta - \beta^0\|_2^2 \hspace{0.2in} [\text{By Assumption }\ref{ass:wedge_mult}] \notag
\end{align}
This concludes the proof.

\subsection{Proof of Theorem \ref{multi_low_dim}}
The proof of this theorem is quite similar to proof of Theorem \ref{rate-manski-p-less-n}. Hence we will skip some details here. As before, we work with the distance metric $\sqrt{d_{\Delta}(\beta,\beta^0)}$ over the parameter space $S^{p-1}$. Borrowing the notations from Theorem \ref{massart} and using Proposition \ref{multi-alpha-kappa} we have: 
$$\ell(\beta, \beta^0) = S^{(mult)}(\beta^0) - S^{(mult)}(\beta) \ge \frac{c_1}{4C} d^2_{\Delta}(\beta, \beta^0) = \left(\omega^{-1}\left(\sqrt{d_{\Delta}\left(\beta, \beta^0\right)}\right)\right)^2$$
for $\omega \in \mathcal{C}_1$. Hence the function $\omega(x)$ satisfy: 
$$\omega^{-1}\left(\sqrt{d_{\Delta}\left(\beta, \beta^0\right)}\right) = \frac{\sqrt{c_1}}{2\sqrt{C}}d_{\Delta}(\beta, \beta^0)$$
Parametrizing $\sqrt{d_{\Delta}\left(\beta, \beta^0\right)} = t$ we get: 
$$\omega^{-1}(t) = \frac{\sqrt{c_1}}{2\sqrt{C}}t^2 \iff \omega(x) = \left(\frac{4C}{c_1}\right)^{1/4}\sqrt{x} \,.$$
Now consider the class of function 
$$\mathcal{F} = \left\{f_{\beta}: f_{\beta}(Y,X) = \frac{1}{m(m-1)}\sum_{k \neq j}\left[Y_j \mathds{1}(X_j'\beta > X_k'\beta)\right]\right\} \,.$$
As $f_{\beta}$ is average of $m(m-1)$ functions, where each function constitutes a VC class of VC dimension of order $p$, the collection $\mathcal{F}$ has bounded uniform entropy integral. More precisely we have for any measure $Q$: 
$$N(\epsilon, \mathcal{F}, L_2(Q)) \lesssim \left(\frac{1}{\epsilon}\right)^{m^2p} \,,$$
as each function is bounded by $1$ and $N(\epsilon, \mathcal{F}, L_2(Q))$ is the covering number of $\mathcal{F}$ with respect to $L_2(Q)$ measure. The variability of the centered function can be bounded as: 
\begin{align*}
    & P(f_{\beta} - f_{\beta^0})^2 \\
    & \le \frac{1}{m(m-1)} \sum_{k \neq j} \left[\P(\s((X_j - X_k)'\beta) \neq \s((X_j - X_k)'\beta^0))\right] \\
    & \le  \frac{c_3}{m(m-1)} \sum_{k \neq j} \left[\P(\s((X_j - X_k)'\beta) \neq \s((X_j - X_k)'\beta^0) \cap \{\|X\| \le R\})\right]  \hspace{0.2in} [\text{Assumption }\ref{ass:wedge_mult}]\\
    &= c_3 \ d_{\Delta}(\beta, \beta^0)
\end{align*}
Finally to apply Theorem \ref{massart} we need to obtain $\phi(\sigma)$ which satisfy condition (2) of that Theorem. Using Theorem 8.7 of \cite{sen2018gentle} one can choose $\phi(\sigma)$ as: 
$$\phi(\sigma) = \sigma\sqrt{m^2p\log{\frac{1}{\sigma}}} \vee \frac{m^2p}{\sqrt{n}}\log{\frac{1}{\sigma}}\,.$$ 
Now from Theorem \ref{massart} we need to find $\phi(\sigma)$ for all the values of $\sigma$ such that $\phi(\sigma) \le \sqrt{n}\sigma^2$. From the above expression of $\phi(\sigma)$, one can immediately conclude that: 
$$\left\{\sigma: \phi(\sigma) \le \sqrt{n} \sigma^2 \right\} = \left\{\sigma: \sigma\sqrt{m^2p\log{\frac{1}{\sigma}}} \ge \frac{m^2p}{\sqrt{n}}\log{\frac{1}{\sigma}}\right\} \,.$$
Hence we can take 
$$\phi(\sigma)= \sigma\sqrt{m^2p\log{\frac{1}{\sigma}}} \,,$$
such that condition (3) of Theorem \ref{massart} will be satisfied for all $\sigma$ such that $\phi(\sigma) \le \sqrt{n}\sigma^2$. 
Now we need to solve the equation 
$$\sqrt{n}\epsilon_*^2 \ge \phi(\omega(\epsilon_*))$$ 
to get $\epsilon_*$. From the expression of $\phi(\sigma)$ we have: 
\begin{align}
\phi(\omega(\epsilon_*)) \le \sqrt{n}\epsilon_*^2 & \iff \omega(\epsilon_*)\sqrt{m^2p\log{\frac{1}{\omega(\epsilon_*)}}} \notag\\
& \iff \omega(\epsilon_*)\sqrt{m^2p\log{\frac{1}{\omega(\epsilon_*)}}}  \le \sqrt{n}\epsilon_*^2   \notag\\
\label{ineq_2} & \iff  \left(\frac{4C}{c_1}\right)^{1/4}\sqrt{\epsilon_*} \sqrt{m^2p\log{\frac{\sqrt{c_1}}{2\sqrt{C}\epsilon_*}}} \le \sqrt{n}\epsilon_*^2 \notag
\end{align}
The above inequality will be satisfied if: 
\begin{align*}
\left(\frac{4C}{c_1}\right)^{1/4}\sqrt{\epsilon_*} \sqrt{m^2p\log{\frac{\sqrt{c_1}}{2\sqrt{C}\epsilon_*}}} \le \sqrt{n}\epsilon_*^2  & \iff \epsilon_*^{-3}\log{\frac{\sqrt{c_1}}{2\sqrt{C}\epsilon_*}}  \le \frac{n}{m^2p}\left(\frac{c_1}{4C}\right)^{1/2} 
\end{align*}
Ignoring the constant $C$, This will be satisfied if we take $\epsilon_*$ to be: 
\begin{equation}
\label{ep_1}
\epsilon_* = \left(\frac{n\sqrt{c_1}}{m^2p}\right)^{-1/3}\log{\left(\frac{nc_1^2}{m^2p}\right)}^{1/3}
\end{equation}
Using this value of $\epsilon_*$ we conclude from Theorem \ref{massart}: 
\begin{equation}
\label{eq:risk_bound_1}
\P\left(S^{(mult)}(\beta^0) - S^{(mult)}(\hat{\beta}) \ge K y\epsilon_*^2\right) \le e^{-y}
\end{equation}
or all $y \ge 1$ and for some constant $K$ which does not depend on $(n,m,p)$. Now Proposition \ref{multi-alpha-kappa} implies along with equation \eqref{eq:risk_bound_1}:  
\begin{equation}
\label{mult_conc_1}
\P\left(r_n \|\hat \beta - \beta^0\|_2 \ge Ky\right) \le e^{-y^2}
\end{equation}
for all $y \ge 1$, where 
\begin{align*}
r_n & =  \sqrt{c_1}\left(\frac{n\sqrt{c_1}}{m^2p}\right)^{1/3}\log{\left(\frac{nc_1^2}{m^2p}\right)}^{-1/3} \\
& = \left(\frac{nc_1^2}{m^2p}\right)^{1/3}\log{\left(\frac{nc_1^2}{m^2p}\right)}^{-1/3}
\end{align*}
and for some constant $K$ but does not depend on $(n, m, p)$.  This concludes the proof.


\subsection{Proof of Theorem \ref{multi_high_dim}}
The proof technique is essentially similar to that of Theorem \ref{high-dim-rate}, hence we will skip some details. As in the proof of previous theorem, the distance function that will be used heavily in the proof: 
$$d_{\Delta}(\beta, \beta^0) =  \sqrt{\frac{1}{m(m-1)} \sum_{k \neq j} \left[\P(X_{j,k,\beta}\cap \{\|X\| \le R\})\right]}$$ 
From Assumption \ref{ass:sparsity_mult} we confine ourselves to search the best model upto sparsity level $L = (nc^2_1)/(4m^2\log{p})$. As in the proof of Theorem \ref{high-dim-rate} we will use Theorem \ref{our_model_selection} to show the model selection consistency here. Recall that $\mathscr{M}_i$ is defined to be the collection of all the models with all the $\beta$ such that $\|\beta\|_0 \le i$. Hence, 
$$\mathscr{M}_i = \left\{f_{\beta}: f_{\beta}(Y,X) = \frac{1}{m(m-1)}\sum_{k \neq j}\left[Y_j \mathds{1}(X_j'\beta > X_k'\beta)\right], \|\beta\|_0 \le i\right\}$$ 
Now $f_{\beta}$ is sum of order $m^2$ many functions each of which has VC dimension of order $i\log{(ep/i)}$ (argued in Theorem \ref{high-dim-rate}). Using the same argument as in Theorem \ref{multi_low_dim} we say that, 
$$N(\epsilon,\mathscr{M}_i, L_2(Q)) \lesssim \left(\frac{1}{\epsilon}\right)^{m^2i\log{(ep)/i}} \,.$$ 
As before, define $V_i = i\log{(ep)/i}$. Using the same calculation as in the proof of Theorem \ref{multi_low_dim} we conclude: 
$$\omega(x) =\left(\frac{4C}{c_1}\right)^{1/4}\sqrt{x} \,,$$
and
$$\phi_i(\sigma) = \sigma\sqrt{m^2V_i\log{\frac{1}{\sigma}}} \,.$$
Hence, the value of  $\epsilon_i$ can be takes as following similar calculation as in Theorem \ref{multi_low_dim}: 
$$\epsilon^2_i = \left(\frac{n\sqrt{c_1}}{m^2V_i}\right)^{-1/3}\log{\left(\frac{nc_1^2}{m^2V_i}\right)}^{1/3}$$
Now we have to take the penalty function so that it satisfies: 
$$\pen(\mathscr{M}_i) \ge K\left(\epsilon_i^2 + \frac{\omega^2(\epsilon_i)x_i}{\epsilon_i^2n}\right) \,.$$
for some constant $K$ (as mentioned inTheorem \ref{our_model_selection}), where $x_i$ is same as defined in Theorem \ref{our_model_selection}. Taking $x_i =  n\epsilon_i^4/\omega^2(\epsilon_i)$ a valid choice of penalty function will be:
$$\pen(\mathscr{M}_i)  = 2K\epsilon_i^2 \,.$$ 
Before applying Theorem \ref{our_model_selection} with this choice of penalty function, we need to argue that this choice of $x_i$ is valid, i.e. 
\begin{equation}
\label{eq:x_value_bound}
\sum_{i=1}^L x_i = \Sigma < \infty \,,
\end{equation}
for some constant $\Sigma$. From the definition of $x_i$ we have: 
\begin{align*}
x_i = \frac{n\epsilon_i^4}{\omega^2(\epsilon_i)} = \left(\frac{c_1}{4C}\right)^{1/2} n\epsilon_i^3  & =  \left(\frac{c_1}{4C}\right)^{1/2} n\left(\frac{n\sqrt{c_1}}{m^2V_i}\right)^{-1} \log{\left(\frac{nc_1^2}{m^2V_i}\right)} \\
& = \frac{m^2V_i}{2\sqrt{C}}\log{\left(\frac{nc_1^2}{m^2V_i}\right)} 
\end{align*}
Similar analysis as in Theorem \ref{high-dim-rate} yields: 
$V_1 \le \dots \le V_L$ and $x_1 \le \dots \le x_L$. Hence we have: 
\begin{align*}
\sum_{i=1}^L e^{-x_i} & = \sum_{i=1}^L e^{-\frac{m^2V_i}{2\sqrt{C}}\log{\left(\frac{nc_1^2}{m^2V_i}\right)} } \\
& = \exp{\left(\log{L} - \frac{m^2V_1}{2\sqrt{C}}\log{\left(\frac{nc_1^2}{m^2V_1}\right)} \right)} \\
& = \exp{\left(\log{\frac{nc_1^2}{m^2\log{p}}} - \frac{m^2\log{p}}{2\sqrt{C}}\log{\left(\frac{nc_1^2}{m^2\log{p}}\right)}\right)} \\
& = \exp{\left(\log{\frac{nc_1^2}{m^2\log{p}}} - \frac{m^2\log{p}}{2\sqrt{C}}\log{\left(\frac{nc_1^2}{m^2\log{p}}\right)} \right)} \\
& = \exp{\left(\log{\frac{nc_1^2}{m^2\log{p}}} \left(1 - \frac{m^2\log{p}}{2\sqrt{C}}\right) \right)} \to 0
\end{align*} 
as $n \to \infty$. Hence we can find some constant $\Sigma$ (in-fact one can take $\Sigma = 1$ for all large $n$) such that equation \eqref{eq:x_value_bound} holds. Now, using Theorem \ref{our_model_selection} we conclude: 
\begin{equation}
\label{final_con_highdim_mult_1}
\P\left(\ell\left(\hat \beta_{\hat m}, \beta^0\right) > C\left[\inf_{1 \le i \le L}\left(\ell(\beta_i , \beta^0) + \pen(\mathscr{M}_i)\right)\right] + C_1\frac{yb(n)}{n}\right) \le \Sigma e^{-y}
\end{equation}
where $b(n) =  \frac{\omega^2(\epsilon_1)}{\epsilon_1^2}$. Now in the RHS of equation \eqref{final_con_highdim_mult_1} we can replace the infimum by its value at $i = s_0$, the true sparsity of $\beta^0$ and get the following concentration bound on the excess risk:
\begin{equation}
\label{final_con_highdim_mult_2}
\P\left(S^{(mult)}\left(\beta^0\right) - S^{(mult)}\left(\hat \beta_{\hat m}\right)  > 2KC\epsilon^2_{s_0} + C_1\frac{yb(n)}{n}\right) \le \Sigma e^{-y}
\end{equation}
Using Proposition \ref{multi-alpha-kappa} we conclude: 
$$\P\left(r_n \|\hat \beta_{\hat m} - \beta^0\|_2 \ge \sqrt{2KC} + y\sqrt{C_1s_n}\right) \le \Sigma e^{-y^2}$$
where 
$$s_n = \frac{b(n)}{n\epsilon^2_{s_0}} \to 0$$
via same argument in Theorem \ref{multi-alpha-kappa} and 
\begin{align*}
r_n & =\left(\frac{nc_1^2}{m^2V_{s_0}}\right)^{1/3}\log{\left(\frac{nc_1^2}{m^2V_{s_0}}\right)}^{-1/3} \\
& = \left(\frac{nc_1^2}{m^2s_0\log{(ep/s_0)}}\right)^{1/3}\log{\left(\frac{nc_1^2}{m^2s_0\log{(ep/s_0)}}\right)}^{-1/3}
\end{align*}
This concludes the proof of the Theorem with $a_n = \sqrt{s_n}$.

\subsection{Proof of Theorem \ref{our_model_selection}: }
The proof is quite long and involved. Before going into details, we define some notation which will be frequently used throughout. Fix $y \ge 0$ and $\{x_m\}_{m \in \mathcal{M}}$ satisfying the condition of the theorem: 
\begin{enumerate}
\item For all $m \in \mathcal{M}$, $s_m = \argmin_{t \in S_m}\ell(s,t)$.  
\\\\
\item $y^2_{m} = 2K \left(\epsilon_{m}^2 + \frac{\omega^2(\epsilon_{m})(x_{m}+y)}{n\epsilon_{m}^2}\right) \le 2\pen(M_m) + 2Ky\frac{\omega^2(\epsilon_m)}{n\epsilon_m^2} \hspace{0.2in} [\text{Defintion of }\pen]$. 
\\\\
\item $V_{m} = \sup_{t \in S_m} \frac{\left[\bar{\gamma}_n(s_{m'}) - \bar{\gamma}_n(t)\right]}{\ell(s, t)+\ell(s,s_{m'}) + y^2_{m}}$ for all $m \in \mathcal{M}$. 
\end{enumerate}
Recall that $\hat m$ is defined to be the optimal model, i.e.: 
$$\hat m = \argmin_{m \in \mathcal{M}} \left[\gamma_n(\hat s_m)+ \pen(m)\right] \,.$$
Fix $m' \in \mathcal{M}$. Then by definition of $\hat m$:  
\allowdisplaybreaks
\begin{align*}
& \gamma_n(\hat s_{\hat m}) + \text{pen}(\hat m) \le \gamma_n(\hat s_{m'}) + \text{pen}(m') \le \gamma_n(s_{m'}) + \text{pen}(m')
\end{align*}
which implies: 
\begin{align}
\ell(\hat s_{\hat m}, s) & \le \ell(s_{m'}, s) + \bar\gamma_n(s_{m'}) - \bar \gamma_n (\hat s_{\hat m}) - \text{pen}(\hat m) +\text{pen}(m') \notag\\
& \le \ell(s_{m'}, s) + \sup_{t \in S_{\hat m}}\frac{\bar\gamma_n(s_{m'}) - \bar \gamma_n (t)}{\ell(s,t)+\ell(s,s_{m'})+y^2_{\hat m}}\left(\ell(s,s_{m'})+\ell(s,\hat s_{\hat m})+y^2_{\hat m}\right) - \pen(\hat m) +\pen(m') \notag\\
\label{ineq}  & \le \ell(s_{m'}, s) + V_{\hat m}\left(\ell(s,s_{m'})+\ell(s,\hat s_{\hat m})+y^2_{\hat m}\right)- \text{pen}(\hat m) +\text{pen}(m') \,.
\end{align}
The rest of the proof is organized as follows. We first show that: 
\begin{equation}
\label{main_ineq_talagrand}
P(V_{m} >1/2) \le e^{-(x_{m} + y)}
\end{equation}
for any $m \in \mathcal{M}$, which implies from the union bound and the fact that $\sum_{i=1}^L e^{-x_i} \le \Sigma$,
$$\P(V_{\hat m} > 1/2) \le \Sigma e^{-y} \,.$$ 
Now, using equation \eqref{ineq}, we obtain with probability larger that $1 - \Sigma e^{-y}$: 
\begin{align*}
& \ell(\hat s_{\hat m}, s) \notag \\
& \le\ell(s,s_{m'}) + \frac{1}{2}\left(\ell(s,s_{m'})+\ell(s,\hat s_{\hat m})+y^2_{\hat m}\right)- \text{pen}(\hat m) +\text{pen}(m') \\
& \le \frac32\ell(s,s_{m'}) + \text{pen}(m') + \frac12\ell(s,\hat s_{\hat m}) + K\frac{\omega^2(\epsilon_{\hat m})y}{n\epsilon_{\hat m}^2} \hspace{0.2in} [\text{Using defintion of }y_m]\\
& \le \frac32\ell(s,s_{m'}) + \text{pen}(m') + \frac12\ell(s,\hat s_{\hat m}) + \frac{K y}{n}\sup_{m \in \mathcal{M}} \frac{\omega^2(\epsilon_{\hat m})}{\epsilon_{\hat m}^2} \\
& \le \frac32\ell(s,s_{m'}) + \text{pen}(m') + \frac12\ell(s,\hat s_{\hat m}) + \frac{Ky}{n} \frac{\omega^2(\epsilon_{m_{(1)}})}{\epsilon_{m_{(1)}}^2} \hspace{0.2in} [\because \omega(x)/x \text{ is } \downarrow \ \text{and using definition of }\epsilon_{m_{(1)}}]\\
& \le \frac32\ell(s,s_{m'}) + \text{pen}(m') + \frac12\ell(s,\hat s_{\hat m}) + K\frac{yb(n)}{n}  \hspace{0.2in} [\text{Definition of }b(n)]
\end{align*}
Multiplying both sides by 2, we get: 
$$\ell(\hat s_{\hat m}, s)  \le 3\left[\ell(s,s_{m'}) + \text{pen}(m')\right] + 2K\frac{yb(n)}{n} \,.$$
As this is true for any $m' \in \mathcal{M}$, we conclude: 
$$\ell(\hat s_{\hat m}, s)  \le 3\inf_{m'\in\mathcal{M}}\left[\ell(s,s_{m'}) + \text{pen}(m')\right] + 2K\frac{yb(n)}{n} \,,$$
which implies for all $y \ge 0$:
\begin{equation}
    \label{pen-choice} \P\left(\ell(\hat s_{\hat m}, s) > 3\left[\inf_{m \in 
    \mathcal{M}}\left(\ell(s_{m}, s) + \text{pen}(m)\right)\right] + 2K\frac{y b(n)}{n}\right) \le \Sigma e^{-y}\,.
\end{equation}
Integrating with respect to $y$ we get the following upper bound on the expectation: 
\begin{equation}
   \E(\ell(\hat s_{\hat m}, s))\le 3\left[\inf_{m \in \mathcal{M}}\left(\ell(s, S_m) + \text{pen}(m)\right)\right] + 2K\frac{\Sigma b(n)}{n} \,.
\end{equation}

\noindent
Now we prove the bound \eqref{main_ineq_talagrand}, for which we use Theorem \ref{thm:bousquet_talagrand}. Our function class will be: 
$$\mathcal{F} = \left\{\frac{\gamma(t,\cdot) - \gamma(s_{m'} , \cdot)}{\ell(t, s)+\ell(s_{m'}, s) + y^2_{m}}: \ t \in S_m\right\} \,.$$
First, we observe that the functions are uniformly bounded in terms of $y_m$: 
\allowdisplaybreaks
\begin{align*}
\left\|\frac{\gamma(t,\cdot) - \gamma(s_{m'} , \cdot)}{\ell(t, s)+\ell(s_{m'}, s) + y^2_{m}}\right\|_{\infty} \le \frac{1}{y^2_{m}}
\end{align*}
\noindent
Next, we bound the variability of the functions. Define $\omega_1 = 1 \wedge \omega$. We have:
\begin{align*}
Var\left(\frac{\gamma(t,\cdot) - \gamma(s_{m'} , \cdot)}{\ell(s, t)+\ell(s, s_{m'}) + y^2_{m}}\right) & \le \frac{d^2(s_m,t)}{\left(\ell(t, s)+\ell(s_m, s) + y^2_{m}\right)^2} \hspace{0.2in} [\because \gamma \in [0,1]] \\
& \le 2\left[\frac{d^2(s_{m'},s)}{(\ell(s, s_{m'}) + y^2_{m})^2}+\frac{d^2(t,s)}{(\ell(s, t) + y^2_{m})^2}\right] \\
& \le 2\left[\frac{\omega^2\left(\sqrt{\ell(s_{m'},s)}\right)}{(\ell(s,s_{m'}) + y^2_{m})^2}+\frac{\omega^2\left(\sqrt{\ell(t,s)}\right)}{(\ell(s,t) + y^2_{m})^2}\right] \\
& \le 4\sup_{\epsilon \ge 0} \frac{\omega^2(\epsilon)}{(\epsilon^2 + y^2_{m})^2}  \\
& \le \frac{4}{y^2_{m}} \sup_{\epsilon \ge 0}  \frac{\omega^2(\epsilon)}{(\epsilon \vee y_{m})^2} \le \frac{4\omega^2(y_{m})}{y^4_{m}}
\end{align*}
Applying Theorem \ref{thm:bousquet_talagrand} yields with probability larger than $1-\exp{(-(x_m + y))}$: 
\begin{align}
\label{talagrand-model-selection} V_{m} \le \E\left[V_{m}\right] + \sqrt{\frac{2\left(\omega^2(y_{m})y^{-2}_{m} + 4\E\left[V_{m}\right]\right)(x_{m} + y)}{ny^2_{m}}} + \frac{2(x_{m}+y)}{3ny^2_{m}}
\end{align}
Now we bound $E(V_{m})$: 
\begin{align*}
E(V_{m}) & \le  E\left[\sup_{t \in S_m} \frac{\left[\bar{\gamma}_n(s_{m'}) - \bar{\gamma}_n(t)\right]}{\ell(s, t)+\ell(s, s_{m'}) + y^2_{m}}\right] \\
& \le E\left[\sup_{t \in S_m} \frac{\left|\bar \gamma_n(s_m) - \bar \gamma_n(t)\right|}{\ell(s,t) + \ell(s, s_{m'}) + y^2_{m}}\right] + E\left[\sup_{t \in S_m} \frac{\left|\bar \gamma_n(s_m) - \bar \gamma_n(s_{m'})\right|}{\ell(s,t) + \ell(s, s_{m'}) + y^2_{m}}\right]  \\
& = T_1 + T_2 \hspace{0.2in} [Say]
\end{align*}
For the next analysis, define $\omega_1 = 1 \wedge \omega$. We first analyze $T_2$ as follows: 
\begin{align}
T_2 = E\left[\sup_{t \in S_m} \frac{\left|\bar \gamma_n(s_m) - \bar \gamma_n(s_{m'})\right|}{\ell(s,t) + \ell(s, s_{m'}) + y^2_{m}}\right] & = E\left[\frac{\left|\bar \gamma_n(s_m) - \bar \gamma_n(s_{m'})\right|}{\inf_{t \in S_m} \left(\ell(s,t) + \ell(s, s_{m'}) + y^2_{m}\right)}\right] \notag \\
& = E\left[\frac{\left|\bar \gamma_n(s_m) - \bar \gamma_n(s_{m'})\right|}{\ell(s,s_m) + \ell(s, s_{m'}) + y^2_{m}}\right] \notag\\
& \le \frac{\sqrt{Var\left(\gamma(s_m, \cdot) - \gamma(s_{m'}, \cdot)\right)}}{\sqrt{n}\left(\ell(s,s_m) + \ell(s, s_{m'}) + y^2_{m}\right)} \notag\\
& \le \frac{1 \wedge (s_m, s_{m'})}{\sqrt{n}\left(\ell(s,s_m) + \ell(s, s_{m'}) + y^2_{m}\right)} \hspace{0.2in} [\because \gamma \in [0,1]] \notag\\
& \le  \frac{1 \wedge d(s, s_{m'}) + 1 \wedge d(s,s_m)}{\sqrt{n}\left(\ell(s,s_m) + \ell(s, s_{m'}) + y^2_{m}\right)} \notag\\
& \le \frac{1}{\sqrt{n}}\left[\frac{1 \wedge d(s,s_{m'})}{\ell(s, s_{m'}) + y^2_{m}} + \frac{1 \wedge d(s,s_m)}{\ell(s,s_m) + y^2_{m}}\right] \notag\\
& \le \frac{2}{\sqrt{n}}\sup_{\epsilon \ge 0}\frac{\omega_1(\epsilon)}{\epsilon^2 + y^2_{m}}  \notag\\
\label{T2} & \le \frac{2\omega_1(y_{m})}{\sqrt{n}y^2_{m}}
\end{align}
Next we analyze $T_1$ using Lemma \ref{miwep}: 
\begin{align*}
E\left[\sup_{t \in S_m} \frac{\left|\bar \gamma_n(s_m) - \bar \gamma_n(t)\right|}{\ell(s,t) + \ell(s, s_{m'}) + y^2_{m}}\right] & \le E\left[\sup_{t \in S_m} \frac{\left|\bar \gamma_n(s_m) - \bar \gamma_n(t)\right|}{\ell(s,t) \vee \ell(s, s_{m'}) + y^2_{m}}\right]\\
& \le E\left[\sup_{t \in S_m} \frac{\left|\bar \gamma_n(s_m) - \bar \gamma_n(t)\right|}{a^2(t) + y^2_{m}}\right]
\end{align*}
where we define $a^2(t) = \ell(s,t) \vee \ell(s, s_{m'})$. We can relate to $a(t)$ to $d(t,s_m)$ in the following way: 
\begin{align*}
a^2(t) \le \epsilon^2 \Rightarrow  \ell(s,t) \le \epsilon^2 & \Rightarrow \frac{1}{2}(\ell(s,t) +\ell(s,t)) \le \epsilon^2 \\
& \Rightarrow  \frac{1}{2}(\ell(s,s_m) +\ell(s,t)) \le \epsilon^2 \\
& \Rightarrow  (\ell(s,s_m) \vee \ell(s,t)) \le 2\epsilon^2 \\
& \Rightarrow \sqrt{\ell(s,s_m)} \vee \sqrt{\ell(s,t)} \le \sqrt{2}\epsilon \\
& \Rightarrow \omega\left(\sqrt{\ell(s,s_m) }\right) \vee \omega\left(\sqrt{\ell(s,t)}\right) \le \omega(\sqrt{2}\epsilon) \\
& \Rightarrow d(s,s_m) \vee d(s,t) \le \omega(\sqrt{2}\epsilon) \\
& \Rightarrow 2\left[d(s,s_m) \vee d(s,t)\right] \le 2\omega(\sqrt{2}\epsilon) \\
& \Rightarrow d(s,s_m) + d(s,t)\le 2\omega(\sqrt{2}\epsilon) \\
& \Rightarrow d(s_m,t) \le 2\omega(\sqrt{2}\epsilon) 
\end{align*} 
Hence we have: 
\begin{align*}
E\left[\sup_{a(t) \le \epsilon} \left|\bar \gamma_n(s_m) - \bar \gamma_n(t)\right|\right] & \le E\left[\sup_{d(t,s_m) \le 2\omega(\sqrt{2}\epsilon)} \left|\bar \gamma_n(s_m) - \bar \gamma_n(t)\right|\right] \\ 
& \le \frac{\phi_{m}(2\omega(\sqrt{2}\epsilon))}{\sqrt{n}} \equiv \psi(\epsilon)
\end{align*}
Using Lemma \ref{miwep} we conclude: 
\begin{equation}
\label{T1}
T_1 = E\left[\sup_{t \in S_m} \frac{\left|\bar \gamma_n(s_m) - \bar \gamma_n(t)\right|}{\ell(s,t) + \ell(s, s_{m'}) + y^2_{m}}\right] \le \frac{4\phi_{m}(2\omega(\sqrt{2}y_{m}))}{\sqrt{n}y^2_{m}}
\end{equation}
Combining equation \ref{T1} and \ref{T2} we conclude that: 
\begin{align}
E(V_{m})  & \le \frac{4\phi_{m}(2\omega(\sqrt{2}y_{m}))}{\sqrt{n}y^2_{m}} + \frac{2\omega_1(y_{m})}{\sqrt{n}y^2_{m}} \notag\\
& \le \frac{4\phi_{m}(2\omega(\sqrt{2}\epsilon_{m}))}{\sqrt{n}y_{m}\epsilon_{m}} + \frac{2}{\sqrt{n}y_{m}}\sqrt{\frac{\omega_1^2(\epsilon_{m})}{\epsilon^2_{m}}} \hspace{0.2in} [y_m \ge \epsilon_m \text{ as } 2K > 1]\notag\\
& \le \frac{8\sqrt{2}\phi_{m}(\omega(\epsilon_{m}))}{\sqrt{n}y_{m}\epsilon_{m}} + \frac{2}{\sqrt{n}y_{m}}\sqrt{\frac{\phi^2_{m}\left(\omega_1(\epsilon_{m})\right)}{\epsilon^2_{m}}} \hspace{0.2in} [\because \omega_1 \le 1]\notag\\
& \le \frac{8\sqrt{2}\epsilon_{m}}{y_{m}} + \frac{2}{\sqrt{n}y_{m}}\sqrt{\frac{\phi^2_{m}\left(\omega(\epsilon_{m})\right)}{\epsilon^2_{m}}}  \hspace{0.2in} [\because \phi_m \text{ is } \uparrow, \omega_1 \le \omega]\notag\\
& \le \frac{8}{\sqrt{K}} + \frac{2}{\sqrt{n}y_{m}}\frac{\phi_{m}\left(\omega(\epsilon_{m})\right)}{\epsilon_{m}}\notag \notag\\ 
\label{bound-on-expectation} & \le \frac{8}{\sqrt{K}} + \frac{2\epsilon_{m}}{y_{m}} \le \frac{8+\sqrt{2}}{\sqrt{K}} \hspace{0.2in} [\because \phi_{m}\left(\omega(\epsilon_{m})\right) = \sqrt{n}\epsilon_m^2]
\end{align}
Putting this bound in equation \ref{talagrand-model-selection} we have: 
\begin{align*}
V_{m} & \le  \frac{8+\sqrt{2}}{\sqrt{K}} + \sqrt{\frac{2\left(\omega^2(y_{m})y^{-2}_{m} + 4\frac{8+\sqrt{2}}{\sqrt{K}}\right)(x_{m} + y)}{ny^2_{m}}} + \frac{2(x_{m}+ y)}{3ny^2_{m}} \\
& \le \frac{8+\sqrt{2}}{\sqrt{K}} + \sqrt{\frac{2\left(\omega^2(y_{m})y^{-2}_{m} + 4\frac{8+\sqrt{2}}{\sqrt{K}}\right)(x_{m} + y)}{ny^2_{m}}} + \frac{\epsilon^2_{m}}{3K \omega^2(\epsilon_{m})} \\
& \le \frac{8+\sqrt{2}}{\sqrt{K}} + \sqrt{\frac{2\left(\omega^2(\epsilon_{m})\epsilon^{-2}_{m} + 4\frac{8+\sqrt{2}}{\sqrt{K}}\right)(x_{m} + y)}{ny^2_{m}}} + \frac{1}{3K} \\
& \le  \frac{8+\sqrt{2}}{\sqrt{K}} + \sqrt{\frac{2\left(\omega^2(\epsilon_{m})\epsilon^{-2}_{m}\right)(x_{m} + y)}{ny^2_{m}} + \frac{4(8+\sqrt{2})(x_{m} + y)}{\sqrt{K} ny^2_{m}}} + \frac{1}{3K} \\
& \le  \frac{8+\sqrt{2}}{\sqrt{K}}+ \sqrt{\frac{2\left(\omega^2(\epsilon_{m})\epsilon^{-2}_{m}\right)(x_{m} + y)}{ny^2_{m'}} + \frac{2(8+\sqrt{2})}{K\sqrt{K}}} + \frac{1}{3K} \\
& \le \frac{8+\sqrt{2}}{\sqrt{K}} + \sqrt{\frac{1}{K} + \frac{2(8+\sqrt{2})}{K\sqrt{K}}} + \frac{1}{3K}
\end{align*}
For large enough $K$, clearly $V_{m} \le 1/2$. This completes the proof. $\Box$



\subsection{Discussion on Lemma \ref{distance}} 
\label{Convex geometry}
\begin{lemma}
\label{newangle}
For any fixed $x \in S^{p-1}$, define $C(x,\epsilon)$ to be $\epsilon$-angular spherical cap around $x$, i.e. $$C(x, \epsilon) = \{y \in S^{p-1}: \langle x,y \rangle \ge \epsilon\}$$ Then we have $$\sigma(C(x, \epsilon)) \le  \frac{1}{2\epsilon\sqrt{p}}(1-\epsilon^2)^{\frac{p-1}{2}} \le \frac{1}{2\sqrt{2}}(1-\epsilon^2)^{\frac{p-1}{2}}$$ for $\sqrt{\frac{2}{p}} \le \epsilon \le 1$. The last inequality follows from the assumption $\sqrt{\frac{2}{p}} \le \epsilon$.
\end{lemma}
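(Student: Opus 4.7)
By the rotational invariance of the uniform probability measure $\sigma$ on $S^{p-1}$, we may assume without loss of generality that $x = e_1$, so that $C(x,\epsilon) = \{y \in S^{p-1}: y_1 \ge \epsilon\}$. The standard disintegration of $\sigma$ shows that if $Y$ is uniformly distributed on $S^{p-1}$, then the marginal density of $Y_1$ on $[-1,1]$ is
$$f(t) = \frac{1}{B(\tfrac{1}{2},\tfrac{p-1}{2})}(1-t^2)^{\frac{p-3}{2}},$$
where $B$ denotes the Beta function. Hence
$$\sigma(C(e_1,\epsilon)) = \frac{1}{B(\tfrac{1}{2},\tfrac{p-1}{2})}\int_\epsilon^1 (1-t^2)^{\frac{p-3}{2}}\,dt.$$

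For the numerator, the key observation is that on $[\epsilon,1]$ we have $1 \le t/\epsilon$, so
$$\int_\epsilon^1 (1-t^2)^{\frac{p-3}{2}}\,dt \;\le\; \frac{1}{\epsilon}\int_\epsilon^1 t\,(1-t^2)^{\frac{p-3}{2}}\,dt \;=\; \frac{(1-\epsilon^2)^{\frac{p-1}{2}}}{\epsilon(p-1)},$$
where the last equality is the antiderivative of $t(1-t^2)^{(p-3)/2}$.

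For the denominator, write $B(\tfrac{1}{2},\tfrac{p-1}{2}) = \sqrt{\pi}\,\Gamma(\tfrac{p-1}{2})/\Gamma(\tfrac{p}{2})$ and apply Gautschi's inequality $\Gamma(x+1)/\Gamma(x+\tfrac{1}{2}) > \sqrt{x}$ with $x=(p-1)/2$, which after rearrangement yields $\Gamma(\tfrac{p-1}{2})/\Gamma(\tfrac{p}{2}) > \sqrt{2/(p-1)}$. Plugging this into the ratio and simplifying leads to
$$\sigma(C(e_1,\epsilon)) \;\le\; \frac{(1-\epsilon^2)^{\frac{p-1}{2}}}{\epsilon(p-1)\sqrt{2\pi/(p-1)}} \;=\; \frac{(1-\epsilon^2)^{\frac{p-1}{2}}}{\epsilon\sqrt{2\pi(p-1)}},$$
and a straightforward check shows $\sqrt{2\pi(p-1)} \ge 2\sqrt{p}$ for all $p\ge 3$ (equivalently $p(2\pi-4)\ge 2\pi$), establishing the first claimed bound $\sigma(C(e_1,\epsilon)) \le \frac{1}{2\epsilon\sqrt{p}}(1-\epsilon^2)^{(p-1)/2}$. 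The second inequality is immediate: under the hypothesis $\epsilon \ge \sqrt{2/p}$ we have $\epsilon\sqrt{p} \ge \sqrt{2}$, so $\frac{1}{2\epsilon\sqrt{p}} \le \frac{1}{2\sqrt{2}}$.

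The main technical obstacle is obtaining the correct constant $\frac{1}{2\sqrt{p}}$ in the leading term; everything is driven by a sharp enough lower bound on the normalizing constant $B(\tfrac{1}{2},\tfrac{p-1}{2})$, which grows like $\sqrt{2\pi/p}$. A cruder Stirling-type estimate would give a weaker numerical constant but the same rate in $p$; the Gautschi bound is what makes the inequality clean and valid for every $p\ge 3$, rather than only for $p$ sufficiently large. Everything else in the proof is a one-line calculus manipulation.
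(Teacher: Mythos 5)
Your proof is correct, and it actually supplies something the paper does not: the paper states Lemma \ref{newangle} as ``a well-known fact in convex geometry'' and gives no argument, only using it to derive the distance-scale version (Lemma \ref{angle-distance}). Your derivation is the standard one behind that cited fact (it recovers exactly the constant $\tfrac{1}{2\epsilon\sqrt{p}}$ that appears, e.g., in Brieden et al.'s bounds on spherical caps): reduce to the marginal density $(1-t^2)^{(p-3)/2}/B(\tfrac12,\tfrac{p-1}{2})$ of one coordinate, bound the tail integral by inserting the factor $t/\epsilon \ge 1$ so the integrand becomes exactly integrable, and lower-bound the Beta normalizer via Gautschi. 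All three steps check out: the antiderivative identity is right, Gautschi with $x=(p-1)/2$ does give $\Gamma(\tfrac{p-1}{2})/\Gamma(\tfrac{p}{2}) > \sqrt{2/(p-1)}$, and $2\pi(p-1)\ge 4p$ holds precisely for $p \ge 2\pi/(2\pi-4) \approx 2.75$. Two small remarks: (i) your first inequality therefore requires $p\ge 3$ (it fails numerically at $p=2$, where the hypothesis $\epsilon\ge\sqrt{2/p}$ forces $\epsilon=1$ and the claim is vacuous anyway), and since the paper only invokes the lemma for $p\ge 8$ this is harmless, but it is worth stating the restriction explicitly; (ii) you correctly observe that the hypothesis $\epsilon\ge\sqrt{2/p}$ enters only in the final step, which matches the lemma's own phrasing.
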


This lemma is a well-known fact in convex geometry. Note that, Lemma \ref{distance} and Lemma \ref{newangle} are in different scale as one of them involves the angle and the other one involves the distance. In the following Lemma we bridge this gap:
\begin{lemma}
\label{angle-distance}
For $0 \le r \le 1$ and $p \ge 8$, we have: 
$$\sigma(D(x, r)) \le \frac{1}{2\sqrt{2}}r^{p-1} \,.$$
\end{lemma}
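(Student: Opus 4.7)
The plan is to reduce this distance-based bound to the angular bound in Lemma \ref{newangle} via the elementary identity on the sphere $\|x - y\|_2^2 = 2 - 2\langle x, y\rangle$ valid for $x, y \in S^{p-1}$. This gives the exact equality of sets $D(x, r) = C(x, 1 - r^2/2)$, so it remains to check that the angle parameter $\epsilon := 1 - r^2/2$ lies in the range $[\sqrt{2/p}, 1]$ required by Lemma \ref{newangle}, and then to simplify the resulting bound.

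First I would note that for $0 \le r \le 1$ we have $\epsilon = 1 - r^2/2 \ge 1/2$, and the assumption $p \ge 8$ forces $\sqrt{2/p} \le 1/2$, so indeed $\sqrt{2/p} \le \epsilon \le 1$ and Lemma \ref{newangle} applies to $C(x, \epsilon)$. Applying that lemma gives
\[
\sigma(D(x, r)) \;=\; \sigma(C(x, \epsilon)) \;\le\; \tfrac{1}{2\sqrt{2}}\bigl(1 - \epsilon^2\bigr)^{(p-1)/2}.
\]
Then I would expand $1 - \epsilon^2 = 1 - (1 - r^2/2)^2 = r^2 - r^4/4 = r^2(1 - r^2/4)$ to factor out $r^{p-1}$:
\[
\sigma(D(x, r)) \;\le\; \tfrac{1}{2\sqrt{2}} \, r^{p-1} \bigl(1 - r^2/4\bigr)^{(p-1)/2}.
\]
Since $0 \le r \le 1$, the factor $(1 - r^2/4)^{(p-1)/2} \le 1$, which yields the desired bound $\sigma(D(x, r)) \le \tfrac{1}{2\sqrt{2}} r^{p-1}$.

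There is no genuine obstacle here — the proof is a short computation that relies entirely on the spherical law of cosines and the previously established angular cap estimate. The only subtle point worth flagging is verifying the range condition on $\epsilon$, which is precisely where the hypothesis $p \ge 8$ enters (it is what makes the threshold $\sqrt{2/p}$ no larger than the worst case value $\epsilon = 1/2$ attained at $r = 1$).
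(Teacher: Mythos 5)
Your proof is correct and is essentially identical to the paper's argument: both identify $D(x,r)$ with the angular cap $C(x, 1-r^2/2)$, verify that $p \ge 8$ and $r \le 1$ put the angle parameter in the range where Lemma \ref{newangle} applies, and then factor $1-\epsilon^2 = r^2(1-r^2/4)$ and drop the factor $(1-r^2/4)^{(p-1)/2} \le 1$. No further comment is needed.
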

\begin{proof}
Note that $C(x,\epsilon) = D(x,r)$ where $\epsilon = (1-r^2/2)$. If $r \le 1$ and $p \ge 8$ then $\epsilon \ge \sqrt{\frac{2}{p}}$. Hence we have:
\allowdisplaybreaks
\begin{align*}
\sigma( D(x,r)) & \le \frac{1}{2\sqrt{2}}\left(1-\left(1-\frac{r^2}{2}\right)^2\right)^{\frac{p-1}{2}} \le \frac{1}{2\sqrt{2}}r^{p-1}\left(1 - \frac{r^2}{4}\right)^{\frac{p-1}{2}} \le \frac{1}{2\sqrt{2}}r^{p-1}.
\end{align*}
which completes the proof.
\end{proof}
Finally using Lemma \ref{angle-distance} we get the upper bound on $\sigma(D(x, r))$. The lower bound can also be found in convex geometry literature. Combining them together, we get Lemma \ref{distance}.

\subsection{Proof of Lemma \ref{Hellinger}}\label{Hellinger Proof}
Define $x - (p_1 - q_1)/2 = \nu/2$. From the definition of Hellinger distance between two Bernoulli Random variables, we get,
\allowdisplaybreaks
\begin{align}
H^2(P_1, P_2) & = 1 - \sqrt{p_1q_1} - \sqrt{(1-p_1)(1-q_1)} \notag\\
& = 1 - \sqrt{(s+x)(s-x)} - \sqrt{(1-s+x)(1-s-x)} \notag \\
& = 1 - \sqrt{s^2 - x^2} - \sqrt{(1-s)^2 - x^2} \notag\\
& = 1 - s\sqrt{1- \frac{x^2}{s^2}} - (1-s)\sqrt{1-\frac{x^2}{(1-s)^2}} \notag \\
& = 1- s \left[1 -\frac{x^2}{2s^2}\left(1-\frac{\tilde{x}_1^2}{s^2}\right)^{-1/2}\right] - (1-s) \left[1 -\frac{x^2}{2(1-s)^2}\left(1-\frac{\tilde{x}_2^2}{(1-s)^2}\right)^{-1/2}\right] \notag \\
\label{hellinger} & = \frac{x^2}{2s}\left(1-\frac{\tilde{x}_1^2}{s^2}\right)^{-1/2} + \frac{x^2}{2(1-s)}\left(1-\frac{\tilde{x}_2^2}{(1-s)^2}\right)^{-1/2}.
\end{align}
In the second last line we use mean value theorem: $$f(x)= \sqrt{1-x} = 1 - \frac{x}{2}(1-\tilde{x})^{-1/2}$$ for some $\tilde{x}$ between $0$ and $x$. As our parameter space is $[1/4, 3/4]$, we have $p_1 \le 3q_1$ for any choice of $p_1, q_1$. Hence, $\frac{|x|}{s} \le \frac{1}{2}$ and $\frac{|x|}{1-s} \le \frac{1}{2}$ which immediately implies $\frac{\tilde{x}_1^2}{s^2} \le \frac{1}{4}$ and $\frac{\tilde{x}_2^2}{(1-s)^2} \le \frac{1}{4}$, which, in turn, validates $\left(1-\frac{\tilde{x}_1^2}{s^2}\right)^{-1/2} \le \frac{2}{\sqrt{3}}$ and $\left(1-\frac{\tilde{x}_2^2}{(1-s)^2}\right)^{-1/2} \le \frac{2}{\sqrt{3}}$.  Using this in equation \ref{hellinger} we conclude:
\begin{equation*}
\begin{split}
\frac{x^2}{2s}\left(1-\frac{\tilde{x}_1^2}{s^2}\right)^{-1/2} + \frac{x^2}{2(1-s)}\left(1-\frac{\tilde{x}_2^2}{(1-s)^2}\right)^{-1/2} & \le \frac{2}{\sqrt{3}}\left[\frac{x^2}{2s}+\frac{x^2}{2(1-s)}\right] \\
& = \frac{(q_1 - p_1)^2}{4\sqrt{3}s(1-s)}.
\end{split}
\end{equation*}

\subsection{Proof of Lemma \ref{KL}}\label{Proof of KL} 
\begin{align*}
KL(P||Q) & = p_1\log{\frac{p_1}{q_1}} + (1-p_1)\log{\frac{1-p_1}{1-q_1}} \\
& \le \frac{p_1}{q_1}(p_1 - q_1) + \frac{1-p_1}{1-q_1}(q_1 - p_1) \hspace*{0.3in} [\because \log{x} \le x-1]\\
& =  (p_1 - q_1)\left[\frac{p_1}{q_1} - \frac{1-p_1}{1-q_1}\right] \\
& = \frac{(p_1 - q_1)^2}{q_1(1-q_1)} \le \frac{16}{3}(p_1 - q_1)^2 \hspace*{0.3in} \left[\because \frac{1}{4} \le q_1 \le \frac{3}{4}\right].
\end{align*}


\section{A discussion of the model with intercept}
\label{Discussion of intercept}
The binary choice model in the presence of intercept can be formulated as follows: 
\begin{enumerate}
\item $(X,\epsilon) \overset{i.i.d.} \sim P$ with $\med(\epsilon|X) = 0$ almost surely. 
\item $Y = \s(Y^*)$ where $Y^* = \tau_0 + X^T\beta^0 + \epsilon$.
\end{enumerate}
The maximum score estimator can be defined as: $$(\hat{\tau}, \hat{\beta}) = \argmax_{\tau, \beta} S_n(\tau, \beta) = \argmax_{\tau, \beta}  \frac{1}{n} \sum_{i=1}^n Y_i \ \s(\tau + X_i^T\beta)$$ with the population score function being $S(\tau, \beta) = \E(Y\s(\tau + X^T\beta))$. In this model we can write the function $\eta(X) = \P(Y = 1|X) = 1 - F_{\epsilon|X} (-\tau - X^T\beta)$.
\noindent
We take our parameter space to be $\{(\tau, \beta): \tau \in (-U, U)\,,\,\|\beta\|_2  = 1\}$. For notational simplicity, define $\gamma = (\tau, \beta)$. The transition assumption (Assumption \ref{ass:low_noise_binary}) remains unchanged under the intercept model. Assumption \ref{ass:wedge_binary} can be generalized for this model as follows: 
\begin{assumption}
\label{ass:wedge_intercept_binary}
For all $\gamma$ sufficiently close to $\gamma^0$, 
$$a^-\|\gamma - \gamma^0\|_2 \le \P(\s(\tau + \beta^TX) \neq \s(\tau^0 + X^T\beta^0)) \,.$$
\end{assumption}
Consider the linear transformation $Y = PX$ where: 
\allowdisplaybreaks
\[
P_{\beta} = P = \begin{bmatrix}
\frac{\beta^{0'} + \beta'}{\|\beta + \beta^0\|_2} \\
\frac{\beta^{0'} - \beta'}{\|\beta - \beta^0\|_2} \\
v_3 \\
\vdots \\
v_{p}
\end{bmatrix}
\]
with $v_3, \cdots, v_p$ being orthogonal extensions to a basis of $\mathbb{R}^p$. Note that $Y$ depends on $\beta$, but this will be suppressed in the notation. The following lemma presents conditions on the distribution of $X$ under which Assumption \ref{ass:wedge_intercept_binary} is valid. 

\begin{lemma}
\label{lower bound}
Suppose there exists $0 < \delta < 2$ and a constant $K$ such that $f_{Y_1, Y_2}(y_1, y_2) \ge F$ for all $\{(y_1, y_2): \|(y_1, y_2)\|_2 \le 2U/\zeta\}$ where $\zeta = \sqrt{1-\delta^2/4}$ and the bound $F = F(U, \zeta)$ is independent of $\beta$ and the dimension $p$. Then 
$$a^-\|\gamma - \gamma^0\|_2 \le \P(\s(\tau + \beta^TX) \neq \s(\tau^0 + X^T\beta^0)) $$ 
holds for $\tau \in (-U,U)$ and for all $\beta: \|\beta - \beta^0\|_2 \le \delta$.  
\end{lemma}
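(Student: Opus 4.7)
The plan is to reduce the sign-disagreement probability to a two-dimensional area estimate using the orthogonal transformation $Y = PX$ introduced just before the lemma. Since the first two rows of $P$ span $\{\beta^0+\beta,\beta^0-\beta\}$ and the remaining rows are orthogonal to both $\beta^0$ and $\beta$, a short computation yields $P\beta^0=(a,b,0,\dots,0)^{\top}$ and $P\beta=(a,-b,0,\dots,0)^{\top}$, where $a=\sqrt{1-r^2/4}$, $b=r/2$, and $r=\|\beta-\beta^0\|_2$. Hence
\[
\tau^0+X^{\top}\beta^0 = \tau^0+aY_1+bY_2, \qquad \tau+X^{\top}\beta = \tau+aY_1-bY_2,
\]
so the event of sign disagreement depends only on $(Y_1,Y_2)$. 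The density lower bound then gives $\P(\cdot)\ge F\cdot\mathrm{Area}(\mathcal{R}\cap B(0,2U/\zeta))$, where $\mathcal{R}$ is the planar disagreement region.

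For the area, at each fixed $Y_2$ the cross-section of $\mathcal{R}$ is a $Y_1$-interval of length $|\tau-\tau^0-2bY_2|/a$, bounded by $\eta_1=(-\tau^0-bY_2)/a$ and $\eta_2=(-\tau+bY_2)/a$. I would restrict to the horizontal slab $|Y_2|\le U$ and verify that for every such $Y_2$ the whole cross-section lies inside $B(0,2U/\zeta)$; this reduces to $(1+b)^2/a^2+1\le 4/\zeta^2$, which, after using $a\ge\zeta$ and $b\le\delta/2$, collapses to $(1+\delta/2)^2\le 3+\delta^2/4$, equivalent to $\delta\le 2$, precisely our hypothesis. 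Granted the containment,
\[
\mathrm{Area}(\mathcal{R}\cap B(0,2U/\zeta)) \;\ge\; \int_{-U}^{U}\frac{|\tau-\tau^0-2bY_2|}{a}\,dY_2 \;=\; \frac{1}{2ab}\int_{c_0-L/2}^{c_0+L/2}|u|\,du,
\]
after substituting $u=\tau-\tau^0-2bY_2$, with $c_0=\tau-\tau^0$ and $L=4bU$. Splitting into the cases $|c_0|\ge L/2$ and $|c_0|<L/2$ gives the elementary bound $\int|u|\,du\ge\tfrac12(L|c_0|+L^2/4)$, so $\mathrm{Area}\ge U|\tau-\tau^0|/a+bU^2/a$. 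Using $a\le 1$ together with the norm inequality $|\tau-\tau^0|+r/2\ge\tfrac12\sqrt{(\tau-\tau^0)^2+r^2}$ yields $\mathrm{Area}\gtrsim\|\gamma-\gamma^0\|_2$ with a positive constant $\kappa(U,\zeta)$, and the lemma follows with $a^- = F\,\kappa(U,\zeta)$.

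The main technical hurdle is the uniform containment step verifying that every horizontal slice of $\mathcal{R}$ fits inside $B(0,2U/\zeta)$ across all $\beta$ in the $\delta$-neighborhood and all $\tau\in(-U,U)$; this is where the specific choice of the radius $2U/\zeta$ and the strict inequality $\delta<2$ both play essential roles, and where any weakening of either ingredient would break the argument. The edge case $\beta=\beta^0$, at which the matrix $P$ is ill-defined because its second row degenerates to $0/0$, must be handled separately, either by a continuity argument as $r\downarrow 0$ or by direct appeal to the marginal density of $X^{\top}\beta^0$ on the interval between $-\tau$ and $-\tau^0$.
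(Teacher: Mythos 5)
Your argument is correct, and it reaches the same conclusion by a genuinely different route than the paper. The paper's proof is a case analysis on the planar geometry of the two lines $aY_1+bY_2+\tau^0=0$ and $aY_1-bY_2+\tau=0$: it locates their intersection point $I$ and the points where they meet the $Y_1$-axis, distinguishes whether $I$ lies inside or outside the disc of radius $2U/\zeta$, and separately treats the degenerate configurations $\tau=\tau^0$ (lines meeting on the axis) and $\beta=\beta^0$ (parallel lines), bounding in each case the area of an inscribed triangle or rectangle by $\|\gamma-\gamma^0\|_2$ times a constant. Your Fubini/slicing computation replaces all four cases with a single integral $\int_{-U}^{U}|\tau-\tau^0-2bY_2|\,a^{-1}\,dY_2$, whose exact evaluation simultaneously produces the $|\tau-\tau^0|$ term and the $\|\beta-\beta^0\|_2$ term of the lower bound; the containment check $(1+b)^2/a^2+1\le 4/\zeta^2$, which you correctly reduce to $\delta\le 2$ via $a\ge\zeta$ and $b\le\delta/2$, is the only geometric input needed, and it plays the role of the paper's various "the relevant vertex lies in the disc" observations. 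Your approach is more uniform and yields an explicit constant $a^-=F\cdot\kappa(U,\zeta)$ in one pass; its only residual case is the one you already flag, namely $\beta=\beta^0$, where the second row of $P$ is $0/0$ — there the disagreement region is the strip $\{-\tau\wedge-\tau^0\le X^{\top}\beta^0\le -\tau\vee-\tau^0\}$ and the same density lower bound on the ball gives the linear bound in $|\tau-\tau^0|$ directly (this is exactly the paper's Case 3), so either of your two proposed fixes closes it. All the intermediate estimates you state check out: the cross-section of the disagreement set at fixed $Y_2$ is indeed the interval between the two roots since both affine functions of $Y_1$ have the same positive slope $a$, the substitution giving $\tfrac{1}{2ab}\int_{c_0-L/2}^{c_0+L/2}|u|\,du$ with $L=4bU$ is right, and the elementary inequality $\int|u|\,du\ge\tfrac12(L|c_0|+L^2/4)$ holds in both cases by AM--GM.
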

 As the wedge condition is only valid in a neighborhood of the true $\gamma^0$, we need to establish the consistency of the maximum score estimator in order to prove the rate of convergence results. 

\begin{lemma}
\label{consistency}
Under Assumption \ref{ass:low_noise_binary} and Assumption \ref{ass:wedge_intercept_binary} we have $$\|\hat{\gamma} - \gamma^0\|_2 \overset{P} \to 0$$ when $p/n \rightarrow 0$. Furthermore, under Assumption \ref{ass:sparsity_binary}, the result continues to hold when $p \gg n$. 
\end{lemma}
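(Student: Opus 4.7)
The plan is to prove consistency in three stages, running the two regimes ($p/n \to 0$ and $p \gg n$) in parallel since the core argument is the same modulo the empirical process bound used in the first step.

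First I would establish vanishing of the excess population risk: $S(\gamma^0) - S(\hat{\gamma}) \overset{P}{\to} 0$. In the moderate growth regime this is a standard uniform law of large numbers argument applied to the VC class of linear half-spaces $\{x \mapsto \s(\tau + x^\top \beta): \tau \in [-U,U], \|\beta\|=1\}$, which has VC dimension $O(p)$. Since the score contrasts are bounded, $\sup_\gamma |S_n(\gamma) - S(\gamma)| = O_P(\sqrt{p/n}) \to 0$, and the optimizing property of $\hat{\gamma}$ gives $0 \le S(\gamma^0) - S(\hat{\gamma}) \le 2\sup_\gamma |S_n(\gamma) - S(\gamma)|$. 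In the $p \gg n$ regime, the same conclusion is obtained through the structural risk minimization framework of Section \ref{p-more-n}: Theorem \ref{high-dim-rate}, whose proof only requires Assumption \ref{ass:low_noise_binary} and Assumption \ref{ass:sparsity_binary} (together with a local version of the wedge assumption) to bound the excess risk, gives $S(\gamma^0) - S(\hat{\gamma}_{\hat m}) = o_P(1)$ under $s_0 \log p / n \to 0$.

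Next I would translate excess risk control into control of the wedge probability $d_\Delta(\hat{\gamma}, \gamma^0) := \P(\s(\hat{\tau} + X^\top \hat{\beta}) \neq \s(\tau^0 + X^\top \beta^0))$. The proof of Proposition \ref{alpha-kappa} carries over verbatim to the intercept model (it uses only the definition of the wedge and the transition condition), yielding
\begin{equation*}
S(\gamma^0) - S(\gamma) \ge \frac{d_\Delta^2(\gamma,\gamma^0)}{C_n}\mathds{1}(d_\Delta \le 2t^* C_n) + 2t^* d_\Delta(\gamma,\gamma^0)\mathds{1}(d_\Delta > 2t^* C_n).
\end{equation*}
Combined with Step 1 this gives $d_\Delta(\hat{\gamma}, \gamma^0) \overset{P}{\to} 0$.

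The final step converts wedge consistency into $\ell_2$ consistency of $\hat\gamma$. For any $\delta > 0$, it suffices to exhibit a constant $c(\delta) > 0$, independent of $p$, such that $d_\Delta(\gamma,\gamma^0) \ge c(\delta)$ for every admissible $\gamma$ with $\|\gamma - \gamma^0\|_2 > \delta$. For $\delta$ smaller than the local radius of Assumption \ref{ass:wedge_intercept_binary}, this follows directly from the assumption itself, namely $d_\Delta(\gamma,\gamma^0) \ge a^- \delta$. For $\|\gamma - \gamma^0\|_2$ larger than this radius, I would use the two-dimensional reduction at the heart of Lemma \ref{lower bound}: passing to the coordinates $(Y_1, Y_2) = P_\beta X$ reduces the wedge probability to the probability assigned by the two-dimensional marginal of $X$ to the symmetric-difference region between two half-planes in $\R^2$, a region whose area grows with the angular separation between the hyperplanes and the gap between the intercepts. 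Under the density lower bound assumed there (uniform in $p$), the wedge probability admits a uniform-in-$p$ lower bound depending only on $\delta$ and $U$, which combined with the local bound completes the proof.

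The main obstacle is Step 3: the local wedge assumption is, by design, only local, so one must manufacture a uniform-in-$p$ lower bound on the wedge probability at parameter values far from $\gamma^0$. The control over $\tau$ (it lies in a bounded interval $[-U,U]$ independent of $p$) together with the density lower bound in Lemma \ref{lower bound} is what makes this possible; without boundedness of $\tau$, large values of $\tau,\tau^0$ of opposite sign could drive the wedge probability to zero even though $\|\gamma - \gamma^0\|_2$ stays bounded away from zero, which is exactly why the paper restricts to bounded intercepts.
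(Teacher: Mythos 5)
Your Steps 1 and 2 are sound and essentially coincide with the first half of the paper's argument: a Glivenko--Cantelli/VC bound gives $\sup_\gamma|S_n(\gamma)-S(\gamma)|\to 0$ (with $V\lesssim p$, resp.\ $V\lesssim s_0\log p$), and the transition condition alone converts the vanishing excess risk into $d_\Delta(\hat\gamma,\gamma^0)\overset{P}{\to}0$. The divergence, and the gap, is in Step 3. You need, for each $\delta>0$, a lower bound on $d_\Delta(\gamma,\gamma^0)$ (equivalently, on the excess risk) that is uniform over the \emph{entire} far region $\{\|\gamma-\gamma^0\|_2>\delta\}$, but Assumption \ref{ass:wedge_intercept_binary} is only local, and your proposed extension of the geometry of Lemma \ref{lower bound} to the whole parameter space does not go through. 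That argument divides by $a_1=\tfrac12\|\beta+\beta^0\|_2$: the intersection points of the two hyperplanes with the $Y_1$-axis sit at $-\tau/a_1$ and $-\tau^0/a_1$, and the density lower bound is only assumed on a ball of radius $2U/\zeta$ with $\zeta=\sqrt{1-\delta^2/4}$. When $\beta$ approaches $-\beta^0$ (which is an admissible point with $\|\gamma-\gamma^0\|_2\approx 2$), $a_1\to 0$, these intersection points escape every fixed ball, $\zeta\to 0$, and neither the hypothesis of Lemma \ref{lower bound} nor its conclusion is available. So "the wedge probability admits a uniform-in-$p$ lower bound depending only on $\delta$ and $U$" is an unproved claim in exactly the regime where the local assumption gives you nothing; handling it would require a separate argument for near-antipodal $\beta$ and strictly stronger distributional hypotheses than the paper imposes.

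The paper closes this gap differently, via the interpolation result Lemma \ref{beta-distance}: for any $\gamma_1$ far from $\gamma^0$ one constructs $\gamma_2$ on the normalized chord toward $\gamma^0$, lying within the radius $\delta$ where the local curvature bound holds, whose wedge is \emph{contained} in that of $\gamma_1$, so that $S(\gamma^0)-S(\gamma_1)\ge S(\gamma^0)-S(\gamma_2)$. This reduces $\inf_{\|\gamma-\gamma^0\|_2>{\epsilon}}\bigl(S(\gamma^0)-S(\gamma)\bigr)$ to an infimum over the annulus $\{{\epsilon}<\|\gamma-\gamma^0\|_2\le\delta\}$, where the local wedge assumption and the curvature bound of Proposition \ref{alpha-kappa} apply, and the standard argmax argument finishes the proof. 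No global lower bound on $d_\Delta$ is ever needed. To repair your proof, replace your Step 3 with this reduction (or prove an analogue of Lemma \ref{beta-distance} yourself); the rest of your outline then stands.
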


We next argue that the rate of convergence results in (Theorem \ref{rate-manski-p-less-n} and Theorem \ref{high-dim-rate}) hold for the intercept model by slight modifications to the previous proofs. 
\begin{theorem}
Under Assumption \ref{ass:low_noise_binary}, \ref{ass:wedge_intercept_binary} and \ref{ass:sparsity_binary} we have: 
$$\|\hat{\gamma} - \gamma^0\|_2 = O_P\left(\frac{r_n}{\sqrt{C_n}} \wedge r_n^2 \right) \,,$$
where 
$$r_n =  \left(\frac{p\sqrt{C_n}\log{(n/pC^2_{n})}}{n}\right)^{-1/3} \wedge \left(\frac{p\log{(n/p)}}{n}\right)^{-1/2} \,.$$ 
for the slowly growing regime $p/n \to 0$, and 
$$r_n =  \left(\frac{n}{V_{s_0}\sqrt{C_n}\log{(n/V_iC^2_{n})}}\right)^{2/3} \vee \left(\frac{n}{V_{s_0}\log{(n/V_{s_0})}}\right), \ \ \ V_{s_0} \sim s_0\log{(ep/s_0)} \,.$$
for the fast growing regime $p \gg n$. 
\end{theorem}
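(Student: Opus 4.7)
The strategy is to mimic the proofs of Theorems \ref{rate-manski-p-less-n} and \ref{high-dim-rate}, but to work on a shrinking neighborhood of $\gamma^0$ where Assumption \ref{ass:wedge_intercept_binary} is valid. The key ingredients already available are: (i) Lemma \ref{lower bound}, which guarantees that the wedge bound $a^- \|\gamma - \gamma^0\|_2 \leq \mathbb{P}(\s(\tau + X^\top \beta) \neq \s(\tau^0 + X^\top \beta^0))$ holds uniformly for all $\gamma$ in a fixed $\ell_2$ neighborhood of $\gamma^0$, and (ii) Lemma \ref{consistency}, which ensures that $\hat{\gamma}$ lies in this neighborhood with probability tending to one.

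First, I would establish a local analogue of Proposition \ref{alpha-kappa}: combining the transition condition (Assumption \ref{ass:low_noise_binary}) with the local wedge condition, the same dichotomous lower bound
\[
S(\gamma^0) - S(\gamma) \;\geq\; \frac{(a^-)^2\|\gamma - \gamma^0\|_2^2}{C_n} \wedge 2t^* a^- \|\gamma - \gamma^0\|_2
\]
holds on the set $\{\|\gamma - \gamma^0\|_2 \leq \delta\}$. The derivation is identical to the proof of Proposition \ref{alpha-kappa}, since the sign function only depends on the hyperplane $\tau + X^\top \beta = 0$ and the margin calculation is unchanged.

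Next, I would repeat the Massart concentration argument (Theorem \ref{massart}) with parameter space restricted to the neighborhood $\{\gamma : \|\gamma - \gamma^0\|_2 \leq \delta\}$. The VC dimension of the class of affine half-spaces $\{\s(\tau + X^\top \beta)\}$ exceeds that of linear half-spaces by only one, so the entropy integral and the resulting $\phi(\sigma)$ are unchanged up to constants; the function $\omega$ is the same as before. Consequently, solving for $\epsilon_*$ gives the same rate $r_n$ as in Theorem \ref{rate-manski-p-less-n} in the slow growth regime and Theorem \ref{high-dim-rate} in the high-dimensional regime. By Lemma \ref{consistency}, the event $\{\|\hat{\gamma} - \gamma^0\|_2 \leq \delta\}$ has probability tending to one, so conditioning on this event gives the desired rate with no change to the leading order.

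The main obstacle lies in the high-dimensional regime, where the SRM-based estimator selects among models of growing sparsity. Consistency must be established model-by-model, and the penalty function must be shown to still yield the claimed tail bound when the curvature only holds locally. The fix is to observe that the Bousquet--Talagrand bound underlying Theorem \ref{our_model_selection} produces an excess-risk inequality regardless of curvature, and curvature is only invoked at the final step to translate the excess-risk bound into an $\ell_2$ bound; this translation only uses the local wedge condition on the event where $\hat{\gamma}_{\hat m}$ is already consistent. A small additional argument is needed to upgrade the concentration result from an ``eventually'' statement (via Lemma \ref{consistency}) to an exponential tail bound of the form stated in Theorem \ref{high-dim-rate}, which can be done by showing that the probability of escape from the neighborhood decays at least polynomially faster than the rate factor, using the same peeling/shelling argument on spherical shells around $\gamma^0$ that was invoked in the proof of Theorem \ref{multistage-estimator}.
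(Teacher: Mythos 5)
Your proposal follows essentially the same route as the paper's proof: a local analogue of Proposition \ref{alpha-kappa} derived from the intercept version of the wedge condition (Lemma \ref{lower bound}), the observation that the class of affine half-spaces has VC dimension only one larger than that of linear half-spaces so that $\omega$, $\phi$ and hence $\epsilon_*$ are unchanged up to constants, an application of Theorem \ref{massart} with the modified distance $d_{\Delta}(f_{\tau_1,\beta_1}, f_{\tau_2,\beta_2}) = \P^{1/2}\left(\s(\tau_1 + X^{\top}\beta_1) \neq \s(\tau_2 + X^{\top}\beta_2)\right)$, and Lemma \ref{consistency} to localize $\hat{\gamma}$ into the neighborhood where Assumption \ref{ass:wedge_intercept_binary} is valid. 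One caveat: your closing claim that a peeling argument upgrades the conclusion to an exponential tail bound of the form in Theorem \ref{high-dim-rate} is neither needed for the stated $O_P$ result nor justified. Outside the neighborhood where the wedge condition holds there is no lower bound on the excess risk in terms of $\|\gamma - \gamma^0\|_2$, so the escape probability is only $o(1)$ via the Glivenko--Cantelli argument rather than exponentially small in the deviation level; the paper's own remark following this theorem explicitly notes that the full exponential tail bound, and consequently the minimax upper bound, is not available in the intercept model.
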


\noindent
Under the intercept model, our class of classifier is:
$$\mathcal{F} = \mathcal{F}_{\gamma} = \{f_{\gamma}: \mathbb{R}^p \rightarrow \{-1,1\}, f_{\gamma}(x) = \s(\tau + x^T\beta)\}$$

\noindent
The VC dimension of this class is $d+1$ (For $p \gg n$ the VC dimension is at most $(s_0 + 1)\log{(p+1)}$). By the same arguments as in the proof of Proposition \ref{alpha-kappa} we can show that 
$$S(\gamma^0) - S(\gamma) \ge \left[c_1^2\frac{\left\|\gamma - \gamma^0\right\|^2_2}{C_n}\mathds{1}_{\left(d_{\Delta}(\gamma, \gamma^0) \le 2t^*C_n\right)} + 2t^*c_1\left\|\gamma - \gamma^0\right\|_2\mathds{1}_{\left(d_{\Delta}(\gamma, \gamma^0) > 2t^*C_n\right)}\right]$$
As before, we next apply Theorem \ref{massart}. The first condition of the theorem remains valid as our parameter space $(-U, U) \times S^{p-1}$ admits a countable dense subset. The distance function $d_{\Delta}(f, f^*)$ changes to the following: 
$$d_{\Delta}(f_{\tau_1, \beta_1}, f_{\tau_2, \beta_2}) = \P^{1/2}(\s(\tau_1 + X^T\beta_1) \neq \s(\tau_2 + \beta_2)) \,.$$
The remainder of the proof remains completely unchanged as can be verified by inspection. 
\begin{remark}
It is not clear whether the minimax upper bound results in Theorems \ref{rate-manski-p-less-n} and \ref{high-dim-rate} hold. Recall that, to prove the minimax upper bound in these theorems, we used an exponential tail bound on the probability that $\|\hat{\beta} - \beta^0\| > t$ for every $t > 0$, derived via Theorem \ref{massart}, using the fact that the wedge condition Assumption \ref{ass:wedge_binary} held for all $\beta$. In the intercept model, the wedge condition only holds on a restricted part of the parameter space, and the exponential tail bound cannot be established for all $t$. Nevertheless, the minimax lower bound rates obtained in 
Theorems \ref{minimax-lower-bound} and \ref{minimax-high-lower} remain exactly the same, as we can take $\tau = 0$ in the minimax constructions that arise in their proofs. Of course, the space of distributions changes, as we have introduced the intercept. We can rewrite these results as follows. 
\end{remark} 

\begin{theorem}
\label{minimax-lower-bound-intercept}
For the slowly growing regime $p/n \to 0$, we have : 
$$\inf_{\hat{\gamma}_n}\sup_{\gamma \equiv \gamma(P)}\mathbb{E}_{\gamma}\left(\|\hat{\gamma}_n - \gamma^0\|_2^2\right) \ge K_{L}\left[\left(\frac{pC^2_{n}}{n}\right)^{2/3} \vee \left(\frac{p}{n}\right)^2\right]$$
for some constant $K_{L}$ that does not depend on $(n,p)$. For $C_{n} = C$ fixed, the lower bound is of the order $(p/n)^{2/3}$. The supremum is taken over all distributions $P$ corresponding to binary response models 
satisfying Assumptions \ref{ass:low_noise_binary} and \ref{ass:wedge_intercept_binary} for some regression parameter $\gamma \in (-U, U) \times \mathcal{S}^{p-1}$ (viewed as a functional of $P$) but with $t^*, a^-, C_{n}$ held fixed. 
\end{theorem}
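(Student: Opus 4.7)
The plan is to reduce the intercept-model minimax lower bound to Theorem \ref{minimax-lower-bound} by restricting attention to the sub-family of distributions where the true intercept is $\tau^0 = 0$. For any estimator $\hat{\gamma}_n = (\hat{\tau}_n, \hat{\beta}_n)$ and any true parameter $\gamma = (0, \beta)$ with $\beta \in \mathcal{S}^{p-1}$, one has
$$\|\hat{\gamma}_n - \gamma\|_2^2 = \hat{\tau}_n^2 + \|\hat{\beta}_n - \beta\|_2^2 \;\ge\; \|\hat{\beta}_n - \beta\|_2^2,$$
so any lower bound on the minimax $\ell_2$-risk for estimating $\beta$ alone over this sub-family automatically produces a lower bound on the minimax $\ell_2$-risk for $\gamma$ over the full intercept-model family.

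Next I would reuse verbatim the Assouad construction from the proof of Theorem \ref{minimax-lower-bound}: fix the same hypercube-indexed packing $\tilde{\Theta} \subset \mathcal{S}^{p-1}$ of cardinality $2^{p-1}$ and, for each $\beta \in \tilde{\Theta}$, form the intercept-model distribution $P_{(0,\beta)}$ by setting $\tau^0 = 0$, using $X \sim \mathcal{N}(0, I_p)$, and defining $\eta(X) = P_{(0,\beta)}(Y = 1 \mid X)$ exactly as in the no-intercept proof. The squared Hellinger and total variation bounds carry over without change, because both depend only on $X^{\top}\beta$ and the specification of $\eta$, and Assouad's lemma (Lemma \ref{lem:Assouad}) then yields the rate $\left(pC_n^2/n\right)^{2/3}$ on the Hamming metric, which translates to $\ell_2$ exactly as before.

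The only genuinely new verification is that each $P_{(0,\beta)}$ in this sub-family lies in the class over which the supremum is taken, i.e., satisfies Assumption \ref{ass:low_noise_binary} and Assumption \ref{ass:wedge_intercept_binary} with constants independent of $(n,p)$. The first is already established in Lemma \ref{low_noise_minimax}. For the second, I would apply Lemma \ref{lower bound} to the transformation $Y = P_{\beta} X$: since $X \sim \mathcal{N}(0, I_p)$ and $P_{\beta}$ is orthogonal, $Y \sim \mathcal{N}(0, I_p)$, and the bivariate density of $(Y_1, Y_2)$ is the standard 2-D Gaussian, bounded below by $F = (2\pi)^{-1}\exp(-2U^2/\zeta^2)$ on the disc of radius $2U/\zeta$, a constant depending only on $U$ and the preselected $\delta \in (0,2)$ defining $\zeta = \sqrt{1-\delta^2/4}$. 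Hence the wedge constant $a^-$ can be chosen uniformly in $p$, and the restricted wedge condition holds for all $\beta$ sufficiently close to $\beta^0$. Finally, the $(p/n)^2$ branch of the lower bound follows from the same reduction applied to the parallel construction with $C_n = 0$ sketched at the end of the proof of Theorem \ref{minimax-lower-bound}.

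The main obstacle is conceptual rather than technical: ensuring that \textbf{all} local alternatives in the Assouad packing simultaneously satisfy the localized wedge condition with a common constant. Once the orthogonality of $P_\beta$ and the rotational invariance of the standard Gaussian are invoked, the uniformity is automatic, and no new calculations beyond those in Theorem \ref{minimax-lower-bound} are required.
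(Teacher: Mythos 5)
Your proposal is correct and follows essentially the same route as the paper, which proves this result simply by observing that one may take $\tau = 0$ in the Assouad construction of Theorem \ref{minimax-lower-bound} and noting that the class of distributions is enlarged only by the introduction of the intercept. In fact you supply more detail than the paper does, by explicitly checking via Lemma \ref{lower bound} and the rotational invariance of $\mathcal{N}(0,I_p)$ that every local alternative satisfies Assumption \ref{ass:wedge_intercept_binary} with a wedge constant uniform over the packing.
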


\begin{theorem}
For the fast growth regime $p \gg n$, we have: 
$$\inf_{\hat{\gamma}} \sup_{\gamma \equiv \gamma(P)}\E_{\gamma} \left(\|\hat{\gamma} -\gamma^0\|^2_2\right) \ge \tilde{K}_{L}\left[\left(\frac{s_0\log{(p/s_0)}C^2_{n}}{n}\right)^{2/3} \vee \left(\frac{s_0\log{(p/s_0)}}{n}\right)^2\right]$$
for some constant $\tilde{K}_{L} > 0$ not depending on $(n,p,s_0)$. For the case $C_{n} = C$ fixed, the lower bound is of the order of $\left(\frac{s_0\log{(p/s_0)}}{n}\right)^{2/3}$. The supremum is taken over the same class of distributions as in Theorem \ref{minimax-lower-bound-intercept}. 
\end{theorem}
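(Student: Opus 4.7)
The plan is to adapt the proof of Theorem \ref{minimax-high-lower} by restricting the minimax construction to distributions whose true intercept is $\tau^0 = 0$. Since taking $\tau^0 = 0$ merely shrinks the class over which the supremum is taken, any lower bound on the resulting restricted minimax risk is also a lower bound on the full minimax risk. Concretely, I would reuse the Gilbert-Varshamov packing $\{\beta_J\}_{J=1}^M$ from the proof of Theorem \ref{minimax-high-lower} and augment each $\beta_J$ to $\gamma_J = (0, \beta_J)$, setting the covariate distribution to $\mathcal{N}(0, I_p)$ and the conditional law $P_{\gamma_J}(Y = 1 \mid X)$ to be the same piecewise linear/threshold function of $\beta_J^{\top} X$ used there. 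Every such conditional distribution can be realised as $Y = \s(\tau^0 + X^{\top} \beta_J + \epsilon)$ for an appropriate $\epsilon \mid X$ with $\med(\epsilon \mid X) = 0$, so these are legitimate intercept-model distributions.

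The key verification is that the constructed distributions lie in the class over which the supremum is taken. Assumption \ref{ass:low_noise_binary} is inherited verbatim from Lemma \ref{low_noise_minimax_high}; Assumption \ref{ass:sparsity_binary} holds since each $\beta_J$ is at most $(s_0{+}1)$-sparse by the Gilbert-Varshamov construction; and for the restricted wedge condition (Assumption \ref{ass:wedge_intercept_binary}), I would invoke Lemma \ref{lower bound}. For $X \sim \mathcal{N}(0, I_p)$, the two-dimensional marginal $(Y_1, Y_2) = P_{\beta_J} X$ arising from the orthogonal rotation in Lemma \ref{lower bound} is again standard bivariate normal, hence bounded below on any disk of radius $2U/\zeta$ by $(2\pi)^{-1} \exp(-2U^2/\zeta^2)$, a constant independent of $p$ and of the choice of $\beta_J$ designated as $\beta^0$. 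Thus Assumption \ref{ass:wedge_intercept_binary} holds with $a^-$ and the radius of validity uniform across the packing.

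With the distributions certified, the remainder of the argument follows the proof of Theorem \ref{minimax-high-lower} unchanged: the KL bound
\[
KL(P_{\gamma_I} \| P_{\gamma_J}) \le U_c\, \delta^3 / C_n^2
\]
(which only depends on $\beta_I, \beta_J$ through $\beta_J^{\top} X$, and hence is unaffected by adjoining the zero intercept) combines with Fano's inequality and the choice $\delta \asymp (s_0 \log(p/s_0)/n)^{1/3} C_n^{2/3}$ to yield the rate $(s_0 \log(p/s_0)\, C_n^2 / n)^{2/3}$. Because $\tau^0 = 0$ throughout the packing, $\|\gamma_I - \gamma_J\|_2 = \|\beta_I - \beta_J\|_2$ and $\E\|\hat{\gamma} - \gamma^0\|_2^2 \ge \E\|\hat{\beta} - \beta^0\|_2^2$ for any estimator $\hat{\gamma} = (\hat{\tau}, \hat{\beta})$, so the lower bound derived for $\beta$-estimation transfers without loss to $\gamma$-estimation. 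The second rate $(s_0 \log(p/s_0)/n)^2$ is obtained by taking $C_n = 0$ in the construction (i.e.\ a jump of $\eta$ across the Bayes hyperplane), exactly as in the corresponding step at the end of the proof of Theorem \ref{minimax-high-lower}.

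The only substantive bookkeeping concern, and the mild obstacle in the argument, is ensuring that the constants appearing in Assumption \ref{ass:wedge_intercept_binary} do not depend on $p$ or on which member of the packing is treated as $\beta^0$. This uniformity is essentially automatic because the joint law of any two orthogonal projections of $\mathcal{N}(0, I_p)$ depends on $p$ only through the rotation, which leaves the marginal density invariant; the pointwise lower bound from Lemma \ref{lower bound} therefore holds with a single constant across the whole packing, and no further work is needed.
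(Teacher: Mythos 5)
Your proposal is correct and follows essentially the same route as the paper, which disposes of this theorem via the remark that one can take $\tau=0$ in the minimax construction of Theorem \ref{minimax-high-lower}: the Gilbert--Varshamov packing, the KL bound, and the Fano step all carry over unchanged since the conditional law depends only on $\beta_J^{\top}X$, and $\|\gamma_I-\gamma_J\|_2=\|\beta_I-\beta_J\|_2$ when all intercepts vanish. Your explicit verification that the zero-intercept Gaussian construction satisfies Assumption \ref{ass:wedge_intercept_binary} uniformly over the packing (via Lemma \ref{lower bound} and rotation invariance of $\mathcal{N}(0,I_p)$) is a detail the paper leaves implicit, but it is exactly the check the paper's argument relies on.
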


\subsubsection{Which distributions satisfy Assumption \ref{ass:wedge_intercept_binary} ?} As stated in lemma \ref{lower bound} we need the joint density of $(Y_1, Y_2)$ to be lower bounded by some non negative constant to establish the lower bound.  Here we show that, under fairly general restrictions, any elliptically symmetric distribution and satisfies the assumption. 
\begin{lemma}
Suppose the distribution of $X$ belongs to a consistent family of elliptical distribution with mean 0 i.e. the density of $X$ has the form: $$f_X(x) = |\Sigma_p|^{-\frac{1}{2}}g_p(x^T\Sigma_p^{-1}x)$$ with $\Sigma$ being a full rank matrix. If $g_2$ (density generator of two dimensional marginal of $X$) is decreasing function on $\mathbb{R}^+$ with $g_2(x) > 0$ for all $x$ and there exists constants $0 <\lambda^- < \lambda^+ < \infty$ such that $$\lambda^- \le \lambda_{min}(\Sigma_p) \le \lambda_{max}(\Sigma_p) \le \lambda^+$$ for all  $p$ then $X$ satisfies the assumption of Lemma \ref{lower bound}. 
\end{lemma}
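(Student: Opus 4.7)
The plan is to exploit two structural features of elliptical families: that linear transformations preserve ellipticity (with the appropriate transformation of the dispersion matrix), and that consistent families have elliptical marginals whose density generator $g_2$ is fixed. These together will reduce the problem to a uniform-in-$p$, uniform-in-$\beta$ estimate on a bivariate density, after which the eigenvalue assumption closes things out cleanly.

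First I would invoke the standard fact that if $X \sim \mathcal{E}_p(0,\Sigma_p,g_p)$ and $P = P_\beta$ is the orthogonal matrix from Lemma~\ref{lower bound}, then $Y=PX$ is again elliptical with density $|P\Sigma_p P^T|^{-1/2}g_p(y^T(P\Sigma_p P^T)^{-1}y) = |\Sigma_p|^{-1/2}g_p(y^T(P\Sigma_p P^T)^{-1}y)$. By the consistency of the elliptical family (marginals belong to the same family, with the same two-dimensional generator $g_2$), integrating out $Y_3,\dots,Y_p$ yields the bivariate density
\[
f_{Y_1,Y_2}(y_1,y_2) \;=\; |\tilde{\Sigma}|^{-1/2}\,g_2\!\left((y_1,y_2)\,\tilde{\Sigma}^{-1}(y_1,y_2)^T\right),
\]
where $\tilde\Sigma$ is the leading $2\times 2$ block of $P\Sigma_p P^T$.

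Next I would control $\tilde\Sigma$ uniformly in $\beta$ and $p$. Writing $\tilde\Sigma = E^T (P\Sigma_p P^T) E$ where $E$ consists of the first two standard basis vectors, Cauchy's eigenvalue interlacing theorem together with the orthogonality of $P$ gives
\[
\lambda^- \;\le\; \lambda_{\min}(\Sigma_p) \;\le\; \lambda_{\min}(\tilde\Sigma) \;\le\; \lambda_{\max}(\tilde\Sigma) \;\le\; \lambda_{\max}(\Sigma_p) \;\le\; \lambda^+.
\]
Hence $|\tilde\Sigma|^{-1/2} \ge 1/\lambda^+$, and for any $(y_1,y_2)$ with $\|(y_1,y_2)\|_2 \le 2U/\zeta$,
\[
(y_1,y_2)\,\tilde\Sigma^{-1}(y_1,y_2)^T \;\le\; \frac{\|(y_1,y_2)\|_2^2}{\lambda_{\min}(\tilde\Sigma)} \;\le\; \frac{4U^2}{\zeta^2 \lambda^-}.
\]

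Finally, because $g_2$ is nonincreasing on $\mathbb{R}^+$ and strictly positive everywhere, the monotonicity yields
\[
f_{Y_1,Y_2}(y_1,y_2) \;\ge\; \frac{1}{\lambda^+}\, g_2\!\left(\frac{4U^2}{\zeta^2 \lambda^-}\right) \;=:\; F \;>\; 0
\]
for every such $(y_1,y_2)$. The constant $F$ depends only on $U$, $\zeta$, $\lambda^-$, $\lambda^+$ and on the fixed bivariate generator $g_2$, so it is independent of both $\beta$ and the ambient dimension $p$, verifying the hypothesis of Lemma~\ref{lower bound}. The only mildly subtle step is the consistency invocation, which is needed to ensure that the bivariate generator $g_2$ is itself independent of $p$; without this, the pointwise lower bound on $g_2$ could, in principle, degrade with the dimension, and the argument would break down.
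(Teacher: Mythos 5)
Your argument is correct and follows essentially the same route as the paper's proof: transform via $P$, reduce to the bivariate marginal with generator $g_2$, bound the leading $2\times 2$ block's eigenvalues via interlacing, and use monotonicity of $g_2$ to obtain the uniform lower bound $F = \lambda_+^{-1} g_2\bigl(4U^2/(\zeta^2\lambda^-)\bigr)$. Your version is slightly more careful in making the eigenvalue interlacing step and the role of consistency (dimension-free $g_2$) explicit, but there is no substantive difference.
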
  
\begin{proof}
The density of $Y = PX$ is $f_Y(y) = |\bar{\Sigma}|^{-\frac{1}{2}}g(y^T\bar{\Sigma}^{-1}y)$ where $\bar{\Sigma} = P\Sigma P^T$. Then density of $(Y_1, Y_2)$ is $f_{Y_1, Y_2}(y_1, y_2) = |\Sigma_1|^{-\frac{1}{2}}g_2((y_1, y_2)^T\Sigma_1^{-1}(y_1,y_2))$, where $\Sigma_1$ is the leading $2 \times 2$ block of $\bar{\Sigma}$. Now, if we confine ourselves on a ball of radius $2U/\zeta$ then: 
\allowdisplaybreaks
\begin{align*}
f_{Y_1, Y_2}(y_1, y_2) & = |\Sigma_1|^{-\frac{1}{2}}g_2((y_1, y_2)^T\Sigma_1^{-1}(y_1,y_2)) \\
& \ge \frac{1}{\lambda_{max}(\Sigma)}g_2\left(\frac{\|(y_1, y_2)\|^2}{\lambda_{min}(\Sigma)}\right) \\
& \ge \frac{1}{\lambda^+}g_2\left(\frac{4U^2}{\zeta^2\lambda^-}\right)
\end{align*}
Hence $F(U,\zeta) =  \frac{1}{\lambda^+}g_2\left(\frac{4U^2}{\zeta^2\lambda^-}\right)$ and assumption (A2:intercept) is satisfied. 
\end{proof}

\begin{lemma}
\label{log_concave_lb}
Suppose the elements of the random vector $X = (X_1, \dots X_p)$ are independent and each component has a log concave density symmetric around 0 and variance 1. Then, there exists constants $\epsilon_0, R > 0$ such that $f_{Y_1, Y_2} (y_1, y_2) \ge \epsilon_0$ on a circle of radius $R$. Hence, Assumption (A2:intercept) is satisfied for all $\zeta$ such that $2U/\zeta \le R$.  
\end{lemma}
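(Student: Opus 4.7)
The plan is to exploit log-concavity of the joint density of $(Y_1, Y_2)$ together with its symmetry and isotropy. First I would observe that the orthogonal matrix $P = P_\beta$ has its first two rows $v_1 := (\beta + \beta^0)/\|\beta + \beta^0\|_2$ and $v_2 := (\beta^0 - \beta)/\|\beta^0 - \beta\|_2$, and a direct calculation using $\|\beta\|_2 = \|\beta^0\|_2 = 1$ gives $\langle v_1, v_2 \rangle = 0$ and $\|v_j\|_2 = 1$. Since the components $X_i$ are independent with unit variance, $Y_j = v_j^\top X$ satisfies $\mathrm{Var}(Y_j) = 1$ and $\mathrm{Cov}(Y_1, Y_2) = v_1^\top v_2 = 0$, so $(Y_1, Y_2)$ has mean $0$ and covariance $I_2$, i.e., is isotropic.

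Second, since each $X_i$ has a symmetric log-concave density, the joint density of $X$ is log-concave and even on $\mathbb{R}^p$. Linear projections of log-concave densities are log-concave (a direct consequence of the Prekopa-Leindler inequality), so $f_{Y_1, Y_2}$ is log-concave on $\mathbb{R}^2$; evenness of each $X_i$ then immediately gives $f_{Y_1, Y_2}(-y) = f_{Y_1, Y_2}(y)$.

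Finally, I would invoke the standard universal lower bound for isotropic log-concave densities in fixed dimension (e.g.\ Lovász--Vempala): there exist absolute constants $c_0, R_0 > 0$, depending only on the ambient dimension (here $2$), such that every isotropic log-concave density $f$ on $\mathbb{R}^2$ with mean $0$ and covariance $I_2$ satisfies $f(y) \geq c_0$ whenever $\|y\|_2 \leq R_0$. Setting $\epsilon_0 := c_0$ and $R := R_0$ then yields the claimed pointwise lower bound on the disc of radius $R$; for any $\zeta$ with $2U/\zeta \leq R$, the hypothesis of Lemma~\ref{lower bound} is fulfilled, which delivers Assumption~\ref{ass:wedge_intercept_binary}. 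The main subtlety to address is uniformity in $\beta$: the law of $(Y_1, Y_2)$ depends on $\beta$ through $P_\beta$, yet the universal constants $c_0, R_0$ depend only on the ambient dimension and not on the particular orthonormal pair $(v_1, v_2)$, so uniformity over $\{\beta : \|\beta - \beta^0\|_2 \leq \delta\}$ comes for free. A reader wishing to avoid invoking the universal isotropic bound could instead use Chebyshev's inequality (exploiting the unit marginal variances) to show $\P(\|(Y_1,Y_2)\|_2 \leq 4) \geq 1/2$, and then combine this mass bound with log-concavity and symmetry to extract a pointwise lower bound on a smaller disc.
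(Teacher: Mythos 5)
Your proposal is correct, and it reaches the same conclusion by a genuinely different route. You first verify (correctly) that the two rows $(\beta+\beta^0)/\|\beta+\beta^0\|_2$ and $(\beta^0-\beta)/\|\beta^0-\beta\|_2$ of $P_\beta$ are orthonormal, so that $(Y_1,Y_2)$ is a mean-zero, identity-covariance, symmetric log-concave vector in $\R^2$ for \emph{every} admissible $\beta$, and you then invoke the universal pointwise lower bound for isotropic log-concave densities in fixed dimension (Lov\'asz--Vempala type): $f(y)\ge c_0$ on $\|y\|_2\le R_0$ with $c_0,R_0$ depending only on the ambient dimension. This is legitimate and gives uniformity in $\beta$ for free, exactly as you say. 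The paper instead builds this bound from scratch: it lower-bounds the density of every one-dimensional projection $Z_\phi=Y_1\cos\phi+Y_2\sin\phi$ at its mode $0$ via the variance--mode inequality $\mathrm{Var}(Z_\phi)f_{Z_\phi}^2(0)\ge 1/12$, upper-bounds the two-dimensional density by an absolute constant $b$ (Ball's bound for log-concave densities with $\E\|Y\|^2=2$), and then runs a concavity-along-rays argument on $g_\phi(x)=\log f_{Y_1,Y_2}(-x\sin\phi,x\cos\phi)$ to convert the integral lower bound $f_{Z_\phi}(0)\ge 1/\sqrt{12}$ into a pointwise lower bound $\epsilon_0=\Psi^{-1}(1/\sqrt{12}-1/\sqrt{13})$ on the disc of radius $R=1/(2b\sqrt{13})$. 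Your version is shorter and cleaner at the cost of a heavier citation; the paper's is self-contained modulo two standard one- and two-dimensional facts and yields explicit constants. Your closing alternative (Chebyshev for a mass bound, then log-concavity plus symmetry) is essentially a sketch of how one would re-derive the Lov\'asz--Vempala bound, and note that for a \emph{symmetric} log-concave density the step from a mass bound to $f(0)=\sup f\ge c$ is immediate since $f(0)\ge\sqrt{f(y)f(-y)}=f(y)$; the remaining step from $f(0)\ge c$ to a lower bound on a disc still requires the concavity-along-rays argument, so that route ultimately reproduces the paper's proof.
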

\begin{proof}
Denote the density of $X_i$ as $f_i$. From the strong unimodality property of log concave densities, each $f_i$ has mode at 0. Also we have 

\begin{align}
\label{coneqn1} \frac{1}{12} \le  Var(X_i)f_i^2(0) \le 1
\end{align}

\noindent
for all $i \in \{1,2, \dots, p\}$. [See equation (2.2)  of \cite{bobkov2015concentration}]. Hence $f_i(0) \ge 1/\sqrt{12}$ under variance = 1. Note that, as each component of $X$ has a symmetric strongly unimodal density, so does $a^TX$ for any $a \in \mathbb{R}^p$. Consider $Y = P_{\beta}X$ as defined before Lemma \ref{lower bound}. Let $Z_{\phi} = Y_1\cos(\phi) + Y_2\sin(\phi)$, then $Z_{\phi}$ is also strongly unimodal with mode at $0$ (Recall that density of linear combination of random variables with log concave density is also log concave and any symmetric log concave density has mode at 0). As marginals of log-concave density is log-concave, the density of $Y_1, Y_2$ is also log-concave i.e. $$f_{Y_1, Y_2}(y_1, y_2) = e^{g(y_1, y_2)} \ \forall \ y_1, y_2 \in \mathbb{R}$$ where $g$ is a log-concave function on $\mathbb{R}^2$ with mode at 0. Then by the Jacobian transformation: $$f_{Z_{\phi}}(0) = \int_{-\infty}^{\infty}e^{g(-x\sin(\phi), x\cos(\phi))} \ dx = \int_{-\infty}^{\infty} e^{g_{\phi}(x)} \ dx$$ where $g_{\phi}(x) = g(-x\sin(\phi), x\cos(\phi))$. Some properties of $g_{\phi}(x)$'s are immediate: 
\begin{enumerate}
\item $g_{\phi}$ is concave.
\item $g_{\phi}$ is symmetric around 0 as $g$ is symmetric around 0.  
\end{enumerate} 
As $(Y_1, Y_2)$ has a two dimensional log concave density in $\mathbb{R}^2$ and $\E(\|Y\|^2) = 2$, there exists an absolute constant $b$ such that $f_{Y_1,Y_2}(y_1,y_2) \le b \ \forall \ y_1, y_2 \in \mathbb{R}$. (e.g. see \cite{ball1988logarithmically}). Next, we show that, there exists a universal constant $\epsilon_0 > 0$ such that $e^{g(-x \sin(\phi), x\cos(\phi))} \ge \epsilon_0$ for all $|x| \le \frac{1}{2b\sqrt{13}}$, for all $\phi \in [0, 2\pi)$ which implies $f_{Y_1, Y_2}(y_1, y_2) \ge \epsilon_0$ on a circle $R = \frac{1}{2b\sqrt{13}}$. Fix $\phi \in [0, 2\pi)$. Denote $e^{g_{\phi}(1/2\sqrt{13}b)}$ by  $\epsilon$. Then, due to concavity, $g_{\phi}(x)$ lies below the line joining $(0, g_{\phi}(0))$ and $\left((1/2b\sqrt{13}), g_{\phi}(1/2b\sqrt{13})\right)$ i.e. $$g_{\phi}(x) \le g_{\phi}(0) - 2b\sqrt{13} \ x \ (g_{\phi}(0) - \log{\epsilon})$$ for $x > 1/(2b\sqrt{13})$. This implies: 
\begin{align*}
1/\sqrt{12} \le f_{Z_{\phi}(0)} & = \int_{-\infty}^{\infty} e^{g_{\phi}(x)} \ dx \\
& = 2 \int_{0}^{\infty} e^{g_{\phi}(x)} \ dx \\
& = 2 \left[ \int_{0}^{1/2b\sqrt{13}} e^{g_{\phi}(x)} \ dx + \int_{1/2b\sqrt{13}}^{\infty} e^{g_{\phi}(x)} \ dx \right] \\
& \le \frac{1}{\sqrt{13}} + 2e^{g_{\phi}(0)} \int_{1/2b\sqrt{13}}^{\infty} e^{- 2b\sqrt{13} \ x \ (g_{\phi}(0) - \log{\epsilon})} \ dx \\
& = \frac{1}{\sqrt{13}} + \frac{2e^{g_{\phi}(0)}}{2b\sqrt{13}(g_{\phi}(0) - \log{\epsilon})}e^{-(g_{\phi}(0) - \log{\epsilon})} \\
& =  \frac{1}{\sqrt{13}} + \frac{2\epsilon}{2b\sqrt{13}(g_{\phi}(0) - \log{\epsilon})} \\
& = \frac{1}{\sqrt{13}} + \frac{2\epsilon}{2b\sqrt{13}(1/4\pi - \log{\epsilon})} \,.\\
\end{align*}
The last equation follows from the lower bound on the mode of two dimensional log concave density (see Lemma 6 of \cite{ball1988logarithmically}). Hence $\frac{2\epsilon}{2b\sqrt{13}(1/4\pi - \log{\epsilon})} \ge (1/\sqrt{12} - 1/\sqrt{13})$. Define $\Psi(s) = \frac{2s}{2b\sqrt{13}(1/4\pi - \log{s})}$. As $\Psi(s)$ is strictly increasing on $(0,1)$ we conclude $\epsilon \ge  \Psi ^{-1} (1/\sqrt{12} - 1/\sqrt{13}) = \epsilon_0$. This immediately implies $e^{g_{\phi}(x)} \ge \epsilon_0$ for $|x| \le 1/2\sqrt{13}b$ from the fact that $g_{\phi}(x)$ has mode 0. Also the value of $\epsilon_0$ does not depend on $\phi$, which implies, $e^{g_{\phi}(x)} \ge \epsilon_0 \ \forall \ |x| \le 1/2b\sqrt{13} \ \forall \phi \in [0, 2\pi)$. This completes the proof with $R = 1/2b\sqrt{13}$. 
\end{proof}

\subsection{Proof of Lemma \ref{consistency}}
First we show that as $p/n \rightarrow 0$, $$\sup_{\gamma \in (-U, U) \times S^{p-1}} \|S_n(\gamma) - S(\gamma)\|_2 \overset{P} \to  0$$ i.e. the class of functions $\mathcal{G} = \mathcal{G}_p = \{g_{\gamma}: \R^p \times \{-1,1\} \rightarrow \{-1,1\}, g_{\gamma}(x,y) = y \s(\tau + x^T{\beta})\}$ is Glivenko-Cantelli class which is equivalent to showing (for details see \cite{pollard1981limit}):
\begin{enumerate}
\item There exists $G$, an envelope of $\mathcal{G}$ such that $P^*G \le \infty$.
\\
\item $\lim_{n \rightarrow \infty} \frac{E^*\left(\log{(N(\epsilon, \mathcal{G}_{m}, L_2(\P_n)))}\right)}{n} = 0$ for all $M < \infty, \epsilon > 0$, where $N(\epsilon, \mathcal{G}_{m}, L_2(\P_n))$ is the $\epsilon$ covering number of the set $\mathcal{G}_m = \{g_{\beta}\mathds{1}_{G \le M}: g_{\beta} \in \mathcal{G}\}$ with respect to $L_2(\P_n)$ norm.
\end{enumerate}
Clearly $G \equiv 1$ is an integrable envelope of $\mathcal{G}$. Now $\mathcal{G}$ is VC class of VC dimension $v = (p+1)$. Hence, we have:$$\sup_{Q} N(\epsilon, \mathcal{G}, L_2(Q)) \le Kv\left(\frac{4\sqrt{e}}{\epsilon}\right)^{2v}$$ for some universal constant $K$ and $0 \le \epsilon \le 1$. Using this, we have: $$ \frac{E^*\left(\log{(N(\epsilon, \mathcal{G}_{m}, L_2(\P_n)))}\right)}{n} \le \frac{\log{(kv)}}{n} + \frac{2v}{n}\log{\left(\frac{4\sqrt{e}}{\epsilon}\right)} \rightarrow 0$$ if $v/n \rightarrow 0 \iff p/n \rightarrow 0$ which completes the proof.

In the previous step we have established that $S_n(\gamma) \rightarrow S(\gamma)$ uniformly over $\gamma$. Now we need to prove $\hat{\gamma} = \argmax_{\gamma} S_n(\gamma)$ converges to $\gamma^0 = \argmax_{\gamma}S(\gamma)$. Towards that we need the following Lemma:

\begin{lemma}
\label{beta-distance}Given any $0 \le \epsilon_1 < \epsilon_2 \le 2$ and $\gamma_1$ such that $\|\gamma_1 - \gamma^0\|_2 = \epsilon_2$, we can find $\gamma_2$ with $\|\gamma_2 - \gamma^0\|_2 \le \epsilon_1$ such that $$S(\gamma^0) -S(\gamma_1) \ge S(\gamma^0) - S(\gamma_2)$$
\end{lemma}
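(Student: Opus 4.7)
The plan is to build a continuous path from $\gamma_1$ to $\gamma^0$ inside the parameter space along which the excess risk $S(\gamma^0) - S(\gamma)$ is monotonically non-increasing, and then to extract a point at the prescribed distance via the intermediate value theorem. In $\mathbb{R}^{p+1}$ I would define the linear interpolant $\alpha(t) = (1-t)(\tau_1, \beta_1) + t(\tau^0, \beta^0)$ and, whenever $c_t := \|(1-t)\beta_1 + t\beta^0\|_2 > 0$, set $\gamma(t) = (\tilde\tau(t), \tilde\beta(t)) := \alpha(t)/c_t$ so that $\tilde\beta(t) \in S^{p-1}$. Since the classification rule at $\alpha(t)$ depends only on the sign of $\alpha_0(t) + x^{\top}\alpha_{1:p}(t)$ and positive rescaling preserves sign, the induced decision region for $\gamma(t)$ agrees with that of $\alpha(t)$ for every $x$.

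The crux is monotonicity of the wedge
\[
W(t) = \{x : \s(\tau^0 + x^{\top}\beta^0) \ne \s(\alpha_0(t) + x^{\top}\alpha_{1:p}(t))\}.
\]
For $0 \le t < s \le 1$, take $x \in W(s)$ and, without loss of generality, assume $\tau^0 + x^{\top}\beta^0 > 0$. Then $(1-s)(\tau_1 + x^{\top}\beta_1) + s(\tau^0 + x^{\top}\beta^0) < 0$, which rearranges to $\tau_1 + x^{\top}\beta_1 < -\tfrac{s}{1-s}(\tau^0 + x^{\top}\beta^0)$. Because $\tau^0 + x^{\top}\beta^0 > 0$ and $\tfrac{s}{1-s} > \tfrac{t}{1-t}$, the right-hand side is strictly less than $-\tfrac{t}{1-t}(\tau^0 + x^{\top}\beta^0)$, yielding $(1-t)(\tau_1 + x^{\top}\beta_1) + t(\tau^0 + x^{\top}\beta^0) < 0$, so $x \in W(t)$; the other sign case is symmetric. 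Hence $W(s) \subseteq W(t)$ whenever $t \le s$. Adapting the excess-risk calculation from the proof of Proposition \ref{alpha-kappa}, with $\tau + x^{\top}\beta$ replacing $x^{\top}\beta$, gives $S(\gamma^0) - S(\gamma(t)) = 4\int_{W(t)} |\eta(x) - 1/2| \, dP_X(x)$, which by the wedge monotonicity is non-increasing in $t$.

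To finish, the map $t \mapsto \|\gamma(t) - \gamma^0\|_2$ is continuous, takes the value $\epsilon_2$ at $t = 0$ and the value $0$ at $t = 1$, so the intermediate value theorem supplies $t^* \in [0,1]$ with $\|\gamma(t^*) - \gamma^0\|_2 = \epsilon_1$; setting $\gamma_2 = \gamma(t^*)$ then yields $S(\gamma^0) - S(\gamma_2) \le S(\gamma^0) - S(\gamma_1)$, as required. The main technical obstacle is ensuring that the normalized path stays inside the admissible set $(-U, U) \times S^{p-1}$: the denominator $c_t$ vanishes only in the antipodal case $\beta_1 = -\beta^0$ at $t = 1/2$, which will be dodged by a vanishing perturbation of $\beta_1$, while $|\tilde\tau(t)| = |\alpha_0(t)|/c_t$ can in principle exceed $U$ because $c_t \le 1$. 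The latter will be handled either by working with the closed set $[-U,U] \times S^{p-1}$, in which the maximum score estimator automatically lies, or by splitting the path into a first leg that moves only $\tau$ with $\beta$ held fixed and a second leg that rotates $\beta$ with $\tau = \tau^0$; the wedge-shrinkage argument above adapts to each leg with only notational changes.
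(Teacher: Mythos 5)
Your argument is essentially the paper's own proof: the paper likewise forms the normalized convex combination $\gamma_2 = (\lambda\gamma_1 + (1-\lambda)\gamma^0)/\|\lambda\beta_1+(1-\lambda)\beta^0\|_2$, shows by the same sign comparison that the wedge $X_{\gamma_2}$ is contained in $X_{\gamma_1}$ (so the excess risk, being an integral of $|\eta-1/2|$ over the wedge, cannot increase), and then controls $\|\gamma_2-\gamma^0\|_2$ --- the only difference being that the paper fixes an explicit $\lambda\in(0,1/2)$ solving $\lambda(\epsilon_2+2)/(1-2\lambda)=\epsilon_1$ instead of running the intermediate value theorem along the whole path. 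Your closing worry about $|\tilde\tau(t)|$ possibly exceeding $U$ is a genuine subtlety that the paper silently ignores (its $\gamma_2$ can likewise have intercept outside $(-U,U)$ when $|\tau^0|$ is close to $U$), but note that your proposed two-leg fix does not go through ``with only notational changes'': translating the hyperplane with $\beta_1$ held fixed does not yield nested wedges, since one half of the symmetric difference grows while the other shrinks.
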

\noindent
We defer the proof of this lemma to the next subsection. Using the same proof as Proposition \ref{alpha-kappa} we have:
$$S(\gamma^0) - S(\gamma) \ge \left[c_1^2\frac{\left\|\gamma - \gamma^0\right\|^2_2}{C_n}\mathds{1}_{\left(d_{\Delta}(\gamma, \gamma^0) \le 2t^*C_n\right)} + 2t^*c_1\left\|\gamma - \gamma^0\right\|_2\mathds{1}_{\left(d_{\Delta}(\gamma, \gamma^0) > 2t^*C_n\right)}\right]$$
which is now true for $\|\gamma - \gamma^0\|_2 \le \delta$ under the assumptions of Theorem \ref{consistency}. Suppose $0 \le \epsilon < \delta$, then using Lemma \ref{beta-distance} we have: 
\begin{align*}
\inf_{\gamma: \|\gamma - \gamma^0\|_2 > \epsilon}S(\gamma^0)-S(\gamma) 
& = \inf_{\epsilon < \|\gamma - \gamma^0\| \le \delta} S(\gamma^0) - S(\gamma) \\
& \ge c_1^2 \frac{\epsilon^2}{C_n} \wedge 2t^*c_1 \epsilon
\end{align*}
\allowdisplaybreaks
\begin{align*}
\P\left(\|\hat \gamma - \gamma^0\|_2 > \epsilon\right) & = \P\left(\sup_{\|\gamma - \gamma^0\|_2 > \epsilon}\left(S_n(\gamma) - S_n(\gamma^0)\right) > 0\right) \\
& =  \P\left(\sup_{\|\gamma - \gamma^0\|_2 > \epsilon}\left((S_n-S)(\gamma) -(S_n -S)(\gamma^0) + S(\gamma - \gamma^0) \right) > 0\right) \\
& \le \P\left(\sup_{\|\gamma - \gamma^0\|_2 > \epsilon}\left((S_n-S)(\gamma) -(S_n -S)(\gamma^0)\right) >  \inf_{\|\gamma - \gamma^0\|_2 > \epsilon} \left(S(\gamma^0) - S(\gamma) \right)\right) \\
& \le \P\left(\sup_{\|\gamma - \gamma^0\|_2 > \epsilon}\left((S_n-S)(\gamma) -(S_n -S)(\gamma^0)\right) >  \inf_{\epsilon < \|\gamma - \gamma^0\| \le \delta} \left(S(\gamma^0) - S(\gamma)\right)\right) \\
& \le \P\left(\sup_{\|\gamma - \gamma^0\|_2 > \epsilon}\left((S_n-S)(\gamma) -(S_n -S)(\gamma^0)\right) >  c_1^2 \frac{\epsilon^2}{C_n} \wedge 2t^*c_1 \epsilon\right) \\
& \rightarrow 0 \hspace*{0.1in}[\because \mathcal{G} \,\, \text{is a GC Class}]
\end{align*}
which completes the proof for $p/n$ going to 0.

The same proof works for $p \gg n$ under our assumption $(s_0\log{p})/n \rightarrow 0$, because, what is really needed in the above proof is the condition $V/n \rightarrow 0$ where $V$ is the VC dimension of the set of classifiers under consideration. When $p \gg n$, $V = O(s_0\log{p})$ under the sparsity assumption, and therefore by our assumption $V/n \rightarrow 0$ in this case as well. $\Box$

\subsection{Proof of Lemma \ref{beta-distance}} Under the assumption that $\med(\epsilon|X) = 0$ in our model, we have for any $\beta$: $$S(\gamma) - S(\gamma^0) = 2\int_{X_{\gamma}}|\E(Y|X)| \ dF_X$$ where $X_{\gamma} = \{x: \s(\tilde{x}^T\gamma) \neq \s(\tilde{x}^T\gamma^0)\}$ with $\tilde{x} = (1, x^T)^T$. Now a fix $\gamma_1$ with $\|\gamma_1 - \gamma^0\|_2 = \epsilon_1$. Define $\gamma_2 = \frac{\lambda \gamma_1 + (1 - \lambda)\gamma^0}{\|\lambda \beta_1 + (1 - \lambda)\beta^0\|_2}$ for some $\lambda \in (0,1/2)$ which will be chosen later. Suppose $x \in X_{\gamma_2}$: 
\\\\
{\bf Case 1: } Suppose $x^T\gamma_2 > 0 > x^T\gamma^0$. Then $$\frac{\lambda}{\|\lambda \beta_1 + (1 - \lambda)\beta^0\|_2}x^T\gamma_1 = \gamma_2^Tx - \frac{1-\lambda}{\|\lambda \beta_1 + (1 - \lambda)\beta^0\|_2}x^T\gamma^0 > 0 \iff x^T\gamma_1 > 0$$
\noindent
{\bf Case 2: } Suppose $x^T\gamma_2 < 0 < x^T\gamma^0$. Then $$\frac{\lambda}{\|\lambda \beta_1 + (1 - \lambda)\beta^0\|_2}x^T\gamma_1 = \gamma_2^Tx - \frac{1-\lambda}{\|\lambda \beta_1 + (1 - \lambda)\beta^0\|_2}x^T\gamma^0 < 0 \iff x^T\gamma_1 < 0$$
\noindent
Hence $X_{\gamma_2} \subseteq X_{\gamma_1}$. Now $\|\lambda \beta_1 + (1 - \lambda)\beta^0\|_2 \ge (1 -2\lambda)$ by triangle inequality and using the fact that $\|\beta_1\| = \|\beta^0\| = 1$. Therefore, 
\allowdisplaybreaks
\begin{align*}
\|\gamma_2 - \gamma^0\|_2 & = \left\|\frac{\lambda \gamma_1 + (1 - \lambda)\gamma^0}{\|\lambda \beta_1 + (1 - \lambda)\beta^0\|_2} - \gamma^0\right\|_2 \\
& = \left\|\frac{\lambda (\gamma_1-\gamma^0) + (1 - \|\lambda \beta_1 + (1 - \lambda)\beta^0\|_2)\beta^0}{\|\lambda \beta_1 + (1 - \lambda)\gamma^0\|_2}\right\|_2 \\
& \le \frac{\lambda (\epsilon_1+2)}{1-2\lambda}
\end{align*}
To conclude the proof we choose $\lambda$ such that $\lambda (\epsilon_1+2)/ (1 - 2\lambda) = \epsilon_2$ i.e. $\lambda = \epsilon_2/(\epsilon_1 + 2 + 2\epsilon_2)$.

\subsection{Proof of Lemma \ref{lower bound}} 
From the transformation $Y = PX$, we can write $a_1Y_1 + a_2Y_2 = X^T\beta^0$ and $X^T\beta = a_1Y_1 - a_2Y_2$ where $a_1 = \frac{1}{2}\|\beta + \beta^0\|_2, a_2 = \frac{1}{2}\|\beta - \beta^0\|_2$. We divide the proof into three cases: 
\\\\
\textbf{Case 1:} Suppose $\tau \neq \tau^0, \beta \neq \beta^0$. The probability of the wedge shaped region can be written as: $$\P(a_1Y_1 + a_2Y_ 2 \ge -\tau^0, a_1Y_1 - a_2 Y_2 \le - \tau) + \P(a_1Y_1 + a_2Y_2 \le - \tau^0, a_1Y_1 - a_2Y_2  \ge - \tau)$$ which is the probability of the region between the straight lines: $a_1Y_1 + a_2Y_2 + \tau^0 = 0$ and $a_1Y_1 - a_2Y_2 + \tau = 0$. The intersection of these two lines is $I = (-(\tau + \tau^0)/2a_1, (\tau - \tau^0)/2a_2)$, the line $a_1Y_1 + a_2Y_2 + \tau^0 = 0$ meets the $X$-axis at $J = (-\tau_0/a_1, 0)$ and the line $a_1Y_1 - a_2Y_2 + \tau = 0$ meets the $X$-axis at $K = (-\tau/a_1, 0)$. From our assumptions $\|\beta - \beta^0\|_2 \le \delta $ we have $a_1 = \frac{1}{2}\|\beta + \beta^0\|_2 \ge \sqrt{1 - \delta^2/4} = \zeta$ (say). Hence, $|\tau|/a_1 \le U/\zeta$ for all $\tau$, indicating that the intersection points with the $X$- axis (denoted by J,K) lie within a circle of radius $2U/\zeta$ around origin. 
\\\\
{\bf Case 1.1: }Suppose the point I is inside the circle of radius $2U/\zeta$. The points $J,K$ are inside by definition. 
\begin{figure}[H]
\centering
\includegraphics[scale = 0.4]{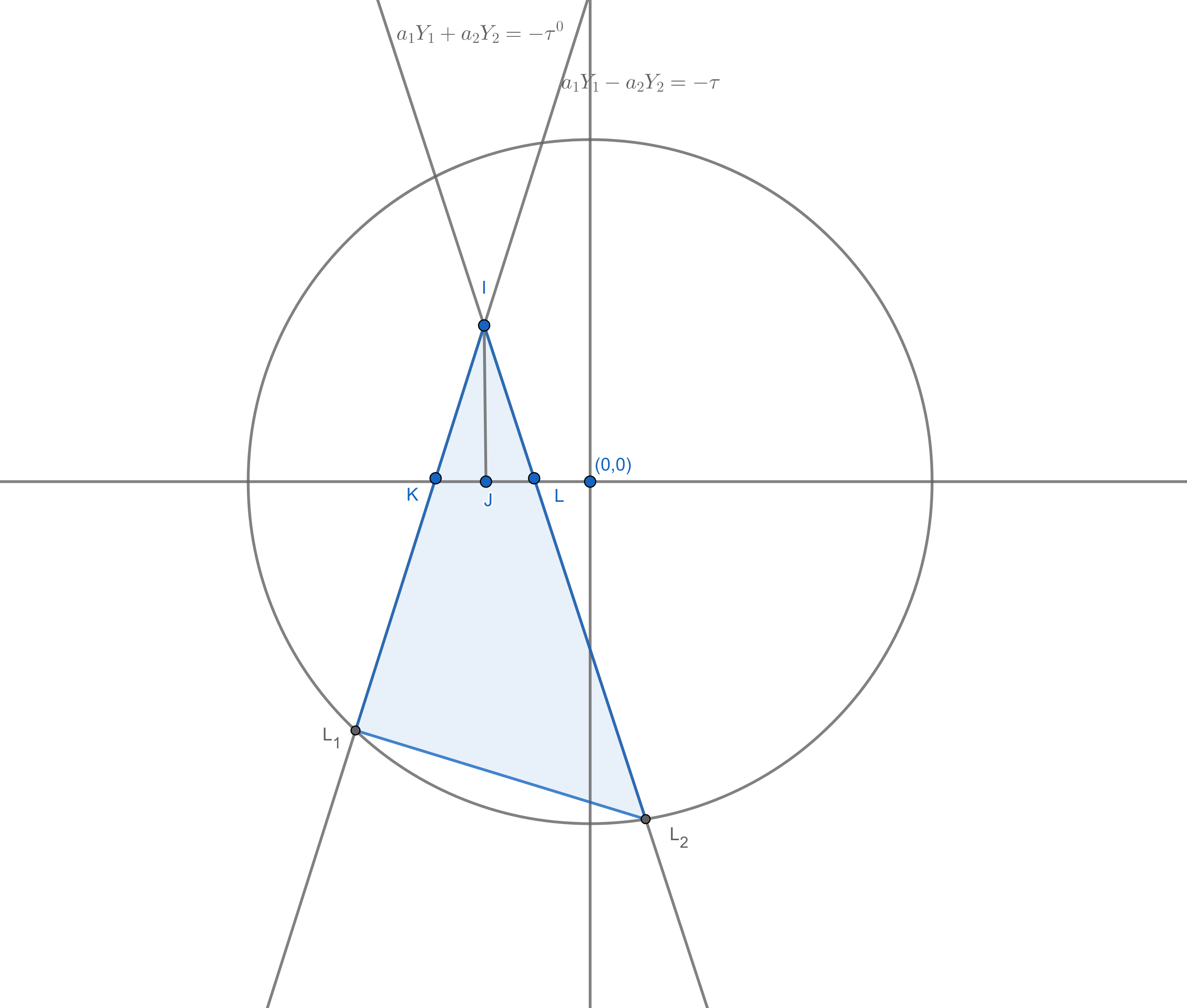}
\end{figure}
Denote $L$ to be the midpoint of $KJ$. If we denote the angle $\angle KIJ $ to be $\theta$, then $\theta = 2(\tan{(-a_1/a_2)}) + \pi$ (which directly follows from the slope of the lines and from the observation that $\Delta IKJ$ is isoceles) and $\tan(\theta/2) = KL/LI$. The length of the side $LI \le 4U/\zeta$ (diameter of the circle) which implies:  
\begin{align}
\label{intlb1}\tan{(\theta/2)} \ge \frac{KL}{(4U/\zeta)} =\zeta \frac{|\tau - \tau^0|}{8U|a_1|} \ge \zeta\frac{|\tau - \tau^0|}{8U} 
\end{align}
as $|a_1| \le 1$. On the other hand we have following upper bound on $\tan{(\theta /2 )}$: 
\allowdisplaybreaks
\begin{align}
\tan{(\theta/2)} &= \tan{(\tan^{-1}(-a_1/a_2) + \pi/2)} \notag\\
& = \cot{(\tan^{-1}(a_1/a_2))} \notag\\
& = \frac{\cos{(\tan^{-1}(a_1/a_2))}}{\sin{(\tan^{-1}(a_1/a_2))}} \notag\\
\label{intlb3}& = \frac{\|\beta - \beta^0\|}{\|\beta + \beta^0\|_2} \\
\label{intlb2} & \le \frac{1}{2\zeta}\|\beta - \beta^0\|_2
\end{align}
Combining \ref{intlb1} and \ref{intlb2} we have $\|\beta - \beta^0\|_2 \ge \frac{\zeta^2}{4U}|\tau - \tau^0|$. Define $L_1$ to be the point where extended $IK$ meets the circle and $L_2$ to be the point where extended $IJ$ meets the circle. ($L_2$ may be equal to $J$). The triangle $\Delta IL_1L_2$ is inside the circle and $$\text{Area} (\Delta IL_1L_2) = \frac{1}{2}IL_1 \times IL_2 \times \sin{\theta}$$ Now $IL_1, IL_2 \ge U/\zeta$ as the maximum possible distance of $K,J$ from the origin is $U/\zeta$ and $I$ is on the opposite side of $L_1, L_2$ with respect to the $X$-axis. Hence, $\text{Area}(\Delta IL_1L_2) \ge \frac{U^2}{2\zeta^2}\sin{\theta}$. Next, 
\begin{align}
\label{intlb4}\sin{\theta} = 2\sin{(\theta/2)}\cos{(\theta/2)} = 2 \times \frac{1}{2}\|\beta + \beta^0\| \times \frac{1}{2}\|\beta - \beta\|_2 \ge \zeta\|\beta - \beta^0\|
\end{align}
Recall that from $\ref{intlb3}$ it is easy to see $\sin{(\theta/2)} = \frac{1}{2}\|\beta - \beta\|_2, \cos{(\theta/2)} = \frac{1}{2}\|\beta + \beta^0\|_2$. Hence, we have $\text{Area}(\Delta IL_1L_2) \ge \frac{U^2}{2\zeta} \|\beta - \beta^0\|$. which implies that :
\allowdisplaybreaks
\begin{align}
& P(\s(\tau + \beta^TX) \neq \s(\tau^0 + X^T\beta^0)) \notag \\
& \ge P(\s(\tau + \beta^TX) \neq \s(\tau^0 + X^T\beta^0) \cap \text{Circle}) \notag \\
& \ge F(U,\zeta)\frac{U^2}{2\zeta}\|\beta - \beta^0\|_2 \notag \\
& \ge F(U,\zeta)\frac{U^2}{2\zeta}\left[\frac{1}{2}\|\beta - \beta^0\|_2+ \frac{\zeta^2}{8U}|\tau - \tau^0| \right] \hspace{0.2in} [\because \|\beta - \beta^0\|_2 \ge \frac{\zeta^2}{4U}|\tau - \tau^0|] \notag \\
& \ge F(U,\zeta)\frac{U^2}{2\zeta}\left(\frac{1}{2} \wedge \frac{\zeta^2}{8U}\right)\left[\|\beta - \beta^0\|_2 + |\tau - \tau^0|\right] \notag \\
\label{inteqn1}
& \ge a_1^- \|\gamma - \gamma^0\|_2 
\end{align}
\\\\
{\bf Case 1.2: }Suppose the intersection point $I$ is outside of the circle. 
\begin{figure}[H]
\centering
\includegraphics[scale = 0.4]{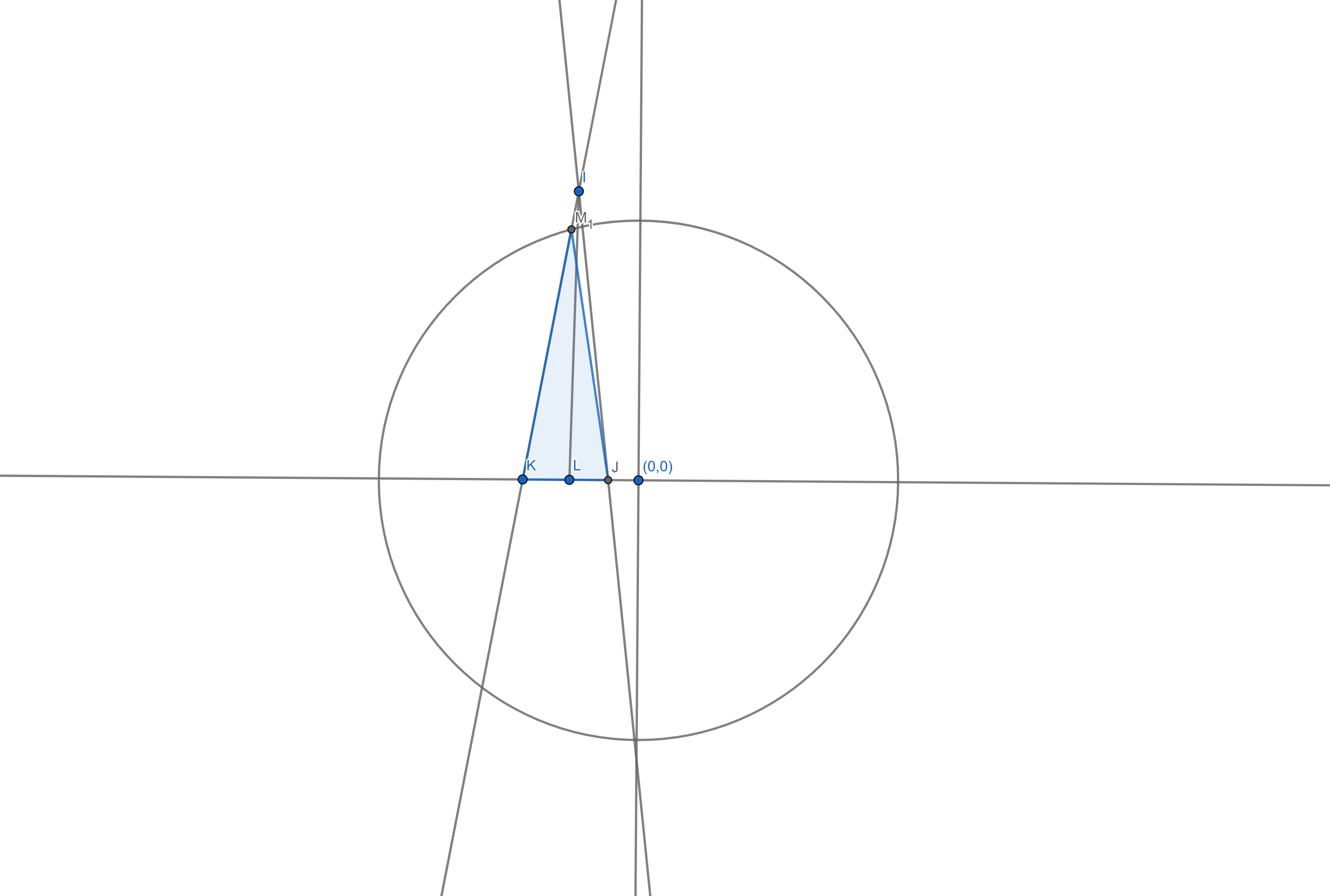}
\end{figure}
\noindent
Here, the length of $LI$ is $\ge \sqrt{3}\frac{U}{\zeta}$ as $I$ is outside the circle and the maximum possible distance of $L$ from the origin is $U/\zeta$. Using this, we have: $$\tan{(\theta/2)} \le \frac{KL}{(\sqrt{3}U/\zeta)} =\zeta \frac{|\tau - \tau^0|}{2\sqrt{3}U|a_1|} \le \frac{|\tau - \tau^0|}{2\sqrt{3}U}$$ Also, from equation \ref{intlb3}, we obtain $\tan{(\theta/2)} \ge (1/2)\|\beta - \beta^0\|$. Combining these bounds, we have, $\|\beta - \beta^0\|_2 \le \frac{1}{\sqrt{3}U}|\tau - \tau^0|$. Let the line $a_1Y_1 - a_2Y_2 + \tau = 0$ cuts the circle at $M_1, M_2$. Consider the triangle $\Delta M_1KJ$. Then the area of this triangle is: $$\text{Area} (\Delta MKJ)= \frac{1}{2} M_1K \times KJ \times \sin{\phi}$$ where $\phi = \angle M_1KJ$. By the same logic as before, $M_1K \ge \frac{U}{\zeta}$, $KJ = |\tau - \tau^0|/a_1$ and $\sin{\phi} = \sin{(\tan^{-1}(a_1/a_2))} = a_1 $. Hence, area of $\Delta M_1KJ \ge \frac{U}{2\zeta}|\tau - \tau^0|$. Using this, we have:
\allowdisplaybreaks
\begin{align}
& P(\s(\tau + \beta^TX) \neq \s(\tau^0 + X^T\beta^0)) \notag\\
& \ge P(\s(\tau + \beta^TX) \neq \s(\tau^0 + X^T\beta^0) \cap \text{Circle}) \notag\\
& \ge F(U,\zeta)\frac{U}{2\zeta}|\tau - \tau^0| \notag \\
& \ge F(U,\zeta)\frac{U}{2\zeta}\left[\frac{1}{2}|\tau - \tau^0|+ \frac{\sqrt{3}U}{2} \|\beta - \beta^0\|_2\right] \notag \\
& \ge F(U,\zeta)\frac{U}{2\zeta}\left(\frac{1}{2} \wedge  \frac{\sqrt{3}U}{2}\right)\left[\|\beta - \beta^0\|_2 + |\tau - \tau^0|\right] \notag \\
\label {inteqn2}
& \ge a_2^- \|\gamma - \gamma^0\|_2 
\end{align}
\\\\
{\bf Case 2: }Suppose $\tau = \tau^0$ and $\beta \neq \beta^0$. Then the lines $a_1Y_1 + a_2Y_2 = -\tau^0$ and $a_1Y_1 - a_2Y_2 = - \tau (\equiv \tau^0)$ meet on the $X$-axis, i.e. $I = K=J = (-\tau^0/a_1)$. 
\begin{figure}[H]
\centering
\includegraphics[scale = 0.3]{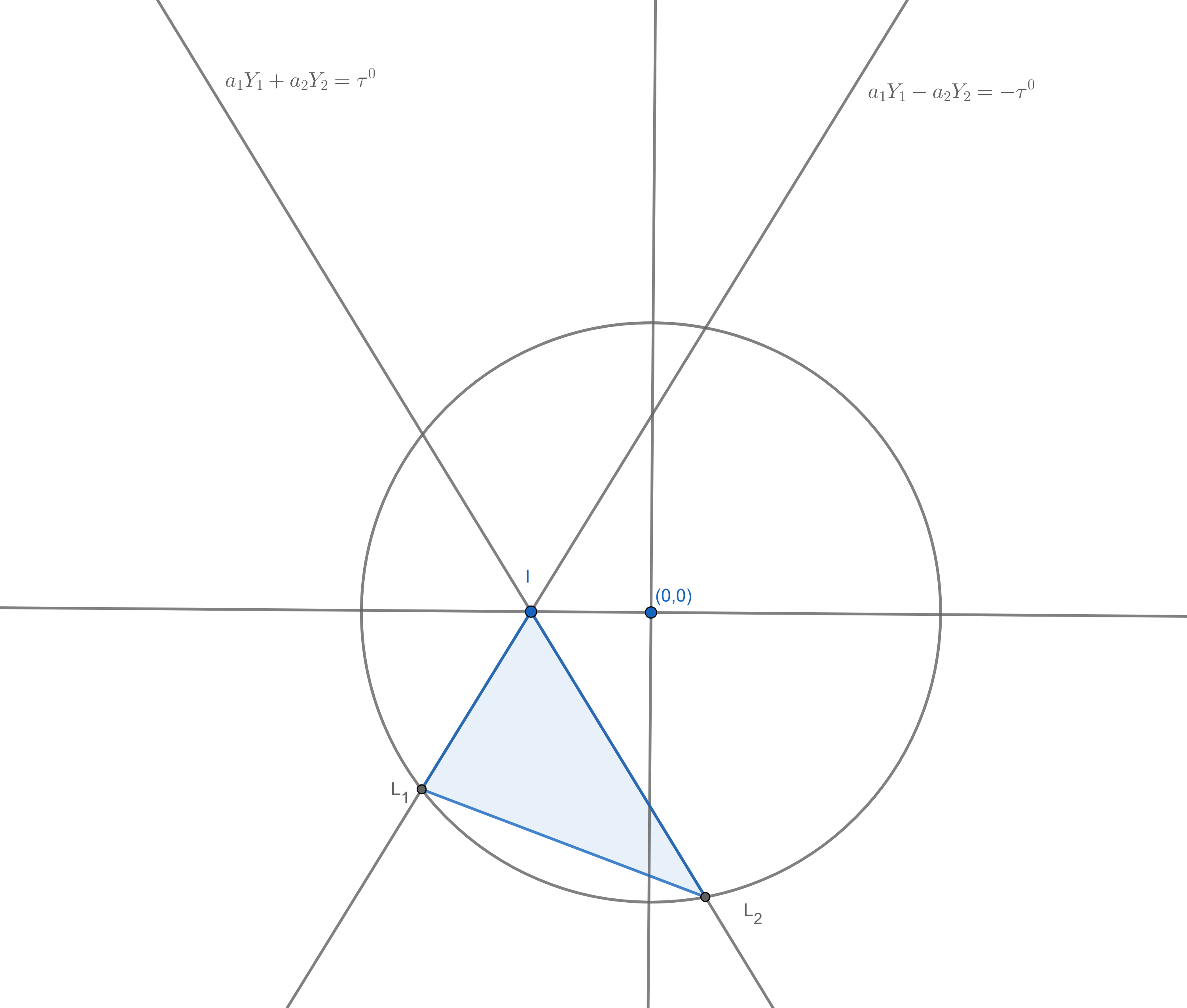}
\end{figure}
\noindent
Consider the triangle $\Delta IL_1L_2$ where $L_1$ and $L_2$ are the intersection points of the lines with the circle. Now the maximum possible distance of $I$ from the origin is $U/\zeta$ which implies $IL_1, IL_2 \ge U/\zeta$. From \ref{intlb4} we have $\sin \angle L_1 I L_2 \ge \zeta \|\beta - \beta^0\|_2$. Combining these, we get: 
\allowdisplaybreaks
\begin{align}
\label{inteqn3}
P(\s(\tau^0 + X^T\beta) \neq \s(\tau^0 + X^T\beta^0)) \ge a_3^-\|\beta - \beta^0\|_2 = a_3^- \|\gamma - \gamma^0\|_2
\end{align}
\\\\
{\bf Case 3: }Finally suppose $\beta = \beta^0$ and $\tau \neq \tau^0$. \begin{figure}[H]
\centering
\includegraphics[scale = 0.4]{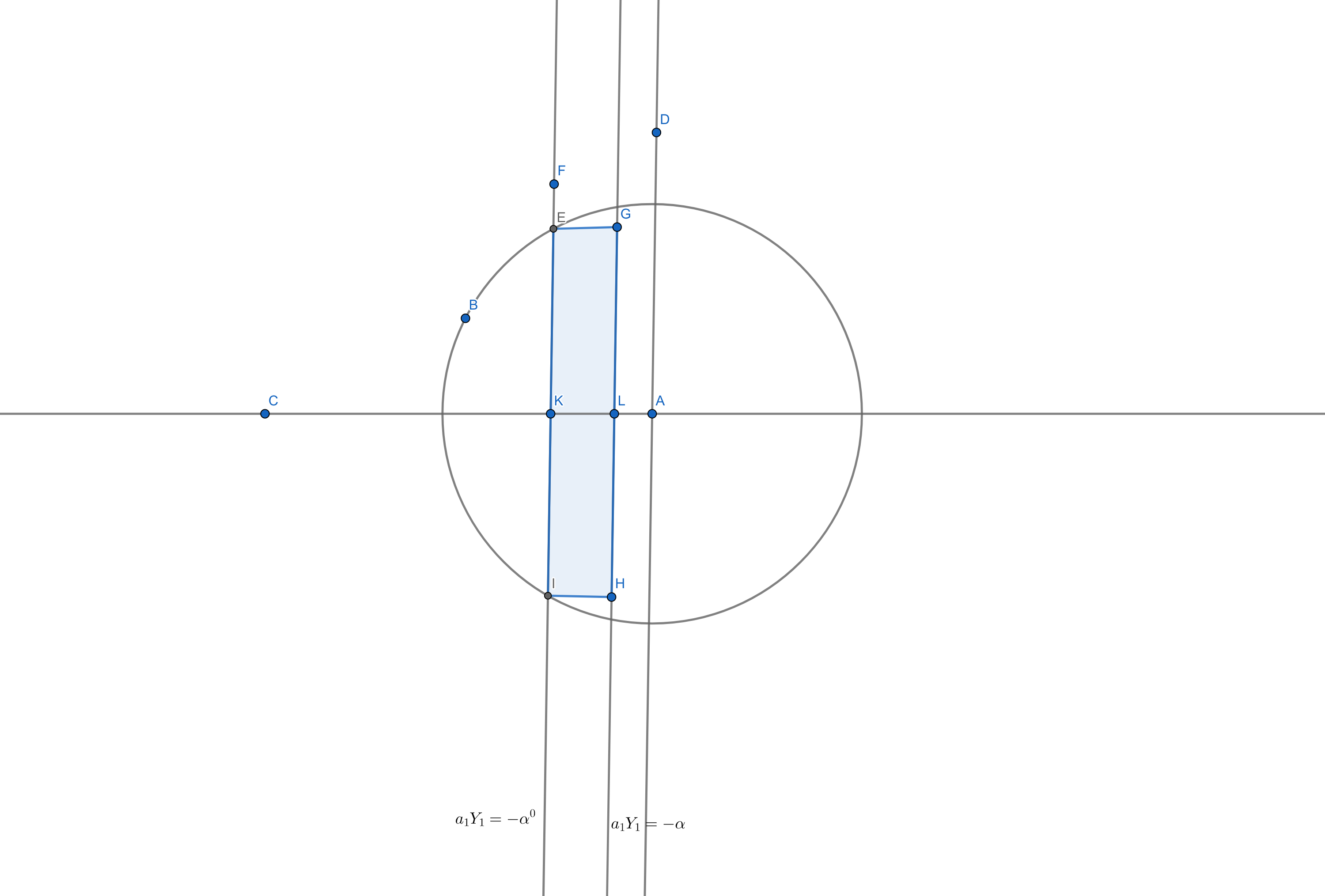}
\end{figure}
\noindent
Consider the rectangle $\Box EIHG$. Here $EG = IH = KL = |\tau - \tau^0|/a_1 = |\tau - \tau^0|$. Also $EI = GH = 2EK \ge \frac{\sqrt{3}U}{\zeta}$. Hence $$\Box EIHG = EG \times EK \ge \frac{\sqrt{3}U}{\zeta}|\tau - \tau^0|$$ which establishes: 
\allowdisplaybreaks
\begin{align}
\label{inteqn4}
P(\s(\tau + X^T\beta^0) \neq \s(\tau^0 + X^T\beta^0)) \ge a_4^-|\tau - \tau^0| = a_4^- \|\gamma - \gamma^0\|_2
\end{align}  
Combining equations \ref{inteqn1}, \ref{inteqn2}, \ref{inteqn3} and \ref{inteqn4} we conclude that Assumption (A2:upper) is valid for this intercept model with $a^- = a_1^- \wedge a_2^- \wedge a_3^- \wedge a_4^-$.

\section{Another version of rate theorem}
\label{sec:new_Talagarand}
In this section we present a version of Theorem 3.2.5 of \cite{vdvw96} which provides the rate of convergence of a generic $M$-estimator along with an 
exponential tail bound, under appropriate conditions.  This theorem can be applied instead of Theorem \ref{massart} to establish rate of convergence along with finite sample concentration bound.

\begin{theorem}
Suppose $\mathbb{M}_n$ be a stochastic processes indexed by a set $\Theta$ and $M: \Theta \rightarrow \mathbb{R}$ be a deterministic process which takes the form: $M(\theta) = Pf_{\theta}$ and $\M_n = \P_nf_{\theta}$. Define $\mathcal{F} = \{f_{\theta}: \theta \in \Theta\}$. Assume for every $\theta$ in a neighborhood of $\theta_0$:  $$ M(\theta) - M(\theta_0) \gtrsim d^{\gamma}(\theta, \theta_0)$$Suppose that for every $n$ and for sufficiently small $\delta$, the centered process $\mathbb{M}_n - M$ satisfies: $$\E^*\left(\sup_{d(\theta, \theta_0) \le \delta} \sqrt{n}\left|(\mathbb{M}_n-M)(\theta) - (\mathbb{M}_n-M)(\theta_0)\right|\right) \lesssim \phi_n(\delta)$$ for functions $\phi_n$ such that $\delta \rightarrow \phi_n(\delta)/\delta^{\alpha}$ is decreasing for some $0 < \alpha < \gamma$. Let $\{r_n\}$ satisfies $$r_n^{\gamma} \phi_n\left(\frac{1}{r_n}\right) = \sqrt{n}$$ for every $n$. If the sequence $\hat{\theta}_n$ takes value in $\Theta$ and satisfies $\mathbb{M}_n(\hat{\theta}_n) \le \mathbb{M}_n(\theta_0) - O_p(r_n^{-2})$ and $d(\hat{\theta}_n ,\theta_0)$ converges to $0$ in outer probability, then $r_n d(\hat{\theta}_n, \theta_0) = O_p^*(1)$. If all the above conditions are valid for all $\delta$ and $\theta$, then we don't need consistency and we can obtain the following finite sample concentration bound: $$\P(r_n d(\hat \theta, \theta_0) > t)  \le Ct^{\alpha - \gamma} \ \forall \ t > 1 \,.$$ 

In addition, assume $\|f_{\theta}\|_{\infty} \le U$ (w.l.o.g. take $U = 1/4$ for simplicity of notation) for all $\theta \in \Theta$ and the existence of $0 < \beta < 2\gamma$ such that: $$Var(f_{\theta} - f_{\theta_0}) \le d^{\beta}(\theta, \theta_0) \ \forall \ \theta \in \Theta \,.$$ 
Then, the following exponential concentrations obtain, for all $t > (1/2)2^{-\frac{\gamma + 1}{\gamma - \alpha}} \vee 1/2$: 
\begin{enumerate}
\item If $\gamma <  \beta < 2\gamma$ and $\liminf_{n \rightarrow \infty} nr_n^{-\gamma} > 0$, then $$\P(r_n d(\hat \theta, \theta_0) > t)  \le C\exp{(-ct^{2\gamma - \beta})} \,.$$

\item If $0 < \beta \le \gamma$ and $\liminf_{n \rightarrow 0} nr_n^{\beta-2\gamma} > 0$, then $$\P(r_n d(\hat \theta, \theta_0) > t)  \le C\exp{(-ct^{\gamma})} \,.$$ 
\end{enumerate}
Here the constants $C,c$ may be different in Case 1 and Case 2, but they don't depend on $n$. 

\end{theorem}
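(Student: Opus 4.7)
The plan is to carry out a peeling (shelling) argument identical in spirit to Theorem 3.2.5 of van der Vaart and Wellner, with Markov's inequality replaced by the Bousquet--Talagrand concentration bound (Theorem \ref{thm:bousquet_talagrand}) in the final stage to upgrade polynomial tails to exponential ones. Fix $t > 0$, write $N = \lfloor \log_2 t \rfloor$ and decompose the parameter space around $\theta_0$ into dyadic shells $S_j = \{\theta \in \Theta : 2^{j-1}/r_n < d(\theta, \theta_0) \le 2^j/r_n\}$. On the event $\{\hat\theta_n \in S_j\}$, the curvature assumption gives $M(\hat\theta_n) - M(\theta_0) \gtrsim (2^{j-1}/r_n)^\gamma$, which combined with the near-optimality $\mathbb{M}_n(\hat\theta_n) \le \mathbb{M}_n(\theta_0) - O_P(r_n^{-2})$ forces
$$\sup_{\theta \in S_j}\bigl|(\mathbb{M}_n - M)(\theta) - (\mathbb{M}_n - M)(\theta_0)\bigr| \gtrsim 2^{j\gamma}/r_n^\gamma$$
(absorbing the $O_P(r_n^{-2})$ term for $j$ above a fixed threshold since $\gamma \ge 2$ is not required but the remainder is of strictly smaller order than the first shell). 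The modulus assumption and the non-increasing property of $\phi_n(\delta)/\delta^\alpha$, combined with the defining relation $r_n^\gamma \phi_n(1/r_n) = \sqrt n$, yield
$$E^*\!\!\sup_{d(\theta,\theta_0)\le 2^j/r_n}\!\!\sqrt n\bigl|(\mathbb{M}_n - M)(\theta) - (\mathbb{M}_n - M)(\theta_0)\bigr| \le \phi_n(2^j/r_n) \le 2^{j\alpha}\sqrt n/r_n^\gamma.$$

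For the basic rate and polynomial tail, Markov's inequality applied shell by shell gives $P(\hat\theta_n \in S_j) \lesssim 2^{j\alpha}/2^{j\gamma} = 2^{j(\alpha-\gamma)}$, and summing the geometric series over $j \ge N$ produces the polynomial tail $Ct^{\alpha-\gamma}$; combined with consistency, which restricts the sum to shells with $\delta_j \to 0$, this yields $r_n d(\hat\theta_n, \theta_0) = O_P^*(1)$. For the exponential refinement, I apply Theorem \ref{thm:bousquet_talagrand} to the class $\mathcal{F}_j = \{f_\theta - f_{\theta_0} : d(\theta, \theta_0) \le 2^j/r_n\}$, which is uniformly bounded by $\mathsf b = 2U$ and has variance $v \le (2^j/r_n)^\beta$. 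Since $E[Z_j]$ has already been shown to be of strictly smaller order than the threshold $2^{j\gamma}/r_n^\gamma$ when $j \ge N$ and $\alpha < \gamma$, the Bousquet bound reduces to
$$P(\hat\theta_n \in S_j) \le \exp\!\Bigl(-c\min\bigl\{n(2^j/r_n)^{2\gamma-\beta},\ n(2^j/r_n)^\gamma\bigr\}\Bigr).$$

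The case analysis now emerges from which term controls the minimum and which $\liminf$ condition keeps the exponent coercive. In Case 1 ($\gamma < \beta < 2\gamma$) the exponent $2\gamma - \beta$ is smaller than $\gamma$, so the variance--driven term $n(2^j/r_n)^{2\gamma-\beta}$ is the binding constraint; rewriting as $n r_n^{\beta - 2\gamma} 2^{j(2\gamma - \beta)}$ and invoking $\liminf n r_n^{-\gamma} > 0$ (together with $r_n^{-\gamma} \le r_n^{\beta-2\gamma}$ since $\beta > \gamma$) bounds the coefficient below and leaves the $t^{2\gamma - \beta}$ rate. In Case 2 ($0 < \beta \le \gamma$) the bound--driven term $n(2^j/r_n)^\gamma$ is binding; the hypothesis $\liminf nr_n^{\beta - 2\gamma} > 0$ keeps the coefficient uniformly positive and yields the $t^\gamma$ rate. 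In both cases the dyadic sum $\sum_{j \ge N}\exp(-c\,2^{j\kappa})$ is dominated by its first term, so the stated tail probability is preserved. The principal technical obstacle is verifying that $E[Z_j]$ really is negligible compared to the Bousquet--Talagrand fluctuation at every shell under consideration (rather than just at $j = N$), and that the threshold $t > (1/2)2^{-(\gamma+1)/(\gamma - \alpha)} \vee 1/2$ in the statement is precisely what is needed to absorb all constants coming from the curvature, the expectation bound, and the transition between the two terms of the Bousquet minimum; once these constants are tracked carefully, everything else is routine geometric summation.
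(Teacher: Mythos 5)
Your proposal is correct and follows essentially the same route as the paper's proof: a dyadic peeling around $\theta_0$, the curvature and modulus-of-continuity assumptions to set the drift against the expected fluctuation on each shell, Markov's inequality for the polynomial tail, and the Bousquet--Talagrand inequality in Bernstein form for the exponential tails, with the same case analysis driven by which of the variance term $n(2^j/r_n)^{2\gamma-\beta}$ or the boundedness term $n(2^j/r_n)^{\gamma}$ is binding under the respective $\liminf$ conditions. The two points you flag as requiring care (negligibility of $\E[Z_j]$ on every shell and the role of the threshold $t > (1/2)2^{-(\gamma+1)/(\gamma-\alpha)} \vee 1/2$) are exactly the constants the paper tracks, so no gap remains.
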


\begin{proof}
For simplicity let's assume the conditions are valid for $\delta, \theta$. We establish the finite sample concentration here. Fix $t > 1$: Define $C_i = \{\theta: d(\theta, \theta_0) \le tr_n^{-1}2^i\}$ for $i \in \mathbb{N}$.  Also define $g_{\theta}(X) = f_{\theta_0}(X) - f_{\theta}(X) - Pf_{\theta_0} + Pf_{\theta}$ and without loss of generality assume $\|g_{\theta}\|_{\infty} \le 1$. We also need the following quantities to apply Talagrand's inequality: $$\mu_{n,i} = \E\left(\sup_{d(\theta, \theta_0) \le tr_n^{-1}2^i} \left\{\sum_{i=1}^ng_{\theta}(X_i)\right\}\right) \le \sqrt{n} \phi_n(tr_n^{-1}2^i) \le \sqrt{n}\phi_{n}(r_n^{-1})(t2^{i})^{\alpha}$$ $$\sigma_{n,i}^2 = \sup_{d(\theta, \theta_0) \le tr_n^{-1}2^i} Var(g_{\theta}) = \sup_{d(\theta, \theta_0) \le tr_n^{-1}2^i} Var(f_{\theta} - f_{\theta_0}) \le (tr_n^{-1}2^i)^{\beta} \overset{\Delta} = \tilde{\sigma}^2_{n,i}$$ $$\nu_{n,i} = 2 \sqrt{n}\phi_{n}(r_n^{-1})(t2^{i})^{\alpha}+ n\tilde{\sigma}^2_{n,i} = 2 \sqrt{n}\phi_{n}(r_n^{-1})(t2^{i})^{\alpha}+n(tr_n^{-1}2^i)^{\beta} $$
\allowdisplaybreaks
\begin{align*}
& \P\left(r_n d(\hat \theta, \theta_0)> t\right)  \\
& = \P\left(d(\hat \theta, \theta_0)> tr_n^{-1}\right) \\
& =  \P\left(\sup_{\theta \in C_0^c} \left\{\M_n(\theta_0) - \M_n(\theta)\right\} \ge 0\right) \\
& \le \sum_{i=1}^{\infty} \P\left(\sup_{\theta \in C_{i-1}^c \cap C_i} \left\{\M_n(\theta_0) - \M_n(\theta)\right\} \ge 0\right) \\
& \le  \sum_{i=1}^{\infty} \P\left(\sup_{\theta \in C_{i-1}^c \cap C_i} \left\{(\M_n - M)(\theta_0 - \theta)\right\} + \sup_{\theta \in C_{i-1}^c \cap C_i}\left\{\M(\theta_0 - \theta)\right\} \ge 0\right) \\
& \le  \sum_{i=1}^{\infty} \P\left(\sup_{\theta \in C_{i-1}^c \cap C_i} \left\{(\M_n - M)(\theta_0 - \theta)\right\} \ge \inf_{\theta \in C_{i-1}^c \cap C_i}\left\{\M(\theta - \theta_0)\right\} \right) \\
& \le  \sum_{i=1}^{\infty} \P\left(\sup_{\theta \in C_{i-1}^c \cap C_i} \left\{(\M_n - M)(\theta_0 - \theta)\right\} \ge \inf_{\theta \in C_{i-1}^c \cap C_i}d^{\gamma}(\theta, \theta_0) \right) \\
& \le  \sum_{i=1}^{\infty} \P\left(\sup_{\theta \in C_i} \left\{(\M_n - M)(\theta_0 - \theta)\right\} \ge (tr_n^{-1}2^{i-1})^{\gamma} \right) \\
& \le  \sum_{i=1}^{\infty} \P\left(\sup_{\theta \in C_i} \left\{\frac{1}{n}\sum_{i=1}^n\left(f_{\theta_0}(X_i) - f_{\theta}(X_i) - Pf_{\theta_0} + Pf_{\theta}\right)\right\} \ge (tr_n^{-1}2^{i-1})^{\gamma} \right) \\
& \le  \sum_{i=1}^{\infty} \P\left(\sup_{\theta \in C_i} \left\{\frac{1}{n}\sum_{i=1}^ng_{\theta}(X_i)\right\} \ge (tr_n^{-1}2^{i-1})^{\gamma} \right) \\
& \le  \sum_{i=1}^{\infty} \P\left(\sup_{d(\theta, \theta_0) \le tr_n^{-1}2^i} \left\{\sum_{i=1}^ng_{\theta}(X_i)\right\} \ge n(tr_n^{-1}2^{i-1})^{\gamma} \right) \\
& \le  \sum_{i=1}^{\infty} \P\left(\sup_{d(\theta, \theta_0) \le tr_n^{-1}2^i} \left\{\sum_{i=1}^n g_{\theta}(X_i)\right\} -\mu_{n,i} \ge n(tr_n^{-1}2^{i-1})^{\gamma}-\mu_{n,i} \right) \\
& \le \sum_{i=1}^{\infty} \P\left(\sup_{d(\theta, \theta_0) \le tr_n^{-1}2^i} \left\{\sum_{i=1}^n g_{\theta}(X_i)\right\} - \mu_{n,i}  \ge n(tr_n^{-1}2^{i-1})^{\gamma} - \sqrt{n}\phi_{n}(r_n^{-1})(t2^{i})^{\alpha} \right) \\
& \le \sum_{i=1}^{\infty} \exp{\left(-\frac{\left(n(tr_n^{-1}2^{i-1})^{\gamma} - \sqrt{n}\phi_{n}(r_n^{-1})(t2^{i})^{\alpha}\right)^2}{4 \sqrt{n}\phi_{n}(r_n^{-1})(t2^{i})^{\alpha}+2n(tr_n^{-1}2^i)^{\beta}+ 2(n(tr_n^{-1}2^{i-1})^{\gamma} - \sqrt{n}\phi_{n}(r_n^{-1})(t2^{i})^{\alpha})/3}\right)} \\
& \le \sum_{i=1}^{\infty} \exp{\left(-\frac{\left(n(tr_n^{-1}2^{i-1})^{\gamma} - \sqrt{n}\phi_{n}(r_n^{-1})(t2^{i})^{\alpha}\right)^2}{4 \sqrt{n}\phi_{n}(r_n^{-1})(t2^{i})^{\alpha}+2n(tr_n^{-1}2^i)^{\beta}+ 2(n(tr_n^{-1}2^{i-1})^{\gamma} - \sqrt{n}\phi_{n}(r_n^{-1})(t2^{i})^{\alpha})/3}\right)} \\
& \le \sum_{i=1}^{\infty} \exp{\left(-r_n^{-\gamma}\frac{\left(n(t2^{i-1})^{\gamma} - \sqrt{n}r_n^{\gamma}\phi_{n}(r_n^{-1})(t2^{i})^{\alpha}\right)^2}{4 \sqrt{n}r_n^{\gamma}\phi_{n}(r_n^{-1})(t2^{i})^{\alpha}+2n(t2^i)^{\beta}r_n^{\gamma - \beta}+ 2(n(t2^{i-1})^{\gamma} - \sqrt{n}r_n^{\gamma}\phi_{n}(r_n^{-1})(t2^{i})^{\alpha})/3}\right)} \\
& \le \sum_{i=1}^{\infty} \exp{\left(-r_n^{-\gamma}\frac{\left(n(t2^{i-1})^{\gamma} - n(t2^{i})^{\alpha}\right)^2}{4 n(t2^{i})^{\alpha}+2n(t2^i)^{\beta}r_n^{\gamma - \beta}+ 2(n(t2^{i-1})^{\gamma} - n(t2^{i})^{\alpha})/3}\right)} \\
& \le \sum_{i=1}^{\infty} \exp{\left(-nr_n^{-\gamma}\frac{\left((t2^{i-1})^{\gamma} - (t2^{i})^{\alpha}\right)^2}{4 (t2^{i})^{\alpha}+2(t2^i)^{\beta}r_n^{\gamma - \beta}+ 2((t2^{i-1})^{\gamma} - (t2^{i})^{\alpha})/3}\right)} \\
& \le \sum_{i=1}^{\infty} \exp{\left(-nr_n^{-\gamma}\frac{\left((t2^{i-1})^{\gamma} - (t2^{i})^{\alpha}\right)^2}{4 (t2^{i})^{\alpha}+2(t2^i)^{\beta}r_n^{\gamma - \beta}+ 2((t2^{i-1})^{\gamma} - (t2^{i})^{\alpha})/3}\right)} \\
& \le \sum_{i=1}^{\infty} \exp{\left(-nr_n^{-\gamma}\frac{\left((t2^{i-1})^{\gamma} - (t2^{i})^{\alpha}\right)^2}{(10/3) (t2^{i})^{\alpha}+2(t2^i)^{\beta}r_n^{\gamma - \beta}+ (2/3)(t2^{i-1})^{\gamma}}\right)} \\
& \le \sum_{i=1}^{\infty} \exp{\left(-nr_n^{-\gamma}(t2^{i})^{\alpha}2^{-2\gamma}\frac{\left((t2^i)^{\gamma-\alpha} - 2^{\gamma}\right)^2}{(10/3)+2(t2^i)^{\beta - \alpha}r_n^{\gamma - \beta}+ (2^{1-\gamma}/3)(t2^i)^{\gamma - \alpha}}\right)} \\
& \le \sum_{i=1}^{\infty} \exp{\left(-nr_n^{-\gamma}(t2^{i})^{\alpha}2^{-2\gamma}\frac{(t2^i)^{\gamma-\alpha}\left(1 - 2^{\gamma}(t2^i)^{-\gamma+\alpha}\right)^2}{(10/3)(t2^i)^{-\gamma+\alpha}+2(t2^i)^{\beta - \gamma}r_n^{\gamma - \beta}+ (2^{1-\gamma}/3)}\right)} \\
& \le \sum_{i=1}^{\infty} \exp{\left(-nr_n^{-\gamma}(t2^{i})^{\gamma}2^{-2\gamma}\frac{\left(1 - 2^{\gamma}(t2^i)^{-\gamma+\alpha}\right)^2}{(10/3)(t2^i)^{-\gamma+\alpha}+2(t2^i)^{\beta - \gamma}r_n^{\gamma - \beta}+ (2^{1-\gamma}/3)}\right)} \\
& \le \exp{(-t^{\gamma})}\sum_{i=1}^{\infty} \exp{\left(-t^{\gamma}\left[nr_n^{-\gamma}2^{i\gamma}2^{-2\gamma}\frac{\left(1 - 2^{\gamma}(t2^i)^{-\gamma+\alpha}\right)^2}{(10/3)(t2^i)^{-\gamma+\alpha}+2(t2^i)^{\beta - \gamma}r_n^{\gamma - \beta}+ (2^{1-\gamma}/3)}-1\right]\right)} \\
& \le \sum_{i=1}^{\infty} \exp{\left(-t^{\gamma}\left[nr_n^{-\gamma}2^{i\gamma}2^{-2\gamma}\frac{\left(1 - 2^{\gamma}(t2^i)^{-\gamma+\alpha}\right)^2}{(10/3)(t2^i)^{-\gamma+\alpha}+2(t2^i)^{\beta - \gamma}r_n^{\gamma - \beta}+ (2^{1-\gamma}/3)}\right]\right)}
\end{align*}

\noindent
Now we manipulate the last sum. For ease of understanding we divide the rest of the proof into three parts. First, assume $\gamma = \beta$. From the assumption of the theorem, there exists $c > 0$ such that $\liminf_{n \rightarrow \infty} nr_n^{-\gamma} \ge c$. As $0 < \alpha < \gamma$, for all $t > 1/2$, $(t2^i)^{-\gamma + \alpha} < 1$.

\allowdisplaybreaks
\begin{align*}
& \sum_{i=1}^{\infty} \exp{\left(-t^{\gamma}\left[nr_n^{-\gamma}2^{i\gamma}2^{-2\gamma}\frac{\left(1 - 2^{\gamma}(t2^i)^{-\gamma+\alpha}\right)^2}{(10/3)(t2^i)^{-\gamma+\alpha}+2(t2^i)^{\beta - \gamma}r_n^{\gamma - \beta}+ (2^{1-\gamma}/3)}\right]\right)} \\
& \le \sum_{i=1}^{\infty} \exp{\left(-t^{\gamma}\left[c2^{i\gamma}2^{-2\gamma}\frac{\left(1 - 2^{\gamma}(t2^i)^{-\gamma+\alpha}\right)^2}{(10/3)+ (2^{1-\gamma}/3)}\right]\right)} \\
& \le \sum_{i=1}^{\infty} \exp{\left(-t^{\gamma}\left[c_12^{i\gamma}\left(1 - 2^{\gamma}(t2^i)^{-\gamma+\alpha}\right)^2\right]\right)} 
\end{align*}

\noindent
Finally assume that $t > (1/2)2^{-\frac{\gamma + 1}{\gamma - \alpha}}$. Then $2^{\gamma}(t2^i)^{-\gamma+\alpha} < 1/2$ for all $i \ge 1$ which implies $\left(1 - 2^{\gamma}(t2^i)^{-\gamma+\alpha}\right)^2 \ge 1/4$. Putting this we get:

\allowdisplaybreaks
\begin{align*}
\sum_{i=1}^{\infty} \exp{\left(-t^{\gamma}\left[c_12^{i\gamma}\left(1 - 2^{\gamma}(t2^i)^{-\gamma+\alpha}\right)^2\right]\right)} & \le  \sum_{i=1}^{\infty} \exp{\left(-\frac{c_1}{4}t^{\gamma}2^{i\gamma}\right)}  \\
& \le \sum_{i=1}^{\infty} \exp{\left(-\frac{c_1}{4}t^{\gamma}i\right)} \\
& \le \frac{\exp{\left(-\frac{c_1}{4}t^{\gamma}\right)}}{1 - \exp{\left(-\frac{c_1}{4}t^{\gamma}\right)}} \\
& \le C\exp{\left(-\frac{c_1}{4}t^{\gamma}\right)}
\end{align*}

\noindent
Next we solve the series for the case when $0 < \beta < \gamma$. We assume here $\liminf_{n \rightarrow \infty} nr_n^{\beta - 2\gamma} = c > 0$ as stated in the theorem. Like before. lets assume $t > (1/2)2^{-\frac{\gamma + 1}{\gamma - \alpha}} \vee 1/2$. Then $2^{\gamma}(t2^i)^{-\gamma+\alpha} < 1/2$ for all $i \ge 1$ which implies $\left(1 - 2^{\gamma}(t2^i)^{-\gamma+\alpha}\right)^2 \ge 1/4$. Also, as $\beta < \gamma$, we have $r_n^{\gamma + \beta} < 1$ for all large $n$. Hence we have:

\allowdisplaybreaks
\begin{align*}
& \sum_{i=1}^{\infty} \exp{\left(-t^{\gamma}\left[nr_n^{\beta-2\gamma}2^{i\gamma}2^{-2\gamma}\frac{\left(1 - 2^{\gamma}(t2^i)^{-\gamma+\alpha}\right)^2}{(10/3)(t2^i)^{-\gamma+\alpha}r_n^{-\gamma + \beta}+2(t2^i)^{\beta - \gamma}+ (2^{1-\gamma}/3)r_n^{-\gamma +\beta}}\right]\right)} \\
& \le \sum_{i=1}^{\infty} \exp{\left(-t^{\gamma}\left[c2^{i\gamma}2^{-2\gamma}\frac{(1/4)}{(10/3)+2+ (2^{1-\gamma}/3)}\right]\right)} \\
& \le \sum_{i=1}^{\infty} \exp{\left(-t^{\gamma}\left[c_12^{i\gamma}\right]\right)}  \\
& \le C\exp{\left(-\frac{c_1}{4}t^{\gamma}\right)} \hspace{0.3in} [\text{Just like previous calculation}]
\end{align*}

\noindent 
Finally let's assume $\gamma < \beta < 2\gamma$. Then we have the assumption $\liminf_{n \rightarrow 0} nr_n^{-\gamma} = c > 0$. Assume $t > (1/2)2^{-\frac{\gamma + 1}{\gamma - \alpha}} \vee 1/2$. Then $2^{\gamma}(t2^i)^{-\gamma+\alpha} < 1/2$ for all $i \ge 1$ which implies $\left(1 - 2^{\gamma}(t2^i)^{-\gamma+\alpha}\right)^2 \ge 1/4$. Here as $\beta > \gamma$ we have $r_n^{\gamma - \beta} < 1$ for all large $n$. Then we have:

\allowdisplaybreaks
\begin{align*}
& \sum_{i=1}^{\infty} \exp{\left(-t^{\gamma}\left[nr_n^{-\gamma}2^{i\gamma}2^{-2\gamma}\frac{\left(1 - 2^{\gamma}(t2^i)^{-\gamma+\alpha}\right)^2}{(10/3)(t2^i)^{-\gamma+\alpha}+2(t2^i)^{\beta - \gamma}r_n^{\gamma - \beta}+ (2^{1-\gamma}/3)}\right]\right)} \\
& \le \sum_{i=1}^{\infty} \exp{\left(-t^{\gamma}\left[c2^{i\gamma}2^{-2\gamma}\frac{(1/4)}{(10/3)+2(t2^i)^{\beta - \gamma}+ (2^{1-\gamma}/3)}\right]\right)} \\
&  \le \sum_{i=1}^{\infty} \exp{\left(-t^{2\gamma - \beta}\left[c2^{i(2\gamma-\beta)}2^{-2\gamma}\frac{(1/4)}{(10/3)(t2^i)^{-\beta + \gamma}+2+ (2^{1-\gamma}/3)(t2^i)^{-\beta + \gamma}}\right]\right)} \\
&  \le \sum_{i=1}^{\infty} \exp{\left(-t^{2\gamma - \beta}\left[c2^{i(2\gamma-\beta)}2^{-2\gamma}\frac{(1/4)}{(10/3)+2+ (2^{1-\gamma}/3)}\right]\right)} \\
&  \le \sum_{i=1}^{\infty} \exp{\left(-t^{2\gamma - \beta}\left[c_12^{i(2\gamma-\beta)}\right]\right)} \\
&  \le \sum_{i=1}^{\infty} \exp{\left(-c_1t^{2\gamma - \beta}i\right)} \\
& \le \frac{\exp{\left(-c_1t^{2\gamma - \beta}\right)}}{1 - \exp{\left(-c_1t^{2\gamma - \beta}\right)}} \\
& \le C\exp{\left(-c_1t^{2\gamma - \beta}\right)}
\end{align*}

\begin{remark} 
Here I have used the inequality $2^{i\gamma} > i$ and $2^{i(\gamma - \beta)} > i$ for notational simplicity. For more exact bound, one can use the fact that $a^i \ge i e\log{a}$ for all $i$ for all $a > 1$.  
\end{remark}

\end{proof}
\end{appendix}

\bibliographystyle{plainnat}
\bibliography{mybib}

\end{document}